\pdfoutput=1
\RequirePackage{ifpdf}
\ifpdf 
\documentclass[pdftex]{sigma}
\else
\documentclass{sigma}
\fi

\usepackage{tikz}
\usetikzlibrary{shapes.geometric,arrows,decorations.pathmorphing,decorations.markings,patterns}

\def\R{{\mathbb R}}
\def\C{{\mathbb C}}
\def\Z{{\mathbb Z}}
\def\F{{\mathbb F}}
\def\Q{{\mathbb Q}}
\def\A{{\mathcal{A}}}
\def\Gr{{\rm Gr}}
\def\bF{{\bar \F}}
\def\oPi{{\mathring{\Pi}}}
\def\oGr{{\mathring{\Gr}}}

\def\Xo{{\mathring{X}}}
\def\Zo{{\mathring{Z}}}
\def\Ao{{\mathring{\A}}}
\def\oV{{\mathring{V}}}
\def\u{{\mathbf{u}}}
\def\Spec{\operatorname{Spec}}
\def\Hom{\operatorname{Hom}}
\def\Ext{\operatorname{Ext}}
\def\prin{{\operatorname{prin}}}
\def\univ{{\operatorname{univ}}}
\def\tB{{\tilde B}}
\def\sp{\operatorname{span}}
\def\wt{\operatorname{wt}}
\def\E{{\mathbb E}}
\def\I{{\mathcal{I}}}
\def\U{{\mathbb U}}
\def\M{{\mathcal{M}}}
\def\tM{{\widetilde {\mathcal{M}}}}
\def\bM{{\overline {\mathcal{M}}}}
\def\tD{{\tilde D}}
\def\tB{{\tilde B}}
\def\tc{{\tilde \c}}
\def\tI{{\tilde I}}
\def\z{{\mathbf z}}
\def\x{{\mathbf x}}
\def\y{{\mathbf y}}

\def\v{{\mathbf v}}
\def\X{{\mathbf X}}
\def\N{{\mathcal{N}}}
\def\one{{\mathbf 1}}

\def\Trop{\operatorname{Trop}}
\def\SL{\operatorname{SL}}
\def\GL{\operatorname{GL}}
\def\dlog{\operatorname{dlog}}
\def\g{{\mathbf g}}
\def\Y{{\mathbf Y}}

\def\tomega{{\tilde \omega}}
\def\tgamma{{\tilde \gamma}}

\def\tPi{{\tilde \Pi}}
\def\c{{\mathbf c}}

\renewcommand{\P}{{\mathbb P}}
\newcommand{\sslash}{\mathbin{/\mkern-6mu/}}

\numberwithin{equation}{section}

\newtheorem{Theorem}{Theorem}[section]
\newtheorem{Corollary}[Theorem]{Corollary}
\newtheorem{Lemma}[Theorem]{Lemma}
\newtheorem{Proposition}[Theorem]{Proposition}
\newtheorem{Conjecture}[Theorem]{Conjecture}
 { \theoremstyle{definition}
\newtheorem{Definition}[Theorem]{Definition}
\newtheorem{Example}[Theorem]{Example}
\newtheorem{Remark}[Theorem]{Remark}
\newtheorem{question}[Theorem]{Question}
\newtheorem{problem}[Theorem]{Problem}}

\begin{document}
\allowdisplaybreaks

\newcommand{\arXivNumber}{2005.11419}

\renewcommand{\PaperNumber}{092}

\FirstPageHeading

\ShortArticleName{Cluster Configuration Spaces of Finite Type}

\ArticleName{Cluster Configuration Spaces of Finite Type}

\Author{Nima ARKANI-HAMED~$^{\rm a}$, Song HE~$^{\rm bcde}$ and Thomas LAM~$^{\rm f}$}
\AuthorNameForHeading{N.~Arkani-Hamed, S.~He and T.~Lam}

\Address{$^{\rm a)}$~School of Natural Sciences, Institute for Advanced Studies, Princeton, NJ, 08540, USA}
\EmailD{\href{mailto:arkani@ias.edu}{arkani@ias.edu}}

\Address{$^{\rm b)}$~CAS Key Laboratory of Theoretical Physics, Institute of Theoretical Physics,\\
\hphantom{$^{\rm b)}$}~Chinese Academy of Sciences, Beijing, 100190, China}
\EmailD{\href{mailto:songhe@itp.ac.cn}{songhe@itp.ac.cn}}
\Address{$^{\rm c)}$~School of Fundamental Physics and Mathematical Sciences,\\
\hphantom{$^{\rm c)}$}~Hangzhou Institute for Advanced Study, UCAS, Hangzhou 310024, China}
\Address{$^{\rm d)}$~ICTP-AP International Centre for Theoretical Physics Asia-Pacific,\\
\hphantom{$^{\rm d)}$}~Beijing/Hangzhou, China}
\Address{$^{\rm e)}$~School of Physical Sciences, University of Chinese Academy of Sciences,\\
\hphantom{$^{\rm e)}$}~No.19A Yuquan Road, Beijing 100049, China}

\Address{$^{\rm f)}$~Department of Mathematics, University of Michigan,\\
\hphantom{$^{\rm f)}$}~530 Church St, Ann Arbor, MI 48109, USA}
\EmailD{\href{mailto:tfylam@umich.edu}{tfylam@umich.edu}}
\URLaddressD{\url{http://math.lsa.umich.edu/~tfylam/}}

\ArticleDates{Received January 05, 2021, in final form October 04, 2021; Published online October 16, 2021}

\Abstract{For each Dynkin diagram $D$, we define a ``cluster configuration space'' ${\mathcal{M}}_D$ and a partial compactification ${\widetilde {\mathcal{M}}}_D$. For $D = A_{n-3}$, we have ${\mathcal{M}}_{A_{n-3}} = {\mathcal{M}}_{0,n}$, the configuration space of $n$ points on ${\mathbb P}^1$, and the partial compactification ${\widetilde {\mathcal{M}}}_{A_{n-3}}$ was studied in this case by Brown. The~space ${\widetilde {\mathcal{M}}}_D$ is a smooth affine algebraic variety with a stratification in bijection with the faces of the Chapoton--Fomin--Zelevinsky generalized associahedron. The~regular functions on ${\widetilde {\mathcal{M}}}_D$ are generated by coordinates $u_\gamma$, in bijection with the cluster variables of type~$D$, and the relations are described completely in terms of the compatibility degree function of the cluster algebra. As an application, we define and study cluster algebra analogues of tree-level open string amplitudes.}

\Keywords{configuration space; cluster algebras; generalized associahedron; string amplitudes}
\Classification{05E14; 13F60; 14N99; 81T30}

\section{Introduction}

\medskip\noindent{\large\bf 1.1.} The configuration space $\M_{0,n}$ of $n$ distinct points on $\P^1$ is a smooth affine algebraic variety of dimension $n{-}3$, and it has a very well-studied Deligne--Knudsen--Mumford compactification $\bM_{0,n}$, which is a smooth projective algebraic variety. The~boundary of $\bM_{0,n}$ consists of $2^{n{-}1}{-}n{-}1$ divisors, satisfying factorization: each divisor is itself a product $\bM_{0,n_1} \times \bM_{0,n_2}$, where $n_1{+}n_2 =n{+}2$.

The real points $\M_{0,n}(\R)$ have the structure of a smooth real manifold with $(n{-}1)!/2$ connected components. Fixing once and for all a (dihedral) ordering on $n$ points, we let $(\M_{0,n})_{>0} \subset \M_{0,n}(\R)$ denote the connected component where the $n$ points are ordered on $\P^1(\R) = S^1$. The~closure $(\M_{0,n})_{\geq 0}$ of $(\M_{0,n})_{>0}$ in $\bM_{0,n}(\R)$ is a stratified space that is homeomorphic to the face stratification of the associahedron polytope.

Let $W$ denote the union of those boundary divisors in $\bM_{0,n}$ whose intersection with $(\M_{0,n})_{\geq 0}$ is empty, and let $\tM_{0,n}:= \bM_{0,n} \setminus W$. The~divisors that do intersect $(\M_{0,n})_{\geq 0}$ correspond to ways to divide $\{1,2,\dots,n\}$ into two cyclic intervals, each of size greater than or equal to two. For example, $\bM_{0,5}$ has ten boundary divisors, and $\tM_{0,5}$ includes five of them, corresponding to the five sides of the pentagon (the associahedron of dimension two). Somewhat surprisingly, the partial compactification $\tM_{0,n}$ is an affine algebraic variety, and its ring of regular functions has the following description. Let~$u_{ij}$ be variables labeled by the diagonals $(i,j)$ (not including sides) of a $n$-gon $P_n$. Then $\C\big[\tM_{0,n}\big]$ is isomorphic to the polynomial ring $\C[u_{ij}]$ modulo the relations
\begin{gather}\label{eq:Rij}
R_{ij} := u_{ij} + \Bigg(\prod_{(k,\ell) \text{ crossing } (i,j)} u_{k\ell} \Bigg) - 1 , \qquad(i,j) \ \text{varying over all diagonals},
\end{gather}
and $\M_{0,n} \subset \tM_{0,n}$ is the locus where $u_{ij} \neq 0$. The~$u_{ij}$ are called \emph{dihedral coordinates}. Brown~\cite{Brown} describes the same space using a presentation with more relations (see Section~10.1); the extra relations are implied by our smaller set. The~$u_{ij}$ are cross-ratios (see \eqref{eq:uij}) on $\M_{0,n}$ and appeared in the study of scattering amplitudes in string theory and for the bi-adjoint $\phi^3$-theory~\cite{ABHY}.

\bigskip\noindent{\large\bf 1.2.} In this paper, we construct in an analogous manner two affine algebraic varieties \mbox{$\M_D \!\subset\! \tM_{D}$} for each Dynkin diagram $D$ of finite type by considering the relations
\begin{gather}\label{eq:Rgamma}
R_\gamma := u_\gamma + \prod_{\omega} u_\omega^{(\omega || \gamma)} - 1.
\end{gather}
Here, $\gamma$ and $\omega$ denote mutable cluster variables of a cluster algebra $\A$ of type $D$ \cite{CA2}, and $(\omega || \gamma)$ denotes the compatibility degree. We~call $\M_D$ the \emph{cluster configuration space of type $D$}. In~the case $D = A_{n-3}$, we have $\M_{A_{n-3}} = \M_{0,n}$ and $\tM_{A_{n-3}} = \tM_{0,n}$. Amongst many remarkable properties of these relations, let us immediately note that $u_\gamma = 0$ forces $u_\omega = 1$ for all $\omega$ such that $(\gamma||\omega) \neq 0$ (or equivalently, $(\omega||\gamma)\neq0$). Thus, factorization is manifest in \eqref{eq:Rgamma}. Some of the results of this work were reported in~\cite{AHLT}, and $\tM_D$ is an example of the notion of ``binary geometry" discussed therein.

Whereas $\tM_{0,n}$ has a stratification indexed by the faces of the associahedron, the space $\tM_D$ has a stratification (Proposition~\ref{prop:MDstrat}) indexed by the faces of the Chapoton--Fomin--Zelevinsky generalized associahedron for $D^\vee$ \cite{CFZ,FZY}. We~show (Theorem~\ref{thm:geom}) that $\M_D$ and $\tM_D$ are smooth affine algebraic varieties and that the boundary stratification of $\tM_D$ is simple normal-crossing. These geometric properties depend on integrality properties of the normal fan $\N(D^\vee)$ of the generalized associahedron, and an isomorphism (Theorem~\ref{thm:toric}) between $\tM_D$ and an affine open subset of the projective toric variety $X_{\N(D^\vee)}$ associated to $\N(D^\vee)$. Like $(\M_{0,n})_{\geq 0}$, the variety~$\tM_D$ contains a distinguished nonnegative part $\M_{D,\geq 0}$, which is a stratified space homeomorphic to the face stratification of the generalized associahedron (Theorem~\ref{thm:pospart}). The~positive part $\M_{D, > 0} \subset \M_D(\R)$ is a distinguished connected component in $\M_D(\R)$, and is cut out by the conditions $u_\gamma >0$. Though $\tM_D$ is not compact, $\big(\tM_D,(\M_{D, \geq 0})_{\geq 0}\big)$ satisfies the other properties of a positive geometry in the sense of \cite{ABL}.

\bigskip\noindent{\large\bf 1.3.} The configuration space $\M_{0,n}$ is isomorphic to the quotient of an open subset $\oGr(2,n) \subset \Gr(2,n)$ of the Grassmannian of 2-planes by the diagonal torus $T \subset \SL_n$ acting on $\Gr(2,n)$. Let~$\tB$ be a full rank acyclic extended exchange matrix of type $D$. Let~$\A\big(\tB\big)$ be the corresponding cluster algebra, $X\big(\tB\big) = \Spec\big(\A\big(\tB\big)\big)$ be the cluster variety, and let $\Xo\big(\tB\big) \subset X\big(\tB\big)$ denote the locus where all cluster variables are non-vanishing. We~show (Theorem~\ref{thm:fullrank}) that $\M_D$ is isomorphic to the (free) quotient of $\Xo\big(\tB\big)$ by the cluster automorphism group $T\big(\tB\big)$, generalizing the construction of $\M_{0,n}$ from $\Gr(2,n)$. The~functions $u_\gamma$ are particular $T\big(\tB\big)$-invariant rational functions on~$X\big(\tB\big)$. The~$u_\gamma$ are related to some of the ``cluster $X$-coordinates" in the sense of Fock and Goncharov~\cite{FG} by the equation $u = X/(1+X)$. The~cluster $X$-coordinates appearing here are exactly those encountered in the Auslander--Reiten walk through cluster variables, beginning from an acyclic quiver and mutating only on sources. It~is important to note that while the $u_\gamma$ are simply related to the cluster $X$-variables in this way, they are actually in bijection with the cluster $A$-variables, while in general there are more cluster $X$-variables than cluster $A$-variables.

We do not have a good understanding of the relationship between $\M_D$ and cluster ${\mathcal{X}}$-varieties; for example, $\M_D$ does not contain a collection of (cluster) torus charts.

Our approach depends crucially on the flexibility in the choice of $\tB$. When $\tB = \tB^\univ$ is the extended exchange matrix for the universal coefficient cluster algebra \cite{CA4,Reading}, the relation~\eqref{eq:Rgamma} is obtained from the primitive exchange relations of $\A\big(\tB^\univ\big)$ by setting all mutable cluster variables to 1, and sending the universal frozen variables $z_\gamma$ to $u_\gamma$. The~non-primitive exchange relations give rise to other relations of the form $U + U' = 1$, where $U$ and $U'$ are monomials in the $u_\gamma$-s.

When $\tB = \tB^\prin$ is the extended exchange matrix for the principal coefficient cluster algebra, the functions $u_\gamma$ become identified with certain ratios of the $F$-polynomials $F_\gamma(\y)$. Bazier-Matte, Douville, Mousavand, Thomas, and Yildrim have shown \cite{BDMTY} in the case that $D$ is simply-laced that the Newton polytope of $F_\gamma(\y)$ has normal fan a coarsening of the $\g$-vector fan $\N\big(D^\vee\big)$ of~$D^\vee$, and this result was extended to skew-symmetric cluster algebras by Fei~\cite{Fei2}. We~extend via folding this description to the case that $D$ is multiply-laced finite type Dynkin diagram. The~identification of $\tM_D$ with an open subset of the toric variety $X_{\N(D^\vee)}$ depends crucially on this analysis.

As an application of our results on quotients of cluster varieties and on $F$-polynomials, we~identify (Theorem~\ref{thm:tropMD}) the positive tropicalization $\Trop_{>0} \M_D$ of the cluster configuration space with the cluster fan $\N\big(D^\vee\big)$. In~particular, we resolve a conjecture of Speyer and Williams \cite[Conjecture~8.1]{SW} on positive tropicalizations of cluster varieties of finite type; see also~\cite{JLS}.

\bigskip\noindent{\large\bf 1.4.}
Inspired by similar questions for $\M_{0,n}$, we proceed with studying the topology of $\M_{D}(\C)$ and $\M_{D}(\R)$. We~identify $\M_{B_n}$ with the complement to the Shi-hyperplane arrangement and thereby compute point counts over finite fields, and the Euler characteristics of $\M_{B_n}(\R)$ and~$\M_{B_n}(\C)$. We~give a configuration space style description of $\M_{C_n}$ (Proposition~\ref{prop:Cnconfig}) but were not able to determine whether $\M_{C_n}$ is a hyperplane arrangement complement. Nevertheless, we~were able to compute the point count for $\M_{C_n}(\F_q)$, and the number of connected components of $\M_{C_n}(\R)$. We~found numerically the point counts for types $D_4$, $D_5$ and $G_2$, and obtained numerically that the point count of $\M_{D_4}(\F_q)$ over a finite field $\F_q$ is not a polynomial in $q$ but a quasi-polynomial.

\bigskip\noindent{\large\bf 1.5.} One of the main motivations for us are scattering amplitudes in string theory. In~\cite{AHL}, we introduced integral functions, called {\it stringy canonical forms},
\begin{gather}\label{eq:stringy}
\I = \int_{\R_{>0}^n} \prod_i \frac{{\rm d}x_i}{x_i} x_i^{\alpha' X_i} \prod_j p_j(\x)^{-\alpha' c_j},
\end{gather}
{\sloppy
where $p_j(\x)$ is a positive Laurent polynomial. We~showed in~\cite{AHL} that the leading order
 $\lim_{\alpha' \to 0} (\alpha')^n \I$ is a rational function that for fixed $c_j$-s coincides with the canonical rational function \cite{ABL} of the Minkowski sum of the Newton polytopes of $p_j(\x)$. Tree-level $n$-point open superstring amplitudes are integrals on $\M_{0,n}$. It~turns out that for a suitable parametrization of $\M_{0,n}$, these amplitudes can be written as an integral $\I_{A_{n-3}}$ in the form \eqref{eq:stringy}, where the $p_j(\x)$ are the $F$-polynomials for the type $A_{n-3}$ cluster algebra. The~importance of the $u_{ij}$-variables appears in the rewriting (see \cite[Section~9]{AHL} or \cite[Section~3]{BD})
\begin{gather*}
\I_{A_{n-3}} = \int_{(\M_{0,n})_{>0}} \Omega((\M_{0,n})_{>0}) \prod_{(i,j)} u_{ij}^{\alpha' X_{ij}}
\end{gather*}}\noindent
of the open-string amplitude. The~poles of $\I_{A_{n-3}}$ are given by $X_{ij} = 0$, and at this pole, the factorization of $\I_{A_{n-3}}$ mimics the factorization of the equations \eqref{eq:Rij}. We~define the \emph{cluster string amplitude} (where $x_\gamma$, $x_i$ are cluster variables of $\A\big(\tB\big)$ of type $D$)
\begin{gather*}
\I_D:= \int_{\M_{D,>0}} \Omega(\M_{D,>0}) \prod_{\gamma \in \Pi} x_\gamma^{\alpha' s_\gamma} \prod_{i=n+1}^{n+m} x_i^{\alpha' s_i}.
\end{gather*}
The poles of $\I_D$ are made manifest by rewriting in terms of the $u_\gamma$-s, and the leading order of~$\I_D$ is controlled by the combinatorics of the generalized associahedron of $D^\vee$.

\section{Background on cluster algebras and generalized associahedra}
In this section we review basic facts concerning cluster algebras. The~most important cluster algebra references for us are \cite{BDMTY,YZ}. For cluster varieties, our conventions follow \cite{LS}.

\bigskip\noindent{\large\bf 2.1.}
Let $D$ be a finite Dynkin diagram with vertex set $I$, and let $A=(a_{ij})$ denote the $n \times n$ Cartan matrix of $D$, where $n = |I|$. Let~$B$ be a skew-symmetrizable exchange matrix, i.e., there exists a matrix $Z$ with positive diagonal entries such that $ZB$ is skew-symmetric. We~say that $B = (B_{ij})$ has type $D$ if
\begin{gather*}
a_{ij} = \begin{cases} 2 & \mbox{if}\quad i = j,
\\
-|B_{ij}| & \mbox{if}\quad i \neq j.
\end{cases}
\end{gather*}
In standard cluster algebra language, $B$ corresponds to an \emph{acyclic} initial seed of a cluster algebra of finite type $D$.
Given $D$, the possible exchange matrices $B$ of type $D$ are in bijection with orientations of the underlying tree of $D$: writing $i \to j$ for the directed edges of this orientation,
we have
\begin{gather*}
B_{ij} = \begin{cases} -a_{ij} & \mbox{if}\quad i \to j,
\\
a_{ij} & \mbox{if}\quad j \to i,
\\
0 & \mbox{otherwise.}
\end{cases}
\end{gather*}
For an $(n+m) \times n$ extended exchange matrix $\tilde B$ extending $B$, we let $\A\big(\tB\big)$ denote the correspon\-ding cluster algebra of geometric type \cite{CA2}. By convention, $\A\big(\tB\big)$ is the $\C$-algebra generated by all mutable cluster variables, all frozen variables, and the inverses of all frozen variables. We~let $X\big(\tB\big) = \Spec\A\big(\tB\big)$ denote the \emph{cluster variety} \cite{LS}. This is a complex affine algebraic variety, and in general it differs from the union of cluster tori, which is sometimes called a cluster manifold.

\bigskip\noindent{\large\bf 2.2.}
We say that $\tB$ (or $\A$ or $X$) has \emph{full rank} if $\tB$ has rank $n$. We~say that $\tB$ (or $\A$ or $X$) has \emph{really full rank} if the rows of $\tB$ span $\Z^n$. If~$\tB$ has full rank, then $X\big(\tB\big)$ is a smooth affine algebraic variety \cite[Theorem~7.7]{Mul}.

\bigskip\noindent{\large\bf 2.3.}
Let $\Pi = \Pi(B)$ be the indexing set for cluster variables, which depends only on $B$. Set $r:=|\Pi|$. For $\gamma \in \Pi$, we let $x_\gamma \in \A\big(\tB\big)$ denote the corresponding cluster variable. (Abusing terminology, sometimes we will refer to elements of $\Pi$ as cluster variables.) We give $\Pi$ the structure of a simplicial complex, called the \emph{cluster complex}, by declaring the maximal faces to be the clusters $\{\gamma_1,\dots,\gamma_n\}$.

The set $\Pi$ can be identified with the following set of pairs of integers:
\begin{gather}\label{eq:Pidef}
\Pi = \bigsqcup_{i \in I} \{(s,i) \,|\, 0 \leq s \leq r_i\},
\end{gather}
where $r_i$, $i \in I$ are some positive integers. The~\emph{initial cluster} is $\{(0,i) \,|\, i \in I\}$. We~let $\Pi^+ \subset \Pi$ denote the subset of non-initial cluster variables, i.e., those $\gamma = (t,j)$ with $t \neq 0$.

\begin{Remark}
In \cite{YZ}, the set $\Pi$ is identified with the set of weights $\{c^s \omega_i \,|\, 0 \leq s \leq r_i\}$. We~have chosen to index using the pairs of integers $(s,i)$ instead. The~choice $c$ of a Coxeter element in~\cite{YZ} corresponds to our choice of an orientation of $D$ in determining the exchange matrix $B$.
\end{Remark}

\begin{Remark}
Starting from the initial cluster $\{x_{(0,i)} \,|\, i \in I\}$, the cluster $\{x_{(1,i)} \,|\, i \in I\}$ is obtained by mutating each vertex of $I$ once, always mutating at sources. This process is repeated to obtain all the cluster variables. In~particular, the cluster variable $x_{(t,j)}$ is obtained by mutation from $x_{(t-1,j)}$; see Proposition~\ref{prop:YZ} for the exchange relation. We~refer the reader to~\cite{BDMTY} for an explanation of this \emph{Auslander--Reiten walk}, the relation to quiver representations, and many examples.
\end{Remark}

\bigskip\noindent{\large\bf 2.4.}
There is an involution ${}^*\colon I \to I$ sending $i$ to $i^*$ induced by the longest element of the Weyl group of the root system of $D$. This involution is the identity in all types except for~$A_n$,~$D_{2n+1}$,~$E_6$, and in these types ${}^*\colon I \to I$ is the non-trivial automorphism of $D$ (as a graph). We~shall use the notation $(-1,i):= (r_{i^*}, i^*)$; see~\cite[Proposition~1.3]{YZ}.

\bigskip\noindent{\large\bf 2.5.} 
Each Dynkin diagram $D$ has a dual denoted $D^\vee$ defined by requiring that the Cartan matrix of $D^\vee$ be transpose to that of $D$. Note that $D$ and $D^\vee$ have the same underlying tree. If~$B$ is an exchange matrix of type $D$, then we let $B^\vee$ be the exchange matrix of type $D^\vee$ associated to the same orientation of the underlying tree of $D$ and $D^\vee$.
For dual exchange matrices $B$ and $B^\vee$, the cluster variables $\Pi(B)$ and $\Pi\big(B^\vee\big)$ are naturally in bijection and under this bijection the cluster complexes are isomorphic.

\bigskip\noindent{\large\bf 2.6.} 
For $\gamma,\omega \in \Pi$, we let $(\omega||\gamma)$ denote the \emph{compatibility degree}, defined for example in~\cite[Proposition~5.1]{YZ}. In~\cite{YZ}, the dependence of the compatibility degree on the choice of $c$ (equivalent to our choice of~$B$) is made explicit, but we have suppressed this dependence in our notation. By \cite[Section~5]{YZ}, the compatibility degrees for different choices of~$B$ are equivalent under an~appropriate renaming of $\Pi$. Examples of the compatibility degree are given in Section~3.2.

We have $(\omega||\gamma) = 0$ if and only if $(\gamma||\omega) = 0$ and in this case we say that $\omega$ and $\gamma$ are compatible. Otherwise, we call $\omega$ and $\gamma$ incompatible. If~$(\omega||\gamma) = (\gamma||\omega) = 1$, we say that $\omega$ and~$\gamma$ are {\it exchangeable}. The~faces of the cluster complex consist of sets of cluster variables that are pairwise compatible.

\bigskip\noindent{\large\bf 2.7.}
Let $\A^{\prin} = \A\big(\tB^\prin\big)$ denote the cluster algebra with principal coefficients \cite{CA4}. Thus $\tB^{\prin}$ is a $2n \times n$ matrix whose top half is equal to $B$ and bottom half is equal to the identity matrix. In~this case, the initial mutable variables are denoted $x_1,x_2,\dots,x_n$ and the principal frozen variables are denoted $y_1,y_2,\dots,y_n$. We~have a $\Z^n$-grading on
the principal coefficient cluster algebra $\A^\prin$ given by
\begin{gather}\label{eq:prindeg}
\deg(x_i) = e_i \qquad \text{and} \qquad \deg(y_i) = -B e_i.
\end{gather}
Each mutable cluster variable is homogeneous with respect to this grading, and we define the \emph{$\g$-vector} by $\g_\gamma := \deg(x_\gamma)$ for $\gamma \in \Pi$.

\bigskip\noindent{\large\bf 2.8.}
For $\gamma \in \Pi^+$, define the $F$-polynomial $F_\gamma(\y)$ by setting the initial cluster variables to 1 in the Laurent expansion of the cluster variable $x^\prin_\gamma$ in the cluster algebra $\A^\prin$ with principal coefficients:
\begin{gather*}
F_\gamma(\y) := x^\prin_\gamma(x_i = 1, y_1,y_2,\dots,y_n).
\end{gather*}
By convention, we have $F_\gamma(\y) = 1$ if $\gamma$ is initial. Computations of $\g$-vectors and $F$-polynomials are given in Examples~\ref{ex:A3} and~\ref{ex:B3}. Further examples can be found in~\cite{BDMTY,CA4}.

\bigskip\noindent{\large\bf 2.9.}
The {\it cluster fan} $\N(B)$ is the collection of cones spanned by $\{\g_{\gamma_1},\dots,\g_{\gamma_s}\}$ as $\{\gamma_1,\dots,\gamma_s\}$ varies over collections of cluster variables that belong to the same cluster, called {\it compatible} cluster variables. Recall that a cone $C$ is called {\it simplicial} if $\dim(C)$ is equal to the number of extremal rays of $C$, and a fan is called simplicial if all its cones are. A fan $\N$ in $\R^n$ is called {\it smooth} if it is simplicial and for each maximal cone $C \in \N$ the primitive integer vectors $\v_1,\dots,\v_n$ spanning $C$ form an integral basis for $\Z^n$.

\begin{Theorem}[{\cite{CFZ,FZY,HLT}}]\label{thm:clusterfan}
The collection of cones $\N(B)$ is a smooth, complete polyhedral fan.
\end{Theorem}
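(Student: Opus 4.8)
The plan is to establish the three claimed properties of $\N(B)$ — that it is a polyhedral fan, that it is complete, and that it is smooth — by invoking and combining the structural results on generalized associahedra. First I would recall that by the Chapoton–Fomin–Zelevinsky construction \cite{CFZ} (and its completion for all Dynkin types in \cite{FZY,HLT}), the $\g$-vectors of cluster variables, or equivalently the ``denominator vectors''/almost positive roots in the appropriate coordinates, are the vertices of a fan whose maximal cones are indexed by the clusters; the combinatorial fact that a set of cluster variables lies in a common cluster exactly when they are pairwise compatible (stated in 2.6 and 2.9) guarantees that the cones $C_{\{\gamma_1,\dots,\gamma_s\}}$ are closed under taking faces and that two such cones meet along a common face, so $\N(B)$ is genuinely a fan. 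That the fan is complete — i.e., its cones cover $\R^n$ — is the statement that the generalized associahedron is a genuine (simple) polytope whose normal fan is $\N(B)$; this is exactly the content of the realizations in \cite{CFZ} and \cite{HLT}, so I would cite those for completeness.

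For smoothness, the key point is that each maximal cone is simplicial — which is immediate since a cluster has exactly $n$ elements and the cluster complex is a simplicial complex of dimension $n-1$, so the $n$ $\g$-vectors of a cluster are linearly independent — and that the $n$ primitive $\g$-vectors spanning a maximal cone form a $\Z$-basis of $\Z^n$. The latter is the substantive claim. I would prove it by the standard mutation argument: the initial cluster has $\g$-vectors $e_1,\dots,e_n$, which trivially form a basis, and under a single mutation at a vertex $k$ the $\g$-vector matrix transforms by the tropicalized exchange relation, whose effect is multiplication by an elementary matrix of determinant $\pm 1$ (this is the well-known fact that $\g$-vectors mutate by a unimodular transformation — see e.g.\ \cite{CA4}). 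Since any cluster is reached from the initial one by a finite sequence of mutations, and the cluster fan of finite type is connected under mutation, every maximal cone's spanning set is obtained from $\{e_1,\dots,e_n\}$ by a product of unimodular matrices, hence is itself a $\Z$-basis. One must also check the $\g$-vectors in question are primitive, which follows because they are already part of an integral basis.

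The main obstacle is bookkeeping across Dynkin types rather than any deep difficulty: the realization of the generalized associahedron as a polytope with normal fan equal to the $\g$-vector fan was originally given in \cite{CFZ} only for simply-laced (or the specific constructions there), and one needs \cite{FZY} and especially \cite{HLT} to cover all finite types uniformly, including the multiply-laced cases and the subtlety that the fan naturally attached to $B$ is the one whose polar dual polytope is the generalized associahedron of $D^\vee$. Concretely, the hard part is matching conventions: checking that the $\g$-vector grading of \eqref{eq:prindeg} produces exactly the rays appearing in the CFZ/HLT fan, and that the unimodularity of $\g$-vector mutation is compatible with the skew-symmetrizable (not just skew-symmetric) setting. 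Once these identifications are in place, completeness and the polyhedral-fan property are quoted directly, and smoothness reduces to the elementary unimodular-mutation argument sketched above.
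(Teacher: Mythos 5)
The paper does not prove this theorem: it is quoted verbatim from the literature, with the entire content delegated to \cite{CFZ,FZY,HLT} (and the smoothness of the $\g$-vector fan in the conventions used here also traces back to \cite{CA4}). Your proposal is therefore not comparable to a proof in the paper, but as a reconstruction of how the cited results combine it is essentially sound: completeness and the polytopal realization are correctly attributed to \cite{CFZ,HLT}, and your unimodular-mutation argument for smoothness is the standard one --- under mutation at $k$ the new $\g$-vector is an integer combination of the old ones in which the replaced vector appears with coefficient $-1$, so each mutation is an elementary unimodular change of basis and every cluster's $\g$-vectors form a $\Z$-basis (primitivity then being automatic).

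One point where your write-up understates the difficulty: you assert that the fan property (cones closed under faces, pairwise intersections along common faces) is ``guaranteed'' by the combinatorics of compatibility. It is not. The cluster complex being an abstract simplicial complex says nothing about whether the geometric cones spanned by the $\g$-vectors have disjoint relative interiors; a priori two maximal cones could overlap. That the $\g$-vectors realize the cluster complex as a genuine simplicial fan is precisely the substantive geometric content of \cite{FZY,HLT} (equivalently, it follows from exhibiting a polytope with this normal fan, at which point completeness comes for free). Since you do cite those references, this is a misattribution of where the weight of the argument lies rather than a gap, but a self-contained proof would need that step supplied, not merely the combinatorial observation.
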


A \emph{generalized associahedron of type $B$} is any polytope whose normal fan is equal to $\N(B)$.
Often, we will say ``generalized associahedron of type $D$", with the choice of $B$ of type $D$ understood.

\bigskip\noindent{\large\bf 2.10.}
For $\gamma \in \Pi$, let $P_\gamma$ denote the Newton polytope of the $F$-polynomial $F_\gamma(\y)$. By convention, if $\gamma$ is initial, we have set $F_\gamma(\y) = 1$ and $P_\gamma = \{0\}$. Let~$F(\y) = \prod_{\gamma \in \Pi} F_\gamma(\y)$. Then the Newton polytope $P$ of $F(\y)$ is the Minkowski sum $\sum_{\gamma \in \Pi} P_\gamma$. The~following result is established in~\cite{BDMTY} when $D$ is simply-laced (and extended to not necessarily acyclic initial seeds in~\cite{Fei2}), and in~Theorem~\ref{thm:Newt} we extend the result to multiply-laced finite type $D$ with acyclic initial seed.

\begin{Theorem}\label{thm:FNewton}
The $($outer$)$ normal fan of the Minkowski sum $\sum_{\gamma \in \Pi} P_\gamma$ is equal to $\N(B^\vee)$.
\end{Theorem}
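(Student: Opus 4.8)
The plan is to reduce to the simply-laced case -- the theorem of Bazier-Matte, Douville, Mousavand, Thomas and Yildirim \cite{BDMTY} (only the acyclic case is needed, so that Fei's extension \cite{Fei2} is not required) -- by \emph{folding}. Write the multiply-laced Dynkin diagram $D$ as a quotient $\hat D/\Sigma$ of a simply-laced $\hat D$ by a group $\Sigma$ of diagram automorphisms, in one of the four standard ways: $A_{2n-1}\to B_n$, $D_{n+1}\to C_n$, $E_6\to F_4$, $D_4\to G_2$. Since the underlying graph of $D$ is a tree, the acyclic exchange matrix $B$ of type $D$ comes from an orientation of that tree; lift it to a $\Sigma$-invariant orientation of the tree of $\hat D$ and let $\hat B$ be the associated $\Sigma$-invariant acyclic exchange matrix of type $\hat D$, folding to $B$ (the entries of $B$ obtained from those of $\hat B$ by summing over $\Sigma$-orbits of columns).

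First I would record how the combinatorial data behave under folding. Folding the principal-coefficient extended exchange matrix of $\hat B$ -- whose lower block is the $\hat n\times\hat n$ identity -- produces that of $B$, since an identity block folds to an identity block. Hence $\A^\prin(\hat B)$ folds to $\A^\prin(B)$, and the standard dictionary for folded cluster algebras yields: the cluster variables $\Pi$ of type $D$ are the $\Sigma$-orbits on the cluster variables $\hat\Pi$ of type $\hat D$; the cluster complexes correspond, a face of $\Pi$ being the folding of a $\Sigma$-invariant face of $\hat\Pi$; and, for $\gamma\in\Pi$ with any lift $\hat\gamma\in\hat\Pi$, the $F$-polynomial $F_\gamma(y_i\colon i\in I)$ is the specialization of $F_{\hat\gamma}(\hat y_{\hat j}\colon\hat j\in\hat I)$ along $\hat y_{\hat j}\mapsto y_{[\hat j]}$. (This is independent of the lift, since $F_{\sigma\hat\gamma}$ is $F_{\hat\gamma}$ with the $\hat y$'s permuted by $\sigma$, and that permutation disappears after the specialization.) It follows that $P_\gamma=\pi(\hat P_{\hat\gamma})$, where $\pi\colon\R^{\hat I}\to\R^I$ is the linear surjection summing the coordinates in each $\Sigma$-orbit.

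Since $\pi\circ\sigma=\pi$, the polytope $\pi(\hat P_{\hat\gamma})$ depends only on the orbit $O$ of $\hat\gamma$, so $\sum_{\gamma\in\Pi}P_\gamma$ is a Minkowski sum of one copy of $\pi(\hat P_{\hat\gamma_O})$ for each orbit $O$. As positive rescalings of Minkowski summands do not affect the normal fan of the sum, this polytope has the same normal fan as $\sum_{\hat\gamma\in\hat\Pi}\pi(\hat P_{\hat\gamma})=\pi\bigl(\textstyle\sum_{\hat\gamma\in\hat\Pi}\hat P_{\hat\gamma}\bigr)=\pi(\hat P)$, where $\hat P=\sum_{\hat\gamma\in\hat\Pi}\hat P_{\hat\gamma}$. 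By the simply-laced case $\hat P$ has normal fan $\N(\hat B^\vee)=\N(\hat B)$, and the normal fan of the linear image $\pi(\hat P)$ is the preimage under $\pi^{*}$ of the normal fan of $\hat P$, i.e.\ the trace of $\N(\hat B)$ on the $\Sigma$-invariant subspace $\operatorname{im}(\pi^{*})$. It therefore remains to identify this trace with $\N(B^\vee)$.

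For the last step -- that $\N(B^\vee)$ is the section of $\N(\hat B)$ by the $\Sigma$-invariant subspace -- I would use the folding behaviour of $\g$-vectors and clusters: a $\Sigma$-invariant cluster $\hat C$ of $\hat B$ folds to a cluster $C$ of $B$, hence (by the isomorphism of cluster complexes for $B$ and $B^\vee$, Section~2.5) to a cluster of $B^\vee$; the maximal cones of the trace are exactly the $\operatorname{cone}(\g_{\hat\gamma}\colon\hat\gamma\in\hat C)\cap\operatorname{im}(\pi^{*})$ over such $\hat C$, and $\pi^{*}$ carries the ray through $\g^\vee_\gamma$ to the ray through the orbit sum $\sum_{\hat\gamma'\in O}\g_{\hat\gamma'}$, so this cone equals $\operatorname{cone}(\g^\vee_\gamma\colon\gamma\in C)$. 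Since $\N(B^\vee)$ and the trace of the complete fan $\N(\hat B)$ (Theorem~\ref{thm:clusterfan}) are both complete, agreement on maximal cones finishes the proof; alternatively this step may be quoted from the folding description of generalized associahedra in \cite{CFZ,HLT}. I expect the work to lie entirely in making these folding compatibilities precise: that the folding in play sends principal coefficients to principal coefficients (so the specialization of $F_{\hat\gamma}$ really computes $F_\gamma$), that $\pi$ is dual to folding $\hat B$ to $B$ and not to $B^\vee$, and -- for the $\g$-vectors -- keeping straight the invariant versus coinvariant lattices so that it is genuinely $\N(B^\vee)$, not $\N(B)$, that appears; this last point is the $\g$-vector/$\mathbf d$-vector duality, which is invisible when $D$ is simply-laced. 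The remaining steps (linear images of Newton polytopes, rescaling of Minkowski summands, the normal fan of a projection) are routine.
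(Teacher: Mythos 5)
Your strategy coincides with the paper's at its core---both reduce to the simply-laced theorem of \cite{BDMTY} by folding---but the execution is genuinely different and worth comparing. The paper never folds the Newton polytopes directly: it first re-realizes each $P_\gamma$ as a projection $\pi\big(\U\big(D^\vee\big)_{e_\gamma}\big)$ of a polytope of solutions to the $\c$-deformed mesh relations (Theorem~\ref{thm:Newt}), proves the folding compatibilities at the level of those solution polytopes (Lemma~\ref{lem:fold}, Proposition~\ref{prop:bijfold}), and then reads off Theorem~\ref{thm:FNewton} from the statement that $\pi\big(\U\big(D^\vee\big)_{\c}\big)$ has normal fan $\N\big(B^\vee\big)$ for $\c>0$. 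Your route---$P_\gamma=\pi(\hat P_{\hat\gamma})$ via the specialization identity for $F$-polynomials (the paper's Proposition~\ref{prop:folding}(3); note you should also invoke positivity of $F$-polynomial coefficients so that no vertex is lost under the specialization), invariance of the normal fan of a Minkowski sum under positive rescaling of summands, the duality between projecting a polytope and restricting its normal fan to $\operatorname{im}(\pi^{*})$, and the identity $\pi^{*}\big(\g^\vee_\gamma\big)=\sum_{\hat\gamma\in O}\g_{\hat\gamma}$ (the paper's Proposition~\ref{prop:gvee})---is shorter and bypasses the $\U_\c$ machinery, which the paper needs anyway for Proposition~\ref{prop:wtbasis} and Theorem~\ref{thm:pair}. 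The one step you delegate to a citation, that the full-dimensional cones of the trace come exactly from $\Sigma$-invariant clusters, is asserted at essentially the same level of detail in the paper's proof of Theorem~\ref{thm:Newt}(1).

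One concrete correction: your folding pairings are transposed relative to the conventions that the statement forces. With the compatibility degrees and $F$-polynomials as defined here, $C_{n-1}$ is folded from $A_{2n-3}$ and $B_{n-1}$ from $D_n$ (Sections~3.2.2 and~3.2.4), not $B_n$ from $A_{2n-1}$ and $C_n$ from $D_{n+1}$. This is not cosmetic: the identity $F_\gamma(\y)=\nu\big(F_{\hat\gamma}(\hat\y)\big)$ on which your first step rests holds only for the correct pairing; with yours, the specialization computes the $F$-polynomials of $D^\vee$ rather than $D$, and the dual fan would surface in the wrong place. This is exactly the $B$-versus-$B^\vee$ bookkeeping you flag at the end as the delicate point, and it is where the slip occurs; once the pairings are swapped (equivalently, once the roles of $B$ and $B^\vee$ are interchanged consistently), the rest of your argument goes through.
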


\medskip\noindent{\large\bf 2.11.} 
Let $\tau\colon \Pi \to \Pi$ be the bijection defined by $\tau(t,j) = (t-1,j)$ for $0 \leq t \leq r_j$, denoted $\tau_c$ in~\cite{YZ}. Then $(\tau \gamma|| \tau \omega) = (\gamma|| \omega)$ and $\tau$ induces an automorphism of the cluster complex of $D$.

An exchange relation for $\A\big(\tB\big)$ is called \emph{primitive} if it is of the form $x_\gamma x_{\omega} = M + M'$, where one of the two monomials $M$, $M'$ does not contain any mutable cluster variables. The~primitive exchange relations are exactly the ones of the form $x_{\tau \gamma} x_{\gamma} = M + M'$.

\bigskip\noindent{\large\bf 2.12.}
Let $\A^{\univ} = \A\big(\tB^\univ\big)$ denote the cluster algebra with universal coefficients, from \cite[Theorem~12.4]{CA4}, \cite[Section~5]{YZ}, and \cite[Theorem~10.12 and Remark 10.13]{Reading}. Thus $\tB^{\univ}$ is a~$(n+r) \times n$ matrix whose top part is equal to $B$ and whose bottom part has rows given by the $\g$-vectors of the cluster algebra with exchange matrix $B^T$, see \cite{Reading}. The~bottom $r$ rows of $\tB^\univ$ are again indexed by $\Pi$, and we denote the corresponding frozen variables by $z_\gamma$, for $\gamma \in \Pi$.

\begin{Proposition}[{\cite[Proposition~5.6]{YZ}}] \label{prop:YZ}
The primitive exchange relations of $\A\big(\tB^\univ\big)$ are given~by
\begin{gather}\label{eq:exchange}
x_{(t-1,j)} x_{(t,j)} = z_{(t,j)} \prod_{i \to j} x_{(t,i)}^{-a_{ij}} \prod_{j \to i} x_{(t-1,i)}^{-a_{ij}} + \prod_\omega z_\omega^{(\omega|| (t,j))}
\end{gather}
for $j \in I$ and $0 \leq t \leq r_j$.
\end{Proposition}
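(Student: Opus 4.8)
The plan is to run the Auslander--Reiten mutation sequence of Section~2.3 inside $\A\big(\tB^\univ\big)$ and read off, mutation by mutation, the exchange relation that creates $x_{(t,j)}$; the only substantive issue will be to recognize the resulting coefficient monomials. Fix $j\in I$ and first take $t\ge 1$; the case $t=0$ is the ``wrap-around'' relation $x_{\tau(0,j)}x_{(0,j)}$, which the same argument handles after using the periodicity convention $(-1,j)=(r_{j^*},j^*)$ and the diagram automorphism $*$. The first point I would check is purely combinatorial: mutating the acyclic quiver of $B$ at sources, one vertex at a time until every vertex has been mutated once, returns the original quiver, and in global step $t$ the neighbours $i$ of $j$ mutated before $j$ are exactly those with $i\to j$ (those with $j\to i$ are mutated after $j$). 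Hence at the instant we mutate $j$ in global step $t$, the ambient seed $\Sigma$ carries $x_{(t-1,j)}$ in position $j$, $x_{(t,i)}$ in position $i$ for $i\to j$, and $x_{(t-1,i)}$ in position $i$ for $j\to i$; and $j$ is then a source, so every edge at $j$ points away from $j$ and every off-diagonal entry of column $j$ of the exchange matrix of $\Sigma$ equals $a_{ij}\le 0$.

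Given this, the general exchange relation for mutating $\Sigma$ at $j$,
\begin{gather*}
x_{(t-1,j)}\,x_{(t,j)} = p^{+}\prod_{i\ \mathrm{mutable}} x_i^{[B^{\Sigma}_{ij}]_+} + p^{-}\prod_{i\ \mathrm{mutable}} x_i^{[-B^{\Sigma}_{ij}]_+},\qquad p^{\pm}:=\prod_{\omega\in\Pi} z_\omega^{[\pm\, B^{\Sigma}_{\omega j}]_+},
\end{gather*}
collapses --- since $[B^{\Sigma}_{ij}]_+=0$ for mutable $i$ and $[-B^{\Sigma}_{ij}]_+=-a_{ij}$ for $i\sim j$ --- to
\begin{gather*}
x_{(t-1,j)}\,x_{(t,j)} = p^{+} + p^{-}\prod_{i\to j} x_{(t,i)}^{-a_{ij}}\prod_{j\to i} x_{(t-1,i)}^{-a_{ij}}.
\end{gather*}
Matching this with the asserted formula, the whole proposition reduces to one statement about coefficients: that the frozen part of column $j$ of the extended exchange matrix of $\Sigma$ --- equivalently, the $j$-th $c$-vector of $\A\big(\tB^\univ\big)$ at $\Sigma$ in the basis $\{e_\omega\}_{\omega\in\Pi}$ --- equals $\sum_{\omega\in\Pi}(\omega\,||\,(t,j))\,e_\omega - e_{(t,j)}$. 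Using $((t,j)\,||\,(t,j))=0$, this is precisely $p^{-}=z_{(t,j)}$ and $p^{+}=\prod_{\omega}z_\omega^{(\omega\,||\,(t,j))}$.

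Proving that coefficient identity is the crux, and here I would exploit the explicit shape of $\tB^\univ$: its frozen rows at the initial seed are the $\g$-vectors of $\A\big(B^T\big)$, a cluster algebra of type $D^\vee$, and the frozen part mutates by the ordinary matrix-mutation rule. Inducting along the Auslander--Reiten walk and carrying the frozen part along, one transports the claim to a statement inside $\A\big(B^T\big)$: that, at the corresponding seed, the relevant $c$-vector of $\A\big(B^T\big)$ paired against the $\g$-vectors $\g^{B^T}_\omega$ ($\omega\in\Pi$) recovers the compatibility degrees $(\omega\,||\,(t,j))$, with the correction $-e_{(t,j)}$ forced by sign-coherence of $c$-vectors --- which also dictates which of $p^{\pm}$ is the monomial $\prod z^{(\omega||\cdots)}$ and which is $z_{(t,j)}$. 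Equivalently, this is the assertion that the tropical $\hat y$-variable attached to the primitive mutation creating $(t,j)$ is $z_{(t,j)}^{-1}\prod_{\omega}z_\omega^{(\omega\,||\,(t,j))}$, i.e.\ the finite-type $Y$-system of \cite{YZ} written in the universal coefficients. Since in finite type cluster variables, $\g$- and $c$-vectors, and the compatibility degree all admit explicit descriptions via almost positive roots, this becomes an intricate but finite root-system computation, in the course of which the involution $*$ and the $D$/$D^\vee$/$B^T$ dictionary reconcile the two sides (and settle $t=0$).

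The hard part will be exactly that identity: showing, with every sign convention and all the $B$/$B^T$/$D^\vee$ bookkeeping kept straight, that the universal-coefficient $c$-vector of the primitive mutation creating $(t,j)$ is the compatibility-degree vector $\big((\omega\,||\,(t,j))\big)_{\omega\in\Pi}$ shifted by $-e_{(t,j)}$. By comparison, locating the seed via source mutations and writing down the bare form of the exchange relation are routine.
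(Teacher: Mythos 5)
The paper does not prove this statement at all: it is imported verbatim as \cite[Proposition~5.6]{YZ}, so the only ``proof'' on offer in the paper is the citation. Measured against that, your write-up is best read as an attempt to reconstruct the Yang--Zelevinsky argument, and it contains a genuine gap at exactly the point where all the content lives. Your preliminary bookkeeping is correct and genuinely routine: in the source-mutation (Auslander--Reiten) order the neighbours $i\to j$ are mutated before $j$ in each global step, so at the moment $j$ is mutated the seed carries $x_{(t-1,j)}$, $x_{(t,i)}$ for $i\to j$, $x_{(t-1,i)}$ for $j\to i$, the vertex $j$ is a source, and the mutable part of the exchange binomial collapses as you say. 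Your reduction of the proposition to the single identity that the frozen part of column $j$ of the mutated $\tB^\univ$ equals $\sum_{\omega}(\omega\,||\,(t,j))\,e_\omega - e_{(t,j)}$ is also a correct and faithful reformulation.

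But that identity \emph{is} the proposition --- it is precisely the statement that the coefficient monomials of the primitive exchange relations are governed by the compatibility degree --- and you do not prove it. What you offer instead is a plan (``inducting along the Auslander--Reiten walk,'' ``transports the claim to a statement inside $\A\big(B^T\big)$,'' ``an intricate but finite root-system computation''), together with an appeal to sign-coherence that does not even apply verbatim here, since the frozen rows of $\tB^\univ$ are the $\g$-vectors of $B^T$ rather than standard-basis vectors, so the frozen column at a mutated seed is not a $c$-vector in the sign-coherent sense (indeed the vector you claim for it has mixed signs). Establishing that the tropical evolution of these $\g$-vector rows under the source-mutation sequence produces exactly the compatibility degrees $(\omega\,||\,(t,j))$ is the substance of \cite[Proposition~5.6]{YZ} (resting on the explicit root-theoretic descriptions of $\g$-vectors and of $(\cdot||\cdot)$ in \cite{YZ} and \cite{Reading}), and you have restated it rather than derived it. As it stands the argument is a correct reduction plus a citation-shaped hole; either carry out the computation in $\A\big(B^T\big)$ or, as the paper does, simply cite \cite{YZ}.
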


\bigskip\noindent{\large\bf 2.13.}
For $\gamma \in \Pi^+$, define the universal $F$-polynomial $F^\univ_\gamma(\z)$ by setting the initial cluster variables to 1 in the Laurent expansion of $x^\univ_\gamma$:
\begin{gather*}
F^\univ_\gamma(\z) := x^\univ_\gamma(x_i = 1, z_\omega).
\end{gather*}
By convention, we have $F^\univ_\gamma(\z) = 1$ if $\gamma$ is initial.

\bigskip\noindent{\large\bf 2.14.} 
Suppose $D$ is a multiply-laced Dynkin diagram whose underlying tree is oriented. Then there exists a simply-laced Dynkin diagram $\tD$ such that $D$ is obtained from $\tD$ by \emph{folding}~\cite{Dup}, and the orientation of $D$ is induced by the orientation of $\tD$. In~this situation, there is a finite group~$\Gamma$ acting on $\tilde I$ and $\tPi$ such that $I$ and $\Pi$ are identified with the $\Gamma$-orbits on $\tilde I$ and $\tPi$. We~obtain surjective quotient maps $\nu\colon \tI \to I$ and $\nu\colon \tPi \to \Pi$.
Abusing notation, let~$\nu\colon \R^{|\tPi|} \to \R^{|\Pi|}$ be given by $\nu(e_{\tgamma}) = e_{\nu(\tgamma)}$, and $\nu\colon \R^{|\tilde I|} \to \R^{|I|}$ be given by~$\nu(e_{\tilde i}) = e_{\nu(\tilde i)}$. Similarly, define $\nu\colon \Z\big[y_{\tilde i} \,|\, \tilde i \in \tilde I\big] \to \Z[y_i \,|\, i \in I]$ by $\nu(y_{\tilde i}) = y_{\nu(\tilde i)}$ and $\nu\colon \Z\big[z_{\tgamma} \,|\, \tgamma \in \tPi\big] \to \Z[z_\gamma \,|\, \gamma \in \Pi]$ by~$\nu(z_{\tgamma})= z_{\nu(\tgamma)}$. The~following results are a consequence of the definitions.

\begin{Proposition} \label{prop:folding}\
\begin{enumerate}\itemsep=0pt
\item[$1.$] For $\tgamma,\tilde \omega \in \tPi$ and $g\in \Gamma$, we have $(g \cdot \tgamma||g \cdot \tilde \omega) = (\tgamma||\tilde \omega)$.
\item[$2.$] For $\gamma,\omega \in \Pi$, we have $(\gamma || \omega)_D = \sum_{\tgamma \in \nu^{-1}(\gamma) } (\tgamma || \tilde \omega)_{\tD}$ for any $\tilde \omega \in \bar \omega$.
\item[$3.$] For any $\tgamma \in \tPi$, we have $\nu(F_{\tgamma}(\tilde \y)) = F_{\nu(\tgamma)}(\y)$ and $\nu\big(F^\univ_{\tgamma}(\tilde \z)\big) = F^\univ_{\nu(\tgamma)}(\z)$.
\item[$4.$] For any $\tgamma \in \tPi$, we have $\nu(\g_{\tgamma}) = \g_{\nu(\tgamma)}$.
\end{enumerate}
\end{Proposition}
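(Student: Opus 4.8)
The plan is to reduce all four parts to the interaction of folding with the source-mutation (Auslander--Reiten) walk of Section~2.3. The technical input I would use is the by-now-standard one (see \cite{Dup}, and \cite[Section~5]{YZ}): because $\Gamma$ acts admissibly on $\tD$, i.e.\ no two vertices of a $\Gamma$-orbit are adjacent, and because $\Gamma$ preserves the chosen orientation, the mutations of $\tD$ at the vertices of a fixed fiber $\nu^{-1}(i) \subset \tilde I$ pairwise commute, and every seed along the walk for $\A\big(\tB^{\tD}\big)$ is $\Gamma$-stable. After pulling coefficients back along $\nu$, the walk for $\tD$ projects onto that for $D$: a mutation step of $D$ at $i \in I$ is realised by the commuting $\tD$-mutations at $\nu^{-1}(i)$, the indexings match via $\nu(t,\tilde j) = (t,\nu(\tilde j))$, the exchange matrices obey $B^{D}_{ij} = \sum_{\tilde i \in \nu^{-1}(i)} B^{\tD}_{\tilde i \tilde j}$ for any $\tilde j \in \nu^{-1}(j)$, and the Laurent expansion of $x^{\tD,\prin}_{\tgamma}$ (resp.\ $x^{\tD,\univ}_{\tgamma}$) maps, upon applying $\nu$ to the coefficients and substituting $x_{\tilde i} \mapsto x_{\nu(\tilde i)}$, to the Laurent expansion of $x^{D,\prin}_{\nu(\tgamma)}$ (resp.\ $x^{D,\univ}_{\nu(\tgamma)}$).

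Granting this, parts~3 and~4 follow at once. For part~3, set the initial mutable variables to $1$ in the displayed substitution: $F_{\tgamma}(\tilde\y) = x^{\tD,\prin}_{\tgamma}(x_i = 1,\tilde\y)$ maps to $x^{D,\prin}_{\nu(\tgamma)}(x_i = 1,\y) = F_{\nu(\tgamma)}(\y)$, which is $\nu(F_{\tgamma}(\tilde\y)) = F_{\nu(\tgamma)}(\y)$, and likewise $\nu\big(F^\univ_{\tgamma}(\tilde\z)\big) = F^\univ_{\nu(\tgamma)}(\z)$. For part~4, one checks that the substitution of the first paragraph takes a monomial homogeneous of $\Z^{\tilde I}$-degree $v$ for the grading \eqref{eq:prindeg} of $\A^{\tD,\prin}$ to one homogeneous of degree $\nu(v)$: this amounts to $\nu(\deg x_{\tilde i}) = e_{\nu(\tilde i)} = \deg x_{\nu(\tilde i)}$ and $\nu\big(\deg y_{\tilde i}\big) = -\nu\big(B^{\tD}e_{\tilde i}\big) = -B^{D}e_{\nu(\tilde i)} = \deg y_{\nu(\tilde i)}$, the second equality being the folding rule for exchange matrices. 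Since $x^{\tD,\prin}_{\tgamma}$ is homogeneous of degree $\g_{\tgamma}$ and its image is $x^{D,\prin}_{\nu(\tgamma)}$, this gives $\nu(\g_{\tgamma}) = \g_{\nu(\tgamma)}$.

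For part~1 I would argue by symmetry. An element $g \in \Gamma$ acts on $\tD$ as an orientation-preserving automorphism of the underlying tree, so it permutes the entries of $B^{\tD}$ and fixes the Coxeter element (it sends each linear extension of the orientation poset to another, so the product of simple reflections is unchanged); hence it commutes with $\tilde\tau$, acts on the cluster complex of $\tD$, and sends $(0,\tilde i) \mapsto (0,g\cdot\tilde i)$. The compatibility degree is determined by $\tilde\tau$-invariance (Section~2.11) together with its values on initial first arguments, and for non-initial $\tilde\omega$ one has $((0,\tilde i)\|\tilde\omega) = \big(\mathbf d_{\tilde\omega}\big)_{\tilde i}$, the $\tilde i$-th coordinate of the denominator vector, while for initial $\tilde\omega$ the value is $0$; both depend $\Gamma$-equivariantly on $\tgamma,\tilde\omega$, so $(g\cdot\tgamma\|g\cdot\tilde\omega) = (\tgamma\|\tilde\omega)$.

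Part~2 is where I expect the real work. By part~1 the right-hand side is independent of $\tilde\omega \in \bar\omega$: replacing $\tilde\omega$ by $g\cdot\tilde\omega$ and reindexing $\tgamma \mapsto g\cdot\tgamma$ permutes $\nu^{-1}(\gamma)$ without changing the sum. Both sides are compatible with $\tau$ — the left by Section~2.11, the right because $\tau_D\circ\nu = \nu\circ\tau_{\tD}$, which is immediate from $\nu(t,\tilde j) = (t,\nu(\tilde j))$ — so writing $\gamma = \tau^k(0,i)$ reduces the identity to the case $\gamma$ initial. For non-initial $\omega$ this reads $\big(\mathbf d_\omega\big)_i = \sum_{\tilde i \in \nu^{-1}(i)}\big(\mathbf d_{\tilde\omega}\big)_{\tilde i}$, i.e.\ $\mathbf d_\omega = \nu\big(\mathbf d_{\tilde\omega}\big)$: the folding rule for denominator vectors, equivalently the statement that the simple-root coordinates of a positive root of $D$ are the fibrewise sums of those of the corresponding positive root of $\tD$. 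I would establish it exactly as for $\g$-vectors in part~4 (or deduce it from part~4 and the standard $\g$-to-$\mathbf d$ dictionary in finite type). The remaining case $\omega$ initial is the identity $0 = \sum_{\tilde i}0$, since negative simple roots are pairwise compatible and distinct vertices of a fiber are non-adjacent. The one genuinely delicate point is to pin down the folded Auslander--Reiten walk carefully enough to extract the denominator-vector folding rule together with $\tau_D\circ\nu = \nu\circ\tau_{\tD}$; after that, parts~1--4 are bookkeeping.
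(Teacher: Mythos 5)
The paper offers no argument here at all --- Proposition~\ref{prop:folding} is introduced with the single sentence ``the following results are a consequence of the definitions'' --- so there is nothing to compare your proof against line by line; what you have written is the standard unfolding of that sentence, and it is correct. Your key structural input (admissibility of the $\Gamma$-action, so that mutations along a fiber $\nu^{-1}(j)$ commute and every seed in the source-mutation walk for $\tD$ is $\Gamma$-stable and projects onto the corresponding seed for $D$, with $\nu(t,\tilde j)=(t,\nu(\tilde j))$ and the column-sum rule for exchange matrices) is exactly what makes parts 3 and 4 immediate, and your reduction of part 2 to the initial case via $\tau$-equivariance plus the denominator-vector folding rule is the right way to organize it; part 1 is indeed pure symmetry. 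The one step you wave at with ``likewise'' that deserves more care is the universal-coefficient half of part 3: to fold $\A\big(\tB^{\univ}\big)$ you must check that the $\Gamma$-identification of the frozen rows of $\tB^{\tD,\univ}$ (which are $\g$-vectors for the transposed matrix) actually reproduces $\tB^{D,\univ}$, and in the paper's own development this rests on the $\g$-vector folding of Proposition~\ref{prop:gvee}, which involves the rescaled map $\nu^\vee$ rather than $\nu$; so that case is not quite ``likewise'' and should be deferred until the $\g$-vector comparison is in place. With that caveat, your proposal is a complete and faithful account of what the paper leaves implicit.
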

Examples of foldings are given in Section~3.2.

\section[The cluster configuration space MD]{The cluster configuration space $\boldsymbol{\M_D}$}
In this section, we define the cluster configuration space $\M_D$ and its partial compactification~$\tM_D$, and we state some geometric properties of these spaces. We~also give examples of the cluster compatibility degree appearing in the defining relations.

\bigskip\noindent{\large\bf 3.1.}
Let $\C[u]:=\C[u_\gamma \,|\, \gamma \in\Pi]$ be the polynomial ring with generators $u_\gamma$ and let $\C[u^{\pm 1}]$ denote the Laurent polynomial ring with the same generators.
\begin{Definition}\label{def:ID}
Let $I_D$ denote the ideal \big(in $\C[u]$ or $\C\big[u^{\pm 1}\big]$\big) generated by the elements
\begin{gather}\label{eq:I}
R_\gamma := u_\gamma + \prod_{\omega} u_{\omega}^{(\omega||\gamma)} - 1
\end{gather}
for $\gamma \in \Pi$, and $(\omega||\gamma)$ denotes the compatibility degree.
\end{Definition}

\begin{Definition}
Define the \emph{cluster configuration space} $\M_D$ and its partial compactification $\tM_D$ by
\begin{gather*}
\M_D := \Spec\big(\C\big[u^{\pm 1}\big]/I_D\big) \qquad \text{and} \qquad
\tM_D:=\Spec\big(\C[u]/I_D\big).
\end{gather*}
\end{Definition}

The following result will be proved in Section~5.4.

\begin{Theorem}\label{thm:geom}
The two schemes $\M_D$ and $\tM_D$ are smooth, irreducible, affine algebraic varieties of dimension $n$. The~boundary divisor $\partial:= \tM_D \setminus \M_D$ is a simple normal-crossing divisor in~$\tM_D$.
\end{Theorem}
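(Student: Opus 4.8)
The plan is to deduce all of the assertions from the toric description of $\tM_D$ (Theorem~\ref{thm:toric}) together with the integrality and smoothness of the cluster fan $\N(D^\vee)$ (Theorem~\ref{thm:clusterfan} applied to $B^\vee$). The key point is that Theorem~\ref{thm:toric} identifies $\tM_D$ with an affine open subset $\Uo$ of the projective toric variety $X_{\N(D^\vee)}$, namely the complement of a union of toric boundary divisors. Since $\N(D^\vee)$ is a smooth complete fan, $X_{\N(D^\vee)}$ is a smooth projective toric variety, hence any Zariski-open subset of it is a smooth variety; this immediately gives that $\tM_D$ is a smooth algebraic variety. Its dimension equals the dimension of the ambient toric variety, which is the rank of the lattice on which $\N(D^\vee)$ lives, namely $n$. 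Irreducibility follows because $X_{\N(D^\vee)}$ is irreducible (a toric variety is irreducible, containing a dense torus) and a nonempty open subset of an irreducible variety is irreducible; affineness of $\tM_D$ is part of the statement of Theorem~\ref{thm:toric} (or, failing that, one checks directly that the relevant open subset of the toric variety is the complement of an ample-type divisor, hence affine). Then $\M_D = \tM_D \setminus \{u_\gamma = 0 \text{ for some } \gamma\}$ is open in $\tM_D$, so it inherits smoothness, irreducibility (the vanishing loci are proper closed subsets, as one sees from the dense torus where all $u_\gamma \ne 0$), and dimension $n$; affineness of $\M_D$ is clear from its definition as $\Spec$ of a localization.

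The second assertion — that $\partial = \tM_D \setminus \M_D$ is a simple normal-crossing divisor — is where the real work lies, and it is here that the smoothness of the fan $\N(D^\vee)$ (not merely its being simplicial) is essential. Under the identification of Theorem~\ref{thm:toric}, the divisor $\{u_\gamma = 0\}$ in $\tM_D$ should correspond to (the intersection with $\Uo$ of) the torus-invariant divisor $D_{\rho_\gamma}$ associated to the ray $\rho_\gamma \in \N(D^\vee)$ spanned by the $\g$-vector $\g_\gamma^\vee$; there are $r = |\Pi|$ such rays, and they exhaust the rays of the cluster fan. For a smooth toric variety, the torus-invariant boundary is automatically a simple normal-crossing divisor: locally on the affine chart $U_C$ attached to a maximal cone $C$ spanned by primitive vectors $\v_1,\dots,\v_n$ forming a $\Z$-basis, the chart is an affine space $\A^n$ with coordinates the corresponding characters, and the divisors $D_{\rho_i}$ become the coordinate hyperplanes, which cross normally. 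So the plan is: (i) match $\{u_\gamma = 0\}$ with $D_{\rho_\gamma}$ under the isomorphism, which amounts to tracking how the coordinates $u_\gamma$ restrict to the toric charts; (ii) invoke the standard fact that the toric boundary of a smooth toric variety is SNC; (iii) observe that intersecting with the open subset $\Uo$ can only delete components and cannot destroy the SNC property, and that no new components of $\partial$ appear since on $\M_D$ every $u_\gamma$ is a unit.

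I expect the main obstacle to be step (i): making precise the dictionary between the coordinate functions $u_\gamma$ on $\tM_D$ and the torus-invariant divisors of $X_{\N(D^\vee)}$, i.e., showing that each $u_\gamma$ vanishes to order exactly one along $D_{\rho_\gamma}$ and is nonvanishing along $D_{\rho_\omega}$ for $\omega \ne \gamma$. This requires knowing the precise form of the isomorphism in Theorem~\ref{thm:toric} — in particular which monomials in the $u_\gamma$ generate the coordinate ring of each affine toric chart — and this in turn relies on the analysis (coming from Theorem~\ref{thm:FNewton} and the $F$-polynomial computations) that the $\g$-vector fan $\N(D^\vee)$ is the normal fan of the Minkowski sum $\sum_\gamma P_\gamma$, so that the $u_\gamma$ are pullbacks of the natural coordinates on that toric variety. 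Once this bookkeeping is in place, everything else is a direct application of standard toric geometry. A secondary point to verify carefully is that the number of boundary divisors of $\tM_D$ is exactly $r$ and that they are irreducible (which again follows from the smoothness of the fan, since each $D_{\rho_\gamma}$ is then itself an irreducible smooth toric variety of dimension $n-1$), so that the boundary stratification of $\tM_D$ really is indexed by the faces of the generalized associahedron as claimed in Proposition~\ref{prop:MDstrat}.
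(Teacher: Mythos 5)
Your proposal is correct and follows essentially the same route as the paper: the paper's proof of Theorem~\ref{thm:geom} (Section~5.4) simply invokes Theorem~\ref{thm:clusterfan} to conclude that $\N\big(B^\vee\big)$ is a smooth complete polytopal fan, so that $X_{\N(B^\vee)}$ is a smooth projective toric variety whose torus-orbit stratification is simple normal-crossing, and then reads off all the assertions via the identification of Theorem~\ref{thm:toric}. The dictionary between the $u_\gamma$ and the torus-invariant divisors that you flag as the main obstacle is exactly the content already established in Theorems~\ref{thm:toric} and~\ref{thm:FNewton}, so your step~(i) is not an additional gap.
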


If $D = \varnothing$, we define $\M_D = \tM_D = \Spec(\C)$ to be a point. If~$D = A_1$, then $\C[u]/I_D = \C[u,u']/(u+u'=1)$ so $\tM_{A_1} = \C$ and $\M_{A_1} = \C \setminus \{0,1\}$.

\begin{Remark}
The definition of $\M_D$ and $\tM_D$ depends on the choice of exchange matrix $B$ of type $D$ only in the indexing of the generators $u_\gamma$ by $\Pi$. For two different orientations of $D$, there is a natural bijection between the two indexing sets $\Pi(B)$ that arise, and a natural isomorphism between the resulting schemes $\M_D(B)$.
\end{Remark}

\medskip\noindent{\large\bf 3.2.} 
Let us give the relations $R_\gamma$ explicitly in types $A$, $B$, $C$, $D$, $G$. In~the following discussion, we use models for $\Pi$ involving diagonals of a polygon; see \cite{CA2} for further details. The~precise correspondence with \eqref{eq:Pidef} depends on the choice of initial cluster (for example, a choice of triangulation of the polygon in type $A$), or equivalently the choice of $B$, or equivalently the choice of orientation of $D$.

\medskip\noindent{\bf 3.2.1. Type $\boldsymbol{A_{n-3}}$.} In this case, the set $\Pi$ can be identified with the diagonals (not including sides!) $\{(i,j)\}$ of an $n$-gon. The~$R_\gamma$ are the equations \eqref{eq:Rij}. The~compatibility degree is given by the formula $((i,j) || (k,\ell)) = 1$ if $(i,j)$ and $(k,\ell)$ cross (in the interior of the polygon) and $((i,j) || (k,\ell)) = 0$ if $(i,j)$ and $(k,\ell)$ do not cross. The~automorphism $\tau$ in this case corresponds to the order $n$ rotation of the polygon. The~clusters are exactly the maximal sets of pairwise compatible diagonals. In~other words, clusters are in bijection with triangulations of the $n$-gon.

\medskip\noindent{\bf 3.2.2. Type $\boldsymbol{C_{n-1}}$.} Let $P_{2n}$ be the $2n$-gon with vertices cyclically labeled $1,2,\dots,n,\bar 1,\bar 2,\dots, \bar n$. The~set $\Pi$ is identified with the union
\begin{align}
\Pi &= \Big\{\big[i,\bar i\big]:= \big(i,\bar i\big) \,|\, 1 \leq i \leq n\Big\}
\cup \Big\{[i, j] := \big((i,j),\big(\bar i, \bar j\big)\big) \,|\, 1 \leq i <j-1 < n\Big\}\nonumber
\\
&{}\phantom{=\ }\cup \Big\{\big[i,\bar j\big] := \big(\big(i,\bar j\big),\big(j, \bar i\big)\big) \,|\, 1 \leq i < j \leq n, \, (i,j) \neq (1,n)\Big\}
\label{eq:PiC}
\end{align}
of the long diagonals, and pairs of centrally symmetric diagonals in $P_{2n}$. In~total we have $|\Pi| = n^2-n$.

The compatibility degree $(\gamma||\omega)$ is equal to the number of crossings of one of the diagonals representing $\omega$ with the diagonals representing $\gamma$. Thus for example $\big([1, \bar 1]||[2, \bar 3]\big) =1$ but $\big([2,\bar 3] || [1,\bar 1]\big) = 2$. For example, for $C_2$, Definition~\ref{def:ID} gives the two equations
\begin{gather}
1 = u_{[1 \bar 1]} + u_{[2 \bar 2]} u_{[3 \bar 3]} u_{[2 \bar 3]}^2,\nonumber
\\
1 = u_{[2 \bar 3]} + u_{[1 \bar 1]} u_{[13]} u_{[1 \bar 2]}\label{eq:C2}
\end{gather}
and the three cyclic rotations of each.

This case is obtained from $A_{2n-3}$ by folding. There is an action of the two-element group $\Gamma$ on $P_{2n}$ mapping $i \leftrightarrow \bar i$. This induces the natural map $\nu\colon \tilde \Pi \to \Pi$ sending diagonals of $P_{2n}$ to $\Gamma$-orbits on the diagonals of $P_{2n}$. We~may verify Proposition~\ref{prop:folding}(2): for example $\big([2,\bar 3]||[1, \bar 1]\big)_{C_2} = \big((2,3)||(1,\bar 1)\big)_{A_3} + \big((\bar 2, \bar 3)||(1,\bar 1)\big)_{A_3} = 1+ 1 = 2$. The~automorphism $\tau$ is inherited from the rotation of the $2n$-gon $P_{2n}$.

\medskip\noindent{\bf 3.2.3. Type $\boldsymbol{D_n}$.} 
Let $PP_n$ denote an $n$-gon $P_n$ with vertices $1,2,\dots,n$ (in clockwise order) and an additional marked point $0$ in the middle. The~set $\Pi$ consists of certain arcs in $PP_n$ connecting vertices and $0$: ($a$) for $1 \leq i \neq j \leq n$ and $i \neq j+1 \mod n$ we have an arc $(i,j)$ connecting $i$ to $j$ going counterclockwise around $0$, and $(b)$ for each $1\leq i \leq n$ we have two arcs~$[i]$ and $[\tilde i]$ connecting $i$ to $0$. We~denote the corresponding $u$-variables by $u_{ij}$, and $u_i$ and~$u_{\tilde i}$. See Figure~\ref{d4}. (We caution the reader that the notation $\tilde i$ here is unrelated to the notation $\tgamma$ used for foldings.) In this case, the automorphism $\tau$ is the composition of the rotation of $P_n$ with ``changing the tagging at 0" (i.e., switching from $[i]$ to $[\tilde i]$ if the arc is incident to $0$).

The compatibility degree $(\gamma || \omega)$ is equal to the (minimal) number of intersection points bet\-ween the arc $\gamma$ and the arc $\omega$, if at least one of $\gamma$ and $\omega$ do not connect to $0$. If~both $\gamma$ and~$\omega$ connect to $0$, then we have $([i] || [j]) = \big([\tilde i] || [\tilde j]\big) = \big([i] || [\tilde i]\big) = 0$ but $\big([i] || [\tilde j]\big) = \big([\tilde j] || [i]\big) = 1$ if $i \neq j$. For $D_4$, representative equations from Definition~\ref{def:ID} are
\begin{gather}
1 = u_{12} + u_3 u_{\tilde3} u_4 u_{\tilde 4} u_{34}^2 u_{23} u_{24} u_{41} u_{31},\nonumber
\\
1 =u_{13} + u_4u_{\tilde 4} u_{41}u_{42} u_{24} u_{34},\nonumber
\\
1 = u_1+ u_{\tilde 2} u_{\tilde 3} u_{\tilde 4} u_{23} u_{34} u_{24}
\label{eq:D4}
\end{gather}
and we have $4$, $4$, $8$ equations of these types respectively, for a total of $16$ equations.

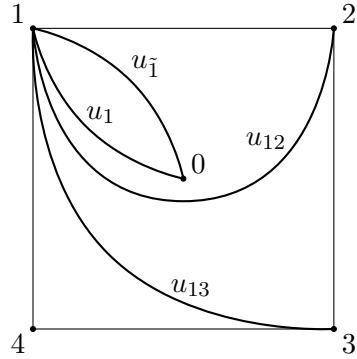
\begin{figure}
\centering
\begin{tikzpicture}
\draw[fill=black] (0,0) circle (.2ex);
\draw[fill=black] (2,2) circle (.2ex);
\draw[fill=black] (2,-2) circle (.2ex);
\draw[fill=black] (-2,-2) circle (.2ex);
\draw[fill=black] (-2,2) circle (.2ex);
\node at (0.2,0.2) {$0$};
\node at (-2.2,2.2) {$1$};
\node at (2.2,2.2) {$2$};
\node at (2.2,-2.2) {$3$};
\node at (-2.2,-2.2) {$4$};
\node at (-0.5,1.5) {$u_{\tilde 1}$};
\node at (-1.1,0.85) {$ u_1$};
\node at (1.1,0.5) {$u_{12}$};
\node at (0.1,-1.45) {$u_{13}$};

\draw (2,2)--(2,-2)--(-2,-2)--(-2,2)--(2,2);
\draw[thick] plot [smooth,tension=1.7] coordinates{(-2,2) (0,-0.3)(2,2)};
\draw[thick] plot [smooth,tension=1] coordinates{(-2,2) (-1.3,0.7)(0,0)};
\draw[thick] plot [smooth,tension=1] coordinates{(-2,2) (-0.7,1.3)(0,0)};
\draw[thick] plot [smooth,tension=1] coordinates{(-2,2) (-1,-1)(2,-2)};
\end{tikzpicture}
\caption{The polygon $PP_4$ and some $u$-variables. Note $u_{21}$ does not exist.}\label{d4}
\end{figure}

\medskip\noindent{\bf 3.2.4. Type $\boldsymbol{B_{n-1}}$.} 
The set $\Pi$ is the same as for $C_{n-1}$ \eqref{eq:PiC}. However, the compatibility degree $(\gamma||\omega)$ is equal to the number of crossings of one of the diagonals representing $\gamma$ with the diagonals representing $\omega$. Thus, $\big([1, \bar 1]||[2, \bar 3]\big) =2$ but $\big([2,\bar 3] || [1,\bar 1]\big) = 1$. For $B_2$, Definition~\ref{def:ID} gives the two equations
\begin{gather*}
1 = u_{[1 \bar 1]} + u_{[2 \bar 2]} u_{[3 \bar 3]} u_{[2 \bar 3]},
\\
1 = u_{[2 \bar 3]} + u_{[1 \bar 1]}^2 u_{[13]} u_{[1 \bar 2]}
\end{gather*}
and the three cyclic rotations of each. Note that $\M_{B_2}$ is isomorphic to $\M_{C_2}$ under a non-trivial re-indexing of the $u$-variables. However $\M_{B_n}$ and $\M_{C_n}$ are not isomorphic for $n >2$.

Type $B_{n-1}$ can be obtained from type $D_n$ by folding. Let~$\Gamma$ be the two-element group acting on $\tPi = \Pi(D_n)$ by sending $[i] \leftrightarrow [\tilde i]$ and fixing all other $(i,j)$. The~map $\nu\colon \tPi \to \Pi$ sends $[i]$ and~$[\tilde i]$ to $[i,\bar i]$, and sends $(i,j)$ to $[i, \bar j]$ if $i < j$ and to $[j, i]$ if $ i > j$. For example, for $B_3$, the images of the equations from \eqref{eq:D4} are
\begin{gather}
1 = u_{[1 \bar 2]} + u_{[3\bar 3]}^2 u_{[4 \bar 4]}^2 u_{[3 \bar 4]}^2 u_{[2 \bar 3]} u_{[2 \bar 4]} u_{[14]} u_{[13]},\nonumber
\\
1 =u_{[1 \bar 3]} + u_{[4 \bar 4]}^2 u_{[1 4]}u_{[2 4]} u_{[2 \bar4]} u_{[3 \bar 4]},\nonumber
\\
1 = u_{[1\bar 1]}+ u_{[2\bar 2]} u_{[3 \bar 3]} u_{[4 \bar 4]} u_{[2 \bar 3]} u_{[3 \bar 4]} u_{[2 \bar 4]}.
\label{eq:B3}
\end{gather}
The automorphism $\tau$ is inherited from the rotation of the $n$-gon in type $D_n$.

\medskip\noindent{\bf 3.2.5. Type $\boldsymbol {G_2}$.} 
In type $G_2$, we have $|\Pi|=8$ and we denote $u$-variables by $a_i$, $b_i$ for $i =1,2,3,4$. The~$u$-equations for type $G_2$ are
\begin{gather*}
1 = a_1 + a_2 b_2 a^2_3 b_3 a_4, \\
1 = b_1 + b_2 a_3^3 b_3^2 a^3_4 b_4
\end{gather*}
and the cyclic rotations under the group $\Z/4\Z$. Type $G_2$ can be obtained from $D_4$ by folding. (Though at present we only consider foldings of simply-laced diagrams, $G_2$ can also be obtained from $B_3$ by folding.)

\bigskip\noindent{\large\bf 3.3.}
Let $F$ be a face of the generalized associahedron, which we identify with a pairwise compatible subset $\{\gamma_1,\dots,\gamma_d\}$ of $\Pi$. We~define $\tM_D(F) \subset \tM_D$ to be the closed subscheme cut out by the ideal $(u_{\gamma_1},\dots,u_{\gamma_d})$. If~$F = \varnothing$, then $\tM_D(\varnothing) := \tM_D$. We~define $\M_D(F) \subset \tM_D(F)$ to be the open subscheme of $\tM_D(F)$ where all the variables $\{u_\gamma \,|\, \gamma \notin F\}$ are non-vanishing.

\begin{Proposition}\label{prop:MDstrat}
We have a natural stratification
\begin{gather}\label{eq:Mstrata}
\tM_D = \bigsqcup_{F} \M_D(F),
\end{gather}
where $F$ varies over all the faces of the generalized associahedron.
\end{Proposition}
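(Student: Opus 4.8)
The plan is to show that the set-theoretic points of $\tM_D$ decompose as claimed by analyzing, for each point $p \in \tM_D$, which coordinates $u_\gamma$ vanish at $p$. First I would observe that, by the factorization property already noted after equation~\eqref{eq:Rgamma} (namely that $u_\gamma = 0$ forces $u_\omega = 1$ whenever $(\gamma||\omega) \neq 0$), the vanishing locus is constrained: if $u_{\gamma}(p) = 0$ and $u_{\gamma'}(p) = 0$, then $\gamma$ and $\gamma'$ cannot be incompatible, since incompatibility $(\gamma||\gamma') \neq 0$ would force $u_{\gamma'}(p) = 1 \neq 0$. Hence the set $F(p) := \{\gamma \in \Pi \mid u_\gamma(p) = 0\}$ is a pairwise compatible subset of $\Pi$, i.e., a face of the cluster complex, equivalently a face of the generalized associahedron. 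This assigns to each point $p$ a unique face $F = F(p)$.

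Next I would verify that for a fixed face $F$, the locus $\{p \in \tM_D \mid F(p) = F\}$ is exactly $\M_D(F)$ as defined in Section~3.3. By construction $\tM_D(F) \subset \tM_D$ is cut out by $(u_{\gamma_1},\dots,u_{\gamma_d})$, so its points are precisely those $p$ with $u_\gamma(p) = 0$ for all $\gamma \in F$; and $\M_D(F) \subset \tM_D(F)$ further imposes $u_\gamma(p) \neq 0$ for $\gamma \notin F$. Thus a point lies in $\M_D(F)$ iff $F(p) \supseteq F$ and $F(p) \cap (\Pi \setminus F) = \varnothing$, i.e., iff $F(p) = F$. This shows the pieces $\M_D(F)$ are pairwise disjoint and their union is all of $\tM_D$, giving \eqref{eq:Mstrata} as a set-theoretic decomposition. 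That $F$ ranges over faces of the generalized associahedron (and no spurious nonempty strata occur for non-faces) is exactly the content of the first paragraph: $\tM_D(F)$ would still be defined for a non-compatible set $F$, but then it contains two incompatible vanishing coordinates, which is impossible, so $\M_D(F) = \varnothing$ in that case — alternatively one simply restricts the index set to faces from the outset, as the statement does.

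The remaining point is to justify calling this a \emph{stratification} rather than a mere set partition: one wants each $\M_D(F)$ to be locally closed, and one wants the closure relations to match the face poset, $\overline{\M_D(F)} = \bigsqcup_{F' \supseteq F} \M_D(F')$. Local closedness is immediate: $\M_D(F)$ is an open subscheme of the closed subscheme $\tM_D(F)$. For the closure relation, the inclusion $\overline{\M_D(F)} \subseteq \tM_D(F) = \bigsqcup_{F' \supseteq F}\M_D(F')$ is clear since $\tM_D(F)$ is closed; the reverse inclusion, that every stratum $\M_D(F')$ with $F' \supseteq F$ actually lies in the closure of $\M_D(F)$, is the part requiring genuine input. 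I expect this to be the main obstacle, and I would handle it by deforming: starting from a point of $\M_D(F')$, one rescales the coordinates $\{u_\gamma\}_{\gamma \in F' \setminus F}$ away from $0$ along a one-parameter family, using the explicit form of the relations $R_\gamma$ to adjust the other coordinates so that the family stays in $\tM_D$; in the limit one recovers the original point, while for generic parameter one lands in $\M_D(F)$. This is cleanest once one knows $\tM_D$ is smooth of dimension $n$ and that $\tM_D(F)$ is itself (isomorphic to) $\tM_{D''}$ for an appropriate smaller Dynkin diagram $D''$ built from the link of $F$ in the cluster complex — which is precisely the content of the ``factorization'' philosophy and would be established alongside Theorem~\ref{thm:geom}. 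With that identification in hand, the closure statement reduces to the already-known fact that $\M_{D''}$ is dense in $\tM_{D''}$, i.e., that the big open stratum is dense, and the general closure relation follows by induction on $\dim F$.
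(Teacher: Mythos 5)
Your argument is correct and is essentially the paper's proof: the whole content of the paper's (two-sentence) argument is exactly your first observation, that $u_\gamma(p)=0$ forces $u_\omega(p)=1\neq 0$ for every $\omega$ incompatible with $\gamma$, so the vanishing set $F(p)$ is a compatible collection and $p\in\M_D(F(p))$. Your third paragraph on closure relations goes beyond what the paper establishes in this proposition (the paper only proves the covering and disjointness, deferring the finer geometry to Theorem~\ref{thm:geom} and Proposition~\ref{prop:divisorsplit}), and is correctly flagged by you as the part needing those later inputs.
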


\begin{proof}
When $\omega$ and $\gamma$ are incompatible, the coordinate $u_\omega$ is non-vanishing on $\M_D(\{\gamma\})$. This shows that the subschemes $\M_D(F)$ cover $\tM_D$.
\end{proof}

\medskip\noindent{\large\bf 3.4.}
Let us analyze $\tM_D(F)$ for $F = \{\gamma\}$. Setting $u_\gamma = 0$ in the equation
\begin{gather*}
u_\omega + \prod_{\tau} u_{\tau}^{(\tau||\omega)} = 1
\end{gather*}
we find that $u_\omega = 1$ for all $\omega$ incompatible with $\gamma$. Let~$\Pi(\gamma) \subset \Pi$ be the subset of $\kappa \in \Pi$ that are compatible with $\gamma$.
Setting $u_\omega=1$ in $R_\kappa$ for $\kappa \in \Pi(\gamma)$, we get
\begin{gather*}
R'_\kappa := u_\kappa + \prod_{\tau\in \Pi(\gamma)} u_{\tau}^{(\tau||\kappa)} - 1.
\end{gather*}
It follows that the coordinate ring of $\tM_D(F)$ has the following presentation
\begin{gather*}
\C[u_\kappa \,|\, \kappa\in \Pi(\gamma)]/(R'_\kappa).
\end{gather*}

\begin{Proposition}\label{prop:divisorsplit}
Suppose $\gamma = (t,j)$ and removing $j$ from $D$ disconnects $D$ into connected components $D_1,\dots,D_s$.
Then we have \begin{gather*}\tM_D(\{\gamma\}) = \tM_{D_1} \times \tM_{D_2} \times \cdots \times \tM_{D_s}\end{gather*} and
\begin{gather*}\M_D(\{\gamma\}) = \M_{D_1} \times \M_{D_2} \times \cdots \times \M_{D_s}.\end{gather*}
\end{Proposition}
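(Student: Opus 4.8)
The plan is to reduce the claim to the description of $\tM_D(\{\gamma\})$ obtained in Section~3.4, namely the presentation $\C[u_\kappa \mid \kappa \in \Pi(\gamma)]/(R'_\kappa)$ where $\Pi(\gamma)$ is the set of cluster variables compatible with $\gamma$ and $R'_\kappa = u_\kappa + \prod_{\tau \in \Pi(\gamma)} u_\tau^{(\tau||\kappa)} - 1$. So the essential point is a purely cluster-combinatorial statement: if $\gamma = (t,j)$ and deleting the vertex $j$ disconnects $D$ into $D_1,\dots,D_s$, then there is a bijection $\Pi(\gamma) \cong \Pi(D_1) \sqcup \cdots \sqcup \Pi(D_s)$ under which the compatibility degree decomposes blockwise, i.e.\ $(\tau||\kappa)_D = 0$ whenever $\tau,\kappa$ lie in different blocks, and equals the compatibility degree internal to $D_a$ when both lie in the $a$-th block. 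Granting this, the polynomial ring $\C[u_\kappa \mid \kappa \in \Pi(\gamma)]$ factors as the tensor product of the rings $\C[u_\kappa \mid \kappa \in \Pi(D_a)]$, the ideal $(R'_\kappa)$ is the sum of the corresponding ideals $I_{D_a}$ since each $R'_\kappa$ only involves variables from a single block (this uses the vanishing of cross-block compatibility degrees), and taking $\Spec$ turns the tensor product into a product of schemes, giving $\tM_D(\{\gamma\}) = \prod_a \tM_{D_a}$. The statement for $\M_D(\{\gamma\})$ then follows by restricting to the open locus where all $u_\kappa \neq 0$, which is exactly the product of the loci where the variables in each block are nonvanishing.

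To prove the combinatorial decomposition, I would use a model for cluster variables of type $D$ compatible with a fixed $\gamma = (t,j)$. One clean route: reduce to the initial case $t = 0$ using the automorphism $\tau$ of the cluster complex (Section 2.11), which satisfies $(\tau\alpha||\tau\beta) = (\alpha||\beta)$ and permutes $\Pi$, so that $\tM_D(\{\gamma\})$ for $\gamma = (t,j)$ is isomorphic to $\tM_D(\{(0,j)\})$ — here one must check that deleting $j$ from $D$ is what controls both $\gamma = (t,j)$ and $\gamma = (0,j)$, which is true since the disconnection type depends only on the vertex $j \in I$, not on $t$. For $\gamma = (0,j)$ an initial cluster variable, the cluster variables compatible with $x_{(0,j)}$ are precisely those appearing in seeds containing $x_{(0,j)}$; by the standard ``freezing''/restriction principle for cluster algebras of finite type, the link of the vertex $(0,j)$ in the cluster complex of $D$ is the join of the cluster complexes of the Dynkin subdiagrams $D_1,\dots,D_s$ obtained by deleting $j$. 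This gives the bijection $\Pi(\gamma) \cong \bigsqcup_a \Pi(D_a)$. The vanishing of cross-block compatibility degrees $(\tau||\kappa)_D = 0$ for $\tau \in \Pi(D_a)$, $\kappa \in \Pi(D_b)$ with $a \neq b$ is then immediate: $\tau$ and $\kappa$ are simultaneously compatible (they lie in a common cluster together with $\gamma$), and compatibility is symmetric and equivalent to vanishing compatibility degree (Section 2.6). That cross-block-compatible pairs have matching internal compatibility degree, $(\tau||\kappa)_D = (\tau||\kappa)_{D_a}$ for $\tau,\kappa$ in the same block, can be checked from the explicit compatibility degree formulas in Section 3.2 (diagonals of a polygon, arcs in $PP_n$, etc.), or abstractly via the folding/restriction compatibility in \cite{YZ}; either way it is a local computation that the intersection/crossing numbers defining $(\,\cdot||\cdot)$ are unaffected by the deletion.

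The main obstacle I anticipate is the bookkeeping in the non-simply-laced cases: when $D$ is multiply-laced, deleting $j$ can produce subdiagrams $D_a$ that are themselves multiply-laced, and one must confirm that the restriction of the type-$D$ compatibility degree to $\Pi(D_a)$ really is the intrinsic type-$D_a$ compatibility degree (not, say, a rescaled version), and that $\Pi(D_a)$ as a subset of $\Pi(B)$ matches $\Pi(B_a)$ for the induced orientation $B_a$ of $D_a$. This is where I would lean on the folding results of Proposition~\ref{prop:folding} together with the simply-laced case, or simply verify it type-by-type using the polygon/arc models of Section 3.2, which make the ``deleting $j$ cuts the polygon into independent sub-polygons'' picture completely transparent. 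The remainder — the tensor-product-to-product-of-schemes argument and the passage to the open subscheme for $\M_D(\{\gamma\})$ — is routine commutative algebra once the combinatorial decomposition is in hand.
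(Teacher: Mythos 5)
Your proposal is correct and follows essentially the same route as the paper: the paper's (very terse) proof also reduces everything to the observation that the clusters containing $\gamma$ are connected by mutation without mutating $\gamma$, so that $\Pi(\gamma)$ is the cluster complex of the finite-type cluster algebra of $D_1\sqcup\cdots\sqcup D_s$, and then invokes the presentation of $\C\big[\tM_D(\{\gamma\})\big]$ from Section~3.4. You actually supply more detail than the paper does on the points it leaves implicit (the vanishing of cross-block compatibility degrees and the matching of internal ones), so nothing is missing.
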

\begin{proof}
Let $\gamma \in \Pi$. Since $\gamma$ defines a facet of the generalized associahedron for $B$, it follows that the collection of clusters containing $\gamma$ are connected by mutation, without mutating $\gamma$. It~follows that $\Pi(\gamma)$ is the cluster complex of a cluster algebra of finite type associated to the disjoint union of $D_1,D_2,\dots,D_s$.
\end{proof}
Note that removing a vertex from a finite type Dynkin diagram produces at most three components, so in Proposition~\ref{prop:divisorsplit} we have $s \leq 3$.

By applying Proposition~\ref{prop:divisorsplit} repeatedly, we have the following result.
\begin{Proposition}
Any $\tM_D(F)$ $($resp.\ $\M_D(F))$ is a direct product of $\tM_{D'}$ $($resp.\ $\M_{D'})$ as $D'$ varies over a finite set of Dynkin diagrams obtained by removing some vertices from $D$.
\end{Proposition}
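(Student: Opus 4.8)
The plan is to induct on $d = |F|$, using Proposition~\ref{prop:divisorsplit} as the inductive step. The cases $d = 0$ (where $\tM_D(\varnothing) = \tM_D$) and $d = 1$ (which is Proposition~\ref{prop:divisorsplit}, with $s \le 3$) are immediate.

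For the inductive step, fix $\gamma = (t,j) \in F$ and let $D_1,\dots,D_s$ be the connected components of $D$ with $j$ removed. The point I would extract from the proof of Proposition~\ref{prop:divisorsplit}, together with the presentation $\C[u_\kappa \mid \kappa \in \Pi(\gamma)]/(R'_\kappa)$ of $\C[\tM_D(\{\gamma\})]$ obtained above, is that the isomorphism $\tM_D(\{\gamma\}) \cong \tM_{D_1}\times\cdots\times\tM_{D_s}$ is \emph{compatible with the surviving coordinates}: it identifies $\Pi(\gamma)$ with the cluster complex of $D_1 \sqcup\cdots\sqcup D_s$ (so $\Pi(\gamma) = \Pi(D_1)\sqcup\cdots\sqcup\Pi(D_s)$ as a set, with matching compatibility degrees), and carries the coordinate $u_\kappa$, for $\kappa$ in the component $D_k$, to the $u$-variable of the $k$-th factor indexed by $\kappa$. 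Consequently $F \setminus \{\gamma\} \subset \Pi(\gamma)$ splits as $F_1 \sqcup\cdots\sqcup F_s$ with each $F_k \subset \Pi(D_k)$ a pairwise compatible set, i.e., a face of the generalized associahedron of $D_k$.

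Imposing the equations $u_{\gamma'} = 0$, $\gamma' \in F$, one at a time with $\gamma$ first, then identifies $\tM_D(F)$ with the closed subscheme of $\tM_{D_1}\times\cdots\times\tM_{D_s}$ cut out by $u_{\gamma'} = 0$ for $\gamma' \in F\setminus\{\gamma\}$, and by the product structure this closed subscheme is $\tM_{D_1}(F_1)\times\cdots\times\tM_{D_s}(F_s)$. Since $|F_k| \le d - 1$, the inductive hypothesis expresses each $\tM_{D_k}(F_k)$ as a direct product of spaces $\tM_{D'}$ with $D'$ obtained by deleting vertices of $D_k$; as $D_k$ is a component of $D$ with $j$ deleted, each such $D'$ is obtained by deleting vertices of $D$, and concatenating over $k$ proves the assertion for $\tM_D(F)$. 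The case of $\M_D(F)$ is identical, the extra observation being that the open condition ``$u_\omega \ne 0$ for $\omega \notin F$'' also respects the product: the variables $u_\omega$ with $\omega$ incompatible with $\gamma$ were set equal to $1$ and impose nothing, while the remaining ones are indexed by $\Pi(\gamma)\setminus(F\setminus\{\gamma\})$, each lying in a single factor.

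The one genuine obstacle is the compatibility claim of the second paragraph: it is not enough that $\tM_D(\{\gamma\})$ abstractly factors as a product, one needs the factorization to be along the decomposition of $\Pi(\gamma)$ into the cluster subcomplexes of $D_1,\dots,D_s$, so that both the subsequent vanishing conditions and the non-vanishing conditions split correctly across factors. This is essentially the content of the proof of Proposition~\ref{prop:divisorsplit}, but it deserves to be spelled out here, since the whole induction rests on it; the rest is routine bookkeeping with faces of generalized associahedra and repeated use of the $d = 1$ case.
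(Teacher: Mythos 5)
Your proof is correct and is essentially the paper's own argument: the paper disposes of this proposition in one line ("by applying Proposition~\ref{prop:divisorsplit} repeatedly"), and your induction on $|F|$, together with the observation that the factorization of $\tM_D(\{\gamma\})$ respects the identification of $\Pi(\gamma)$ with the cluster complex of $D_1\sqcup\cdots\sqcup D_s$ (so that the remaining vanishing and non-vanishing conditions distribute over the factors), is exactly the bookkeeping that one-liner leaves implicit.
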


\bigskip\noindent{\large\bf 3.5.}
Recall the bijection $\tau\colon \Pi \to \Pi$ of Section~2.11. We~have automorphisms $\tau\colon \M_D \to \M_D$ and $\tau\colon \tM_D \to \tM_D$ induced by $u_\gamma \mapsto u_{\tau \gamma}$. The~order of the automorphism $\tau$ is either $h+2$ or~$(h+2)/2$, where $h$ is the Coxeter number of $D$; see for example~\cite{ASS}. It~would be interesting to compute: (1) the group of automorphisms of the variety $\tM_D$, (2) the group of automorphisms of the variety $\M_D$, and (3) the group of automorphisms of $\M_D$ that send the positive part~$\M_{D,>0}$ (defined in Section~8.1) to itself.

In the case $D = A_{n-3}$, we have $\M_D = \M_{0,n}$ which has a natural action of $S_n$. The~group~$S_n$ acts transitively on the connected components of $\M_{0,n}(\R)$. The~positive part $\M_{D,>0}$ is one of the connected component of $\M_{0,n}(\R)$ and it is sent to itself by a dihedral subgroup of $S_n$ of order $2$, and the number of connected components of $\M_{0,n}(\R)$ is equal to $n!/2n$, see Section~7.2.

\bigskip\noindent{\large\bf 3.6.}
Let $\gamma \in \Pi$. Then as in Proposition~\ref{prop:divisorsplit}, we can uniquely associate Dynkin diagrams $D_1,\dots,D_{s(\gamma)}$ to $\gamma$, where $s \leq 3$.

\begin{Proposition}\label{prop:forget}
Suppose that $\gamma \in \Pi$ and $s(\gamma) = 1$. Then we have a natural morphism
\begin{gather}\label{eq:forget}
\M_{D} \longrightarrow \M_{D_1}.
\end{gather}
\end{Proposition}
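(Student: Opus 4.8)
The plan is to construct the morphism \eqref{eq:forget} by writing down the corresponding homomorphism of coordinate rings $\C[\M_{D_1}]\to\C[\M_D]$. By Proposition~\ref{prop:divisorsplit}, since $s(\gamma)=1$ we have $\tM_D(\{\gamma\})=\tM_{D_1}$ with $D_1=D\setminus\{j\}$ where $\gamma=(t,j)$; as in Section~3.4 this identifies the cluster complex $\Pi(D_1)$ with the set of cluster variables of $D$ compatible with $\gamma$ (other than $\gamma$ itself), in a way carrying the compatibility degrees of $D_1$ to those inherited from $D$. Thus giving a morphism $\M_D\to\M_{D_1}$ is the same as choosing, for each $\kappa\in\Pi(D_1)$, a unit $v_\kappa\in\C[\M_D]$ with $v_\kappa+\prod_{\tau\in\Pi(D_1)}v_\tau^{(\tau||\kappa)}=1$. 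I would look for $v_\kappa$ of the form $u_\kappa\cdot m_\kappa$, where $m_\kappa$ is a Laurent monomial in the variables $\{u_\omega\mid(\omega||\gamma)\ne0\}$: such a $v_\kappa$ is automatically a unit and automatically reduces to $u_\kappa$ on the boundary divisor $\tM_D(\{\gamma\})$ (on which all those $u_\omega$ equal $1$), so the entire content is to choose the correction monomials $m_\kappa$ so that the $D_1$-relations hold. The guiding model is type $A$: there $D=A_{n-3}$, $\gamma$ is the diagonal of the $n$-gon cutting off a single vertex $p$, the desired map is the forgetful map $\M_{0,n}\to\M_{0,n-1}$ dropping $p$, and rewriting a dihedral coordinate of $\M_{0,n-1}$ as a cross-ratio in the coordinates of $\M_{0,n}$ yields exactly such a correction (for instance $u_{13}\mapsto u_{13}u_{35}$ when $n=5$). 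Note that the $D_1$-relation then pulls back not to $R^D_\kappa$ but to a \emph{different} relation of $I_D$, so the verification is a nontrivial reshuffling of the defining relations.

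The cleanest route to doing this uniformly in all finite types goes through Theorem~\ref{thm:fullrank}. Fix a full rank acyclic extended exchange matrix $\tB$ of type $D$. Freezing the mutable vertex $j$ gives an extended exchange matrix $\tB'$ --- delete the $j$-th column of $\tB$ and reinterpret the $j$-th row as a coefficient row --- whose mutable part is $B$ restricted to $I\setminus\{j\}$, an acyclic exchange matrix of type $D_1$; moreover $\tB'$ is still full rank, since deleting one column of the full column rank matrix $\tB$ lowers the rank by exactly one. Every cluster variable of $\A(\tB')$ is a cluster variable of $\A(\tB)$ obtained without mutating at $j$, and $x_j$ is the initial cluster variable of $\A(\tB)$ at $j$, so $\A(\tB')\subseteq\A(\tB)[x_j^{-1}]$; this inclusion induces a morphism $\Xo(\tB)\to\Xo(\tB')$, because all cluster and frozen variables of $\A(\tB')$ are non-vanishing on $\Xo(\tB)$. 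This morphism is equivariant for a natural inclusion $T(\tB)\hookrightarrow T(\tB')$ of cluster automorphism tori whose cokernel is a single copy of $\mathbb{G}_m$, so it descends to $\M_D=\Xo(\tB)/T(\tB)\to\Xo(\tB')/T(\tB')=\M_{D_1}$, the last identification being Theorem~\ref{thm:fullrank} applied to the full rank acyclic $\tB'$. Translating back through $u_\gamma=X/(1+X)$ and the relation between the coefficients of $\tB$ and $\tB'$ should reproduce the monomial-correction description of the first paragraph, and in type $A$ the forgetful map. I expect the main obstacle to be precisely this equivariance: producing the map $T(\tB)\hookrightarrow T(\tB')$ and checking its compatibility with $\Xo(\tB)\to\Xo(\tB')$, which requires unwinding the definition of $T(\tB)$ (say via the cokernel of $\tB$) and how it behaves under freezing a vertex.

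A second, toric, approach uses Theorem~\ref{thm:toric}: it would suffice to exhibit a map of fans $\N(D^\vee)\to\N(D_1^\vee)$, giving a toric morphism $X_{\N(D^\vee)}\to X_{\N(D_1^\vee)}$ that restricts to $\tM_D\to\tM_{D_1}$ and hence to $\M_D\to\M_{D_1}$. The natural candidate is the projection of $\R^n$ along the ray $\R_{\ge0}\g_\gamma$, whose image fan is the star of that ray in $\N(D^\vee)$, which by Proposition~\ref{prop:divisorsplit} is $\N(D_1^\vee)$. For this projection to be a morphism of fans one needs $\N(D^\vee)$ to be star-shaped from $\g_\gamma$ --- the image of every cone of $\N(D^\vee)$ must lie in a single cone of the star --- and proving this, using $s(\gamma)=1$ and the fact (Proposition~\ref{prop:divisorsplit}) that the clusters containing $\gamma$ are connected by mutations that do not mutate $\gamma$, would be the crux of this approach. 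In either case the multiply-laced types reduce to the simply-laced ones by folding (Proposition~\ref{prop:folding}).
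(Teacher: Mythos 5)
Your second paragraph is essentially the paper's proof (Section~4.7): the paper first uses $\tau$ to reduce to the case where $\gamma=(0,j)$ is initial, then freezes $x_j$ to get $\A\big(\tB_j\big)$ of type $D_1$ with $\A\big(\tB\big)\big[x_j^{-1}\big]=\A\big(\tB_j\big)$, identifies $T\big(\tB\big)$ with a subgroup of $T\big(\tB_j\big)$, and reads off the morphism from the ring inclusions $\Ao\big(\tB_j\big)^{T(\tB_j)}\subset\Ao\big(\tB_j\big)^{T(\tB)}\subset\Ao\big(\tB\big)^{T(\tB)}$, which is exactly your freezing-plus-equivariant-descent argument phrased on coordinate rings. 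The explicit monomial-correction and toric-fan routes you also sketch are not what the paper does.
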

The proof of Proposition~\ref{prop:forget} is delayed to Section~4.7. The~map of Proposition~\ref{prop:forget} corresponds to ``forgetting a marked point'' in the case of $\M_{A_{n-3}} = \M_{0,n}$. We~expect \eqref{eq:forget} to be a fibration, similar to the $\M_{0,n}$ case.

\bigskip\noindent{\large\bf 3.7.}
Let $D$ be a folding of $\tD$ and let $\Gamma$ and $\nu\colon \tPi \to \Pi$ be as in Section~2.14.

\begin{Proposition}
The quotient of $\C[u]/I_{\tD}$ by the ideal generated by the equations $u_{\tgamma} = u_{g \cdot \tgamma}$ for $\tgamma \in \tPi$ and $g \in \Gamma$ is canonically isomorphic to $\C[u]/I_D$.
\end{Proposition}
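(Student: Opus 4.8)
The plan is to exhibit an explicit ring isomorphism realizing the folding at the level of defining ideals. Write $\tPi$ for the cluster variables of $\tD$ and $\Pi$ for those of $D$, with the surjection $\nu\colon\tPi\to\Pi$ whose fibers are exactly the $\Gamma$-orbits. On the polynomial ring $\C[u_{\tgamma}\mid\tgamma\in\tPi]$, quotienting by the ideal $J$ generated by $u_{\tgamma}-u_{g\cdot\tgamma}$ for all $\tgamma$ and $g\in\Gamma$ produces a ring canonically identified with $\C[u_\gamma\mid\gamma\in\Pi]$ via $u_\gamma\mapsto$ the common image of all $u_{\tgamma}$ with $\nu(\tgamma)=\gamma$; call this identification $\phi$. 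The statement then reduces to showing that $\phi$ carries the image of $I_{\tD}$ in the quotient onto $I_D$. Since both $I_{\tD}$ and $I_D$ are generated by the relations $R_{\tgamma}$ and $R_\gamma$ respectively, it suffices to check that $\phi$ sends each $R_{\tgamma}\bmod J$ to the corresponding $R_\gamma$, and conversely that every $R_\gamma$ arises this way.

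The first step is to reduce the product $\prod_{\tilde\omega}u_{\tilde\omega}^{(\tilde\omega\,||\,\tgamma)}$ modulo $J$. Modulo $J$, $u_{\tilde\omega}$ depends only on $\nu(\tilde\omega)=\omega$, so
\begin{gather*}
\prod_{\tilde\omega\in\tPi}u_{\tilde\omega}^{(\tilde\omega\,||\,\tgamma)} \equiv \prod_{\omega\in\Pi} u_\omega^{\sum_{\tilde\omega\in\nu^{-1}(\omega)}(\tilde\omega\,||\,\tgamma)} \pmod{J}.
\end{gather*}
The key input is now Proposition~\ref{prop:folding}(2), which states precisely that $\sum_{\tilde\omega\in\nu^{-1}(\omega)}(\tilde\omega\,||\,\tgamma)=(\omega\,||\,\gamma)_D$ where $\gamma=\nu(\tgamma)$ — note the exponent is summed over the fiber of $\omega$, while $\tgamma$ is held fixed, which is the direction the proposition is stated in. (One should double-check the roles of the two arguments of the compatibility degree match between the statement as written and the factor $u_\omega^{(\omega||\gamma)}$ appearing in $R_\gamma$; this is bookkeeping but worth pinning down, using the $D_4\to C_2$ and $D_n\to B_{n-1}$ examples in Section~3.2 as sanity checks.) Granting this, $R_{\tgamma}\bmod J$ maps under $\phi$ to $u_\gamma+\prod_{\omega}u_\omega^{(\omega||\gamma)}-1=R_\gamma$.

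The second step is surjectivity: every $\gamma\in\Pi$ equals $\nu(\tgamma)$ for some $\tgamma\in\tPi$ since $\nu$ is surjective, so every $R_\gamma$ is the $\phi$-image of some $R_{\tgamma}\bmod J$. Combined with the first step, $\phi$ maps $(I_{\tD}+J)/J$ bijectively onto $I_D$, so it descends to the claimed isomorphism $\C[u]/(I_{\tD}+J)\xrightarrow{\ \sim\ }\C[u]/I_D$, and the left-hand side is by construction the quotient of $\C[u]/I_{\tD}$ by the equations $u_{\tgamma}=u_{g\cdot\tgamma}$. One small point to verify is well-definedness of $\phi$ on generators — that all $u_{\tgamma}$ in a single $\Gamma$-orbit genuinely become equal, and that distinct orbits stay distinct — but this is immediate from the definition of $J$ and the fact that $\Gamma$ acts on $\tPi$ with orbits the fibers of $\nu$.

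I do not expect a serious obstacle here; the content is entirely Proposition~\ref{prop:folding}(2), and the proof is essentially the observation that the exponents in the $u$-relations are additive over $\Gamma$-orbits in exactly the way the compatibility degree folds. The only place to be careful is the asymmetry of $(\,\cdot\,||\,\cdot\,)$ in multiply-laced types (the $B$ versus $C$ distinction in Section~3.2.4 shows the two arguments play genuinely different roles), so one must make sure the summation-over-fibers in Proposition~\ref{prop:folding}(2) is applied to the correct argument of $(\omega||\gamma)$ as it appears in $R_\gamma$.
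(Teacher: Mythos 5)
Your proposal is correct and is essentially the paper's own proof: the paper also defines the ring homomorphism $\nu(u_{\tgamma})=u_{\nu(\tgamma)}$ and observes, via Proposition~\ref{prop:folding}(1),(2), that $\nu(R_{\tgamma})=R_{\nu(\tgamma)}$, from which the isomorphism follows. Your care about which argument of the compatibility degree gets summed over the fiber is exactly the right check, and it does match the statement of Proposition~\ref{prop:folding}(2).
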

\begin{proof}
Let $\nu\colon \C\big[u_{\tgamma}\,|\, \tgamma \in \tPi\big] \to \C[u_{\gamma}\,|\, \gamma \in \Pi]$ be the ring homomorphism given by $\nu(u_{\tgamma}) \allowbreak = u_{\nu(\tgamma)}$. Then applying Proposition~\ref{prop:folding}(1),(2), we have $\nu(R_{\tgamma}) = R_{\nu(\tgamma)}$. The~result follows.
\end{proof}
Thus $\tM_{D}$ can be identified with a closed subscheme of $\tM_{\tD}$ and it is straightforward to see that~$\M_{D}$ is the intersection of $\tM_{D} \subset \tM_{\tD}$ with the open subset $\M_{\tD} \subset \tM_{\tD}$.

\bigskip\noindent{\large\bf 3.8.}
The significance of the following conjecture is unclear to us. We~have proved it by a direct, elementary calculation for $D= A_n$, $n \geq 2$.
\begin{Conjecture}
For $D$ not of type $A_1$, the ring $\C[u^{\pm 1}]/I_D$ is generated by $u_\gamma^{-1}$, for $\gamma \in \Pi$.
\end{Conjecture}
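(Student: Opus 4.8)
The plan is to prove the (uniform) statement that for every connected Dynkin diagram $D \neq A_1$ and every $\kappa \in \Pi$ the generator $u_\kappa$ already lies in the subring $R' := \C\big[u_\gamma^{-1} \,\big|\, \gamma \in \Pi\big]$ of $R := \C\big[u^{\pm 1}\big]/I_D$. Since $R$ is generated as a $\C$-algebra over $R'$ by the $u_\gamma$, this is equivalent to $R = R'$, so it suffices to treat one $\kappa$ at a time.

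The only input I would take from $I_D$ is the rewriting of $R_\gamma = 0$ as $\prod_{\omega} u_\omega^{(\omega||\gamma)} = 1 - u_\gamma$, which shows that for every $\gamma \in \Pi$ the element
\[
e_\gamma \,:=\, u_\gamma^{-1}\prod_{\omega \in \Pi} u_\omega^{(\omega||\gamma)} \,=\, u_\gamma^{-1} - 1
\]
lies in $R'$, being linear in $u_\gamma^{-1}$. First I would multiply all of these together. As a Laurent monomial in the $u_\omega$, the factor $e_\gamma$ has exponent vector $-\delta_\gamma + \sum_{\omega}(\omega||\gamma)\,\delta_\omega$ (using $(\gamma||\gamma) = 0$), so, writing $\sigma_\omega := \sum_{\gamma \in \Pi}(\omega||\gamma)$,
\[
\prod_{\gamma \in \Pi} e_\gamma \,=\, \prod_{\omega \in \Pi} u_\omega^{\,\sigma_\omega - 1} \qquad\text{in } R.
\]
Then, fixing $\kappa$ and setting $b_\omega := \sigma_\omega - 1 - \delta_{\omega\kappa}$, one has $\prod_{\gamma}e_\gamma = u_\kappa\prod_{\omega}u_\omega^{\,b_\omega}$, hence
\[
u_\kappa \,=\, \Bigg(\prod_{\gamma \in \Pi} e_\gamma\Bigg)\prod_{\omega \in \Pi}\big(u_\omega^{-1}\big)^{b_\omega},
\]
and this manifestly lies in $R'$ as soon as every $b_\omega \geq 0$, i.e.\ as soon as $\sigma_\omega \geq 2$ for all $\omega \in \Pi$.

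So the whole argument reduces to the combinatorial claim that $\sigma_\omega \geq 2$, i.e.\ that every cluster variable is incompatible with at least two other cluster variables (counted with multiplicity); this is the one step where I expect any genuine work, although it is short. Writing $\omega = (t,j)$, the proof of Proposition~\ref{prop:divisorsplit} identifies the cluster variables that are compatible with $\omega$ and distinct from it with the cluster variables of $D\setminus\{j\}$, so the number of cluster variables incompatible with $\omega$ equals $|\Pi(D)| - 1 - |\Pi(D\setminus\{j\})|$. Using that for any finite type $E$ the number of cluster variables is $\operatorname{rank}(E) + \#\Phi_E^+$, this count equals $\#\{\alpha \in \Phi_D^+ : [\alpha:\alpha_j] > 0\}$, which is $\geq 2$: since $D$ is connected with at least two nodes, $j$ has a neighbour $k$, and both $\alpha_j$ and $\alpha_j + \alpha_k$ lie in this set. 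Hence $\sigma_\omega \geq 2$, which completes the proof.

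Finally I would note that the hypothesis is sharp and the argument uniform. For $D = A_1$ one has $\Pi = \{\gamma,\gamma'\}$ with $(\gamma||\gamma') = 1$, so $\sigma_\gamma = 1$ and the construction produces nothing; and indeed $\C\big[u_\gamma^{-1},(1-u_\gamma)^{-1}\big]$ does not contain $u_\gamma$, since every element of it is regular at the puncture $u_\gamma = \infty$ while $u_\gamma$ is not. For every other connected finite type — not only type $A$ — the same elementary manipulation applies verbatim.
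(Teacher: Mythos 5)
The paper offers no proof to compare against: this statement is left as a conjecture, with only an unexhibited ``direct, elementary calculation'' claimed for $D=A_n$. Your argument is therefore not a rederivation but an apparent resolution of the conjecture, and I find it correct. The algebra checks out: $e_\gamma := u_\gamma^{-1}\prod_\omega u_\omega^{(\omega||\gamma)}$ is simultaneously the Laurent monomial with exponent vector $-\delta_\gamma+\sum_\omega(\omega||\gamma)\delta_\omega$ (using $(\gamma||\gamma)=0$) and, by $R_\gamma=0$, the element $u_\gamma^{-1}-1$ of $R'$; hence $\prod_\gamma(u_\gamma^{-1}-1)=\prod_\omega u_\omega^{\sigma_\omega-1}$ in $R$, and dividing by $\prod_\omega u_\omega^{b_\omega}$ with $b_\omega=\sigma_\omega-1-\delta_{\omega\kappa}\ge 0$ exhibits $u_\kappa$ in $R'$. (A sanity check in type $A_2$: each diagonal of the pentagon crosses exactly two others, so $\prod_\gamma(u_\gamma^{-1}-1)=\prod_\omega u_\omega$, which immediately gives each $u_\kappa$ as an element of $R'$.) The combinatorial input $\sigma_\omega\ge 2$ also holds: by the proof of Proposition~\ref{prop:divisorsplit} the variables compatible with $\omega=(t,j)$ form the cluster complex of $D\setminus\{j\}$, and the count $|\Pi(D)|-1-|\Pi(D\setminus\{j\})|=\#\Phi_D^+-\#\Phi^+_{D\setminus\{j\}}=\#\{\alpha\in\Phi_D^+:[\alpha:\alpha_j]>0\}\ge 2$ for connected $D$ of rank at least $2$, via $\alpha_j$ and $\alpha_j+\alpha_k$ for $k$ adjacent to $j$. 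Your sharpness discussion for $A_1$ is also right. Compared with the paper's type-by-type verification in type $A$, your proof is uniform, uses only the defining relations plus two standard facts (the link description of $\Pi(\gamma)$ and the count of cluster variables as almost positive roots), and explains structurally why $A_1$ is excluded ($\sigma_\gamma=1$).

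One small point of interpretation: the conjecture as stated does not say $D$ is connected, and you silently assume it. For disconnected $D$ containing an $A_1$ factor the statement is false (the ring is a tensor product and the $A_1$ obstruction persists), so connectedness, or at least the absence of $A_1$ factors, must be intended; in the latter case your argument applies factor by factor since $(\omega||\gamma)=0$ across components. It would be worth stating this explicitly.
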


\section[MD as a quotient of a cluster variety]
{$\boldsymbol{\M_D}$ as a quotient of a cluster variety}\label{sec:quotient}

In this section we show that $\M_D$ can be obtained as a quotient of an open subspace $\Xo$ of the cluster variety $\X\big(\tB\big)$ by the action of the cluster automorphism torus $T$ considered in~\cite{LS}. An~important role is played by principal and universal coefficients, where $\tB = \tB^\prin$ or \mbox{$\tB=\tB^\univ$}. In~particular, the defining relations of $\M_D$ are obtained from the primitive exchange relations of $\X\big(\tB^\univ\big)$.

\bigskip\noindent{\large\bf 4.1.}
Let $\tB$ be a full rank extended exchange matrix. Let~$T = T\big(\tB\big)$ be the cluster automorphism group \cite{LS} of $\A\big(\tB\big)$: this is the group of algebra automorphisms $\phi\colon \A\big(\tB\big) \to \A\big(\tB\big)$ such that for each (mutable or frozen) cluster variable $x$, we have $\phi(x) = \zeta(x) x$ for $\zeta(x) \in \C^\ast$. Thus, $T$ acts on any cluster torus of the cluster variety $X\big(\tB\big)$ by scaling the coordinates. By \cite[Proposition~5.1]{LS}, we have
\begin{gather}\label{eq:auto}
T = \Hom\big(\Z^{n+m}/\tB \Z^n, \C^\ast\big).
\end{gather}
Since $\tB$ has full rank, the group $T$ is a (possibly disconnected) abelian algebraic group of dimension $m$. The~character group of $T$ is the lattice $\Z^{n+m}/\tB \Z^n$. By definition, the torus $T$ acts on each cluster variable by a character, and we denote the weight of the cluster variable $x \in \A\big(\tB\big)$ by $\wt(x) \in \Z^{n+m}/\tB \Z^n$.

\begin{Lemma}
Let $\tB$ have full rank. Then $\tB$ has really full rank if and only if $\Z^{n+m}/\tB \Z^n$ has no torsion, or equivalently, the group $T$ is connected, and thus a torus of dimension $m$.
\end{Lemma}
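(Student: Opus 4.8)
The plan is to unwind the definitions and reduce everything to a standard fact about finitely generated abelian groups. Recall that $\tB$ is an $(n+m)\times n$ integer matrix. Since $\tB$ has full rank, the map $\Z^n \to \Z^{n+m}$, $v \mapsto \tB v$, is injective, so $\tB\Z^n$ is a free abelian subgroup of rank $n$, and $A := \Z^{n+m}/\tB\Z^n$ is a finitely generated abelian group of rank $m$. By the structure theorem, $A \cong \Z^m \oplus A_{\mathrm{tors}}$ for a finite group $A_{\mathrm{tors}}$, and $A$ is torsion-free if and only if $A \cong \Z^m$.

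First I would record the equivalence "really full rank $\iff$ $A$ torsion-free". By definition $\tB$ has really full rank when the rows of $\tB$ span $\Z^n$; but the rows of $\tB$ span $\Z^n$ exactly when the columns of $\tB^T$ do, i.e.\ when the transpose map $\Z^{n+m} \to \Z^n$ given by $\tB^T$ is surjective. Dualizing, $\tB^T$ is surjective iff $\tB\colon \Z^n \to \Z^{n+m}$ is a split injection, i.e.\ its cokernel $A$ is free; and since $A$ already has rank $m$, freeness of $A$ is equivalent to torsion-freeness. (Concretely: put $\tB$ in Smith normal form $\tB = U \operatorname{diag}(d_1,\dots,d_n) V$ with $U \in \GL_{n+m}(\Z)$, $V \in \GL_n(\Z)$; then $A \cong \Z^m \oplus \bigoplus_i \Z/d_i\Z$, this is torsion-free iff all $d_i = 1$, and all $d_i = 1$ iff the rows of $\tB$ span $\Z^n$.)

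Next I would connect torsion-freeness of $A$ to connectedness of $T$. By \eqref{eq:auto}, $T = \Hom(A, \C^\ast)$. For a finitely generated abelian group $A \cong \Z^m \oplus A_{\mathrm{tors}}$, we have $\Hom(A,\C^\ast) \cong (\C^\ast)^m \times \Hom(A_{\mathrm{tors}},\C^\ast)$, and $\Hom(A_{\mathrm{tors}},\C^\ast)$ is a finite group (non-canonically isomorphic to $A_{\mathrm{tors}}$, as $\C^\ast$ contains all roots of unity) which is trivial iff $A_{\mathrm{tors}} = 0$. Hence $T$ is connected iff $A_{\mathrm{tors}} = 0$ iff $A$ is torsion-free, and in that case $T \cong (\C^\ast)^m$ is a torus of dimension $m$. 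Chaining the two equivalences gives the Lemma.

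There is no serious obstacle here; the only point requiring a little care is the first equivalence, namely that "rows of $\tB$ span $\Z^n$" is the correct algebraic translation of torsion-freeness of the cokernel $\Z^{n+m}/\tB\Z^n$ rather than of some other lattice. I would make this airtight via the Smith normal form computation above, which simultaneously exhibits both the cokernel and the row span in terms of the invariant factors $d_1,\dots,d_n$, so that all three conditions---really full rank, $A$ torsion-free, $T$ connected---are visibly equivalent to $d_1 = \cdots = d_n = 1$.
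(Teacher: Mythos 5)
Your proof is correct. It reaches the same conclusion as the paper but by a slightly different intermediate step: the paper's argument is the one-line chain ``the rows of $\tB$ span $\Z^n$ iff $\tB\Q^n\cap\Z^{n+m}=\tB\Z^n$ iff $\Z^{n+m}/\tB\Z^n$ has no torsion,'' i.e.\ it characterizes torsion-freeness via \emph{saturation} of the sublattice $\tB\Z^n$, whereas you characterize it via surjectivity of the dual map $\tB^T$ (equivalently, splitness of $\tB$), pinned down by the Smith normal form. Both routes are standard lattice facts and both are airtight; yours is more self-contained, since the Smith normal form computation simultaneously exhibits the row span, the cokernel, and the invariant factors $d_1,\dots,d_n$, reducing all three conditions to $d_1=\cdots=d_n=1$, and you also spell out the second half of the statement (connectedness of $T=\Hom(\Z^{n+m}/\tB\Z^n,\C^\ast)$ and $T\cong(\C^\ast)^m$ in the torsion-free case), which the paper leaves implicit. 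The paper's version buys brevity at the cost of asking the reader to supply the equivalence between ``rows span $\Z^n$'' and the saturation condition, which is essentially the same duality you made explicit.
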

\begin{proof}
The rows of $\tB$ span $\Z^n$ if and only if $\tB \Q^n \cap \Z^{n+m} = \tB \Z^n$ if and only if $\Z^{n+m}/\tB \Z^n$ has no torsion.
\end{proof}

Let $X = X\big(\tB\big)$ be the cluster variety, which is a smooth affine algebraic variety. Let~$\Xo \subset X$ be the locus where all mutable cluster variables are non-vanishing. In~terms of rings, we have
\begin{gather*}
\Xo:= \Spec\big(\A[1/x \,|\, x \text{ is a mutable cluster variable}]\big).
\end{gather*}
Thus $\Xo$ is a smooth affine subvariety of the initial (or any) cluster subtorus of $X$, and it follows immediately from the definitions that the action of $T$ preserves $\Xo$, and furthermore the action of $T$ is free on $\Xo$. The~geometric invariant theory quotient
\begin{gather*}
\Xo \sslash T:= \Spec\big(\C\big[\Xo\big]^T\big)
\end{gather*}
is again a smooth affine algebraic variety, and furthermore, there is a bijection between closed points of $\Xo \sslash T$ and $T$-orbits on $\Xo$. We~thus simply denote $\Xo\sslash T$ by $\Xo/T$. 
Explicitly, the ring~$\C\big[\Xo\big]^T$ consists of all weight zero Laurent polynomials in cluster variables.

\bigskip\noindent{\large\bf 4.2.}
Let $\tau\colon \Pi \to \Pi$ be the bijection defined by $\tau(t,j) = (t-1,j)$ for $0 \leq t \leq r_j$. The~primitive exchange relations are of the form
\begin{gather*}
x_{\tau \gamma} x_{\gamma} = M + M',
\end{gather*}
where $M'$ only involves frozen variables. For each primitive exchange relation, we define the rational function
\begin{gather*}
f_\gamma := \frac{M}{x_{\tau \gamma} x_{\gamma}}.
\end{gather*}
By definition, $f_\gamma \in \C\big[\Xo\big]$, and it is easy to see that $f_\gamma$ is $T$-invariant. Thus $f_\gamma \in \C\big[\Xo\big]^T$.

\begin{Theorem}\label{thm:fullrank}
Suppose that $\tB$ is a full rank extended exchange matrix, acyclic and of finite type. Let~$X= X\big(\tB\big)$. Then the quotient $\Xo/T$ is a smooth affine variety isomorphic to $\M_D$, and the isomorphism $\C[\M_D] \cong \C\big[\Xo\big]^T$ is given by $u_\gamma \mapsto f_\gamma$.
\end{Theorem}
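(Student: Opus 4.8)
\textbf{Setup and well-definedness of $\boldsymbol{\phi}$.} The plan is to prove the statement first for $\tilde B=\tilde B^{\univ}$, where the relations are most transparent, and then to note that the argument is uniform in $\tilde B$ (or to descend via the specialization $\A(\tilde B^{\univ})\twoheadrightarrow\A(\tilde B)$). Write the primitive exchange relation \eqref{eq:exchange} for $\gamma=(t,j)$ as $x_{\tau\gamma}x_\gamma=M_\gamma+M'_\gamma$, where $M_\gamma=z_\gamma\,m_\gamma$ with $m_\gamma=\prod_{i\to j}x_{(t,i)}^{-a_{ij}}\prod_{j\to i}x_{(t-1,i)}^{-a_{ij}}$ and $M'_\gamma=\prod_\omega z_\omega^{(\omega||\gamma)}$. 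Each $f_\gamma=M_\gamma/(x_{\tau\gamma}x_\gamma)$ is a Laurent monomial in the cluster variables and is $T$-invariant (the exchange relation is homogeneous for the $T$-weight), so $f_\gamma\in\C[\Xo]^T$; dividing the relation by $x_{\tau\gamma}x_\gamma$ gives $f_\gamma+M'_\gamma/(x_{\tau\gamma}x_\gamma)=1$. It then remains to check $M'_\gamma/(x_{\tau\gamma}x_\gamma)=\prod_\omega f_\omega^{(\omega||\gamma)}$; since $f_\omega=z_\omega m_\omega/(x_{\tau\omega}x_\omega)$, the $z$-factors on both sides cancel termwise, and what remains is the monomial identity among cluster variables
\begin{gather*}
x_{\tau\gamma}x_\gamma\prod_\omega m_\omega^{(\omega||\gamma)}=\prod_\omega(x_{\tau\omega}x_\omega)^{(\omega||\gamma)},
\end{gather*}
which I would verify by comparing exponents: both sides being Laurent monomials of the same $T$-weight, it suffices to compare $\g$-vectors, whereupon it becomes a linear identity relating $\g_{\tau\gamma}+\g_\gamma$, the $\g$-vectors appearing in the $m_\omega$, and the compatibility degrees $(\omega||\gamma)$ — a consequence of the $\g$-vector/compatibility-degree formalism of \cite{YZ}. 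This produces the $\C$-algebra homomorphism $\phi\colon\C[\M_D]=\C[u^{\pm1}]/I_D\to\C[\Xo]^T$, $u_\gamma\mapsto f_\gamma$; the same computation goes through for any full-rank $\tilde B$, since the frozen monomials replacing the $z_\omega$ cancel in the same way.

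\textbf{Surjectivity (the main point).} I would first describe $\C[\Xo]^T$ explicitly. By the Laurent phenomenon, $\C[\Xo]$ is the coordinate ring of the initial cluster torus localized at the non-initial mutable cluster variables $x_\gamma$; but $x_\gamma=M'_\gamma(1+X_\gamma)/x_{\tau\gamma}$, where $X_\gamma:=M_\gamma/M'_\gamma$ is a weight-zero unit on $\Xo$ and $M'_\gamma$ is a Laurent monomial in the frozen variables, so that localization coincides with the one obtained by inverting the weight-zero elements $1+X_\gamma$. As these are homogeneous of weight zero, forming $T$-invariants commutes with the localization and
\begin{gather*}
\C[\Xo]^T=\C[S]\big[(1+X_\gamma)^{-1}:\gamma\in\Pi\big],
\end{gather*}
where $\C[S]$ is the algebra of weight-zero Laurent monomials in the initial torus coordinates — a Laurent polynomial ring in $n$ variables because $\tilde B$ has full rank — and note $X_\gamma\in\C[S]$. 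Now $X_\gamma=f_\gamma/(1-f_\gamma)$ and $(1+X_\gamma)^{-1}=1-f_\gamma=\prod_\omega f_\omega^{(\omega||\gamma)}$ both lie in $\mathrm{im}(\phi)=\C[f_\gamma^{\pm1}]$ (using that $1-f_\gamma$ is a unit there), so surjectivity reduces to the claim $\C[S]\subseteq\mathrm{im}(\phi)$: every weight-zero Laurent monomial is a polynomial in the $X_\gamma^{\pm1}$ and the $(1+X_\gamma)^{-1}$. \emph{This is the step I expect to be the main obstacle.} Here I would exploit that, via $u=X/(1+X)$, the $X_\gamma$ are exactly the cluster $\mathcal X$-coordinates met along the Auslander--Reiten walk: the block $\{X_{(t,i)}:i\in I\}$ is the full tuple of $\hat y$-coordinates of the seed reached after $t$ rounds of source-mutations, hence a coordinate system on a cluster $\mathcal X$-torus $\cong(\C^*)^n$, and by combining finitely many such blocks (with the full-rank hypothesis controlling the associated exponent lattice) one shows the $X_\gamma$ together with the $(1+X_\gamma)^{-1}$ generate all of $\C[S]$.

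\textbf{Injectivity, smoothness, and conclusion.} Granting surjectivity, injectivity is formal: $\C[\M_D]$ is an integral domain of Krull dimension $n$ (Theorem~\ref{thm:geom}) and $\C[\Xo]^T$ is an integral domain of dimension $n$ (recorded in Section~4.1), so the kernel of the surjection $\phi$ is a prime ideal $P$ with $\dim(\C[\M_D]/P)=n=\dim\C[\M_D]$, which forces $P=0$ for a finitely generated domain over a field. Hence $\phi$ is an isomorphism, and $\Xo/T$ is smooth affine as already noted (\cite[Theorem~7.7]{Mul} makes $X(\tilde B)$ smooth, $\Xo$ is open hence smooth, and the free $T$-action makes $\Xo\sslash T$ smooth). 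Finally, for a general full-rank $\tilde B$ one may either run the argument above verbatim — nothing in the description of $\C[\Xo]^T$ or in the $\hat y$-coordinate analysis needs universal coefficients, only full rank — or reduce to the universal case via the specialization surjection $\A(\tilde B^{\univ})\twoheadrightarrow\A(\tilde B)$, which gives a closed embedding $\Xo(\tilde B)\hookrightarrow\Xo(\tilde B^{\univ})$ compatible with the torus actions and restricting each $f_\gamma$ to $f_\gamma$. Everything except the generation statement in the second paragraph is bookkeeping with exchange relations and $\g$-vectors or standard commutative algebra.
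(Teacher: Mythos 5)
Your overall strategy (do the universal‑coefficient case first, then specialize to a general full‑rank $\tB$) is the same as the paper's, but two of your steps have genuine gaps. The first is exactly the step you flag as ``the main obstacle'': the generation claim that every weight‑zero Laurent monomial in the initial torus coordinates lies in $\C\big[f_\gamma^{\pm 1}\big]$. You only gesture at an argument via $\hat y$‑blocks along the Auslander--Reiten walk, but this is where all the content of surjectivity sits. The paper resolves it with Proposition~\ref{prop:wtbasis}: the weights $\wt(x_\gamma)$, $\gamma\in\Pi$, form a $\Z$‑basis of the character lattice $\Z^{n+r}/\tB^\univ\Z^n$, which yields the product decomposition $\Xo^\univ\cong T^\univ\times X^\univ(1)$ and hence $\C\big[\Xo^\univ\big]^{T^\univ}\cong\C[X^\univ(1)]$; every invariant Laurent monomial then restricts to a Laurent monomial in the $z_\gamma=f_\gamma|_{X^\univ(1)}$, giving surjectivity. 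That lattice‑basis fact in turn rests on Theorem~\ref{thm:Newt}(2), so it is not bookkeeping. Relatedly, your well‑definedness check via ``comparing $\g$‑vectors'' does not suffice: the $\g$‑vector (or $T$‑weight) of a Laurent monomial in cluster variables drawn from different clusters does not determine the monomial. The clean way to get your monomial identity is again the product decomposition — both sides are $1$ on $X^\univ(1)$ and have equal $T$‑weight — so even this step secretly needs Proposition~\ref{prop:wtbasis}.

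The second gap is circularity in your injectivity argument. You invoke Theorem~\ref{thm:geom} to say $\C[\M_D]$ is a domain of dimension $n$, but Theorem~\ref{thm:geom} is proved in Section~5.4 via Theorem~\ref{thm:toric}, which depends on Theorem~\ref{thm:RDisom}, which depends on Theorem~\ref{thm:principal}, which is deduced from the very theorem you are proving. A priori you know nothing about $\dim\C[u^{\pm1}]/I_D$ or its primality. The paper instead proves injectivity directly: a relation $p(f_\gamma)=0$ lifts to a Laurent identity $q(x_\gamma,z_\gamma)=0$, the primitive exchange relations are used to eliminate all non‑initial mutable variables, and algebraic independence of $x_1,\dots,x_n,z_\gamma$ forces $p$ to lie in the ideal generated by the relations \eqref{eq:f}. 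Finally, a small point on the descent to general $\tB$: the specialization $\A^\univ\to\A\big(\tB\big)$ need not be surjective (e.g., if $\tB$ has zero or repeated rows), so one must first replace $\A$ by the image $\A'$ and check $\A^T=\A'^{T'}$, as the paper does.
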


\begin{Theorem}\label{thm:principal}
Suppose that $\tB$ is a full rank extended exchange matrix, acyclic and of finite type. Let~$X= X^\prin\big(\tB\big)$ have principal coefficients. Then $\M_D \cong \Xo^\prin/T^\prin$ is isomorphic to the locus $X^\prin(1) \subset X^\prin$, where all initial mutable cluster variables have been set to $1$. The~coordinate ring $\C[\M_D]$ is isomorphic to the subring of $\C(y_1,y_2,\dots,y_n)$ generated by $F_\gamma^{\pm 1}(\y)$ and $y_i^{\pm 1}$.
\end{Theorem}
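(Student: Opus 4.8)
The plan is to prove Theorem~\ref{thm:principal} as a specialization of Theorem~\ref{thm:fullrank} together with an explicit analysis of the principal-coefficient cluster variety. First I would note that $\tB^\prin$ is manifestly full rank (its bottom half is the identity matrix), really full rank in fact, acyclic and of finite type, so Theorem~\ref{thm:fullrank} applies and gives $\M_D \cong \Xo^\prin/T^\prin$ with $u_\gamma \mapsto f_\gamma^\prin$. The torus $T^\prin = \Hom(\Z^{2n}/\tB^\prin\Z^n, \C^\ast)$; since the bottom $n$ rows of $\tB^\prin$ are the standard basis, the quotient $\Z^{2n}/\tB^\prin\Z^n$ is freely generated by the images of $e_1,\dots,e_n$ (the top $n$ coordinates), so $T^\prin \cong (\C^\ast)^n$ and acts on $\A^\prin$ by rescaling; one checks from \eqref{eq:prindeg} that a point of $T^\prin \cong (\C^\ast)^n$ with coordinates $t_1,\dots,t_n$ sends $x_i \mapsto t_i x_i$ and $y_i \mapsto (\prod_j t_j^{-B_{ji}}) y_i$, i.e.\ it acts by the opposite of the $\Z^n$-grading of Section~2.7.

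Next I would use the standard slice: because $T^\prin$ acts freely on $\Xo^\prin$ and acts simply transitively on the initial-cluster coordinates $(x_1,\dots,x_n)$, the locus $X^\prin(1) = \{x_1 = \cdots = x_n = 1\}$ is a section of the quotient map $\Xo^\prin \to \Xo^\prin/T^\prin$. Concretely, $\C[\Xo^\prin]^{T^\prin}$ is the degree-zero part of $\A^\prin[1/(\text{mutable variables})]$ for the $\Z^n$-grading, and setting $x_i = 1$ identifies this with the subring of $\C(y_1,\dots,y_n)$ generated by the images of all (Laurent monomials in) cluster variables after the substitution $x_i \mapsto 1$. By definition of the $F$-polynomial, $x^\prin_\gamma(x_i=1, \y) = F_\gamma(\y)$, and the frozen variables $y_i$ are already degree homogeneous so they restrict to $y_i^{\pm 1}$. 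Hence $\C[\M_D] = \C[\Xo^\prin]^{T^\prin}$ is exactly the subring of $\C(y_1,\dots,y_n)$ generated by the $F_\gamma^{\pm 1}(\y)$ and the $y_i^{\pm 1}$. Under the isomorphism of Theorem~\ref{thm:fullrank}, $u_\gamma = f^\prin_\gamma = M/(x_{\tau\gamma} x_\gamma)$ restricts to the corresponding ratio of $F$-polynomials times a monomial in the $y_i$, which is the promised identification of $u_\gamma$ with a ratio of $F$-polynomials.

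The main thing to be careful about — and the step I expect to be the real obstacle — is verifying cleanly that $X^\prin(1)$ is genuinely a \emph{section} of the quotient, i.e.\ that every $T^\prin$-orbit on $\Xo^\prin$ meets $\{x_i = 1\ \forall i\}$ in exactly one point. One direction (at most one point, since $T^\prin$ acts freely and simply transitively on the $(x_i)$-coordinates) is immediate from the free action; the subtlety is surjectivity, i.e.\ that on $\Xo^\prin$ all the initial cluster variables $x_i$ are automatically nonzero so that one can indeed rescale them to $1$ — but this holds precisely because $\Xo^\prin$ is cut out inside the initial cluster torus by inverting \emph{all} mutable cluster variables, the $x_i$ among them. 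Granting that, the scheme-theoretic statement $\Spec(\C[\Xo^\prin]^{T^\prin}) \cong X^\prin(1)$ follows by identifying coordinate rings: $\C[X^\prin(1)] = \A^\prin/(x_1-1,\dots,x_n-1)$, and the inclusion $\C[\Xo^\prin]^{T^\prin} \hookrightarrow \C[\Xo^\prin] \twoheadrightarrow \C[X^\prin(1)]$ is an isomorphism because invariants inject and the composite is surjective (every class is represented by its restriction to the slice) and injective (two invariants agreeing on the slice agree on all of $\Xo^\prin$ since the slice meets every orbit). Combining with Theorem~\ref{thm:fullrank} then yields all three assertions of the theorem.
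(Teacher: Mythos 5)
Your proof is correct. The first half — identifying $\Xo^\prin/T^\prin$ with the slice $X^\prin(1)$ — is exactly the paper's argument: the character lattice $\Z^{2n}/\tB^\prin\Z^n\cong\Z^n$ with $\wt(x_i)=e_i$ makes $(x_1,\dots,x_n)$ a $T^\prin$-equivariant chart, so the slice is a section of the quotient. Where you genuinely diverge is the coordinate-ring statement. The paper obtains it from Proposition~\ref{prop:hinvert}, the invertible monomial transformation between $\{f_\gamma(\y)\}$ and $\{y_i\}\cup\{F_\gamma(\y)\}$, whose proof is deferred to Section~6.4 and rests on Theorem~\ref{thm:pair} (the evaluation $A_\gamma(-\g^\vee_\omega)=\delta_{\omega,\tau\gamma}$, proved via Newton polytopes of $F$-polynomials, Auslander--Reiten triangles, and folding). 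You instead compute $\C[X^\prin(1)]$ directly: by the Laurent phenomenon $\C\big[\Xo^\prin\big]=\C\big[x^{\pm},y^{\pm}\big]\big[1/x_\gamma,\ \gamma\in\Pi^+\big]$, so killing $(x_1-1,\dots,x_n-1)$ yields the localization $\C\big[y^{\pm}\big]\big[1/F_\gamma\big]$, which is precisely the stated subring of $\C(\y)$. This is more elementary and entirely adequate for the theorem as stated; what it does not deliver is Proposition~\ref{prop:hinvert} itself, which the paper needs independently later (Theorem~\ref{thm:RDisom}, Proposition~\ref{prop:RD}, and the tropicalization results), so the heavier input is not globally avoidable even if it is avoidable here. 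One small point of care: $\deg(y_i)=-Be_i$ is generally nonzero, so $y_i$ is not itself $T^\prin$-invariant; its presence in $\C[X^\prin(1)]$ comes from restricting the invariant monomial $y_i\prod_j x_j^{(Be_i)_j}$ to the slice. Your section isomorphism handles this, but the phrase ``already degree homogeneous so they restrict to $y_i^{\pm1}$'' slightly obscures it.
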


\begin{Theorem}\label{thm:univ}
Suppose that $\tB$ is a full rank extended exchange matrix, acyclic and of finite type. Let~$X= X^\univ\big(\tB\big)$ have universal coefficients. Then $\M_D \cong \Xo^\univ/T^\univ$ is isomorphic to the locus $X^\univ(1) \subset X^\univ$, where all mutable cluster variables have been set to $1$. The~isomorphism $\C[\M_D] \cong \C[X^\univ(1)]$ is given by $u_\gamma \mapsto z_\gamma$.
\end{Theorem}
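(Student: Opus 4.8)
The plan is to deduce this from Theorem~\ref{thm:fullrank} together with an explicit description of the torus quotient in the universal-coefficient case. First I would record that $\tB^\univ$ is acyclic of finite type (its $B$-part being the chosen acyclic seed of type $D$) and has \emph{really} full rank: by construction the bottom $r$ rows of $\tB^\univ$ are the $\g$-vectors of $\A(B^T)$, and these include the initial $\g$-vectors $e_1,\dots,e_n$, which already span $\Z^n$. Hence Theorem~\ref{thm:fullrank} applies with $\tB=\tB^\univ$, yielding a canonical isomorphism $\M_D\cong\Xo^\univ/T^\univ$ under which $u_\gamma\mapsto f_\gamma$, where the primitive exchange relation of $\A(\tB^\univ)$ is $x_{\tau\gamma}x_\gamma=M_\gamma+M'_\gamma$ (with $M'_\gamma$ involving only frozen variables) and $f_\gamma=M_\gamma/(x_{\tau\gamma}x_\gamma)$. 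It then remains to identify $\Xo^\univ/T^\univ$ with the closed subvariety $X^\univ(1)\subset\Xo^\univ$ cut out by $x_\gamma=1$ for all $\gamma\in\Pi$, and to check that this identification sends $f_\gamma$ to $z_\gamma$.

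For the identification, the key point is that $X^\univ(1)$ is a global section of the free $T^\univ$-action on $\Xo^\univ$. Since $\tB^\univ$ has really full rank, $T^\univ=\Hom(\Z^{n+r}/\tB^\univ\Z^n,\C^\ast)$ is a torus whose character lattice has rank $r=|\Pi|$. I claim the $r$ weights $\{\wt(x_\gamma)\}_{\gamma\in\Pi}$ of the mutable cluster variables form a $\Z$-basis. They generate the lattice: the initial ones are the images of $e_1,\dots,e_n$, while by Proposition~\ref{prop:YZ} the primitive exchange relation reads $x_{\tau\gamma}x_\gamma=z_\gamma\mu_\gamma+\prod_\omega z_\omega^{(\omega||\gamma)}$ with $\mu_\gamma$ a monomial in mutable cluster variables, so homogeneity gives $\wt(z_\gamma)=\wt(x_{\tau\gamma})+\wt(x_\gamma)-\wt(\mu_\gamma)\in\operatorname{span}_\Z\{\wt(x_\kappa)\mid\kappa\in\Pi\}$; thus the $\wt(x_\gamma)$ span the images of all $n+r$ standard basis vectors. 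A generating set of $r$ elements in a rank-$r$ free abelian group is automatically a basis, so the homomorphism $T^\univ\to(\C^\ast)^\Pi$, $t\mapsto(\wt(x_\gamma)(t))_{\gamma\in\Pi}$, is an isomorphism of tori. Consequently the map $\sigma\colon\Xo^\univ\to T^\univ$ sending $p$ to the unique element with $\wt(x_\gamma)(\sigma(p))=x_\gamma(p)^{-1}$ for all $\gamma$ is a morphism, $\sigma(p)\cdot p$ lies in $X^\univ(1)$, and $p\mapsto(\sigma(p),\sigma(p)\cdot p)$ is inverse to the action morphism $T^\univ\times X^\univ(1)\to\Xo^\univ$. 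Hence $\Xo^\univ\cong T^\univ\times X^\univ(1)$ equivariantly (with $T^\univ$ acting by translation on the first factor), so $\Xo^\univ/T^\univ\cong X^\univ(1)$, the isomorphism of coordinate rings being restriction of $T^\univ$-invariant functions to $X^\univ(1)$.

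It remains to track coordinates. Writing $M_\gamma=z_\gamma\mu_\gamma$ as above, and noting that $\mu_\gamma$, $x_{\tau\gamma}$ and $x_\gamma$ all restrict to $1$ on $X^\univ(1)$, we get $f_\gamma|_{X^\univ(1)}=z_\gamma$. Composing with the isomorphism of Theorem~\ref{thm:fullrank} then gives $\M_D\cong X^\univ(1)$ with $u_\gamma\mapsto z_\gamma$, as required. (That $X^\univ(1)\subset\Xo^\univ$ is clear since all mutable cluster variables take the nonzero value $1$ there; that $X^\univ(1)$ is nonempty follows because it meets every $T^\univ$-orbit and $\Xo^\univ$ is dense in the irreducible variety $X^\univ$.)

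The main obstacle I anticipate is the basis claim of the second paragraph — confirming that $\tB^\univ$ is really full rank and that the primitive exchange relations recover the frozen weights from the mutable ones, so that the $r$ mutable-variable weights span, hence base, the character lattice of $T^\univ$. Once this unimodularity is in hand the rest is formal: the splitting $\Xo^\univ\cong T^\univ\times X^\univ(1)$ and the coordinate bookkeeping are routine, and the genuinely nontrivial algebra — that the relations $R_\gamma$ generate \emph{all} relations among the $z_\gamma$ — has already been absorbed into Theorem~\ref{thm:fullrank}.
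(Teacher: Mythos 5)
Your proposal has a genuine circularity. You deduce the theorem from Theorem~\ref{thm:fullrank}, but in the paper the logical order is reversed: Theorem~\ref{thm:fullrank} is proved \emph{from} Theorem~\ref{thm:univ}. Concretely, the paper's proof of Theorem~\ref{thm:fullrank} (Section~4.5) constructs a comparison map $\phi\colon \C\big[\Xo^\univ\big]^{T^\univ} \to \C\big[\Xo\big]^T$ and then invokes Section~4.4 twice — once for the injectivity step (``we have already shown that $X^\univ/T^\univ$ is an irreducible affine variety in Section~4.4'') and once for the final identification ``$u_\gamma \mapsto f_\gamma$ now follows from Section~4.4.'' So the statement you treat as a black box is itself reduced to the universal-coefficient case you are trying to prove. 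The content you explicitly defer — ``that the relations $R_\gamma$ generate \emph{all} relations among the $z_\gamma$'' — is not absorbed into Theorem~\ref{thm:fullrank}; it is exactly the part of Theorem~\ref{thm:univ} that must be proved directly. The paper does this by an elimination argument: a putative extra relation $p(f_\gamma)=0$ lifts to a Laurent identity $q(x_\gamma,z_\gamma)=0$ in $\C\big[\Xo^\univ\big]$, and the primitive exchange relations are used iteratively (following the Auslander--Reiten order on $\Pi$) to eliminate all non-initial cluster variables, reducing to an identity among the algebraically independent elements $x_1,\dots,x_n,z_\gamma$, which forces $p$ to lie in the ideal generated by the $R_\gamma$. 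Your proof contains no substitute for this step, and it also silently assumes surjectivity, i.e., that the $f_\gamma^{\pm 1}$ generate $\C\big[\Xo^\univ\big]^{T^\univ}$.

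The parts you do carry out are correct and agree with the paper: the splitting $\Xo^\univ \cong T^\univ \times X^\univ(1)$ rests on the claim that $\{\wt(x_\gamma)\}_{\gamma\in\Pi}$ is a basis of the character lattice (the paper's Proposition~\ref{prop:wtbasis}), and your derivation of this via homogeneity of the primitive exchange relations — expressing each $\wt(z_\gamma)$ in the span of the $\wt(x_\kappa)$ and counting ranks using really full rank — is a legitimate and arguably simpler alternative to the paper's argument through Theorem~\ref{thm:Newt} and the mesh relations. Likewise $f_\gamma|_{X^\univ(1)} = z_\gamma$ is the same bookkeeping as in the paper. To repair the proof you must either supply the elimination argument (or some other proof that the kernel of $u_\gamma\mapsto f_\gamma$ is exactly $I_D$) directly for $\tB^\univ$, or give an independent proof of Theorem~\ref{thm:fullrank} that does not route through the universal-coefficient case.
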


\bigskip\noindent{\large\bf 4.3.}
The relations in the following corollary will be discussed in further detail in Section~10.1.
\begin{Corollary}\label{cor:ext}
The ideal $I_D$ has a natural set of generators of the form $U + U' - 1$, given by the images of all exchange relations of $X^\univ$. \end{Corollary}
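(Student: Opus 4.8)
The plan is to leverage Theorem~\ref{thm:univ}, which identifies $\M_D$ with the locus $X^\univ(1) \subset X^\univ$ where all mutable cluster variables are set to $1$, via $u_\gamma \mapsto z_\gamma$. Under this identification, $\C[\M_D] = \C[u^{\pm 1}]/I_D$ is the image of the map that restricts a regular function on $X^\univ$ (a Laurent polynomial in the cluster and frozen variables) to $X^\univ(1)$. Every exchange relation of $\A(\tB^\univ)$ has the form $x_\mu x_{\mu'} = M_1 + M_2$, where $M_1, M_2$ are monomials in the remaining cluster variables and the frozen variables $z_\omega$. First I would observe that setting all mutable variables to $1$ turns each side of such a relation into a Laurent monomial in the $z_\omega$, so the relation becomes $1 = U + U'$ where $U, U'$ are Laurent monomials in the $z_\omega$; under $z_\gamma \mapsto u_\gamma$ this is an element of $I_D$ (it vanishes on $\M_D$), of the desired shape $U + U' - 1$.

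The substance of the claim is that these elements \emph{generate} $I_D$, not merely lie in it. Here I would first handle the primitive exchange relations: by Proposition~\ref{prop:YZ}, the primitive exchange relation indexed by $(t,j)$ reads
\begin{gather*}
x_{(t-1,j)} x_{(t,j)} = z_{(t,j)} \prod_{i \to j} x_{(t,i)}^{-a_{ij}} \prod_{j \to i} x_{(t-1,i)}^{-a_{ij}} + \prod_\omega z_\omega^{(\omega||(t,j))}.
\end{gather*}
Setting all $x$'s to $1$ and applying $z_\omega \mapsto u_\omega$ yields exactly $1 = u_{(t,j)}^{-1}\cdot\big(u_{(t,j)}\big) \cdots$; more precisely it yields (after clearing the monomial $u_{(t,j)}$, which is a unit in $\C[u^{\pm 1}]$) the relation $R_{(t,j)} = u_{(t,j)} + \prod_\omega u_\omega^{(\omega||(t,j))} - 1 = 0$. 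Thus the images of the primitive exchange relations already recover the generators $R_\gamma$ of $I_D$ from Definition~\ref{def:ID}, at least after inverting units — which is legitimate in $\C[u^{\pm 1}]$. This shows the ideal generated by the images of \emph{all} exchange relations contains $I_D$; combined with the first paragraph (each such image lies in $I_D$), we get equality.

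The main obstacle I anticipate is bookkeeping the units: the raw restriction of an exchange relation $x_\mu x_{\mu'} = M_1 + M_2$ is $1 = (\text{monomial in }u) + (\text{monomial in }u)$, and to present it literally in the form $U + U' - 1$ with $U, U'$ \emph{monomials} (not Laurent monomials) one must check that in each case at least one side restricts to an honest monomial, or else absorb a Laurent factor — this is why the statement is cleanest in $\C[u^{\pm 1}]/I_D$ and why Section~10.1 is cited for the detailed form. A secondary point is to confirm that working over $\C[u^{\pm 1}]$ versus $\C[u]$ does not change the ideal-generation conclusion: since $I_D$ is defined in both rings by the same generators $R_\gamma$, and $\tM_D = \Spec(\C[u]/I_D)$ while $\M_D = \Spec(\C[u^{\pm 1}]/I_D)$, the corollary as stated is about $I_D$ abstractly, so it suffices to argue in whichever ring Theorem~\ref{thm:univ} most directly provides — namely $\C[u^{\pm 1}]$ — and then note the generators are the same. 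I would close by remarking that the non-primitive exchange relations contribute the additional relations $U + U' = 1$ mentioned in Section~1.3, which are then automatically consequences of the $R_\gamma$.
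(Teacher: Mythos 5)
Your proposal is correct and follows the same route the paper intends: Corollary~\ref{cor:ext} is an immediate consequence of the proof of Theorem~\ref{thm:univ} in Section~4.4, where it is shown that the images of the primitive exchange relations are exactly the $R_\gamma$ and already generate the full ideal of relations among the $f_\gamma$, while the images of the remaining exchange relations visibly lie in $I_D$. (One small simplification: setting all mutable variables to $1$ in the primitive relation of Proposition~\ref{prop:YZ} yields $R_{(t,j)}$ directly, with no unit-clearing needed.)
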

The ideal $I_D$ also contains the $|\Pi|-n$ distinguished elements which are images of $1 - F^\univ_\gamma(\z)$.

\bigskip\noindent{\bf{\large 4.4.} Proof of Theorem~\ref{thm:univ}.} 
Recall that the mutable cluster variables of $\A^\univ$ are denoted~$x_\gamma$ and the frozen variables are denoted $z_\gamma$, where $\gamma \in \Pi$. Let~$X^\univ(1) \subset \Xo^\univ \subset X^\univ$ be the locus $\{x_\gamma = 1\}$, where all mutable cluster variables have been set to 1.

By Proposition~\ref{prop:YZ}, the primitive exchange relations are of the form
\begin{gather*}
x_{\tau \gamma} x_\gamma = z_\gamma S + \prod_{\omega \in \Pi } z_{\omega}^{(\omega||\gamma)},
\end{gather*}
where $S$ is a monomial in the mutable cluster variables. So,
\begin{gather*}
f_\gamma = \frac{z_\gamma S}{x_{\tau\gamma} x_\gamma }
\end{gather*}
and thus on $X^\univ(1)$ we have $(f_\gamma)|_{X^\univ(1)} = z_\gamma$ and the relation
\begin{gather}\label{eq:f}
f_\gamma + \prod_{\omega \in \Pi } f_{\omega}^{(\omega||\gamma)} = 1.
\end{gather}
We will now show that the multiplication map gives an isomorphism
\begin{gather*}
T^\univ \times X^\univ(1) \cong \Xo^\univ,
\end{gather*}
or equivalently, every $T^\univ$-orbit on $\Xo^\univ$ intersects $X^\univ(1)$ in exactly one point. The~character group of $T^\univ$ is naturally isomorphic to $\Z^{n+r}/\tB \Z^n$, which is a free abelian group of rank $r = |\Pi|$. Thus each cluster variable $x_\gamma$ has a weight (or degree) $\wt(x_\gamma) \in \Z^{n+r}/\tB \Z^n$ (see \eqref{eq:wtuniv} for the weight of initial and frozen variables). By Proposition~\ref{prop:wtbasis}, 
the set $\{\wt(x_\gamma) \,|\, \gamma \in \Pi\}$ form a basis of the lattice $\Z^{n+r}/\tB \Z^n$. Thus we have a projection $\Xo^\univ \mapsto T^\univ$ given by sending $x \in \Xo^\univ$ to the coordinates $(x_\gamma)_{\gamma \in \Pi}$, and the fiber of this projection is $X^\univ(1)$. This is an inverse to the multiplication map $T^\univ \times X^\univ(1) \to \Xo^\univ$, and we deduce that $T^\univ \times X^\univ(1) \cong \Xo^\univ$.

We conclude that $\C\big[\Xo^\univ\big]^{T^\univ} \!\!\cong \C[X^\univ(1)]$. Now, any $T^\univ$-invariant function in $\C\big[\Xo^\univ\big]$ is a linear combination of $T^\univ$-invariant Laurent monomials in mutable and frozen variables. Each such Laurent monomial restricts to a Laurent monomial in the $z_\gamma$-s on $\C[X^\univ(1)]$. It~follows that the functions $f_\gamma$ and their inverses generate $\C\big[\Xo^\univ\big]^{T^\univ}$, and by \eqref{eq:f} satisfy the same relations that $u_\gamma \in \C[\M_D]$ satisfy. Finally, we check that the generators $f_\gamma$ do not satisfy any further relations. Suppose we have a polynomial identity $p(f_\gamma) = 0$ inside $\C\big[\Xo^\univ\big]^{T^\univ}$. The~equality $p(f_\gamma) = 0$ is equivalent to an equality $q(x_\gamma,z_\gamma) = 0$ inside $\C\big[\Xo^\univ\big]$, where $q(x_\gamma,z_\gamma)$ is a Laurent polynomial. We~claim that the primitive exchange relations allow us to eliminate all the non-initial cluster variables, i.e.,
\begin{gather*}
q(x_\gamma,z_\gamma) = r(x_1,x_2,\dots,x_n,z_\gamma) \mod \mbox{ ideal generated by primitive exchange relations},
\end{gather*}
where $r(x_1,x_2,\dots,x_n,z_\gamma)$ is a Laurent polynomial and the ideal is taken inside $\C\big[\Xo^\univ\big]^{T^\univ}$. To see this, first note that $\deg(x_1),\dots,\deg(x_n)$ and $\deg(z_\gamma)$, $\gamma \in \Pi$ together span $\Z^{n+r}$, and thus we can always multiply $q(x_\gamma,z_\gamma)$ by a $T^\univ$-invariant monomial so that the denominator involves only initial $x_i$ and the $z_\gamma$. Next, we have
\begin{gather*}
x_\gamma = \frac{M+M'}{x_{\tau \gamma}} - x_\gamma R,
\end{gather*}
where $R = \frac{M+M'}{x_{\tau \gamma} x_\gamma} - 1$ is a primitive exchange relation (divided by $x_{\tau \gamma} x_\gamma$). This allows us (modulo the ideal) to replace $x_\gamma$ by an expression involving $x_{\tau \gamma}$ and $M+M'$. If~$\gamma = (t,j)$, the mutable cluster variables that appear in $M+M'$ are either of the form $(t-1,i)$ or of the form $(t,i)$, where $i \to j$ (see Proposition~\ref{prop:YZ}). It~follows that $x_\gamma$ will not appear again when this process is repeated. This proves our claim.

But $r(x_1,x_2,\dots,x_n,z_\gamma) = 0$ as an element of $\C\big[\Xo^\univ\big]^{T^\univ} \subset \C\big[\Xo^\univ\big] \subset \C(X^\univ)$ only if the polynomial $r$ is 0, since $x_1,x_2,\dots,x_n,z_\gamma$ are algebraically independent. We~conclude that $p(f_\gamma)$ lies in the ideal generated by primitive exchange relations. Thus the ideal of relations satisfied by the $f_\gamma$ is generated by \eqref{eq:f}. We~thus have an isomorphism of rings
\begin{gather*}
\C[\M_D] \longrightarrow \C\big[\Xo^\univ\big]^{T^\univ}, \qquad u_\gamma \longmapsto f_\gamma
\end{gather*}
and an isomorphism of varieties $X^\univ(1) \cong \M_D$. 

\bigskip\noindent{\bf {\large 4.5.} Proof of Theorem~\ref{thm:fullrank}.}
By the defining property of universal coefficients, we have a~homomorphism of rings $\phi\colon \A^\univ \to \A= \A\big(\tB\big)$ such that $\phi(x_\gamma^\univ) = x_\gamma$ and $\phi(z_\gamma)$ is a~Laurent monomial in the frozen variables $x_{n+1},\dots,x_{n+m}$ of $\A$. The~homomorphism $\phi$ may not be surjective, for example this would be the case if $\tB$ has rows equal to 0, or rows that are repeated.
However, the image $\A' := \phi(\A^\univ)$ is itself a cluster algebra: it is generated by~$\phi(x_\gamma^\univ)$ and the monomials $\phi(z_\gamma)$. The~monomials $\phi(z_\gamma)$ and their inverses generate a Laurent polynomial subring $S \subset \C\big[x_{n+1}^{\pm 1}, \dots, x_{n+m}^{\pm 1}\big]$ which is the coefficient ring of $\A'$. For any monomial~$M$ in~$x_{n+1},\dots,x_{n+m}$, we can find $t \in T$ such that $t \cdot M$ is a Laurent monomial in $S$. Thus, we have $\A^T = \A'^{T'}$. Since $\A$ has full rank, the quotient $X/T = X'/T'$ has dimension $n$, and $\A'$ also has full rank.

Replacing $\A$ by $\A'$, we now assume that $\phi\colon \A^\univ \to \A$ is surjective, and thus we have a~clo\-sed immersion $\psi\colon \Xo \to \Xo^\univ$. The~monomials $\phi(z_\gamma)$, $\gamma \in \Pi$ together define a sur\-jec\-tive linear map $C'\colon \Z^r \to \Z^m$. Extending by the identity in the first $n$ coordinates, we~get a~linear map $\Z^{n+r} \to \Z^{n+m}$, represented by a matrix $C$ satisfying $C \tB^\univ = \tB$. Suppose that $t \in T= \Hom\big(\Z^{n+m}/\tB \Z^n, \C^\ast\big)$. Then composing $t$ with $C$, we get an element $t' \in T^\univ =\Hom\big(\Z^{n+r}/\tB^\univ \Z^n,\C^\ast\big)$. Since $C$ is surjective, the induced map $\psi\colon T \to T^\univ$ is injective and thus the inclusion of a subgroup.

We need to show that $\phi\colon \C\big[\Xo^\univ\big]^{T^\univ} \to \C\big[\Xo\big]^T$ is an isomorphism. For surjectivity, suppose that $f \in \C\big[\Xo\big]^T$. Then we may assume that $f$ is a Laurent monomial in mutable and frozen variables. Let~$g \in \C\big[\Xo^\univ\big]$ be such that $\phi(g) = f$. It~is immediate that $g$ is invariant under~$T$, i.e., the weight $\wt(g) \in \Z^{n+r}/\tB^\univ \Z^n$ of $g$ satisfies $C \wt(g) = 0$ in $\Z^{n+m}/\tB \Z^n$. Thus, there exists $\u \in \tB^\univ \Z^n$ such that $C(\wt(g) + \u) = 0 \in \Z^{n+m}$. The~matrix $C$ is the identity in the first $n$-coordinates, so the first $n$ coordinates of $\wt(g)+\u$ is 0. Let~$M = \prod_{\gamma \in \Pi} z_\gamma^{-a_\gamma}$, where~$(a_\gamma)$ are the last $m$ coordinates of $\wt(g)$. Then by construction we have $\wt(gM)+\u = 0$, i.e., $\wt(gM) = 0 \in \Z^{n+r}/\tB^\univ \Z^n$. Furthermore, $\phi(M) = 1$ and thus $gM \in \C\big[\Xo^\univ\big]^{T^\univ}$ satisfies $\phi(gM) = f$, proving surjectivity.

For injectivity, suppose that $\phi(g) = 0$, where $g \in \C\big[\Xo^\univ\big]^{T^\univ}$ is nonzero. We~have already shown that $X^\univ/T^\univ$ is an irreducible affine variety in Section~4.4. The~affine variety $\Spec\big(\C\big[\Xo\big]^T\big)$ is thus identified with a subvariety of $X^\univ/T^\univ$ of lower dimension. But this is impossible, since $\dim(X/T) =n = \dim(X^\univ/T^\univ)$.

The isomorphism $\C[\M_D] \cong \C\big[\Xo\big]^T$ given by $u_\gamma \mapsto f_\gamma$ now follows from Section~4.4.

\bigskip\noindent{\bf {\large 4.6.} Proof of Theorem~\ref{thm:principal}.}
The group $\Z^{2n}/\tB^\prin \Z^n$ can be naturally identified with the subgroup $\Z^n = \Z^{[1,n]} \subset \Z^{[1,2n]}=\Z^{2n}$ consisting of vectors which vanish in the last $n$-coordinates. Under this identification, the torus $T^\prin$ has character lattice $\Z^n$, and the grading on $\A^\prin$ is given by \eqref{eq:prindeg}. By Theorem~\ref{thm:fullrank}, we have $\M_D \cong \Xo^\prin/T^\prin$. It~follows from $\wt(x_i) = e_i$ that $\Xo^\prin/T^\prin$ is identified with the subvariety $\Xo^\prin(1) \subset \Xo^\prin$, where all initial cluster variables are set to 1.

The function $f^\prin_\gamma$ on $\Xo^\prin(1)$ restricts to the rational function in $y_1,\dots,y_n$ given by (see \cite[Theorem~1.5]{YZ})
\begin{gather} \label{eq:fy}
f_{(t,j)}(\y) = \begin{cases}
\frac{\prod_{i \to j} F_{(t,i)}^{-a_{ij}}(\y) \prod_{j \to i} F_{(t-1,i)}^{-a_{ij}}(\y)}{F_{(t-1,j)}(\y) F_{(t,j)}(\y) }, &t \neq 0,
\\[1.5ex]
 \frac{y_j \prod_{i \to j} F_{(t,i)}^{-a_{ij}}(\y) \prod_{j \to i} F_{(t-1,i)}^{-a_{ij}}(\y)}{F_{(t-1,j)}(\y) F_{(t,j)}(\y) }, &t = 0
 \end{cases}
\end{gather}
for $\gamma = (t,j)$ with $0 \leq t \leq r_j$.
By the following result, $\C\big[\Xo^\prin(1)] \simeq \C[\M_D\big]$ is isomorphic to the subring of $\C(y_1,y_2,\dots,y_n)$ generated by $F_\gamma^{\pm 1}(\y)$ and $y_i^{\pm 1}$.

\begin{Proposition}\label{prop:hinvert}
The rational functions $\{f_\gamma(\y) \,|\, \gamma \in \Pi\}$ and $\{y_1,\dots,y_n\} \cup \{ F_\gamma(\y) \,|\, \gamma \in \Pi^+\}$ are related by an invertible monomial transformation.
\end{Proposition}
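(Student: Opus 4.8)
The plan is to write the monomial change of variables down explicitly and check that it is unimodular. First, both collections have $r=|\Pi|$ elements, since $\{y_1,\dots,y_n\}$ contributes $n$ and $\{F_\gamma(\y):\gamma\in\Pi^+\}$ contributes $|\Pi^+|=r-n$. By \eqref{eq:fy} together with the convention $F_{(0,i)}(\y)=1$, each $f_\gamma(\y)$ is visibly a Laurent monomial in $\{y_1,\dots,y_n\}\cup\{F_\omega(\y):\omega\in\Pi^+\}$: the surviving $F$-factors are exactly those indexed by non-initial cluster variables, including the factors $F_{(-1,i)}=F_{(r_{i^*},i^*)}$, whose indices lie in $\Pi^+$ because $r_{i^*}\ge1$. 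So it suffices to show that the integer matrix $M=(M_{\gamma,c})$ defined by $f_\gamma(\y)=\prod_c c^{M_{\gamma,c}}$ — with $\gamma$ ranging over $\Pi$ and $c$ over $\{y_i\}\sqcup\{F_\omega:\omega\in\Pi^+\}$ — lies in $\GL_r(\Z)$.

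I would prove this by bringing $M$ to triangular form with diagonal entries $\pm1$. Identify rows with columns via $\sigma(0,j)=y_j$ and $\sigma(t,j)=F_{(t,j)}(\y)$ for $t\ge1$, and totally order the columns by the rule: every $F$-column precedes every $y$-column; among $F$-columns, $F_{(t,j)}\prec F_{(t',j')}$ when $t<t'$, or when $t=t'$ and $j$ precedes $j'$ in a fixed topological order of the quiver of $B$ (which is acyclic); and the $y$-columns are ordered arbitrarily. Reading off \eqref{eq:fy}: in the row $f_{(t,j)}$ with $t\ge1$, the factor $F_{(t,j)}$ occurs with exponent $-1$, no $y$ occurs, and every other non-constant factor present — one of $F_{(t-1,j)}$, the $F_{(t-1,i)}$ with $j\to i$, or the $F_{(t,i)}$ with $i\to j$ — indexes a column strictly $\prec F_{(t,j)}$, the last class precisely because $i$ precedes $j$ whenever $i\to j$. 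In the row $f_{(0,j)}$, the factor $y_j$ occurs with exponent $+1$, it is the only $y$-factor, and every remaining factor is an $F$, hence $\prec y_j$. Thus, listing rows (via $\sigma$) and columns in $\prec$-increasing order, $M$ is lower triangular with unit diagonal, so $\det M=\pm1$ and $M\in\GL_r(\Z)$.

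The real content is the case analysis in the second paragraph, and its one essential input is acyclicity of the initial quiver: this is what lets the $F_{(t,i)}$ with $i\to j$ — the only factors sharing the ``time index'' $t$ with the diagonal entry $F_{(t,j)}$ — be placed before it. Everything else reads off \eqref{eq:fy} directly. Equivalently, the triangularity says one can solve \eqref{eq:fy} recursively for the $F_{(t,j)}(\y)$ in increasing order of $t$ (and, for fixed $t$, in topological order of the quiver), and finally for the $y_j$, with each step expressing the new function as a Laurent monomial in the $f_\gamma(\y)$ already determined; this is the inverse transformation.
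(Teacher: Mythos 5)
Your proof is correct, and it takes a genuinely different route from the paper's. The paper proves Proposition~\ref{prop:hinvert} by tropicalization: it invokes Theorem~\ref{thm:FNewton} to say that the tropicalization $G$ of any Laurent monomial in $\{y_i, F_\gamma(\y)\}$ is linear on the cones of $-\N\big(B^\vee\big)$, hence determined by its values $G\big({-}\g^\vee_\gamma\big)$ on the rays, and then uses Theorem~\ref{thm:pair} (the statement $A_\gamma\big({-}\g^\vee_\omega\big)=\delta_{\omega,\tau\gamma}$, itself proved via Auslander--Reiten triangles and folding) to conclude that the $\Trop(f_\gamma)$ form the dual basis, which gives both invertibility and the closed-form inverse $m(\y)=\prod_\gamma f_\gamma(\y)^{G(-\g^\vee_{\tau\gamma})}$. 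You instead read the exponent matrix directly off \eqref{eq:fy} and exhibit a column ordering (increasing $t$, then topological order of the acyclic quiver, then the $y$-columns) making it triangular with unit diagonal; your case analysis is accurate, including the observations that the $F_{(-1,i)}=F_{(r_{i^*},i^*)}$ factors lie in $\Pi^+$ and that initial $F$'s are constant and drop out. What each buys: your argument is elementary and self-contained, needing only \eqref{eq:fy} and acyclicity of $B$, and it makes the recursive solvability transparent; the paper's argument is heavier but produces the explicit tropical formula for the inverse, which is reused later (e.g., in the proof of Proposition~\ref{prop:RD} and in the discussion of the exponents in Section~12.2), whereas your inverse is only given by back-substitution.
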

The proof of Proposition~\ref{prop:hinvert} is delayed until Section~6.4.

\bigskip\noindent{\bf {\large 4.7.} Proof of Proposition~\ref{prop:forget}.} 
Using $\tau$, let us assume that $\gamma = (0,j)$ so that $x_\gamma = x_j$ is an initial mutable cluster variable. Let~$\tB$ be full rank of type $D$ and let $\A\big(\tB_j\big)$ denote the cluster algebra of type $D_1$ that is obtained by freezing the variable $x_j$ in $\A\big(\tB\big)$. The~extended exchange matrix $\tB_j$ is obtained from $\tB$ by removing the $j$-th row and we have $\A\big(\tB\big)\big[x_j^{-1}\big] = \A\big(\tB_j\big)$. Thus $\Ao\big(\tB_j\big) \subset \Ao\big(\tB\big)$. The~action of the cluster automorphism group $T\big(\tB\big)$ extends to an action on~$\A\big(\tB_j\big)$ and we can identify $T\big(\tB\big)$ with a subgroup of $T\big(\tB_j\big)$. The~morphism $\M_{D} \to \M_{D_1}$ corresponds to the inclusion of rings $\Ao\big(\tB_j\big)^{T(\tB_j)} \subset \Ao\big(\tB_j\big)^{T(\tB)} \subset \Ao\big(\tB\big)^{T(\tB)}$.

\section[MD as an affine open in a projective toric variety]
{$\boldsymbol{\tM_D}$ as an affine open in a projective toric variety}

In this section, we show that the partial compactification $\tM_D$ is an affine open subspace of the projective toric variety $X_{\N(B^\vee)}$ associated to the cluster fan of $B^\vee$. The~stratification (Proposition~\ref{prop:MDstrat}) of $\tM_D$ is inherited from the natural stratification of $X_{\N(B^\vee)}$ by torus orbits. Our approach follows that of \cite{AHL}.

\bigskip\noindent{\large\bf 5.1.}
Let $\C(\y) = \C(y_1,\dots,y_n)$ denote the field of rational functions. Recall that for $\gamma \in \Pi$, we have defined $f_\gamma(\y) \in \C(\y)$ in \eqref{eq:fy}.
By the proof of Theorem~\ref{thm:principal}, $\C[\M_D]$ is isomorphic to the subring of $\C(\y)$ generated by $f_\gamma(\y)^{\pm 1}$. Define $R_B \subset \C(\y)$ to be the subring generated by $\{f_\gamma(\y) \,|\, \gamma \in \Pi\}$. Some examples of $f_\gamma(\y)$ are computed in Examples~\ref{ex:A3} and~\ref{ex:B3}.

\begin{Theorem}\label{thm:RDisom}
The coordinate ring $\C\big[\tM_D\big]$ is isomorphic to $R_B$. \end{Theorem}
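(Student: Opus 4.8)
The plan is to exhibit the isomorphism $\C[\tM_D] \cong R_B$ by realizing both rings concretely inside $\C(\y)$ and matching generators and relations. First I would recall from Definition~\ref{def:ID} that $\C[\tM_D] = \C[u_\gamma \mid \gamma \in \Pi]/I_D$, where $I_D$ is generated by the relations $R_\gamma$ of \eqref{eq:I}. On the other side, $R_B \subset \C(\y)$ is the subring generated by $\{f_\gamma(\y)\}$, and by equation \eqref{eq:f} in the proof of Theorem~\ref{thm:univ} (which transports verbatim to the principal-coefficients setting via \eqref{eq:fy}, or directly from \cite[Theorem~1.5]{YZ}), the $f_\gamma(\y)$ satisfy exactly $f_\gamma + \prod_\omega f_\omega^{(\omega\|\gamma)} = 1$. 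Hence the assignment $u_\gamma \mapsto f_\gamma(\y)$ descends to a well-defined surjective ring homomorphism $\Phi\colon \C[\tM_D] \twoheadrightarrow R_B$. The entire content of the theorem is therefore the \emph{injectivity} of $\Phi$, i.e., that the $f_\gamma(\y)$ satisfy no relations beyond those in $I_D$.

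To prove injectivity I would mirror the strategy already used in Section~4.4 for the Laurent-ring statement. The key point is Proposition~\ref{prop:hinvert}: the rational functions $\{f_\gamma(\y) \mid \gamma \in \Pi\}$ and the functions $\{y_1,\dots,y_n\} \cup \{F_\gamma(\y) \mid \gamma \in \Pi^+\}$ are related by an invertible monomial transformation. Combined with the fact that $y_1,\dots,y_n,$ together with the $F_\gamma(\y)$ for $\gamma \in \Pi^+$, generate a ring in which the only relations are, after clearing denominators, the primitive exchange relations \eqref{eq:exchange} specialized at universal/principal coefficients --- equivalently the relations $1 - F^\univ_\gamma(\z)$ together with \eqref{eq:f} --- this lets me eliminate all non-initial $f$-variables and reduce any putative relation $p(f_\gamma) = 0$ to a relation among algebraically independent quantities, forcing $p$ to lie in $I_D$. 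The cleanest route is: given $p(u_\gamma) \in \ker\Phi$, use the relations $R_\gamma$ (specifically the form $u_\gamma = 1 - \prod_\omega u_\omega^{(\omega\|\gamma)}$, which by the Auslander--Reiten structure of Section~2.3 expresses each non-initial $u_{(t,j)}$ in terms of $u$'s indexed by ``earlier'' cluster variables $(t-1,i)$ and $(t,i)$ with $i\to j$) to rewrite $p$ modulo $I_D$ as a polynomial $q$ in the $n$ ``initial'' $u$-variables $u_{(0,i)}$; then show these $n$ functions $f_{(0,i)}(\y)$ are algebraically independent in $\C(\y)$ (which follows from Proposition~\ref{prop:hinvert}, since under the monomial change of variables they correspond, up to monomials in the $y_i$, to the $y_i$ themselves), so $q = 0$ and $p \in I_D$.

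The main obstacle I expect is the elimination/termination argument: one must verify that iteratively substituting $u_{(t,j)} \mapsto 1 - \prod_\omega u_\omega^{(\omega\|\gamma)}$ actually terminates and produces a polynomial in the initial variables alone, rather than looping. This is exactly the subtlety handled in Section~4.4 via the observation that the monomial $M + M'$ in the primitive exchange relation for $\gamma = (t,j)$ involves only cluster variables of the form $(t-1,i)$ or $(t,i)$ with $i \to j$, so a well-founded induction on the ``walk'' parameter $t$ (and within a fixed $t$, on the acyclic order of $I$) closes the argument. The only genuinely new ingredient relative to Theorem~\ref{thm:univ} is that we are now working in the polynomial ring $\C[u]$ rather than $\C[u^{\pm 1}]$, so I must be careful that no inversion of $u$-variables is secretly used; but since the substitution $u_\gamma \mapsto 1 - (\cdots)$ is polynomial and the target relation $q(f_{(0,i)}(\y)) = 0$ only needs algebraic independence of the $f_{(0,i)}$ (a statement insensitive to passing between $\C[\y]$ and $\C(\y)$), this causes no difficulty. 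Thus the theorem reduces, modulo Proposition~\ref{prop:hinvert}, to the same bookkeeping already performed for universal coefficients, now read off through the monomial dictionary between $f$-variables and $F$-polynomials.
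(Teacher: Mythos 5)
Your reduction to the injectivity of $\Phi\colon \C[u]/I_D \to R_B$ is correct, and surjectivity is indeed immediate. But the elimination step at the heart of your injectivity argument does not work, because the relations $R_\gamma$ do not have the triangular structure you attribute to them. The Auslander--Reiten structure exploited in Section~4.4 applies to the \emph{primitive exchange relations} $x_{\tau\gamma}x_\gamma = M + M'$, where the monomial $M$ involves only the cluster variables $x_{(t-1,i)}$ and $x_{(t,i)}$ with $i \to j$; that is what makes the substitution for $x_\gamma$ terminate. The relation $R_\gamma$, by contrast, reads $u_\gamma = 1 - \prod_\omega u_\omega^{(\omega||\gamma)}$ where the product runs over \emph{all} $\omega$ incompatible with $\gamma$ --- these are spread throughout $\Pi$ and include variables ``later'' in the walk. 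Already for $A_2$ (the pentagon), $u_{13} = 1 - u_{24}u_{25}$ while $u_{24} = 1 - u_{13}u_{35}$, so iterated substitution reintroduces $u_{13}$ and never terminates; there is no well-founded order making your induction close. (One \emph{can} solve for all $u$-variables in terms of those of an acyclic seed, but only rationally, via the local $u$-equations of Section~10.2, which introduce denominators $1-u_\gamma$; that only recovers the statement after localization, which is already known from Theorem~\ref{thm:principal}, and says nothing about potential kernel elements supported on the boundary $\{u_\gamma = 0\}$ --- precisely the case your final ``no inversion is secretly used'' remark needs to exclude.)

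The paper closes this gap by an entirely different mechanism. Starting from the localized isomorphism of Theorem~\ref{thm:principal}, it shows that for each $\gamma$ the induced map $\C[u]/(I_D + (u_\gamma)) \to R_B/(f_\gamma(\y))$ is an isomorphism: by Proposition~\ref{prop:divisorsplit} the quotient $\tM_D(\{\gamma\})$ factors as a product of spaces $\tM_{D_i}$ of strictly smaller rank, so this follows by induction on the rank of $D$. The commutative-algebra Lemma~\ref{lem:ringiso} then patches the localized isomorphism together with these quotient isomorphisms into an isomorphism of the full polynomial quotient. To repair your argument you would need both a genuine termination mechanism for the substitution and a separate device for detecting kernel elements killed by localization at the $u_\gamma$; the boundary induction plus Lemma~\ref{lem:ringiso} is exactly such a device.
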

\begin{proof}
There is a surjective ring homomorphism $\varphi'\colon \C[u] \to R_B$ given by $u_\gamma \mapsto f_\gamma(\y)$. We~already know that the kernel $K$ of $\varphi$ contains the ideal $I_D \subset \C[u]$. We~need to show that the homomorphism $\varphi\colon \C[u]/I_D \to R_B$ is an isomorphism. From Theorem~\ref{thm:principal}, we know this holds after inverting the $\{u_\gamma \,|\, \gamma \in \Pi\}$ and $\{f_\gamma \,|\, \gamma \in \Pi\}$.

By definition, $\tM_D(\{\gamma\})$ is cut out of $\tM_D$ by the ideal $(u_\gamma)$. Thus the ring $\C[u]/(I_D+(u_\gamma))$ is isomorphic to $\C\big[\tM_D(\{\gamma\})\big]$, and by Proposition~\ref{prop:divisorsplit}, we have $\tM_D(\{\gamma\}) = \tM_{D_1} \times \tM_{D_2} \times \cdots \allowbreak \times \tM_{D_s}$ for some Dynkin diagrams $D_i$. By induction on the rank of $D$, we have that $\varphi_i\colon \C[u]/\allowbreak(I_D+(u_\gamma)) \to R_B/(f_\gamma(\y))$ is an isomorphism. Applying Lemma \ref{lem:ringiso}, we conclude that $\varphi$ itself is an isomorphism.
\end{proof}

\bigskip\noindent{\large\bf 5.2.}
We give another description of $R_B \subset \C(\y)$. Let~$R(\y) = P(\y)/Q(\y) \in \C(\y)$ be a rational function such that $P(\y),Q(\y) \in \Z[\y]$ have positive integer coefficients. Then $\Trop(R(\y))$ is the piecewise-linear function on $\R^n$ given by the formal substitution
\begin{gather*}
y_i \mapsto Y_i, \qquad (+, \times, \div) \mapsto (\min, + , -).
\end{gather*}
For example, $\Trop\big(\big(3y_1^2y_2 + y_2^2\big)/(y_2+6y_3)\big) = \min(2Y_1+Y_2,2Y_2)-\min(Y_2,Y_3)$. Note that the coefficients are unimportant since, for example, $\Trop(2y) = \Trop(y+y) = \min(Y,Y) = Y$.

The domains of linearity of the piecewise-linear function $L(\Y) = \Trop(R(\y))$ define the structure of a complete fan on $\R^n$. A piecewise-linear function $L(\Y)\colon \R^n \to \R$ is called \emph{nonnegative}, denoted $L(\Y)\geq 0$, if it takes nonnegative values on $\R^n$.

\begin{Proposition}\label{prop:RD}
The ring $R_B$ is equal to the subring of $\C(\y)$ generated by rational functions~$R(\y)$ satisfying \begin{enumerate}\itemsep=0pt
\item[$(1)$]
$R(\y) = \prod_{i=1}^n y_i^{a_i} \prod_{\gamma \in \Pi} F_\gamma(\y)^{a_\gamma}$ is a Laurent monomial in $y_i$ and $F_\gamma(\y)$,
\item[$(2)$]
$\Trop(R(\y))$ is nonnegative.
\end{enumerate}
\end{Proposition}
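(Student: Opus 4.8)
The plan is to prove the two inclusions separately. Write $R_B' \subset \C(\y)$ for the subring generated by all Laurent monomials $R(\y) = \prod_i y_i^{a_i} \prod_\gamma F_\gamma(\y)^{a_\gamma}$ with $\Trop(R(\y)) \geq 0$; we must show $R_B = R_B'$. For the inclusion $R_B \subseteq R_B'$, by Proposition~\ref{prop:hinvert} the ring $R_B$ is generated by the $f_\gamma(\y)$, which are themselves Laurent monomials in the $y_i$ and the $F_\gamma(\y)$ (formula~\eqref{eq:fy}), so condition~(1) is automatic. What remains is to check $\Trop(f_\gamma(\y)) \geq 0$ for each $\gamma \in \Pi$. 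This is exactly the content of the relation $R_\gamma$: since $f_\gamma + \prod_\omega f_\omega^{(\omega\|\gamma)} = 1$ holds as an identity in $\C(\y)$ with $f_\gamma$ and each $f_\omega$ being quotients of polynomials with positive coefficients, tropicalizing the identity $1 = f_\gamma(\y) + (\text{second term})$ — more precisely, clearing denominators to write $Q(\y) = P_1(\y) + P_2(\y)$ with all three having positive coefficients — forces $\Trop(P_i) \geq \Trop(Q)$ for $i=1,2$, hence $\Trop(f_\gamma) = \Trop(P_1) - \Trop(Q) \geq 0$. I would need to be slightly careful that the $F_\gamma(\y)$ have positive coefficients (true, as they are $F$-polynomials of a cluster algebra, by positivity of the Laurent phenomenon), and that the numerator and denominator in~\eqref{eq:fy} share no cancellation that would spoil the tropical inequality — but since the tropicalization of a ratio of positive polynomials only depends on the polynomials and $\min$ is subadditive in the right direction, the argument goes through regardless.

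For the reverse inclusion $R_B' \subseteq R_B$, suppose $R(\y) = \prod_i y_i^{a_i} \prod_\gamma F_\gamma(\y)^{a_\gamma}$ with $\Trop(R) \geq 0$; we must express $R$ as a polynomial in the $f_\gamma(\y)$. By Proposition~\ref{prop:hinvert}, the transformation between $\{f_\gamma\}$ and $\{y_i\} \cup \{F_\gamma\}$ is an invertible monomial transformation, so $R$ is equally well a Laurent monomial $R = \prod_\gamma f_\gamma^{b_\gamma}$ in the $f_\gamma$, and the condition $\Trop(R) \geq 0$ is preserved. Now the key geometric input is Theorem~\ref{thm:FNewton}: the normal fan of $\sum_\gamma P_\gamma$ is $\N(B^\vee)$, which by Theorem~\ref{thm:clusterfan} is a complete smooth (hence unimodular) simplicial fan. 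The rays of $\N(B^\vee)$ are the $\g$-vectors $\g_\gamma$ of $B^\vee$ (equivalently indexed by $\Pi$), and on each maximal cone — a cluster $\{\gamma_1,\dots,\gamma_n\}$ — these rays form a $\Z$-basis of $\Z^n$. I would argue that a Laurent monomial $\prod_\gamma f_\gamma^{b_\gamma}$ has nonnegative tropicalization precisely when, cone by cone, it restricts to a monomial (not a Laurent monomial) in the local coordinates dual to that cluster's $\g$-vectors; this is the standard dictionary between global sections of the structure sheaf of a toric variety and piecewise-linear functions that are nonnegative. Then an induction — peeling off one $f_\gamma$ at a time, using that some $f_\gamma$ must appear with a nonnegative exponent after a suitable monomial change of coordinates adapted to a cone where $\Trop(R)$ vanishes — expresses $R$ as a genuine polynomial in the $f_\gamma$, placing it in $R_B$.

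The main obstacle is the reverse inclusion, specifically making precise the claim that $\Trop\big(\prod_\gamma f_\gamma^{b_\gamma}\big) \geq 0$ implies the monomial is a polynomial in the $f_\gamma$. The cleanest route, which I expect the authors also take, is to invoke the toric picture being set up in this section: $\tM_D$ will be identified (Theorem~\ref{thm:toric}, alluded to in the introduction) with an affine open in the projective toric variety $X_{\N(B^\vee)}$, and $R_B = \C[\tM_D]$ by Theorem~\ref{thm:RDisom}. Concretely, $\Trop(f_\gamma)$ is the support function of a lattice polytope (a Minkowski summand of $\sum_\gamma P_\gamma$), and the cone of piecewise-linear functions of the form $\sum_\gamma b_\gamma \Trop(f_\gamma)$ that are $\geq 0$ is dual to the monoid of lattice points that is exactly the homogeneous coordinate monoid of that affine chart. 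Thus the statement is really a restatement of Gordan's lemma / the basic toric correspondence once Theorem~\ref{thm:FNewton} and the unimodularity of $\N(B^\vee)$ are in hand; the remaining labor is bookkeeping to match the two descriptions of the monoid, and handling the $y_i$ (which correspond to the non-compact directions, i.e.\ the torus factor) alongside the $F_\gamma$.
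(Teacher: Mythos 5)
Your forward inclusion $R_B\subseteq R_B'$ is correct, and it is a genuinely different (and more elementary) route than the paper's: you deduce $\Trop(f_\gamma)\geq 0$ from the relation $f_\gamma+\prod_\omega f_\omega^{(\omega||\gamma)}=1$ by clearing denominators among positive polynomials, whereas the paper gets it as a byproduct of Theorem~\ref{thm:pair}. That argument is sound (the support of $P_1$ is contained in that of $Q$, so $\Trop(P_1)\geq\Trop(Q)$, and $\Trop$ of a positive rational function is independent of the chosen representation).

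The reverse inclusion, however, has a genuine gap. After writing $R=\prod_\gamma f_\gamma^{b_\gamma}$ via Proposition~\ref{prop:hinvert}, you must show that $\Trop(R)\geq 0$ forces every $b_\gamma\geq 0$ (equivalently, that $R$ is an honest monomial, hence in $R_B$). The ``standard toric dictionary'' does not give this by itself: knowing that $\N(B^\vee)$ is complete, smooth and simplicial, and that the $\Trop(f_\gamma)$ span the space of piecewise-linear functions on $-\N(B^\vee)$, only tells you that nonnegativity of $L=\sum_\gamma b_\gamma\Trop(f_\gamma)$ is equivalent to nonnegativity of the ray values $L(-\g^\vee_\omega)$; to translate ray values into the exponents $b_\gamma$ you need the matrix $\big(\Trop(f_\gamma)(-\g^\vee_\omega)\big)$ to be (a permutation of) the identity. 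That is exactly Theorem~\ref{thm:pair}, $A_\gamma\big({-}\g^\vee_\omega\big)=\delta_{\omega,\tau\gamma}$, which the paper proves by an Auslander--Reiten computation and then uses here: $b_\gamma=\Trop(R)\big({-}\g^\vee_{\tau\gamma}\big)\geq 0$. Your sketched alternative --- ``peeling off one $f_\gamma$ at a time, using that some $f_\gamma$ must appear with a nonnegative exponent after a suitable monomial change of coordinates'' --- is not a proof as it stands: without the duality of Theorem~\ref{thm:pair} there is no reason a priori why the cone of nonnegative piecewise-linear functions should be the nonnegative orthant in the basis $\{\Trop(f_\gamma)\}$, and the induction has no mechanism for decreasing a negative exponent. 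Once you cite Theorem~\ref{thm:pair} (or equivalently the explicit exponent formula $m(\y)=\prod_\gamma f_\gamma(\y)^{G(-\g^\vee_{\tau\gamma})}$ from the proof of Proposition~\ref{prop:hinvert}), the reverse inclusion closes in one line, which is precisely what the paper does.
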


\begin{proof}
Let $R(\y)$ be a Laurent monomial in $\{y_i, F_\gamma(\y)\}$. By Theorem~\ref{thm:FNewton}, the domains of linearity of the function $L(\Y) = \Trop(R(\y))$ is a coarsening of the negative of the cluster fan $-\N(B^\vee)$. Thus $L(\Y)$ is uniquely determined by $b_\gamma = L(-\g_\gamma)$ as $\gamma$ varies over $\Pi$, and $\g_\gamma$ denotes a $\g$-vector. As in the proof of Proposition~\ref{prop:hinvert}, we have $R(\y) = \prod_\gamma f_\gamma(\y)^{-b_{\tau\gamma}}$. The~condition $L(\Y) \geq 0$ is equivalent to $b_\gamma \geq 0$ for all $\gamma \in \Pi$. Thus the subring of rational functions $R(\y)$ satisfying (1) and (2) is exactly the subring $R_D$.
\end{proof}

\medskip\noindent{\large\bf 5.3.}
The Laurent polynomial ring $\C\big[y_1^{\pm 1}, \dots, y_n^{\pm 1}\big]$ is the coordinate ring of an $n$-dimensional torus $T_\y$. Recall that $F(\y) = \prod_\gamma F_\gamma(\y)$. The~following result is an application of \cite[Section~10]{AHL}.

\begin{Theorem}\label{thm:toric}
The affine scheme $\tM_D$ is isomorphic to the affine open $\{F(\y) \neq 0\}$ in the projective toric variety $X_{\N(B^\vee)}$ associated to the complete fan $\N\big(B^\vee\big)$. The~subvariety \mbox{$\M_D \subset \tM_D$} is identified with the intersection of $\{F(\y) \neq 0\}$ with the open torus orbit $T_\y$ in $X_{\N(B^\vee)}$.
\end{Theorem}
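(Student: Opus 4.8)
The plan is to reduce the statement to an identification of coordinate rings and then feed it through the toric dictionary, following the mechanism of \cite[Section~10]{AHL}. By Theorem~\ref{thm:RDisom} we already have $\C[\tM_D]\cong R_B$, where $R_B\subset\C(\y)$ is generated by the functions $f_\gamma(\y)$, so it suffices to identify $R_B$ with the coordinate ring of the affine open $\{F\neq 0\}\subset X_{\N(B^\vee)}$. First I would describe the toric side. By Theorem~\ref{thm:FNewton} the fan $\N(B^\vee)$ is the normal fan of the Minkowski sum $P=\sum_{\gamma\in\Pi}P_\gamma=\operatorname{Newt}(F)$, which is full-dimensional since $\N(B^\vee)$ is a complete fan in $\R^n$ (Theorem~\ref{thm:clusterfan}); thus $X_{\N(B^\vee)}$ is the projective toric variety attached to $P$, and $P$ determines an ample torus-invariant divisor $D_P$ with $H^0\big(X_{\N(B^\vee)},\mathcal{O}(kD_P)\big)=\bigoplus_{m\in kP\cap\Z^n}\C\,\chi^m$. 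Since $\operatorname{Newt}(F)=P$, the Laurent polynomial $F=\prod_\gamma F_\gamma(\y)$ is a global section of $\mathcal{O}(D_P)$; hence its nonvanishing locus $U:=\{F\neq 0\}$ is affine, and writing $X_{\N(B^\vee)}=\operatorname{Proj}\bigoplus_{k\ge 0}H^0(\mathcal{O}(kD_P))$ with $U=D_+(F)$ yields that $\C[U]$ is the $\C$-subspace of $\C(\y)$ spanned by the Laurent monomials $y^m/F^k$ with $k\ge 0$ and $m\in kP\cap\Z^n$ (this holds even if $P$ is not a normal polytope).

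Next I would match this with the description of $R_B$ in Proposition~\ref{prop:RD}, namely that $R_B$ is generated by the Laurent monomials $R(\y)=\prod_i y_i^{a_i}\prod_\gamma F_\gamma(\y)^{a_\gamma}$ with $\Trop(R)(\Y)\ge 0$ for all $\Y$. I would use throughout that, since $F_\gamma$ has nonnegative coefficients and Newton polytope $P_\gamma$, one has $\Trop(F_\gamma)=h_{P_\gamma}$ with $h_A(\Y):=\min_{x\in A}\langle x,\Y\rangle$; consequently $\Trop(F)=\sum_\gamma h_{P_\gamma}=h_P$, support functions are additive over Minkowski sums, and for a lattice point $m$ and a polytope $A$ one has $m\in kA\iff\langle m,\Y\rangle\ge h_{kA}(\Y)$ for all $\Y$. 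For $\C[U]\subseteq R_B$: each generator $y^m/F^k=\prod_i y_i^{m_i}\prod_\gamma F_\gamma(\y)^{-k}$ has the required monomial form, and $\Trop(y^m/F^k)(\Y)=\langle m,\Y\rangle-h_{kP}(\Y)\ge 0$ exactly because $m\in kP$, so it lies in $R_B$ by Proposition~\ref{prop:RD}. For the reverse inclusion: given a generator $R=y^a\prod_\gamma F_\gamma^{a_\gamma}$ with $\Trop(R)\ge 0$, choose $k$ with $k+a_\gamma\ge 0$ for all $\gamma$ and put $G:=R\cdot F^k=y^a\prod_\gamma F_\gamma^{k+a_\gamma}$, a Laurent polynomial in $\y$ with $\operatorname{Newt}(G)=a+\sum_\gamma(k+a_\gamma)P_\gamma$; comparing support functions, $h_{\operatorname{Newt}(G)}(\Y)-h_{kP}(\Y)=\langle a,\Y\rangle+\sum_\gamma a_\gamma h_{P_\gamma}(\Y)=\Trop(R)(\Y)\ge 0$, so $\operatorname{Newt}(G)\subseteq kP$, i.e., $G\in H^0(\mathcal{O}(kD_P))$ and therefore $R=G/F^k\in\C[U]$. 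Combining the two inclusions, $R_B=\C[U]$, and with Theorem~\ref{thm:RDisom} this gives an isomorphism $\tM_D\cong\{F\neq 0\}$ in $X_{\N(B^\vee)}$ under which $u_\gamma\mapsto f_\gamma(\y)$.

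For the last sentence of the theorem, recall that $\M_D$ is by definition the locus of $\tM_D$ where every $u_\gamma$ is nonvanishing, which under the isomorphism above becomes $\{x\in\{F\neq 0\}:f_\gamma(x)\neq 0\text{ for all }\gamma\}$. By Proposition~\ref{prop:hinvert} the family $\{f_\gamma(\y)\}_{\gamma\in\Pi}$ and the family $\{y_1,\dots,y_n\}\cup\{F_\gamma(\y):\gamma\in\Pi^+\}$ are related by an invertible monomial transformation, so each $y_i$ and each $F_\gamma$ is a Laurent monomial in the $f_\gamma$'s and vice versa. It follows that $\bigcap_\gamma\{f_\gamma\neq 0\}$ coincides with $\big(\bigcap_i\{y_i\neq 0\}\big)\cap\big(\bigcap_\gamma\{F_\gamma\neq 0\}\big)$; since $T_\y$ is exactly the locus where all $y_i$ are defined and nonvanishing and $F=\prod_{\gamma\in\Pi^+}F_\gamma$, this is precisely $T_\y\cap\{F\neq 0\}$. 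This identifies $\M_D$ with the intersection of $\{F\neq 0\}$ with the open torus orbit $T_\y$, as claimed.

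The argument is essentially bookkeeping once Theorems~\ref{thm:RDisom} and~\ref{thm:FNewton} and Proposition~\ref{prop:RD} are in hand; the step I expect to require the most care is keeping the toric dictionary consistent — that $X_{\N(B^\vee)}$ is literally the polytope toric variety of $P=\operatorname{Newt}(F)$ with the matching inner/outer convention for normal fans, that $\{F\neq 0\}$ is affine with coordinate ring $\bigcup_k F^{-k}H^0(\mathcal{O}(kD_P))$ even when $P$ is not normal, and that $\Trop$-nonnegativity of a monomial in $\{y_i,F_\gamma\}$ is equivalent to the Newton-polytope containment $\operatorname{Newt}(G)\subseteq kP$ via support functions. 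I do not anticipate any genuinely deep point beyond these normalizations.
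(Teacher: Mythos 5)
Your proof is correct and follows essentially the same route as the paper: reduce to Theorem~\ref{thm:RDisom}, identify $R_B$ with the coordinate ring of $\{F(\y)\neq 0\}$ using Theorem~\ref{thm:FNewton} and Proposition~\ref{prop:RD}, and read off the second claim from the monomial relation of Proposition~\ref{prop:hinvert}. The only difference is cosmetic: the paper realizes $\{F\neq 0\}$ via a very-ample dilation and an explicit projective embedding and defers the inclusion $f_\gamma\in\C[\{F\neq 0\}]$ to \cite[Section~10]{AHL}, whereas you work with the section ring and give a self-contained support-function argument for both inclusions.
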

\begin{proof}
For any $\g \in \R^n$, the quantity $\Trop(F(\y))(\g)$ is equal to the minimum value that the linear function $\Y \mapsto \Y \cdot \g$ takes on the Newton polytope $P$ of $F(\y)$. Thus by Theorem~\ref{thm:FNewton}, the outer normal fan of $P$ is equal to $\N\big(B^\vee\big)$. Recall that a lattice polytope $Q$ is called \emph{very ample} if for sufficiently large integers $r>0$, every lattice point in $rQ$ is a sum
of $r$ (not necessarily distinct) lattice points in $Q$. For any lattice polytope $Q$, it is known that some integer dilation~$cQ$ is very ample. So let $c \in \Z_{>0}$ be such that $cP$ is very ample and let $\{\v_1,\dots,\v_k\} = cP \cap \Z^n$ be the set of all lattice points in $cP$. For $\v \in \Z^n$, let $\y^\v$ be the monomial with exponent vector~$\v$. Then $X_{\N(B^\vee)}$ can be explicitly realized as the closure of the set of points
\begin{gather*}
\Big\{\big[\y^{\v_1}:\cdots: \y^{\v_k}\big] \in \P^{k-1} \,|\, \y \in T_\y \Big\}
\end{gather*}
inside the projective space $\P^{k-1}$. The~polynomial $F(\y)^c$ can be identified with a hyperplane section of $X_{\N(B^\vee)}$ in this projective embedding, and the affine open $V:=\{F(\y) \neq 0\}$ is the complement of this hyperplane section. The~coordinate ring $\C[V]$ is generated by the functions $\y^{\v_i}/F(\y)^c$, $i =1,2,\dots,k$. Since $\Trop\big(\y^{\v_i}/F(\y)^c\big)$ is nonnegative, by Proposition~\ref{prop:RD}, we have $\C[V] \subset R_B$. It~is also not hard to see that $f_\gamma(\y) \in \C[V]$ (see~\cite[Section~10]{AHL}), and we have $\C[V] = R_B$ as subrings of $\C(\y)$. The~theorem now follows from Theorem~\ref{thm:RDisom}.
\end{proof}

\begin{question}
Is $P$, the Newton polytope of $F(\y) = \prod_\gamma F_\gamma(\y)$, very ample? Is $P$ normal?
\end{question}

\begin{question}\label{q:sat}
Is the polynomial $F(\y)$ saturated?
\end{question}

\begin{question}\label{q:sum}
Is every lattice point in $P$ a sum of lattice points in $P_\gamma$?
\end{question}

Fei \cite{Fei} has shown that $F_\gamma(\y)$ is saturated in the simply-laced case (and in more general situations). Thus Questions~\ref{q:sat} and~\ref{q:sum} are equivalent in that case.

\bigskip\noindent{\bf{\large 5.4.} Proof of Theorem~\ref{thm:geom}.} 
By Theorem~\ref{thm:clusterfan}, the fan $\N\big(B^\vee\big)$ is a smooth, simplicial, polytopal, complete fan. Thus $X_{\N(B^\vee)}$ is a smooth projective toric variety and the torus-orbit closure stratification of $X_{\N(B^\vee)}$ is simple normal-crossing.

\section[Properties of F-polynomials]
{Properties of $\boldsymbol F$-polynomials}\label{sec:F}

We establish some technical properties of $F_\gamma(\y)$ and $F^\univ_\gamma(\z)$, following the approach of \cite{BDMTY}. The~statements are first established in the case of simply-laced $D$; the multiply-laced case follows from folding. Another closely related approach is that of \cite{PPPP}, which would presumably avoid folding.

A key technical result is Theorem~\ref{thm:pair} which gives the values of the tropicalization $\Trop(f_\gamma(\y))$ on a (negated) $\g$-vector.

In this section, we will assume that $D$ is a finite type Dynkin diagram whose underlying tree has been given an orientation, and we let $B$ denote the corresponding exchange matrix. Recall that $D^\vee$ denotes the dual Dynkin diagram, and we let $B^\vee$ denote the exchange matrix of type~$D^\vee$, satisfying the condition: $B_{ij} > 0$ if and only if $B^\vee_{ij} > 0$. Recall that we write $i \to j$ if~$B_{ij} > 0$.

\bigskip\noindent{\large\bf 6.1.}
Let $B$ be the exchange matrix corresponding to the oriented Dynkin diagram $D$. Let~$\R^\Pi$ be the vector space with basis indexed by $\Pi$, and write $(p_\gamma)_{\gamma \in \Pi}$ for a typical vector in $\R^\Pi$. Define $\Pi^+ :=\{(s,i) \,|\, 1\leq s \leq r_i\} \subset \Pi$ and let $\c = (c_\gamma)_{\gamma \in \Pi^+}$ denote a typical vector in $\R^{\Pi^+}$. Following~\cite{BDMTY}, we consider the \emph{$\c$-deformed mesh relations}
\begin{gather}\label{eq:c}
p_{(t-1,j)} + p_{(t,j)} = c_{(t,j)} + \sum_{i \to j} |B_{ij}| p_{(t,i)}+ \sum_{j \to i} |B_{ij}| p_{(t-1,i)},
\end{gather}
where $(t,j) \in \Pi^+$. (Compare with \eqref{eq:exchange}, and note that if $i \to j$ then $B_{ij} >0$, but if $j \to i$ then $B_{ij} < 0$.) If $\c = 0$, we call \eqref{eq:c} the ${\mathbf 0}$-mesh relations.

For $\c=(c_\gamma) \in \R^{\Pi^+}$, we let $\E_\c \subset \R^{\Pi}$ denote the solutions to \eqref{eq:c}, and let $\U_\c := \E_\c \cap \R_{\geq 0}^{\Pi}$ denote the intersection of $\E_\c$ with the positive orthant. Let~$\pi\colon \R^{\Pi} \to \R^n$ denote the projection onto the coordinates $p_\gamma$, where $\gamma$ varies over $\{(r_i,i) \,|\, i=1,2,\dots,n\}$. (Up to the action of $\tau^{-1}$, this is the same as projection onto the initial cluster variables.)

\def\e{{\mathbf e}}
We use the notation $\U(D)_\c$ and $\E(D)_\c$ \big(resp.\ $\U\big(D^\vee\big)_\c$ and $\E\big(D^\vee\big)_\c$\big) to denote these objects for $B$ or $D$ \big(resp.\ $B^\vee$ or $D^\vee$\big). In~the following, $e_\gamma$ denotes the unit basis vector in $\R_{>0}^{\Pi^+}$.

\begin{Theorem}\label{thm:Newt} \
\begin{enumerate}\itemsep=0pt
\item[$1.$]
If $\c=(c_\gamma) \in \R_{>0}^{\Pi^+}$, then the normal fan of $\pi(\U(D)_{\c})$ is equal to $\N(B)$. If~$(c_\gamma) \in \R_{\geq 0}^{\Pi^+}$, then the normal fan of $\pi(\U(D)_{\c})$ is a coarsening of $\N(B)$.
\item[$2.$]
For $\gamma \in \Pi^+$, the polytope $\U\big(D^\vee\big)_{e_\gamma}$ is the Newton polytope of $F^\univ_\gamma(\z)$.
\item[$3.$]
For $\gamma \in \Pi^+$, the polytope $\pi\big(\U\big(D^\vee\big)_{e_\gamma}\big)$ is the Newton polytope of $F_\gamma(\y)$.
\end{enumerate}
\end{Theorem}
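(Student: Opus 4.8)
The plan is to prove all three statements first for simply-laced $D$ with acyclic initial seed — which is the content of \cite{BDMTY} (together with \cite{YZ} for the universal $F$-polynomials) — and then transport them to the multiply-laced case by folding via Proposition~\ref{prop:folding}. In the simply-laced case, (1) is their main theorem on the $\c$-deformed generalized associahedron, and (2)--(3) are the identification of the Newton polytopes of the (universal) $F$-polynomials with the mesh polytopes $\U(\tD)_{e_{\tgamma}}$ (recall that a simply-laced $\tD$ is its own dual). Throughout, for multiply-laced $D$ write $D=\tD/\Gamma$ with $\nu\colon\tPi\to\Pi$ and $\nu\colon\R^{\tPi}\to\R^{\Pi}$, $\nu(e_{\tgamma})=e_{\nu(\tgamma)}$, as in Section~2.14; note that $\nu$ on $\R^{\tPi}$ sums coordinates over $\Gamma$-orbits.

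\textbf{Part (2).} By Proposition~\ref{prop:folding}(3), $F^\univ_\gamma(\z)=\nu\big(F^\univ_{\tgamma}(\tilde\z)\big)$ for any $\tgamma\in\nu^{-1}(\gamma)$, where $\nu$ acts on polynomials by $z_{\tgamma}\mapsto z_{\nu(\tgamma)}$. Since $F$-polynomials have nonnegative coefficients, no cancellation occurs under $\nu$, so the support of $\nu\big(F^\univ_{\tgamma}\big)$ is the $\nu$-image of that of $F^\univ_{\tgamma}$; taking convex hulls, the Newton polytope of $F^\univ_\gamma$ is $\nu\big(\U(\tD)_{e_{\tgamma}}\big)$ by the base case. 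It then remains to prove the combinatorial identity $\nu\big(\U(\tD)_{e_{\tgamma}}\big)=\U(D^\vee)_{e_\gamma}$. For the inclusion $\subseteq$, take $\tilde p\in\U(\tD)_{e_{\tgamma}}$ and sum the mesh relations \eqref{eq:c} for $\tD$ over the orbit $\nu^{-1}(j)$ of a node $j$: the left side becomes $\nu(\tilde p)_{(t-1,j)}+\nu(\tilde p)_{(t,j)}$, the inhomogeneous term collapses to $(e_\gamma)_{(t,j)}$, and the coefficient of $\nu(\tilde p)_{(t,i)}=\sum_{\tilde i\in\nu^{-1}(i)}\tilde p_{(t,\tilde i)}$ equals $\#\{\tilde j\in\nu^{-1}(j):\tilde i\to\tilde j\}=-\sum_{\tilde j\in\nu^{-1}(j)}\tilde a_{\tilde i\tilde j}$ for a fixed $\tilde i\in\nu^{-1}(i)$. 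By the Cartan-folding identity $a^D_{ij}=\sum_{\tilde i\in\nu^{-1}(i)}\tilde a_{\tilde i\tilde j}$ — equivalently, by double-counting the edges joining the two orbits, which interchanges the two indices and hence $B$ with $B^\vee$ — this coefficient is $-a^{D^\vee}_{ij}=|B^\vee_{ij}|$, which is exactly the coefficient appearing in the $D^\vee$-mesh relation. As $\nu$ preserves the positive orthant, $\nu\big(\U(\tD)_{e_{\tgamma}}\big)\subseteq\U(D^\vee)_{e_\gamma}$.

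\textbf{Parts (3) and (1).} Part (3) follows from (2) by applying $\pi$: the universal-to-principal specialization $\A^\univ\to\A^\prin$ of Section~2.12 sends $F^\univ_\gamma(\z)$ to $F_\gamma(\y)$ and, by positivity, sends the Newton polytope to its image under the induced linear map on $z$-exponents, which — under the identification $\E(D^\vee)_\c\cong\R^n$ furnished by $\pi$, using \eqref{eq:fy} — is $\pi$ itself. For (1) one again folds $D=\tD/\Gamma$, now lifting $\c\in\R^{\Pi^+}_{>0}$ to the $\Gamma$-invariant $\tilde\c\in\R^{\tPi^+}_{>0}$ with $\tilde c_{\tgamma}=c_{\nu(\tgamma)}$. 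Then $\U(\tD)_{\tilde\c}$ is a $\Gamma$-invariant polytope whose normal fan is $\N(\tB)$ by the base case, and restricting the orbit-summing computation above to $\Gamma$-invariant solutions identifies $\pi\big(\U(D)_\c\big)$, up to rescaling coordinates by orbit sizes, with the $\Gamma$-fixed subpolytope $\pi\big(\U(\tD)_{\tilde\c}\big)^\Gamma$. The normal fan of the $\Gamma$-fixed subpolytope of a $\Gamma$-invariant polytope is the restriction of its normal fan to the $\Gamma$-fixed subspace (average any preimage of a $\Gamma$-fixed point over $\Gamma$), and the restriction of $\N(\tB)$ to the $\Gamma$-fixed subspace is $\N(B)$, a standard property of $\g$-vector fans under folding compatible with $\nu(\g_{\tgamma})=\g_{\nu(\tgamma)}$ (Proposition~\ref{prop:folding}(4)); positive rescaling of the $\Gamma$-fixed coordinates does not change the normal fan. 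Hence the normal fan of $\pi\big(\U(D)_\c\big)$ is $\N(B)$, and the case $\c\in\R^{\Pi^+}_{\geq 0}$ follows by degeneration, which can only coarsen the fan. (This case $\c=\mathbf 1$ is also what is needed for Theorem~\ref{thm:FNewton}, via $\sum_\gamma\U(D^\vee)_{e_\gamma}=\U(D^\vee)_{\mathbf 1}$.)

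\textbf{Main obstacle.} The crux is the reverse inclusion $\nu\big(\U(\tD)_{e_{\tgamma}}\big)\supseteq\U(D^\vee)_{e_\gamma}$. One verifies that $\nu$ restricts to a surjection of affine spaces $\E(\tD)_{e_{\tgamma}}\twoheadrightarrow\E(D^\vee)_{e_\gamma}$ (the mesh relations express every coordinate in terms of those indexed by $\{(r_i,i)\}$), but it is not formal that the positive-orthant slice maps onto the positive-orthant slice: one must show that each point of $\U(D^\vee)_{e_\gamma}$ lifts to a \emph{nonnegative} solution of the $\tD$-mesh relations. This is precisely where the inductive, positivity-based argument of \cite{BDMTY} — walking along the Auslander--Reiten sequence and ruling out cancellations among the monomials of $F^\univ$ — has to be reproduced in the folded setting, rather than invoked as a black box; alternatively, one computes the support functions of both polytopes on the negated $\g$-vectors, as in Theorem~\ref{thm:pair}, and checks that they agree. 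Some additional bookkeeping with the constant terms of the summed mesh relations is required when $\Gamma$ does not act freely, as for $G_2=D_4/S_3$, where orbit sizes vary.
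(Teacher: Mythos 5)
Your overall strategy (simply-laced base case from \cite{BDMTY}, then folding) is the paper's, and your treatment of part (1) via $\Gamma$-invariant lifts of $\c$ and the fixed-subspace slice of $\pi\big(\U\big(\tD\big)_{\tc}\big)$ matches the argument given there. However, you have correctly diagnosed --- and then left open --- the crux of part (2): the reverse inclusion $\nu\big(\U\big(\tD\big)_{e_{\tgamma}}\big)\supseteq\U\big(D^\vee\big)_{e_\gamma}$, i.e., that every point of $\U\big(D^\vee\big)_{e_\gamma}$ lifts to a \emph{nonnegative} solution of the $\tD$-mesh relations. Saying that the positivity-based induction of \cite{BDMTY} ``has to be reproduced in the folded setting'' is not a proof; as written, the argument is incomplete exactly at the step the theorem needs.

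The paper closes this gap without redoing any of \cite{BDMTY}, by symmetrizing over the $\Gamma$-orbit. Instead of $F^\univ_\gamma$ itself, one considers $\prod_{\tgamma\in\nu^{-1}(\gamma)}F^\univ_{\tgamma}(\tilde\z)$, whose Newton polytope is $\U\big(\tD\big)_{\tc}$ with $\tc=\sum_{\tgamma\in\nu^{-1}(\gamma)}e_{\tgamma}$ now $\Gamma$-\emph{invariant}, so the polytope upstairs is $\Gamma$-invariant. Two elementary observations then finish the proof: (i) for a $\Gamma$-invariant convex set $P$ one has $\nu(P)=\nu\big(P\cap\big(\R^{\tPi}\big)^\Gamma\big)$, since $\nu$ sums coordinates over orbits and hence factors through the averaging map $p\mapsto\frac{1}{|\Gamma|}\sum_{g}g\cdot p$, whose image lies in $P$ by convexity; and (ii) on the $\Gamma$-fixed subspace, $\nu$ is an invertible positive rescaling of coordinates, so it carries the nonnegative slice bijectively onto the nonnegative slice --- this is Proposition~\ref{prop:bijfold}, and it is precisely the statement whose failure for non-invariant data you were worried about. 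This yields that the Newton polytope of $\big(F^\univ_\gamma(\z)\big)^{k}$ is $\U\big(D^\vee\big)_{k e_\gamma}$ with $k=|\nu^{-1}(\gamma)|$, and dilating by $1/k$ gives part (2); your concern about non-free actions (varying orbit sizes) is absorbed into the scaling factors defining $\nu^\vee$ and $\tc$. Part (3) then follows directly from Proposition~\ref{prop:folding}(3) and the compatibility of $\nu$ with $\pi$; your detour through the universal-to-principal specialization is unnecessary and would itself require justifying that the specialization acts as $\pi$ on exponents without cancellation.
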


\begin{proof}[Proof of Theorem~\ref{thm:FNewton}]
By Theorem~\ref{thm:Newt}(3) the Newton polytope $P_\gamma$ of $F_\gamma(\y)$ is $\pi\big(\U\big(D^\vee\big)_{e_\gamma}\big)$. The~Newton polytope~$P$ of $\prod_\gamma F_\gamma(\y)$ is the Minkowski sum of the $P_\gamma$, and by Theorem~\ref{thm:Newt}(1), we conclude that $P$ is a generalized associahedron.
\end{proof}

We let $\g^\vee_\gamma$ denote the $\g$-vector for $B^\vee$ indexed by the element of $\Pi(B^\vee)$ corresponding to $\gamma$ under the bijection of Section~2.5.

\bigskip\noindent{\bf{\large 6.2.} Proof of Theorem~\ref{thm:Newt}.}
For $D$ simply-laced, we have $D = D^\vee$ and Theorem~\ref{thm:Newt} is proven in~\cite{BDMTY}. We~now prove it for multiply-laced $D$ via folding.

Let $D$ be a folding of $\tD$ with folding group $\Gamma$, and $\nu\colon \tPi \to \Pi$ the quotient map on cluster variables from Section~2.14. Define $\nu\colon \R^{\tPi} \to \R^{\Pi}$ by $\nu(e_\tgamma) = e_{\nu(\tgamma)}$, and $\nu^\vee\colon \R^{\tPi} \to \R^{\Pi}$ by $\nu^\vee(e_\tgamma) = \frac{1}{|\nu^{-1}(\nu(\tgamma))|}e_{\nu(\tgamma)}$. (The finite set $\nu^{-1}(\gamma)$ has cardinality one, two, or three.) Similarly, we have $\nu,\nu^\vee\colon \R^{\tPi^+} \to \R^{\Pi^+}$.

\begin{Lemma}\label{lem:fold}
If $(p_{\tgamma})_{\tgamma \in \tPi} \in \E\big(\tD\big)_{\tc}$ then $\nu^\vee(p_{\tgamma}) \in \E(D)_{\nu^\vee(\tc)}$ and $\nu(p_{\tgamma}) \in \E\big(D^\vee\big)_{\nu(\tc)}$.
\end{Lemma}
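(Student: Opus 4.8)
The plan is to obtain both assertions by summing the simply-laced mesh relations \eqref{eq:c} for $\tD$ over the fibers of $\nu$ and recognizing the two resulting linear systems as the mesh relations for $D^\vee$ and for $D$ respectively. First some bookkeeping. Since $\Gamma$ acts through automorphisms of the oriented diagram $\tD$, it commutes with the level shift $\tau$ on $\tPi$, so for $\gamma = (t,j) \in \Pi$ we have $\nu^{-1}(\gamma) = \{(t,\tilde j) : \tilde j \in \nu^{-1}(j)\}$, these elements lie in $\tPi^+$ whenever $\gamma \in \Pi^+$, and $N_j := |\nu^{-1}(\gamma)|$ depends only on $j \in I$. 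Given $p = (p_\tgamma) \in \E(\tD)_{\tc}$, write $q := \nu(p) \in \R^{\Pi}$, so that $q_{(t,j)} = \sum_{\tilde j \in \nu^{-1}(j)} p_{(t,\tilde j)}$ and $(\nu\tc)_{(t,j)} = \sum_{\tilde j \in \nu^{-1}(j)} \tilde c_{(t,\tilde j)}$, while $\nu^\vee(p)_{(t,j)} = q_{(t,j)}/N_j$ and $\nu^\vee(\tc)_{(t,j)} = (\nu\tc)_{(t,j)}/N_j$.

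The only nontrivial input is an edge count for the folding. For adjacent $i \neq j$ in $I$, the definition of folding identifies $|B_{ij}| = -a_{ij}$ with the number of $\tilde i \in \nu^{-1}(i)$ adjacent (in $\tD$) to a fixed $\tilde j_0 \in \nu^{-1}(j)$, a count independent of $\tilde j_0$ by $\Gamma$-equivariance; counting the edges of $\tD$ between $\nu^{-1}(i)$ and $\nu^{-1}(j)$ in the two possible ways gives the identity $N_j|B_{ij}| = N_i|B_{ji}|$, and since $a^\vee_{ij} = a_{ji}$ we get $|B^\vee_{ij}| = |B_{ji}|$, which equals the number of $\tilde j \in \nu^{-1}(j)$ adjacent to a fixed $\tilde i_0 \in \nu^{-1}(i)$. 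Moreover, because the orientation of $D$ is well-defined, every edge of $\tD$ joining $\nu^{-1}(i)$ to $\nu^{-1}(j)$ is oriented the same way, so if $i \to j$ in $D$ then each $\tilde i \in \nu^{-1}(i)$ has exactly $|B^\vee_{ij}|$ out-neighbours in $\nu^{-1}(j)$ and each $\tilde j \in \nu^{-1}(j)$ has exactly $|B^\vee_{ij}|$ in-neighbours in $\nu^{-1}(i)$.

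Now fix $(t,j) \in \Pi^+$ and add up the relations \eqref{eq:c} for $\tD$ — in which all $|\tilde B_{\tilde i \tilde j}| = 1$ — over $\tilde j \in \nu^{-1}(j)$. The left-hand side becomes $q_{(t-1,j)} + q_{(t,j)}$ and the constant term becomes $(\nu\tc)_{(t,j)}$. Regrouping $\sum_{\tilde j \in \nu^{-1}(j)}\sum_{\tilde i \to \tilde j} p_{(t,\tilde i)}$ according to the orbit $i = \nu(\tilde i)$ and invoking the edge count, the total coefficient of $p_{(t,\tilde i)}$ after summing over $\tilde j$ equals $|B^\vee_{ij}|$ for every $\tilde i \in \nu^{-1}(i)$, so this double sum equals $\sum_{i \to j}|B^\vee_{ij}|\, q_{(t,i)}$; likewise the $\tilde j \to \tilde i$ part becomes $\sum_{j \to i}|B^\vee_{ij}|\, q_{(t-1,i)}$. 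Hence $q$ solves the $(\nu\tc)$-deformed mesh relations for $B^\vee$, i.e.\ $\nu(p) \in \E(D^\vee)_{\nu(\tc)}$. Dividing this identity by $N_j$ and substituting $q_{(t,i)}/N_i = \nu^\vee(p)_{(t,i)}$, $(\nu\tc)_{(t,j)}/N_j = \nu^\vee(\tc)_{(t,j)}$ and the edge-count identity in the form $|B^\vee_{ij}|/N_j = |B_{ij}|/N_i$ turns it into precisely the $\nu^\vee(\tc)$-deformed mesh relations for $B$, so $\nu^\vee(p) \in \E(D)_{\nu^\vee(\tc)}$.

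The computation is routine once the folding dictionary is in place, and that dictionary is the only delicate point: summing the symmetric mesh relations of $\tD$ over $\Gamma$-orbits produces the mesh relations of $D^\vee$ (with coefficients $|B^\vee_{ij}| = |B_{ji}|$) rather than those of $D$, and it is the averaging built into $\nu^\vee$ — equivalently the fact that the reciprocals $1/N_i$ of the orbit sizes skew-symmetrize $B$ — that converts the $D^\vee$-system back to the $D$-system. Keeping the index order in $|B^\vee_{ij}| = |B_{ji}|$ and in the edge counts consistent with the fixed orientation is the main place where one must be careful.
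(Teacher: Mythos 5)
Your proof is correct and is precisely the direct verification that the paper omits (the lemma is stated there without proof): summing the simply-laced mesh relations over a fiber $\nu^{-1}(j)$ and invoking the edge count $N_j|B_{ij}|=N_i|B_{ji}|$ is the intended computation, and you correctly identify that the raw sum yields the $B^\vee$-relations for $\nu(p)$ while the averaging built into $\nu^\vee$ converts them into the $B$-relations. Your folding dictionary ($|B_{ij}|$ equals the number of elements of $\nu^{-1}(i)$ adjacent to a fixed element of $\nu^{-1}(j)$) is consistent with the paper's conventions, e.g., with Proposition~\ref{prop:folding}(2) and with the explicit type $C_2$ exchange matrix displayed in Section~7.4, so your index bookkeeping is right.
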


The $\g$-vectors for $\tD$ are solutions to the $0$-mesh relations in the following sense: for each $i =1,2,\dots,n$, the $i$-th coordinates of $\g_\gamma$ give a vector $\g^{(i)}$ that belongs to $\E_0$. This follows from~\cite[relation~(6.13)]{CA4}, noting that the sign-coherence conjecture \cite[Conjecture~6.13]{CA4} holds in our case.

The following follows from Lemma \ref{lem:fold} and Proposition~\ref{prop:folding}(3). (The appearance of $\nu^\vee$ seems to contradict Lemma \ref{lem:fold}, but it is actually correct: the $\nu^\vee$ in Lemma \ref{lem:fold} acts on $\R^{|\tPi|}$ while the~$\nu^\vee$ below acts only on $\R^{\tI}$.)
\begin{Proposition}\label{prop:gvee}
We have $\g^\vee_\gamma = \nu^\vee\Big( \sum_{\tgamma \in \nu^{-1}(\gamma)}\g_\tgamma\Big)$.
\end{Proposition}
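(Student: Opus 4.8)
The plan is to use two facts recalled just before the statement: that the $\g$-vectors of a finite-type acyclic cluster algebra, organised one coordinate at a time, solve the $\mathbf 0$-mesh relations; and that a solution of the $\mathbf 0$-mesh relations is determined by its restriction to the initial cluster. First I would, for each $\tilde i \in \tilde I$, form the vector $\g^{(\tilde i)} := \big((\g_\tgamma)_{\tilde i}\big)_{\tgamma \in \tPi} \in \R^{\tPi}$ recording the $\tilde i$-th coordinate of every $\g$-vector of $\tD$; by the recalled fact, $\g^{(\tilde i)} \in \E(\tD)_0$. Then Lemma~\ref{lem:fold}, applied with $\tc = \mathbf 0$, gives $\nu(\g^{(\tilde i)}) \in \E(D^\vee)_0$, a vector whose $\gamma$-coordinate is $\sum_{\tgamma \in \nu^{-1}(\gamma)}(\g_\tgamma)_{\tilde i}$. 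For a fixed $j \in I$ I would then average over the fibre, setting $h^{(j)} := |\nu^{-1}(j)|^{-1}\sum_{\tilde i \in \nu^{-1}(j)} \nu(\g^{(\tilde i)})$; since $\E(D^\vee)_0$ is a linear subspace, $h^{(j)} \in \E(D^\vee)_0$, and unwinding the definition of $\nu^\vee$ on $\R^{\tilde I}$ shows that the $\gamma$-coordinate of $h^{(j)}$ is exactly the $j$-th coordinate of $\nu^\vee\big(\sum_{\tgamma \in \nu^{-1}(\gamma)}\g_\tgamma\big)$.

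Next I would bring in the $D^\vee$ side directly: for each $j \in I$, the vector $(\g^\vee)^{(j)} := \big((\g^\vee_\gamma)_j\big)_{\gamma \in \Pi}$ of $j$-th coordinates of the $\g$-vectors of $B^\vee$ likewise lies in $\E(D^\vee)_0$, by the same general fact (relation~(6.13) of \cite{CA4} together with sign-coherence) applied to $B^\vee$. So $h^{(j)}$ and $(\g^\vee)^{(j)}$ are two solutions of the $\mathbf 0$-mesh relations for $B^\vee$. Since $B^\vee$ is acyclic I would order $I$ topologically; then for $(t,j) \in \Pi^+$ the $\mathbf 0$-mesh relation expresses $p_{(t,j)}$ through $p_{(t-1,j)}$, the already-determined $p_{(t,i)}$ with $i \to j$, and the $p_{(t-1,i)}$ with $j \to i$, so (by induction on $t$) a solution of $\E(D^\vee)_0$ is determined by its values on $\{(0,i) \mid i \in I\}$. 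It then remains to compare $h^{(j)}$ and $(\g^\vee)^{(j)}$ on the initial cluster, which is immediate from $\g^\vee_{(0,i)} = e_i$, from $\nu^{-1}((0,i)) = \{(0,\tilde i) \mid \tilde i \in \nu^{-1}(i)\}$, and from $\g_{(0,\tilde i)} = e_{\tilde i}$: both sides equal $\delta_{ij}$. Hence $h^{(j)} = (\g^\vee)^{(j)}$ for every $j$, which is exactly the asserted identity. (Proposition~\ref{prop:folding}(3) can be used in place of the explicit evaluation of the initial $\g$-vectors here, but is not strictly necessary.)

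I expect the only delicate point — and the one the parenthetical remark before the statement is warning about — to be the bookkeeping with the three distinct folding maps in play: $\nu \colon \R^{\tPi} \to \R^{\Pi}$ (fibrewise summation, supplied by Lemma~\ref{lem:fold}), $\nu^\vee \colon \R^{\tilde I} \to \R^{I}$ (fibrewise averaging, appearing in the statement), and the relabelling $\nu^{-1}(\gamma) \subset \tPi$. One must check that the weight $|\nu^{-1}(j)|^{-1}$ is precisely what converts $\sum_{\tilde i \in \nu^{-1}(j)}\nu(\g^{(\tilde i)})$ into the $j$-th coordinate of $\nu^\vee(-)$, and that inserting this average is legitimate only because $\E(D^\vee)_0$ is a linear subspace. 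None of this is deep, but it is where all the care is needed.
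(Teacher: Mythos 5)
Your argument is correct and is essentially the fleshed-out version of what the paper only asserts: the paper derives the proposition from Lemma~\ref{lem:fold} applied to the coordinate vectors of the $\g$-vectors (which solve the $\mathbf 0$-mesh relations), and its parenthetical warning about the two different maps called $\nu^\vee$ is exactly the bookkeeping you carry out with the fibrewise average. Pinning down the two mesh solutions by their values on the initial cluster, as you do, is the standard way to finish, so your proof matches the paper's intended route.
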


We say that $(p_{\tgamma})_{\tgamma \in \tPi} \in \R^{\tPi}$ is $\Gamma$-invariant and write $(p_{\tgamma})_{\tgamma \in \tPi} \in \big(\R^{\tPi}\big)^\Gamma$ if for all $g \in \Gamma$, we have $p_{\tgamma} = p_{g \cdot \tgamma}$. Similarly, we define $\Gamma$-invariants $\tc \in \big(\R^{\tPi^+}\big)^\Gamma$. The~following result follows from Lemma \ref{lem:fold}.

\begin{Proposition}\label{prop:bijfold} \
\begin{enumerate}\itemsep=0pt
\item[$1.$]
Suppose that $\tc \in \big(\R^{\tPi^+}\big)^\Gamma$.
Then the linear map $\nu^\vee$ $($resp.\ $\nu)$ is a bijection between $\E\big(\tD\big)_{\tc} \cap \big(\R^{\tPi}\big)^\Gamma$ and $\E(D)_{\nu^\vee(\tc)}$ $\big($resp.\ $\E\big(D^\vee\big)_{\nu(\tc)}\big)$.
\item[$2.$]
Suppose that $\tc \in (\R_{\geq 0}^{\tPi^+})^\Gamma$.
Then the linear map $\nu^\vee$ $($resp.\ $\nu)$ is a bijection between $\U\big(\tD\big)_{\tc} \cap \big(\R^{\tPi}\big)^\Gamma$ and $\U(D)_{\nu^\vee(\tc)}$ $\big($resp.\ $\U\big(D^\vee\big)_{\nu(\tc)}\big)$.
\end{enumerate}
\end{Proposition}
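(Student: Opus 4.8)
The plan is to deduce both parts from Lemma~\ref{lem:fold}, from the recursive solvability of the mesh relations, and from an elementary dimension count; the actual folding combinatorics relating the Cartan matrices of $\tD$, $D$ and $D^\vee$ to the $\Gamma$-orbit sizes is already packaged inside Lemma~\ref{lem:fold}, so the remaining work is linear algebra.

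First I would record the restrictions of $\nu$ and $\nu^\vee$ to the $\Gamma$-invariant subspace. Since $\Pi$ is the set of $\Gamma$-orbits on $\tPi$ and $\nu^{-1}(\gamma)$ is precisely the orbit corresponding to $\gamma$, the vectors $w_\gamma := \sum_{\tgamma \in \nu^{-1}(\gamma)} e_{\tgamma}$ form a basis of $(\R^{\tPi})^\Gamma$, and straight from the definitions one gets $\nu^\vee(w_\gamma) = e_\gamma$ and $\nu(w_\gamma) = |\nu^{-1}(\gamma)|\,e_\gamma$. Hence both $\nu^\vee$ and $\nu$ restrict to linear isomorphisms $(\R^{\tPi})^\Gamma \to \R^\Pi$; moreover, since a $\Gamma$-invariant vector lies in the nonnegative orthant iff its common value on each orbit is nonnegative, and the integers $|\nu^{-1}(\gamma)|$ are positive, these restrictions carry the nonnegative cone of $(\R^{\tPi})^\Gamma$ onto $\R_{\geq 0}^\Pi$. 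The same computation gives, for $\Gamma$-invariant $\tc$, that $\nu^\vee(\tc)_\gamma = \tc_\gamma$ and $\nu(\tc)_\gamma = |\nu^{-1}(\gamma)|\,\tc_\gamma$, where $\tc_\gamma$ is the common value of $\tc$ on the orbit $\nu^{-1}(\gamma)$.

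Next I would pin down dimensions. Solving \eqref{eq:c} recursively in the Auslander--Reiten order (each relation expresses $p_{(t,j)}$ through $p_{(t-1,j)}$ and the neighbouring coordinates, and the orientation of the tree supplies a valid order) identifies $\E(\tD)_{\tc}$ affinely with $\R^{\tilde I}$ via the projection to the initial coordinates $(p_{(0,\tilde i)})_{\tilde i \in \tilde I}$, and likewise $\E(D)_{\nu^\vee(\tc)} \cong \R^I$ and $\E(D^\vee)_{\nu(\tc)} \cong \R^I$. When $\tc$ is $\Gamma$-invariant, $\Gamma$ permutes the relations \eqref{eq:c} for $\tD$ (using that $\Gamma$ acts on the oriented tree and commutes with $\tau$), so $\Gamma$ acts on $\E(\tD)_{\tc}$ compatibly with the $\Gamma$-equivariant projection to initial coordinates; uniqueness of the recursive solution then shows that $\Gamma$-invariant initial data extend to $\Gamma$-invariant solutions, whence $\E(\tD)_{\tc} \cap (\R^{\tPi})^\Gamma \cong (\R^{\tilde I})^\Gamma$ is an affine space of dimension equal to the number of $\Gamma$-orbits on $\tilde I$, i.e., $|I|$.

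To conclude part~1: by Lemma~\ref{lem:fold}, $\nu^\vee$ sends $\E(\tD)_{\tc} \cap (\R^{\tPi})^\Gamma$ into $\E(D)_{\nu^\vee(\tc)}$ and $\nu$ sends it into $\E(D^\vee)_{\nu(\tc)}$; by the first step these restrictions are injective, and by the second step they are injective affine-linear maps between affine spaces of the same dimension $|I|$, hence bijections. (Alternatively, surjectivity may be checked by hand: the unique $\Gamma$-invariant preimage of a given $(q_\gamma)$ under $\nu^\vee$, resp.\ $\nu$ --- namely $p_{\tgamma} = q_{\nu(\tgamma)}$, resp.\ $p_{\tgamma} = q_{\nu(\tgamma)}/|\nu^{-1}(\nu(\tgamma))|$ --- satisfies the $\tD$-mesh relations once the right-hand side is grouped over $\Gamma$-orbits, which reduces to the folding identity for Cartan matrices and, in the $D^\vee$ case, to the symmetrizability relation $|\nu^{-1}(j)|\,a^D_{ij} = |\nu^{-1}(i)|\,a^D_{ji}$.) Part~2 is then immediate: $\U = \E \cap \R_{\geq 0}$ in each of the three cases, and since $\nu^\vee$ and $\nu$ identify the nonnegative cones by the first step, the bijections of part~1 restrict to bijections of the nonnegative parts. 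The only point that needs real care is the compatibility of the $\Gamma$-action with the orientation and $\tau$-structure built into \eqref{eq:c} (which underlies the dimension count in the third paragraph), together with the bookkeeping of which of $\nu$, $\nu^\vee$ targets $D$ and which targets $D^\vee$; the genuine folding content is already supplied by Lemma~\ref{lem:fold}.
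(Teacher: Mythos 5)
Your proof is correct and follows the same route the paper intends: the paper simply asserts that the proposition ``follows from Lemma~\ref{lem:fold}'', and your argument supplies exactly the omitted details --- that $\nu$ and $\nu^\vee$ restrict to isomorphisms of $\big(\R^{\tPi}\big)^\Gamma$ onto $\R^{\Pi}$ preserving nonnegative cones, that the mesh relations are recursively solvable so all three solution sets are affine spaces of dimension $|I|$, and that an injective affine map between affine spaces of equal dimension is a bijection. No gaps.
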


\begin{proof}[Proof of Theorem~\ref{thm:Newt}]
In this proof we write $\N(D)$ for $\N(B)$ to avoid conflict of notation. Let~$\tD$ fold onto $D$. Let~$\c \in \R_{>0}^{\Pi^+}$ and pick $\tc \in \big(\R_{>0}^{\tPi^+}\big)^\Gamma$ satisfying $\c = \nu^\vee(\tc)$. By Proposition~\ref{prop:bijfold}(2), the map $\nu^\vee$ is a bijection between $\U\big(\tD\big)_{\tc} \cap \big(\R^{\tPi}\big)^\Gamma$ and $\U(D)_{\c}$. To prove Theorem~\ref{thm:Newt}(1) for $D$, it thus suffices to show that $\pi\big(\U\big(\tD\big)_{\tc} \cap \big(\R^{\tPi}\big)^\Gamma\big) = \pi\big(\U\big(\tD\big)_{\tc}\big) \cap \big(\R^{\tI}\big)^\Gamma \subset \big(\R^{\tI}\big)^\Gamma$ has normal fan $\N(D)$.
By Theorem~\ref{thm:Newt}(1) for $\tD$, the polytope $\pi\big(\U\big(\tD\big)_{\tc}\big)$ has normal fan~$\N\big(\tD\big)$, and by our choice of $\tc$, it is $\Gamma$-invariant. The~faces of $\pi\big(\U\big(\tD\big)_{\tc}\big)$ that intersect $\big(\R^{\tI}\big)^\Gamma$ are exactly those normal to the cones $\{\tgamma_1,\dots,\tgamma_a\}$ of $\N\big(\tD\big)$ consisting of $\Gamma$-invariant pairwise compatible collections. Combining with Proposition~\ref{prop:folding}(4), we deduce that the normal fan of $\pi\big(\U\big(\tD\big)_{\tc}\big) \cap \big(\R^{\tI}\big)^\Gamma$ is $\N(D)$. This proves the first statement of Theorem~\ref{thm:Newt}(1) for $D$, and the second statement is similar.

Now, let $\nu(\tgamma) = \gamma$. By Proposition~\ref{prop:folding}(3), the Newton polytope of $F_\gamma^\univ(\z)$ is the image of the Newton polytope of $F_\tgamma^\univ(\z)$ under the map $\nu$. By Proposition~\ref{prop:bijfold}(2) and Theorem~\ref{thm:Newt}(2) for $\tD$, the Newton polytope of $\prod_{\tgamma \in \nu^{-1}(\gamma)} F_\tgamma^\univ(\z)$ is equal to $\U\big(\tD\big)_{\sum_{\tgamma \in \nu^{-1}(\gamma)} e_{\tgamma}}$. Thus by Proposition~\ref{prop:bijfold}(2), the Newton polytope of $(F_\gamma^\univ(\z))^{|\nu^{-1}(\gamma)|}$ is equal to $\U\big(D^\vee\big)_{|\nu^{-1}(\gamma)| e_{\gamma}}$, and Theorem~\ref{thm:Newt}(2) for $D$ follows. Finally, Theorem~\ref{thm:Newt}(3) follows from Proposition~\ref{prop:folding}(3).
\end{proof}

\bigskip\noindent{\large\bf 6.3.}
The character group of $T^\univ$ is $\Z^{n+r}/\tB^\univ \Z^n$. Let~$\{e_1,\dots,e_n\} \cup \{e_\gamma \,|\, \gamma \in \Pi\}$ be basis vectors of $\Z^{n+r}$. We~have
\begin{gather}\label{eq:wtuniv}
\wt(x_i) = e_i\quad \text{for}\quad i = 1,2,\dots,n, \qquad \text{and} \qquad
\wt(z_\gamma) = e_\gamma\quad \text{for}\quad \gamma \in \Pi.
\end{gather}
For $\gamma \in \Pi^+$, we have
\begin{gather*}
\wt(x_\gamma) = \wt\big(F^\univ_\gamma\big) \mod \tB^\univ \Z^n + \sp({e_1,\dots,e_n}).
\end{gather*}
Note that all monomials in $F^\univ_\gamma$ have the same weight modulo $\tB^\univ \Z^n + \sp({e_1,\dots,e_n})$.

\begin{Proposition}\label{prop:wtbasis}
The sets \begin{gather*}
\big\{\wt\big(F^\univ_\gamma\big) \,|\, \gamma \in \Pi^+ \big\} \qquad \text{and} \qquad
\{\wt(x_\gamma) \,|\, \gamma \in \Pi\}
\end{gather*}
are bases of $\Z^{n+r}/\big(\tB^\univ \Z^n + \sp({e_1,\dots,e_n})\big)$ and $\Z^{n+r}/\tB^\univ \Z^n$ respectively.
\end{Proposition}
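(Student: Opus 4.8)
The plan is to prove the statement for $\{\wt(x_\gamma)\,|\,\gamma\in\Pi\}$ first, and then read off the statement for $\{\wt(F^\univ_\gamma)\,|\,\gamma\in\Pi^+\}$ as an essentially immediate consequence. To set up, I would first record that $\tB^\univ$ has really full rank: its bottom $r$ rows are the $\g$-vectors of $\A\big(B^T\big)$, and these include the initial $\g$-vectors $e_1,\dots,e_n$, hence span $\Z^n$. Therefore $\Z^{n+r}/\tB^\univ\Z^n$ is torsion-free, i.e., free abelian of rank $r=|\Pi|$ (this is already used in Section~4.4). Since the set $\{\wt(x_\gamma)\,|\,\gamma\in\Pi\}$ has exactly $r$ elements, and a generating set of $r$ elements in a free abelian group of rank $r$ is automatically a $\Z$-basis (a surjective endomorphism of $\Z^r$ is an isomorphism), it suffices to show that the $\wt(x_\gamma)$ \emph{generate} $\Z^{n+r}/\tB^\univ\Z^n$.

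To prove generation, note that $\Z^{n+r}$ is spanned by $e_1,\dots,e_n$ together with the $e_\gamma$, $\gamma\in\Pi$, that is, by $\wt(x_i)$ and $\wt(z_\gamma)$ via \eqref{eq:wtuniv}; so it is enough to express each class $e_i$ and each class $e_\gamma$ as a $\Z$-linear combination of the $\wt(x_\omega)$. For $e_i$ this is immediate from \eqref{eq:wtuniv}, since $e_i=\wt(x_{(0,i)})$. For $e_\gamma$ with $\gamma=(t,j)$, I would take $T^\univ$-weights in the primitive exchange relation \eqref{eq:exchange} of Proposition~\ref{prop:YZ}; since both monomials on the right-hand side are homogeneous of the same degree $\wt(x_{(t-1,j)})+\wt(x_{(t,j)})$, and $-a_{ij}=|B_{ij}|$ for adjacent $i\neq j$, comparing the weight of the first monomial with that degree yields
\begin{gather*}
e_{(t,j)} = \wt\big(x_{(t-1,j)}\big) + \wt\big(x_{(t,j)}\big) - \sum_{i\to j}|B_{ij}|\,\wt\big(x_{(t,i)}\big) - \sum_{j\to i}|B_{ij}|\,\wt\big(x_{(t-1,i)}\big)
\end{gather*}
in $\Z^{n+r}/\tB^\univ\Z^n$, where for $t=0$ one interprets $(-1,i):=(r_{i^*},i^*)$ as in Section~2.4. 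Letting $(t,j)$ range over all of $\Pi$, every $e_\gamma$ is thereby expressed in terms of the $\wt(x_\omega)$, so $\{\wt(x_\gamma)\,|\,\gamma\in\Pi\}$ generates, and hence is a $\Z$-basis of $\Z^{n+r}/\tB^\univ\Z^n$.

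For the first assertion I would observe that the $n$ classes $e_i=\wt(x_{(0,i)})$ are distinct members of the basis $\{\wt(x_\gamma)\,|\,\gamma\in\Pi\}$ just obtained. Consequently $\Z^{n+r}/\big(\tB^\univ\Z^n+\sp(e_1,\dots,e_n)\big)$ is the quotient of this free module by $n$ of its basis vectors, hence is free of rank $r-n=|\Pi^+|$, with basis the images of the remaining basis vectors, namely $\wt(x_\gamma)$ for $\gamma\in\Pi^+$. Finally, by the identity recorded in Section~6.3 we have $\wt(x_\gamma)=\wt\big(F^\univ_\gamma\big)$ in this quotient for each $\gamma\in\Pi^+$ (all monomials of $F^\univ_\gamma$ having the same weight modulo $\tB^\univ\Z^n+\sp(e_1,\dots,e_n)$), so $\{\wt(F^\univ_\gamma)\,|\,\gamma\in\Pi^+\}$ is a basis as claimed.

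The conceptual crux is the realization that the first assertion is free once the second is proved, because the $e_i$ are literally basis vectors in the basis produced for $\Z^{n+r}/\tB^\univ\Z^n$. The one step needing care is the weight computation in the middle paragraph: one must be sure that every cluster variable occurring in \eqref{eq:exchange} — in particular the wrap-around factor $x_{(-1,j)}=x_{(r_{j^*},j^*)}$ when $t=0$ — is an honest element of $\Pi$, so that \eqref{eq:exchange} really is a homogeneous identity in $\A\big(\tB^\univ\big)$ and degrees may be compared. This is precisely what Proposition~\ref{prop:YZ} guarantees, so no further work is needed; everything else is bookkeeping with ranks of free abelian groups.
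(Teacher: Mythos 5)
Your proof is correct, but it runs in the opposite direction from the paper's and uses different inputs. The paper proves the \emph{first} assertion directly — via Theorem~\ref{thm:Newt}(2), which places $\wt\big(F^\univ_\gamma\big)$ in $\E\big(D^\vee\big)_{e_\gamma}$, together with the identification of the kernel of the mesh-relation map $L$ with $B'\Z^n$ (the $\g$-vectors being solutions of the $0$-mesh relations) — concluding the sharper statement $\wt\big(F^\univ_\gamma\big)=e_\gamma$ modulo $\tB^\univ\Z^n+\sp(e_1,\dots,e_n)$, and then notes that the first assertion implies the second. You instead prove the \emph{second} assertion directly, by taking $T^\univ$-weights in the primitive exchange relations of Proposition~\ref{prop:YZ} to express each $e_{(t,j)}$ in terms of the $\wt(x_\omega)$, combining this with the torsion-freeness of $\Z^{n+r}/\tB^\univ\Z^n$ (really full rank) and the fact that an $r$-element generating set of $\Z^r$ is a basis; the first assertion then follows by quotienting out the $n$ basis vectors $e_i=\wt(x_{(0,i)})$ and invoking the congruence $\wt(x_\gamma)\equiv\wt\big(F^\univ_\gamma\big)$ from Section~6.3. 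Your route is more elementary and self-contained — it bypasses Theorem~\ref{thm:Newt} and the Newton-polytope/mesh-relation machinery entirely, needing only the homogeneity of the exchange relations and the convention $(-1,i)=(r_{i^*},i^*)$ — whereas the paper's route yields the more precise identification of $\wt\big(F^\univ_\gamma\big)$ with the standard basis vector $e_\gamma$ in the quotient, which ties into the polytopes $\U\big(D^\vee\big)_{e_\gamma}$ used elsewhere. Both arguments are sound; the reversed implication between the two assertions is harmless since each direction is an easy lattice-theoretic deduction once the other is established.
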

\begin{proof}
The first statement implies the second. By Theorem~\ref{thm:Newt}(2), for $\gamma \in \Pi^+$, we have $\wt\big(F^\univ_\gamma\big) \in \E(D^\vee)_{e_\gamma}$. The~equations \eqref{eq:c} define a linear map $L\colon \R^{\Pi} \to \R^{\Pi^+}$, sending $(p_\gamma)$ to~$(c_\gamma)$. Let~$B'$ be the last $r$ rows of $\tB^\univ$. By \cite{Reading}, see also \cite[Section~8]{BDMTY}, the matrix $B'$ has rows given by $-\g^\vee_\gamma$. By \cite[relation~(6.13)]{CA4}, the $\g$-vectors are solutions to the $0$-mesh relations, and thus the kernel of $L$ is exactly $B' \Z^n$. We~conclude that modulo $B'\Z^n$, the last $r$ entries of~$\wt\big(F^\univ_\gamma\big)$ is equal to the basis vector $e_\gamma$. Returning to the vector $\wt\big(F^\univ_\gamma\big) \in \Z^{n+r}$, we~obtain
\begin{gather*}
\wt\big(F^\univ_\gamma\big) = e_\gamma \mod \tB^\univ \Z^n + \sp({e_1,\dots,e_n}).
\end{gather*}
Thus
$\big\{\wt\big(F^\univ_\gamma\big) \,|\, \gamma \in \Pi^+\big\}$
form a basis of $\Z^{n+r}/\big(\tB^\univ \Z^n + \sp({e_1,\dots,e_n})\big)$.
\end{proof}

\bigskip\noindent{\large\bf 6.4.} 
Recall the definition of $f_\gamma(\y)$ from \eqref{eq:fy}. Let~$A_\gamma(\g):= \Trop(f_\gamma(\y))$, where we take $(g_1,\dots,g_n)$ as the tropicalization of $y_1,\dots,y_n$, for example $\Trop(1+y_1+y_1y_2) = \min(0,g_1,\allowbreak g_1+g_2)$.

\begin{Theorem}\label{thm:pair}
For $\gamma,\omega \in\Pi$, we have $A_\gamma\big({-}\g^\vee_\omega\big) = \delta_{\omega, \tau \gamma}$.
\end{Theorem}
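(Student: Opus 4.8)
The plan is to reduce to the simply-laced case by folding, then to tropicalize the primitive exchange relations of Proposition~\ref{prop:YZ} and read off the answer from the geometry of the cluster fan $\N(B^\vee)$. For the reduction: if $D$ is a folding of a simply-laced $\tD$ with group $\Gamma$ and quotient $\nu\colon\tPi\to\Pi$, then \eqref{eq:fy} together with Proposition~\ref{prop:folding}(3) gives $f_\gamma=\nu\big(f_{\tgamma}\big)$ for any lift $\tgamma$ of $\gamma$ (the Cartan entries $-a_{ij}$ of $D$ are sums of $\tD$-Cartan entries over $\Gamma$-orbits, and $\nu$ carries the $\tD$-data to the $D$-data), hence $A_\gamma(\v)=A_{\tgamma}\big(\nu^{*}\v\big)$ with $\big(\nu^{*}\v\big)_{\tilde i}=v_{\nu(\tilde i)}$; by Proposition~\ref{prop:gvee} one has $\nu^{*}\big(\g^\vee_\omega\big)=\sum_{\tilde\omega\in\nu^{-1}(\omega)}\g_{\tilde\omega}$. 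As a $\Gamma$-orbit in $\tPi$ is a pairwise-compatible set (Proposition~\ref{prop:folding}(1) and the definition of folding), the rays $-\g_{\tilde\omega}$, $\tilde\omega\in\nu^{-1}(\omega)$, span a cone of the negated cluster fan of $\tD$ on which $A_{\tgamma}$ is linear, and $A_{\tgamma}(0)=0$, so $A_\gamma\big({-}\g^\vee_\omega\big)=\sum_{\tilde\omega\in\nu^{-1}(\omega)}A_{\tgamma}\big({-}\g_{\tilde\omega}\big)$; the simply-laced case and $\nu\circ\tau=\tau\circ\nu$ then give $\delta_{\omega,\tau\gamma}$. So assume from now on that $D=D^\vee$ is simply-laced.

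Write $\gamma=(t,j)$, so $\tau\gamma=(t-1,j)$. Restricting the primitive exchange relation of Proposition~\ref{prop:YZ} to the locus $X^\prin(1)$ of Theorem~\ref{thm:principal} --- on which $x_\mu$ restricts to $F_\mu(\y)$, with $F_\mu:=1$ for $\mu$ initial --- and comparing with \eqref{eq:fy} (which forces the universal frozen variables to specialize by $z_{(0,i)}\mapsto y_i$ and $z_\mu\mapsto 1$ for $\mu\in\Pi^+$), one obtains an identity
\[
F_{\tau\gamma}(\y)\,F_\gamma(\y)\;=\;m_\gamma(\y)\;+\;m'_\gamma(\y),
\]
where $m_\gamma$ is the numerator of $f_\gamma$ in \eqref{eq:fy} and $m'_\gamma=\prod_i y_i^{((0,i)\,||\,\gamma)}$ is a monomial. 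Since $\Trop$ is a homomorphism to the min-plus semiring, writing $\Phi_\mu:=\Trop(F_\mu(\y))$ (and $\Phi_\mu\equiv 0$ for $\mu$ initial) this yields $\Phi_{\tau\gamma}+\Phi_\gamma=\min\big(\Trop m_\gamma,\Trop m'_\gamma\big)$, and hence, directly from \eqref{eq:fy},
\[
A_\gamma\;=\;\Trop(m_\gamma)-\Phi_{\tau\gamma}-\Phi_\gamma\;=\;\max\big(0,\ \Trop(m_\gamma)-\Trop(m'_\gamma)\big)\;\ge\;0 .
\]
It therefore suffices to show that, evaluated at $\v=-\g^\vee_\omega$, the quantity $\Trop(m_\gamma)(\v)-\Trop(m'_\gamma)(\v)$ equals $1$ when $\omega=\tau\gamma$ and is $\le 0$ otherwise.

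For this I would use Theorem~\ref{thm:Newt}(3): the Newton polytope $P_\mu$ of $F_\mu(\y)$ is $\pi\big(\U(D^\vee)_{e_\mu}\big)$, and by Theorems~\ref{thm:FNewton} and~\ref{thm:clusterfan} its normal fan coarsens the smooth complete fan $\N(B^\vee)$. Consequently $\Phi_\mu\big({-}\g^\vee_\omega\big)=-\langle v_\omega(\mu),\g^\vee_\omega\rangle$, where $v_\omega(\mu)$ is the vertex of $P_\mu$ indexed by any cluster of $\Pi(B^\vee)$ containing $\omega$. The relation $\Phi_{\tau\gamma}+\Phi_\gamma=\min(\Trop m_\gamma,\Trop m'_\gamma)$ above is a recursion along the Auslander--Reiten walk; combined with the description of $P_\mu$ via the $e_\mu$-deformed mesh relations \eqref{eq:c} and with the fact that the $\g^\vee$-vectors solve the $\mathbf 0$-mesh relations (\cite[relation~(6.13)]{CA4}, valid by sign-coherence), it lets one run an induction on $t$ --- and, within a fixed $t$, along a source-first ordering of the vertices of $D$, so that the $\Phi_{(t,i)}$ with $i\to j$ are already known when treating $\gamma=(t,j)$ --- proving at once the asserted value of $A_\gamma(-\g^\vee_\omega)$ and the auxiliary values of $\Phi_\gamma(-\g^\vee_\omega)$ that the recursion forces. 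The base cases, $\mu$ or $\gamma$ initial, are immediate because the relevant $\Phi$'s vanish.

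The step I expect to be the real obstacle is exactly this sign bookkeeping: one must verify that the ``frozen'' monomial $m'_\gamma=\prod_i y_i^{((0,i)\,||\,\gamma)}$ attains the tropical minimum $\min(\Trop m_\gamma,\Trop m'_\gamma)$ at $-\g^\vee_\omega$ for every $\omega\ne\tau\gamma$, and is beaten by exactly $1$ when $\omega=\tau\gamma$ --- equivalently, that $-\g^\vee_{\tau\gamma}$ is the unique $\g^\vee$-ray along which the identity $F_{\tau\gamma}F_\gamma=m_\gamma+m'_\gamma$ ``factorizes'' in the tropical limit, which is the tropical shadow of the factorization built into the relations $R_\gamma$. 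Making this precise requires identifying which maximal cone of $\N(B^\vee)$ contains $-\g^\vee_\omega$ and using the explicit compatibility-degree exponents of $m'_\gamma$; it is the one place where the interplay between $D$ and its Langlands dual $D^\vee$ --- responsible for the appearance of $\N(B^\vee)$ rather than $\N(B)$ --- has to be handled with care, and in the simply-laced case it is exactly the computation carried out via quiver Grassmannians in~\cite{BDMTY}.
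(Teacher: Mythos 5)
Your folding reduction to the simply-laced case is sound and is essentially the paper's own (the paper phrases the additivity $\Trop(F_\gamma(\y))\big({-}\g^\vee_\omega\big)=\sum_{\tilde\omega\in\nu^{-1}(\omega)}\Trop(F_{\tilde\gamma}(\y))(-\g_{\tilde\omega})$ via the minimum of a linear functional over the Newton polytope together with Proposition~\ref{prop:gvee}, rather than via linearity on a cone of $-\N\big(\tD\big)$, but these amount to the same thing). The simply-laced core, however, is not a proof. You correctly derive $A_\gamma=\max\big(0,\Trop(m_\gamma)-\Trop(m'_\gamma)\big)$ from the tropicalized primitive exchange relation, but the decisive step --- showing that at $\v=-\g^\vee_\omega$ the frozen monomial attains the minimum except when $\omega=\tau\gamma$, where it loses by exactly $1$ --- is exactly what you defer to an unstated induction and to ``the computation carried out via quiver Grassmannians in [BDMTY]''. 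An induction needs an inductive hypothesis: a closed formula for $\Phi_\mu\big({-}\g^\vee_\omega\big)$ valid for all $\mu,\omega$. Without one, your recursion $\Phi_{\tau\gamma}+\Phi_\gamma=\min(\Trop m_\gamma,\Trop m'_\gamma)$ only computes numbers type by type and cannot be closed uniformly over all finite types. That missing formula is the entire content of the theorem, and it is what the paper supplies: by [BDMTY], $\Trop(F_\mu(\y))(-\g_\omega)=-\dim\Hom(W_\omega,W_{\tau\mu})$ in $D^b({\rm rep}\,Q^{\rm op})$, whereupon $A_\gamma(-\g_\omega)$ becomes the alternating sum of $\Hom$-dimensions around the Auslander--Reiten triangle ending at $W_{\tau\gamma}$, and the defining property of that triangle (every non-isomorphism into $W_{\tau\gamma}$ factors through the middle term) gives precisely $\delta_{\omega,\tau\gamma}$.

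A second, smaller problem: your base cases are not ``immediate because the relevant $\Phi$'s vanish''. For $\gamma=(0,j)$ the formula \eqref{eq:fy} involves $F_{(-1,j)}=F_{(r_{j^*},j^*)}$ --- a non-initial $F$-polynomial sitting at the far end of the Auslander--Reiten walk --- together with the extra factor $y_j$, so the $t=0$ relations wrap around and cannot serve as the start of your induction; the paper has to treat the two boundary cases ($\tau\gamma$ initial, and $\gamma$ initial) separately, the latter using the interpretation of $\g$-vectors as the change of basis between the summands of a tilting object and the indecomposable projectives. Until you either import the $\dim\Hom$ formula explicitly or produce an equivalent combinatorial closed form and verify it against the mesh recursion (including the wrap-around), the argument is a plan rather than a proof.
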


\begin{proof}
First, assume that $D$ is simply-laced so that $\g_\omega = \g^\vee_\omega$. For $\gamma \in \Pi$, let $W_\gamma \in D^b( {\rm rep}\,Q^{\rm op})$ be the object indexed by $\gamma$ in the bounded derived category of representations of the quiver $Q^{\rm op}$ corresponding to the reversed orientation of $D$, see \cite[Section~3]{BDMTY}. For any $\g \in \R^n$, the quantity $\Trop(F_\gamma(\y))(\g)$ is equal to the minimum value that the linear function $\Y \mapsto \Y \cdot \g$ takes on the Newton polytope $P_\gamma$.

Now take $\gamma \in \Pi^+$. Let~$G$ be the $n \times |\Pi|$ matrix whose columns are $\g_\omega$. According to \cite[Proof of Theorem~1]{BDMTY}, the map $\Y \mapsto \Y \cdot (-G) + \v_{e_{\gamma}}$ is a diffeomorphism between $P_\gamma$ and $\U_{e_\gamma}$, where $\v_{e_{\gamma}} \in \E_{e_\gamma}$ is the integer vector given by $(\v_{e_\gamma})_{\omega} = \dim \Hom(W_\omega, W_{\tau \gamma})$. Here, $\Hom$ is taken within $D^b\big( {\rm rep}\,Q^{\rm op}\big)$. Furthermore, it follows from~\cite{BDMTY} that $\U_{e_\gamma}$ has nonempty intersection with every coordinate hyperplane. Thus,
\begin{gather*}
\Trop(F_\gamma(\y))(-\g_\omega) = -\dim \Hom(W_\omega, W_{\tau \gamma}).
\end{gather*}
Suppose $\tau \gamma = (t,i) \in \Pi^+$. Then we have an Auslander--Reiten triangle in $D^b\big( {\rm rep}\, Q^{\rm op}\big)$
\begin{gather*}
W_{(t-1,i)} \to E \to W_{(t,i)} \to W_{(t-1,i)} [1],
 \qquad \text{where} \quad
E = \bigoplus_{i \to j} W_{(t,j)} \oplus \bigoplus_{j \to i} W_{(t-1,j)}.
\end{gather*}
We have an exact sequence
\begin{gather*}
0 \to \Hom\big(W_\omega, W_{(t-1,i)}\big) \to \Hom(W_\omega, E) \to \Hom\big(W_\omega, W_{(t,i)}\big)
\\ \hphantom{0}
{}\to \Hom\big(W_\omega, W_{(t-1,i)} [1]\big) \to \cdots.
\end{gather*}
By the definition of Auslander--Reiten triangle, any map from $W_\omega$ to $W_{(t,i)}$ which is not an isomorphism factors through $E$. Thus
\begin{gather*}
\big(\dim \Hom\big(W_\omega, W_{(t-1,i)}\big) + \dim\Hom\big(W_\omega, W_{(t,i)}\big)\big) - \dim \Hom(W_\omega, E) =\delta_{\omega,(t,i)}
\end{gather*}
and this is exactly $A_\gamma(-\g_\omega)$. Now if $\gamma \in \Pi^+$ but $\tau \gamma$ is initial, then we have an Auslander--Reiten triangle of the form
\begin{gather*}
W_{(r_{i^*}-1,i^*)}[-1] \to E \to W_{(0,i)} \to W_{(r_{i^*}-1,i^*)}
\end{gather*}
and $\Hom\big(W_\omega,W_{(r_{i^*}-1,i^*)}[-1]\big)= \Ext^{-1}\big(W_\omega,W_{(r_{i^*}-1,i^*)}\big) = 0$ for all $\omega \in \Pi$, agreeing with our convention that $F_{(0,i)}(\y) = 1$, so again we have $A_\gamma(-\g_\omega) = \delta_{\omega, \tau \gamma}$. Finally, suppose that $\gamma = (0,i)$ itself is initial. Then we have an Auslander--Reiten triangle of the form
\begin{gather*}
W_{(r_{i^*}-1,i^*)} \to E \to W_{(0,i)}[1] \to W_{(r_{i^*}-1,i^*)} [1].
\end{gather*}
In this case, our formula for $f_\gamma(\y)$ includes a factor of $y_i$, and $\Trop(y_i)(\g_\omega)$ is simply the $i$-th coordinate of $\g_\omega$. Our claim then follows from the interpretation \cite[Section~6]{BDMTY} of $\g$-vectors as a change of basis between the summands of a tilting object and the indecomposable projectives $\big\{W_{(0,1)},\dots,W_{(0,n)}\big\}$ (see also \cite[Theorem~3.23(ii)]{PPPP}).

Now, suppose that $D$ is multiply-laced and let $\tD$ be the simply-laced diagram that folds to $D$. Note that for $\u \in \R^{|\tI|}$ and $\v \in \big(\R^{|\tI|}\big)^\Gamma$, we have $\u \cdot \v = \nu(\u) \cdot \nu^\vee(\v)$.
By Proposition~\ref{prop:gvee}, $\Trop(F_\gamma(\y))(-\g^\vee_\omega)$ is equal to the minimum value that the linear function $\Y \mapsto \Y \cdot \big({-}\sum_{\tomega \in \nu^{-1}(\omega)}\g_\tomega\big)$ takes on the Newton polytope $P_{\tgamma}$ (where $\tgamma \in \nu^{-1}(\gamma)$), and we thus have $\Trop(F_\gamma(\y))\big({-}\g^\vee_\omega\big) = \sum_{\tomega \in \nu^{-1}(\omega)} \Trop(F_\tgamma(\y))(-\g_\tomega)$. It~follows from the definitions that $f_\gamma(\y) = \nu(f_{\tgamma}(\y))$ for any $\tgamma \in \nu^{-1}(\gamma)$. The~equality $A_\gamma\big({-}\g^\vee_\omega\big) = \delta_{\omega, \tau \gamma}$ for $D$ thus follows from the same equality for~$\tD$.
\end{proof}

\begin{proof}[Proof of Proposition~\ref{prop:hinvert}]
Let $m(\y)$ be a Laurent monomial in $\{y_i, F_\gamma(\y)\}$, and denote by $G=G(\g):=\Trop(m(\y))$ the piecewise-linear function that is the tropicalization of $m(\y)$. (Recall that by convention the variables $\g$ are the tropicalizations of the variables $\y$.)
The~domains of linearity of the function $G$ is a coarsening of $-\N\big(B^\vee\big)$, so the function~$G$ is uniquely determined by the integer vector $\big(G\big({-}\g^\vee_\gamma\big) \,|\, \gamma \in \Pi\big) \in \Z^{|\Pi|}$. By Theorem~\ref{thm:pair}, any vector in $\Z^{|\Pi|}$ can arise in this way. It~follows that $m(\y)$ is uniquely determined by its tropicalization $G$ by the formula $m(\y) = \prod_\gamma f_\gamma(\y)^{G(-\g^\vee_{\tau \gamma})}$.
\end{proof}

\begin{Example}\label{ex:A3}
We illustrate Theorem~\ref{thm:pair} for the exchange matrix
\begin{gather*}
B = \begin{bmatrix}
0 & 1 & 0 \\
-1 & 0 & -1 \\
0 & 1 & 0
\end{bmatrix}
\end{gather*}
of type $A_3$. In~this case, we have $\Pi = \{0,1,2\} \times \{1,2,3\}$ and $i^* = 4-i$. We~tabulate $\g_\gamma$, $F_\gamma(\y)$, and $f_\gamma(\y)$ below:
\begin{center}
\setlength{\tabcolsep}{4.0pt}
\renewcommand{\arraystretch}{1.2}
\begin{tabular}{c|c|c|c}
\hline
$\gamma$ & $\g_\gamma$ & $F_\gamma(\y)$ &$f_\gamma(\y)$\\
\hline
$(0,1)$ & $(1,0,0)$ & 1 & $\dfrac{y_1(1+y_2)}{1+y_1+y_1y_2} \vphantom{\Bigg(}$\\
\hline
$(0,2)$ & $(0,1,0)$ & 1 & $\dfrac{y_2}{1+y_2}\vphantom{\Bigg(} $\\
\hline
$(0,3)$ & $(0,0,1)$ & 1 & $\dfrac{y_3(1+y_2)}{1+y_3+y_2y_3}\vphantom{\Bigg(} $\\
\hline
$(1,1)$ & $(-1,1,0)$ & $1+y_1$ & $\dfrac{1}{1+y_1} \vphantom{\Bigg(}$\\
\hline
$(1,2)$ & $(-1,1,-1)$ & $1+y_1+y_3+y_1y_3+y_1y_2y_3$ &$ \dfrac{(1+y_1)(1+y_3)}{1+y_1+y_3+y_1y_3+y_1y_2y_3}\vphantom{\Bigg(} $\\
\hline
$(1,3)$ & $(0,1,-1)$ & $1+y_3$ & $ \dfrac{1}{1+y_3}\vphantom{\Bigg(}$\\
\hline
$(2,1)$ & $(0,0,-1)$ & $1+y_3+y_2y_3$ & $\dfrac{1+y_1+y_3+y_1y_3+y_1y_2y_3}{(1+y_1)(1+y_3+y_2y_3)}\vphantom{\Bigg(}$ \\
\hline
$(2,2)$ & $(0,-1,0)$ & $1+y_2$ & $\dfrac{(1+y_1+y_1y_2)(1+y_3+y_2y_3)}{(1+y_2)(1+y_1+y_3+y_1y_3+y_1y_2y_3)}\vphantom{\Bigg(}$\\
\hline
$(2,3)$ & $(-1,0,0)$ & $1+y_1+y_1y_2$ & $\dfrac{1+y_1+y_3+y_1y_3+y_1y_2y_3}{(1+y_3)(1+y_1+y_1y_2)}\vphantom{\Bigg(}$\\
\hline
\end{tabular}
\end{center}

\noindent
Taking $\gamma = (1,2)$ as an example, we have
\begin{gather*}
A_\gamma(\g) = \min(0,g_1)+\min(0,g_3) - \min(0,g_1,g_3,g_1+g_3,g_1+g_2+g_3)
\end{gather*}
and one can verify that it takes value 0 on all negatives of $\g$-vectors except for $-\g_{(0,2)}$, where it takes value $1$.
\end{Example}

\begin{Example}\label{ex:B3}
Consider the following exchange matrix of type $B_3$:
\[
B=\begin{bmatrix} 0&1&0 \\ -1&0&1 \\ 0&-2&0 \end{bmatrix}\!.
\]
We have $\Pi = \{(t,j) \in [0,3] \times [1,3]\}$ and $\tau(t,j) = ((t-1) \mod 4,j)$. We~tabulate the $\g$-vectors, the $\g^\vee$-vectors, the $F$-polynomials, and the polynomials $f_\gamma(\y)$ in Table~\ref{fig:B3}.
\begin{table}[t!]
\setlength{\tabcolsep}{2.0pt}
\renewcommand{\arraystretch}{1.2}
\caption{$\g$-vectors, $F$-polynomials, and $f_\gamma(\y)$ in type $B_3$.}
\label{fig:B3}
\begin{tabular}{c|c|c|c|c}
\hline
$\gamma$ & $\g_\gamma$ & $\g^\vee_\gamma$ & $F_\gamma(\y)$ & $f_\gamma(\y)$\\
\hline
$(0,1)$ & $\begin{bmatrix}1\\[-1ex]0\\[-1ex]0 \end{bmatrix}$& $\begin{bmatrix}1\\[-1ex]0\\[-1ex]0 \end{bmatrix}$& 1& $\dfrac{ y_1(1 + y_2 + 2y_2y_3 + y_2y_3^2)}{1 + y_1 + y_1y_2 + 2y_1y_2y_3 + y_1y_2y_3^2}\vphantom{\begin{bmatrix}1\\[-.5ex]0\\[-.5ex]0 \end{bmatrix}}$\\
\hline
$(0,2)$ & $\begin{bmatrix}0\\[-1ex]1\\[-1ex]0 \end{bmatrix}$ & $\begin{bmatrix}0\\[-1ex]1\\[-1ex]0 \end{bmatrix}$ & 1& $\dfrac{y_2(1+y_3)^2}{1 + y_2 + 2y_2y_3 + y_2y_3^2}\vphantom{\begin{bmatrix}1\\[-.5ex]0\\[-.5ex]0 \end{bmatrix}}$\\
\hline
$(0,3)$ & $\begin{bmatrix}0\\[-1ex]0\\[-1ex]1 \end{bmatrix}$ & $\begin{bmatrix}0\\[-1ex]0\\[-1ex]1 \end{bmatrix}$ & 1 & $\dfrac{y_3}{1+y_3}\vphantom{\begin{bmatrix}1\\[-.5ex]0\\[-.5ex]0 \end{bmatrix}}$\\
\hline
$(1,1)$ & $\begin{bmatrix}-1\\[-1ex]1\\[-1ex]0 \end{bmatrix}$ & $\begin{bmatrix}-1\\[-1ex]1\\[-1ex]0 \end{bmatrix}$ & $1+y_1$&$\dfrac{1}{1+y_1}\vphantom{\begin{bmatrix}1\\[-.5ex]0\\[-.5ex]0 \end{bmatrix}}$\\
\hline
$(1,2)$ & $\begin{bmatrix}-1\\[-1ex]0\\[-1ex]2 \end{bmatrix}$ & $\begin{bmatrix}-1\\[-1ex]0\\[-1ex]1 \end{bmatrix}$ & $1+y_1+y_1y_2$ & $\dfrac{1+y_1}{1+y_1+y_1y_2}\vphantom{\begin{bmatrix}1\\[-.5ex]0\\[-.5ex]0 \end{bmatrix}}$\\
\hline
$(1,3)$ & $\begin{bmatrix}-1\\[-1ex]0\\[-1ex]1 \end{bmatrix}$ & $\begin{bmatrix}-2\\[-1ex]0\\[-1ex]1 \end{bmatrix}$ & $1+y_1+y_1y_2+y_1y_2y_3$ & $\dfrac{1+y_1+y_1y_2}{1+y_1+y_1y_2+y_1y_2y_3}\vphantom{\begin{bmatrix}1\\[-.5ex]0\\[-.5ex]0 \end{bmatrix}}$\\
\hline
$(2,1)$ & $\begin{bmatrix}0\\[-1ex]-1\\[-1ex]2 \end{bmatrix}$& $\begin{bmatrix}0\\[-1ex]-1\\[-1ex]1 \end{bmatrix}$&$1+y_2$ & $\dfrac{1+y_1+y_1y_2}{(1+y_1)(1+y_2)}\vphantom{\begin{bmatrix}1\\[-.5ex]0\\[-.5ex]0 \end{bmatrix}}$\\
\hline
$(2,2)$ & $\begin{bmatrix}-1\\[-1ex]-1\\[-1ex]2 \end{bmatrix}\vphantom{\begin{bmatrix}1\\[-.5ex]0\\[-.5ex]0 \end{bmatrix}}$ & $\begin{bmatrix}-1\\[-1ex]-1\\[-1ex]1 \end{bmatrix}$ &
 \begin{tabular}{@{}c@{}}$1 + y_1 + y_2+ 2y_1y_2 + y_1y_2^2 + $ \\ $ 2y_1y_2y_3 + 2y_1y_2^2y_3 + y_1y_2^2y_3^2$\end{tabular} & $\dfrac{(1+y_2)(1+y_1+y_1y_2+y_1y_2y_3)^2}{(1+y_1+y_1y_2)f_{(2,2)(\y)}}$
\\
\hline
$(2,3)$ & $\begin{bmatrix}0\\[-1ex]-1\\[-1ex]1 \end{bmatrix}$ & $\begin{bmatrix}0\\[-1ex]-2\\[-1ex]1 \end{bmatrix}$& $1+y_2+y_2y_3$ & $\dfrac{F_{(2,2)}(\y)}{(1+y_1+y_1y_2+y_1y_2y_3)(1+y_2+y_2y_3)}\vphantom{\begin{bmatrix}1\\[-.5ex]0\\[-.5ex]0 \end{bmatrix}}$\\
\hline
$(3,1)$ & $\begin{bmatrix}-1\\[-1ex]0\\[-1ex]0 \end{bmatrix}$& $\begin{bmatrix}-1\\[-1ex]0\\[-1ex]0 \end{bmatrix}$& $1 + y_1 + y_1y_2 + 2y_1y_2y_3 + y_1y_2y_3^2$ & $ \dfrac{F_{(2,2)}(\y)}{(1+y_2)f_{(3,1)}(\y)}\vphantom{\begin{bmatrix}1\\[-.5ex]0\\[-.5ex]0 \end{bmatrix}}$\\
\hline
$(3,2)$ & $\begin{bmatrix}0\\[-1ex]-1\\[-1ex]0 \end{bmatrix}$ & $\begin{bmatrix}0\\[-1ex]-1\\[-1ex]0 \end{bmatrix}$ & $1 + y_2 + 2y_2y_3 + y_2y_3^2$ & $\dfrac{(1+y_2+y_2y_3)^2 F_{(3,1)}(\y)}{F_{(2,2)}(\y)(1 + y_2 + 2y_2y_3 + y_2y_3^2)}\vphantom{\begin{bmatrix}1\\[-.5ex]0\\[-.5ex]0 \end{bmatrix}}$\\
\hline
$(3,3)$ & $\begin{bmatrix}0\\[-1ex]0\\[-1ex]-1 \end{bmatrix}$ & $\begin{bmatrix}0\\[-1ex]0\\[-1ex]-1 \end{bmatrix}$ & $1+y_3$ & $\dfrac{1 + y_2 + 2y_2y_3 + y_2y_3^2}{(1+y_3)(1+y_2+y_2y_3)}\vphantom{\begin{bmatrix}1\\[-.5ex]0\\[-.5ex]0 \end{bmatrix}}$
\\
\hline
\end{tabular}
\end{table}
\end{Example}

\section[Examples of MD as a configuration space]
{Examples of $\boldsymbol{\M_D}$ as a configuration space}

The space $\M_{A_{n-3}}$ can be identified with the configuration space of $n$ distinct points on $\P^1$. In~this section, we investigate similar descriptions of $\M_D$ in the cases $D=B_n$ and $D=C_n$. We~also consider the question of whether $\M_D$ is a hyperplane arrangement complement.

So far we have considered $\M_D$ as a complex algebraic variety. However, the equations \eqref{eq:I} make sense over the integers, and we may also consider $\M_D$ as a scheme over $\Z$. In~particular, in this section we will also consider $\M_D(\F_q)$, the set of $\F_q$-points of $\M_D$, where $\F_q$ is a finite field.

Throughout this section, we use the description of $\Pi$ from Section~3.2.

{\samepage\bigskip\noindent{\bf{\large 7.1.} Hyperplane arrangements.}
Let $H_1, H_2,\dots,H_r$ be (affine) hyperplanes in $\C^n$, with the assumption that the hyperplanes are defined over the integers. Let~$Z(\C) := \C^n - (H_1 \cup H_2 \cup \cdots \cup H_r)$. By our assumption $Z(\R)$ and $Z(\F_q)$ are also well-defined. The~following result is well-known \cite{OT, Sta}.

}
\begin{Theorem} \label{thm:hyp} \
\begin{enumerate}\itemsep=0pt
\item[$1.$] There exists a polynomial $\chi(t)$ so that $\chi(q) = \#Z(\F_q)$, where $q = p^m$ is a prime power with sufficiently large $p$.
\item[$2.$] The number of connected components $|\pi_0(Z(\R))|$ of the real hyperplane arrangement complement $Z(\R)$ is given by $(-1)^n \chi(-1)$.
\item[$3.$] The cohomology ring $H^*(Z(\C),\C)$ is generated by the classes of $\dlog f_i$, where $H_i =$ \mbox{$\{f_i =0\}$}, and we have $\sum_i \dim(H^i (Z(\C),\C)) \;t^i = (-t)^n \chi(-1/t)$. Thus the Euler characteristic of $Z(\C)$ is equal to $\chi(1)$.
\end{enumerate}
\end{Theorem}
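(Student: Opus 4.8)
The plan is to derive all three statements from the theory of the characteristic polynomial of a hyperplane arrangement, following the standard references \cite{OT, Sta}. Write $\A = \{H_1,\dots,H_r\}$, let $L(\A)$ be its intersection poset ordered by reverse inclusion with minimum $\hat 0 = \C^n$, let $\mu$ be the M\"obius function of $L(\A)$, and set $\chi(t) := \sum_{X \in L(\A)} \mu(\hat 0, X)\, t^{\dim X}$; this is the polynomial whose existence is asserted in~(1).

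For (1), I would invoke the finite-field method \cite[Section~2.4]{Sta}, \cite[Theorem~2.69]{OT}. Since the $H_i$ are defined over $\Z$, there is a finite set $S$ of primes (those dividing certain subdeterminants of the defining equations) such that for $q = p^m$ with $p \notin S$, reduction mod $p$ gives an arrangement over $\F_q$ with intersection poset canonically isomorphic to $L(\A)$, and every flat $X$ of dimension $d$ has exactly $q^d$ points over $\F_q$. M\"obius inversion over $L(\A)$ then yields $\#Z(\F_q) = \sum_{X} \mu(\hat 0,X)\, q^{\dim X} = \chi(q)$.

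For (2), I would argue by deletion--restriction and induction on $r$. Choosing $H \in \A$ and writing $\A' = \A \setminus \{H\}$, $\A'' = \A^H$, one has $\chi_\A(t) = \chi_{\A'}(t) - \chi_{\A''}(t)$, while geometrically the number of regions obeys $r(\A) = r(\A') + r(\A'')$ because adding $H$ back subdivides precisely those regions of $\A'$ meeting $H$, and these correspond bijectively to the regions of $\A''$. Both sides of $|\pi_0(Z(\R))| = (-1)^n\chi_\A(-1)$ satisfy the same recursion and agree for the empty arrangement, giving Zaslavsky's theorem \cite[Theorem~2.5]{Sta}.

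For (3), the key and genuinely deep input is the Brieskorn--Orlik--Solomon theorem \cite[Chapters~3, 5]{OT}, which identifies $H^*(Z(\C),\C)$ with the Orlik--Solomon algebra --- the subalgebra of de Rham forms generated by the classes $\dlog f_i$ --- and gives the Poincar\'e polynomial $\sum_i \dim H^i(Z(\C),\C)\, t^i = \sum_{X \in L(\A)} |\mu(\hat 0, X)|\, t^{\,n-\dim X}$. By Whitney's sign theorem $\mu(\hat 0, X) = (-1)^{n-\dim X}|\mu(\hat 0, X)|$, so the right side rewrites as $(-t)^n \chi(-1/t)$; evaluating at $t = -1$ gives the topological Euler characteristic $\sum_i (-1)^i \dim H^i(Z(\C),\C) = \chi(1)$. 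The substantive content is all in this step; parts (1) and (2) are elementary once the characteristic polynomial is at hand, the only subtlety being the need for $p$ large in (1) so that reduction mod $p$ preserves the intersection lattice.
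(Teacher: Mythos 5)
Your proposal is correct and is exactly the standard argument from the references the paper cites: the paper offers no proof of this theorem, simply labeling it as well-known and pointing to~\cite{OT} and~\cite{Sta}, which contain precisely the finite-field method, Zaslavsky's deletion--restriction argument, and the Brieskorn--Orlik--Solomon theorem that you sketch. The sign bookkeeping relating the Poincar\'e polynomial to $(-t)^n\chi(-1/t)$ checks out, so nothing further is needed.
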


\bigskip\noindent{\bf{\large 7.2.} Type $\boldsymbol{A_n}$.} 
Let $D = A_{n-3}$ with $n \geq 4$. Then $\Pi$ can be identified with the diagonals of a $n$-gon $P_n$. We~write $u_{ij}$ for the $u$-variable indexed by a diagonal $(i,j)$. Then the relations defining $\tM_{A_{n-3}}$ are given by \eqref{eq:Rij}.
These relations have appeared a number of times in the literature, for example see \cite{ABHY,Brown}. Let~$\M_{0,n}$ denote the configuration space of $n$ (distinct) points $(z_1,z_2,\dots,z_n)$ on $\P^1$. Then the identification
\begin{gather}\label{eq:uij}
u_{ij} = \frac{(z_i - z_{j+1})(z_{i+1}-z_j)}{(z_i - z_j)(z_{i+1}-z_{j+1})}
\end{gather}
of $u_{ij}$ with a cross ratio gives an isomorphism $\M_{A_{n-3}} \cong \M_{0,n}$. There is a well-studied Deligne--Knudsen--Mumford compactification $\bM_{0,n}$, and $\tM_{0,n}:=\tM_{A_{n-3}}$ is an affine variety that sits between $\M_{0,n}$ and $\bM_{0,n}$, that is, we have open inclusions $\M_{0,n} \subset \tM_{0,n} \subset \bM_{0,n}$.

Let $\Gr(2,n)$ denote the Grassmannian of 2-planes in $\C^n$. Let~$\oPi(2,n)\subset \Gr(2,n)$ denote the open subset where the adjacent cyclic minors $\Delta_{i,i+1}$ are non-vanishing. Then $\oPi(2,n)$ is a full rank cluster variety of type $A_{n-3}$, see \cite[Section~12.2]{CA2}. Let~$\oGr(2,n) \subset \oPi(2,n)$ be the open subset where \emph{all} Pl\"ucker coordinates $\Delta_{ij}$ are non-vanishing. This is the subset denoted $\Xo$ in Section~\ref{sec:quotient}. Then $\M_{0,n}$ can be identified with the quotient of $\oGr(2,n)$ by the diagonal torus $T$ sitting inside $\GL_n$ that acts on $\Gr(2,n)$. The~isomorphism $\M_{0,n} \cong \oGr(2,n)/T$ is an instance of Theorem~\ref{thm:fullrank} for $D = A_{n-3}$. In~the $\Gr(2,n)$ cluster algebra, we have the primitive exchange relation
\begin{gather*}
\Delta_{i,j} \Delta_{i+1, j+1}= \Delta_{i, j+1}\Delta_{i+1,j} + \Delta_{i,i+1} \Delta_{j,j+1},
\end{gather*}
where $\Delta_{i,i+1}$ and $\Delta_{j,j+1}$ are frozen variables. Thus \eqref{eq:uij}, or equivalently $u_{ij} = \frac{ \Delta_{i, j+1}\Delta_{i+1,j} }{\Delta_{i,j} \Delta_{i+1, j+1}}$, agrees with the formula for $u_\gamma$ in Theorem~\ref{thm:fullrank}.

The geometry and topology of $\M_{0,n}$ is very well-studied; see for example \cite{Brown}. We~recall some basic facts in the context of Theorem~\ref{thm:hyp}. Gauge-fixing $z_1$, $z_{n-1}$, $z_n$ to 0, 1, $\infty$, we have an identification
\begin{gather*}
\M_{0,n}(k) = \big\{(z_2,z_3,\dots,z_{n-2})\in k^{n-3} \,|\, \mbox{$z_i \neq z_j$ and $z_i \notin \{0,1\}$}\big\}
\end{gather*}
for $k$ a field. In~particular, $\M_{0,n}(k)$ is the complement in $k^{n-3}$ of the hyperplane arrangement with hyperplanes
\begin{gather*}
z_i - z_j = 0, \qquad z_i = 0, \qquad 1-z_i =0.
\end{gather*}
We may compute that
$ \#\M_{0,n}(\F_q) = (q-2)(q-3)\cdots (q-n+2).
$
By Theorem~\ref{thm:hyp}, the number of connected components of $\M_{0,n}(\R)$ is given by
\begin{gather*}
|\pi_0(\M_{0,n}(\R))| = |(-3)\cdot (-4) \cdots (-n+1)| = (n-1)!/2.
\end{gather*}

\bigskip\noindent{\bf{\large 7.3.} Type $\boldsymbol{B_n}$.}
By Theorem~\ref{thm:fullrank}, $\M_D$ can be identified with $\A\big(\tB\big)/T\big(\tB\big)$ for any full rank extended exchange matrix $\tB$ of type $D$. One such choice of $\tB$, and thus of $\A\big(\tB\big)$ is given in~\cite[Example 12.10]{CA2}. Let~$\C[X_{n+2}]$ be the ring generated by the Pl\"ucker coordinates of the Grassmannian $\Gr(2,n+2)$. Recall that $\Gamma$ is the two-element group whose non-trivial element maps $i \leftrightarrow \bar i$. Consider the following functions in $\C[X_{n+2}]$, labeled by $\Gamma$-orbits of sides and diagonals in the polygon $P_{2n+2}$ with vertices $\big\{1,2,\dots,n+1,\bar 1, \bar 2, \dots, \overline{n+1}\big\}$:
\begin{gather*}
\big[a, \bar a\big] \mapsto \Delta_{a \bar a} = \Delta_{a,n+2} \qquad (1 \leq a \leq n+1),
\\[.5ex]
\big\{[a,b],\big[\bar a, \bar b\big]\big\} \mapsto \Delta_{ab} \qquad (1 \leq a < b \leq n+1),
\\[.5ex]
\big\{\big[a,\bar b\big],\big[\bar a, b\big]\big\} \mapsto \Delta_{a \bar b} = \Delta_{a,n+2} \Delta_{b,n+2} - \Delta_{ab}\qquad
(1 \leq a < b \leq n+1).
\end{gather*}
Let $\oV_n$ be the space of $2 \times (n+2)$ matrices such that all the above functions are non-vanishing, and let $T_{n+1} \cong (\C^\times)^{n+1}$ act on $\oV_n$ by scaling the first $n+1$ columns. Then the action of $\SL_2 \times T_{n+1}$ on $\oV_n$ is free.
\begin{Proposition}
We have an isomorphism $\M_{B_n} \cong \SL_2 \backslash \oV_n/T_{n+1}$.
\end{Proposition}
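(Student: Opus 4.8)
The plan is to deduce this from Theorem~\ref{thm:fullrank} together with the realization of a type $B_n$ cluster algebra inside $\C[X_{n+2}]$ recorded in \cite[Example~12.10]{CA2}. Fix a really full rank acyclic extended exchange matrix $\tB$ of type $B_n$ realizing that cluster algebra, so that $\A := \A\big(\tB\big)$ is a subalgebra of $\C[X_{n+2}]$ whose mutable cluster variables are the functions attached above to $\Gamma$-orbits of diagonals of $P_{2n+2}$ and whose $m = n+1$ frozen variables are the functions attached to $\Gamma$-orbits of sides (there are $2n+2$ sides, permuted freely by $\Gamma$, hence $n+1$ orbits). By Theorem~\ref{thm:fullrank}, $\M_{B_n}\cong \Xo\big(\tB\big)/T\big(\tB\big)$, where $\C\big[\Xo\big(\tB\big)\big]$ is $\A$ with all mutable cluster variables inverted.

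The first step is to identify $\Xo\big(\tB\big)$ with $\SL_2\backslash\oV_n$. By the first fundamental theorem of invariant theory for $\SL_2$ acting on $2\times(n+2)$ matrices, $\C[X_{n+2}]^{\SL_2} = \C[\Gr(2,n+2)]$ is generated by the Pl\"ucker coordinates $\Delta_{ij}$, $1 \le i < j \le n+2$. Every such $\Delta_{ij}$ is one of the listed functions — a $\Delta_{a,n+2}$ or a $\Delta_{ab}$ with $a,b \le n+1$ — hence lies in $\A$; combined with the evident inclusion $\A \subseteq \C[X_{n+2}]^{\SL_2}$ and the fact that $\A$ contains the inverses of its frozen variables, this gives $\A = \C[\Gr(2,n+2)]\big[\text{frozen}^{-1}\big]$ and hence $\C\big[\Xo\big(\tB\big)\big] = \C[\Gr(2,n+2)]$ with all the listed functions inverted. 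On the other hand $\oV_n\subseteq X_{n+2}$ is the non-vanishing locus of the listed functions, so $\C[\oV_n]$ is $\C[X_{n+2}]$ localized at those functions; since they are $\SL_2$-invariant and localization at invariant elements commutes with taking invariants, $\C[\oV_n]^{\SL_2}$ equals $\C[X_{n+2}]^{\SL_2} = \C[\Gr(2,n+2)]$ localized at the listed functions, which is exactly $\C\big[\Xo\big(\tB\big)\big]$. As every matrix in $\oV_n$ has rank $2$, the $\SL_2$-action on $\oV_n$ is free with closed orbits, so $\SL_2\backslash\oV_n := \Spec\C[\oV_n]^{\SL_2}$ is a geometric quotient, and we obtain $\Xo\big(\tB\big)\cong\SL_2\backslash\oV_n$.

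The second step is to match the torus actions. The action of $T_{n+1}\cong(\C^\times)^{n+1}$ rescaling the first $n+1$ columns commutes with $\SL_2$, hence descends to $\SL_2\backslash\oV_n\cong\Xo\big(\tB\big)$; under $t = (t_1,\dots,t_{n+1})$ one has $\Delta_{ab}\mapsto t_at_b\Delta_{ab}$ and $\Delta_{a,n+2}\mapsto t_a\Delta_{a,n+2}$, so each cluster variable of $\A$ is rescaled by a character, i.e.\ the action factors through a homomorphism $T_{n+1}\to T\big(\tB\big)$. This homomorphism is injective since $t_a$ is recovered from the weight of $\Delta_{a,n+2}$, and both sides are tori of dimension $m = n+1$ (using that $\tB$ is really full rank, so $T\big(\tB\big)$ is a torus of dimension $m$, by the discussion in Section~4.1), so it is an isomorphism compatible with $\Xo\big(\tB\big)\cong\SL_2\backslash\oV_n$. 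Combining the two steps, $\M_{B_n}\cong\Xo\big(\tB\big)/T\big(\tB\big)\cong(\SL_2\backslash\oV_n)/T_{n+1}$; since the $\SL_2$- and $T_{n+1}$-actions on $\oV_n$ commute and their product acts freely, this iterated quotient is the geometric quotient written $\SL_2\backslash\oV_n/T_{n+1}$.

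The main obstacle is the first paragraph: extracting from \cite[Example~12.10]{CA2} the precise dictionary asserted there — that the cluster variables of $\A\big(\tB\big)$ are exactly the functions listed above on $X_{n+2}$, with $\Gamma$-orbits of sides furnishing the frozen variables — and checking that the resulting $\tB$ can be taken really full rank (equivalently, that $T\big(\tB\big)$ is connected, which is what makes the final two quotients agree). Once that dictionary is in place, the invariant-theoretic computation in Step~1 and the elementary matching of tori in Step~2 are routine.
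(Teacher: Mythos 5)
Your proposal is correct and follows the same route the paper intends: the paper states this Proposition without a detailed argument, simply invoking Theorem~\ref{thm:fullrank} applied to the realization of the type $B_n$ cluster algebra inside $\C[X_{n+2}]$ from \cite[Example~12.10]{CA2}, which is exactly your plan. Your two supporting steps (identifying $\Xo\big(\tB\big)$ with $\SL_2\backslash\oV_n$ via the first fundamental theorem, and matching $T_{n+1}$ with the cluster automorphism torus using that $\tB$ is really full rank, a fact the paper confirms in the Remark of Section~7.4) correctly fill in the details the paper leaves implicit.
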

Using the action of $\SL_2$ we can gauge-fix the last column of $M \in \oV_n$ to $[0,1]^T$, and using the action of $T_{n+1}$, we may gauge-fix the first entry of columns $1,2,\dots,n+1$ of $M$ to $1$.
Thus modulo the action of $\SL_2 \times T_{n+1}$, every point in $\oV_n$ can be written in the form\vspace{-.5ex}
\begin{gather*}
\begin{bmatrix}
1 & 1 & 1 & \cdots &1 & 0 \\
z_1 & z_2 & z_3 & \cdots & z_{n+1} & 1
\end{bmatrix}\!,
\end{gather*}
where $z_i \in \C$, and two such matrices with parameters $\z = (z_1,\dots,z_{n+1})$ and $\z'=(z'_1,\dots,z'_{n+1})$ are equivalent if $\z' -\z = c \one$, where $\one$ is the all $1$-s vector. We~may thus identify $\M_{B_n}$ with a subspace of $\C^{n+1}/\C= (z_1,\dots,z_{n+1})/\C\cdot \one$.
For these matrices, the cluster variables $\Delta_{a \bar a}$ are equal to $1$, and we have\vspace{-.5ex}
\begin{gather}
\Delta_{ab} = z_b-z_a \qquad (1 \leq a < b \leq n+1),\nonumber
\\
\Delta_{a \bar b} = 1 - z_b + z_a \qquad (1 \leq a < b \leq n+1).
\label{eq:Shi}
\end{gather}
We recognize the hyperplanes \eqref{eq:Shi} as the \emph{Shi arrangement} \cite{Shi}.
\begin{Proposition}
$\M_{B_n}$ is isomorphic to the complement in $\C^{n+1}/\C$ of the Shi arrangement.
\end{Proposition}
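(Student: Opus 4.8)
The plan is to read the statement off directly from the isomorphism $\M_{B_n} \cong \SL_2 \backslash \oV_n / T_{n+1}$ of the preceding Proposition, combined with the explicit gauge-fixing already described. First I would make that gauge-fixing precise as an isomorphism of varieties. Since each defining function $\Delta_{a \bar a} = \Delta_{a,n+2}$ of $\oV_n$ is non-vanishing, the last column of every $M \in \oV_n$ is nonzero; as $\SL_2$ acts transitively on nonzero vectors of $\C^2$, we may move the last column to $[0,1]^T$, after which the residual stabilizer is the lower-unipotent subgroup, acting on $M$ by $z_i \mapsto z_i + c$ and fixing $[0,1]^T$. Then $\Delta_{a,n+2} = M_{1a}$, which is non-vanishing, so $T_{n+1}$ may be used to set $M_{1a} = 1$ for $1 \le a \le n+1$; the only remaining freedom is the translation $z \mapsto z + c\one$. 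Thus $\M_{B_n}$ is identified with the locus in $\C^{n+1}/\C$ (coordinates $z_1,\dots,z_{n+1}$ modulo $\C\cdot\one$) where all the functions cutting out $\oV_n$ remain non-vanishing; on rings, $\C[\oV_n]^{\SL_2 \times T_{n+1}}$ becomes the translation-invariant subring of $\C(z_1,\dots,z_{n+1})$ generated by these functions and their inverses.

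Next I would substitute the normal form into the three families of functions, which is exactly the computation leading to \eqref{eq:Shi}: one gets $\Delta_{a \bar a} \equiv 1$ (so that condition is vacuous), $\Delta_{ab} = z_b - z_a$ for $1 \le a < b \le n+1$, and $\Delta_{a \bar b} = \Delta_{a,n+2}\Delta_{b,n+2} - \Delta_{ab} = 1 - z_b + z_a$. Hence the non-vanishing conditions become precisely $z_a \neq z_b$ and $z_b - z_a \neq 1$ for all $1 \le a < b \le n+1$, i.e., the defining hyperplanes of the Shi arrangement \cite{Shi}; this yields $\M_{B_n} \cong \C^{n+1}/\C \setminus (\text{Shi arrangement})$.

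The only step requiring any care — and it is not a serious obstacle — is checking that the gauge-fixing respects the scheme structures and is not merely a bijection on $\C$-points: this follows from the freeness of the $\SL_2 \times T_{n+1}$-action on $\oV_n$ (already noted) together with the observation that the slice in normal form is a section of the quotient map over its image, so the induced map on coordinate rings is an isomorphism. Everything else is the direct substitution carried out above.
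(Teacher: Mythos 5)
Your proposal is correct and follows essentially the same route as the paper: gauge-fix the last column to $[0,1]^T$ via $\SL_2$ and the first row to $1$ via $T_{n+1}$, observe that the residual freedom is translation by $c\one$, and substitute to get $\Delta_{a\bar a}=1$, $\Delta_{ab}=z_b-z_a$, $\Delta_{a\bar b}=1-z_b+z_a$, which is the Shi arrangement. The extra care you take about the slice being a genuine section of the free quotient (so the identification holds at the level of coordinate rings, not just points) is a point the paper leaves implicit, but it is the right justification.
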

Among many well-known properties, we obtain the following as immediate consequences using Theorem~\ref{thm:hyp}: $(a)$ $\#\M_{B_n}(\F_q)= (q-n-1)^n $, $(b)$ the number of connected components $|\pi_0(\M_{B_n}(\R))| = (-1)^n(\#\M_{B_n}(\F_q)|_{q=-1})$ is equal to $(n+2)^n$, $(c)$ the cohomology $H^*(\M_{B_n},\C)$ is generated in degree one by $\dlog \Delta$, as $\Delta$ varies over the hyperplanes \eqref{eq:Shi}, and $(d)$ $|\chi(\M_{B_n})| \allowbreak = n^n$.

\bigskip\noindent{\bf{\large 7.4.} Type $\boldsymbol{C_n}$.} 
\def\Mat{{\rm Mat}}
\def\SO{{\operatorname{SO}}}
By applying Theorem~\ref{thm:fullrank} to \cite[Example 12.12]{CA2}, we obtain a description of~$\M_{C_n}$. Recall that~$\Gamma$ is the two-element group whose non-trivial element maps $i \leftrightarrow \bar i$. Consider the space $\Mat_{2,n+1}$ of $2 \times (n+1)$ matrices\vspace{-.5ex}
\begin{gather*}
\begin{bmatrix}
y_{11} & y_{12} & \cdots & y_{1,n+1} \\
y_{21} & y_{22} & \cdots & y_{2,n+1}
\end{bmatrix}
\end{gather*}
and the cluster variables\vspace{-.5ex}
\begin{gather}
\big\{[a,b],[\bar a, \bar b]\big\} \mapsto \Delta_{ab} = \frac{y_{1a}y_{2b}-y_{1b}y_{2a}}{2i} \qquad
(1 \leq a < b \leq n+1), \nonumber
\\
\big\{[a,\bar b],[\bar a, b]\big\} \mapsto \Delta_{a \bar b} = \frac{y_{1a}y_{2b}+y_{2a}y_{2b}}{2} \qquad
(1 \leq a \leq b \leq n+1)
\label{eq:Cncluster}
\end{gather}

\noindent
labeled by $\Gamma$-orbits of sides and diagonals in the polygon $P_{2n+2}$ with vertices $\{1,2,\dots,n+1,\bar 1, \bar 2, \dots, \overline{n+1}\}$. These functions generate the ring of invariant functions $\C[\Mat_{2,n+1}]^{S}$, isomorphic to a cluster algebra $\A\big(\tB\big)$ of type $C_n$ (when frozen variables are inverted). Here, $S = {\rm diag}(t,1/t)$ is the group of $2 \times 2$ diagonal matrices with determinant 1. The~cluster automorphism group is $T = (\C^\times)^{n+1} \times (\Z/2\Z)$, where $(\C^\times)^{n+1} \cong (\C^\times)^{n+1}/\langle (-1,-1,\dots,-1) \rangle$ acts on
$\Mat_{2,n+1}$ by rescaling columns (with the element $(-1,\dots,-1)$ acting trivially on $S \backslash \Mat_{2,n+1}$), and the non-trivial element of the group $(\Z/2\Z)$ acts by swapping the two rows.
\begin{Remark}
Note that in contrast to the $B_n$ case (\cite[Example 12.10]{CA2}), the $\tB$-matrix of \cite[Example 12.12]{CA2} is full rank but not really full rank. For example, for $n = 2$ we may choose an initial cluster so that we have
\begin{gather*}
\tB\quad \text{for type}\quad B_2= \begin{bmatrix} 0 & 1 \\
-2 & 0 \\
1 & 0 \\
1&-1\\
-1 & 0 \end{bmatrix} \qquad \text{and} \qquad
\tB\quad \text{for type}\quad C_2 = \begin{bmatrix} 0 & 2 \\
-1 & 0 \\
1 & 0 \\
1&-2\\
-1 & 0 \end{bmatrix}
\end{gather*}
respectively, where the rows are labelled by 13, $1\bar 1$, 12, $1 \bar 3$, 23. This explains why the cluster automorphism group $T$ in our discussion is disconnected.
\end{Remark}
On the locus where all cluster variables are non-vanishing, such $2 \times (n+1)$ matrices can be gauge-fixed, using $S$ and $(\C^\times)^{n+1} \subset T$ to the form:
\begin{gather}\label{eq:Zn}
\begin{bmatrix}
1 & 1 & \cdots & 1 &1 \\
z_1 & z_2 & \cdots & z_n & 1
\end{bmatrix}
\end{gather}
and the non-vanishing of the cluster variables is equivalent to the non-vanishing of the linear forms
\begin{gather}\label{eq:Chyper}
z_i - z_j, \qquad z_i + z_j, \qquad 1-z_i, \qquad 1+ z_i, \qquad z_i.
\end{gather}
Let $\Zo_n$ denote the space of matrices of the form \eqref{eq:Zn}, where the linear forms \eqref{eq:Chyper} are non-vanishing. There is still a free action of $\Z/2\Z$ on $\Zo_n$, acting by swapping the two rows, which induces $(z_1,\dots,z_n) \mapsto (1/z_1,\dots,1/z_n)$. By Theorem~\ref{thm:fullrank}, we obtain

\begin{Proposition}\label{prop:Cnconfig}
We have an isomorphism $\M_{C_n} \cong \Zo_n/(\Z/2\Z)$.
\end{Proposition}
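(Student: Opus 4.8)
The plan is to combine Theorem~\ref{thm:fullrank} with the explicit gauge-fixing set up in the discussion above. By Theorem~\ref{thm:fullrank} applied to the full-rank extended exchange matrix $\tB$ of type $C_n$ of \cite[Example 12.12]{CA2}, we have $\M_{C_n}\cong\Xo/T$, where $\Xo$ is the locus on which every cluster variable \eqref{eq:Cncluster} is non-vanishing and $T=(\C^\times)^{n+1}\times(\Z/2\Z)$, the continuous factor $(\C^\times)^{n+1}$ rescaling columns (with $(-1,\dots,-1)$ acting trivially modulo $S$) and the generator of $\Z/2\Z$ swapping the two rows. Since the two factors of $T$ commute, the quotient splits as $\Xo/T=\big(\Xo/(\C^\times)^{n+1}\big)\big/(\Z/2\Z)$, so it suffices to identify $\Xo/(\C^\times)^{n+1}$ with $\Zo_n$ compatibly with the residual $\Z/2\Z$-action and then to check that this action is free.

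First I would carry out the gauge-fixing. Since $\Xo$ is itself a geometric quotient of the locus $U\subset\Mat_{2,n+1}$ where all the functions \eqref{eq:Cncluster} are non-vanishing by the free $S$-action, quotienting $\Xo$ further by $(\C^\times)^{n+1}$ is the same as quotienting $U$ by the group $G:=S\times(\C^\times)^{n+1}$ generated by $\operatorname{diag}(t,1/t)$ and column rescalings. On $U$ the diagonal cluster variables force each entry $y_{1a}$ (and $y_{2a}$) to be non-vanishing, so one may rescale column $a$ by $y_{1a}^{-1}$ to make the first row $(1,\dots,1)$ and then use $S$ together with a compensating overall column rescaling to normalize the last entry of the second row to $1$; this produces a representative of the form \eqref{eq:Zn}, and a direct check — the only residual freedom being $t\mapsto -t$, which acts trivially — shows the representative is unique. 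Thus $G$ acts with trivial effective stabilizer and the orbit space is the space of matrices \eqref{eq:Zn}; translating the non-vanishing of the $\Delta_{ab}$ and $\Delta_{a\bar b}$ into the coordinates $z_i$ (with the convention $z_{n+1}:=1$) yields exactly the conditions \eqref{eq:Chyper}, giving $\Xo/(\C^\times)^{n+1}\cong\Zo_n$. This translation is where the indexing of cluster variables by $\Gamma$-orbits of diagonals of $P_{2n+2}$ enters, and I would spell it out carefully.

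It then remains to treat the residual $\Z/2\Z$. Its generator swaps the two rows of \eqref{eq:Zn}; re-applying the gauge-fixing (rescaling columns so the first row is again $(1,\dots,1)$) shows that on $\Zo_n$ it acts by $(z_1,\dots,z_n)\mapsto(1/z_1,\dots,1/z_n)$, which is well defined since each $z_i\neq0$ on $\Zo_n$ and which preserves $\Zo_n$ because each form in \eqref{eq:Chyper} is carried to a non-zero multiple of another such form. This action is free: a fixed point would satisfy $z_i=1/z_i$, i.e.\ $z_i=\pm1$, contradicting $1\mp z_i\neq0$. As $\Z/2\Z$ is finite and acts freely on the smooth affine variety $\Zo_n$, the quotient $\Zo_n/(\Z/2\Z)$ is again smooth and affine, and assembling the identifications gives $\M_{C_n}\cong\Xo/T\cong\Zo_n/(\Z/2\Z)$ as varieties, with $u_\gamma$ corresponding to $f_\gamma$ expressed in the coordinates $z_i$.

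The main obstacle is the middle step: showing that $\Xo$ maps bijectively (and isomorphically) onto $\Zo_n$ under gauge-fixing. Existence of the normalized form \eqref{eq:Zn} rests on knowing which cluster variables vanish where — in particular that the diagonal variables cut out precisely the coordinate loci $y_{1a}=0$ and $y_{2a}=0$ — while uniqueness and the exact passage to the arrangement \eqref{eq:Chyper} require matching every $\Delta_{ab}$ and $\Delta_{a\bar b}$ with the corresponding linear form in the $z_i$. This bookkeeping is elementary but must be done with care, since the precise shape of \eqref{eq:Chyper} is dictated by the combinatorial labelling of the cluster variables.
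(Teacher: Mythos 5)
Your proposal is correct and follows essentially the same route as the paper: apply Theorem~\ref{thm:fullrank} to the type $C_n$ cluster structure on $\C[\Mat_{2,n+1}]^S$ from \cite[Example 12.12]{CA2}, gauge-fix by $S\times(\C^\times)^{n+1}$ to the form \eqref{eq:Zn}, translate non-vanishing of the cluster variables into \eqref{eq:Chyper}, and quotient by the residual free $\Z/2\Z$ acting by $z_i\mapsto 1/z_i$. The paper states this as an immediate consequence of the preceding discussion; your write-up just supplies the same bookkeeping in more detail.
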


This isomorphism is valid over the integers even though \eqref{eq:Cncluster} involves the scalars $1/2$ and~$1/2i$: this is because the scalars cancel in any $T$-invariant ratio of cluster variables.

\begin{Proposition}
We have
\begin{gather*}
\# \M_{C_n}(\F_q) = (q-n-1)(q-3)(q-5) \cdots (q-2n+1)
\end{gather*}
for ${\rm char}(q) > 2$ and
\begin{gather*}
|\pi_0( \M_{C_n}(\R))| = \frac{1}{2}\big( 2^n (n+1)! + 2^n n!\big) = 2^{n-1}(n+2) n!.
\end{gather*}
\end{Proposition}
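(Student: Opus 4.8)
The plan is to deduce both statements from Proposition~\ref{prop:Cnconfig}, which identifies $\M_{C_n}$ with $\Zo_n/(\Z/2\Z)$, where $\Zo_n$ is the complement of the arrangement of the linear forms \eqref{eq:Chyper} and the generator $\sigma$ of $\Z/2\Z$ acts by $(z_1,\dots,z_n)\mapsto(1/z_1,\dots,1/z_n)$. A fixed point of $\sigma$ would satisfy $z_i^2=1$, i.e.\ $z_i=\pm1$, which is excluded by \eqref{eq:Chyper}; hence $\sigma$ acts freely on $\Zo_n$ over any ring in which $2$ is a unit, as well as on $\Zo_n(\R)$ and $\Zo_n(\C)$.

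For the $\F_q$-point count I would apply the standard formula for a free $\Z/2\Z$-quotient, $\#\M_{C_n}(\F_q)=\frac12\bigl(\#\Zo_n(\F_q)+N\bigr)$, where $N$ counts $z\in\Zo_n(\overline{\F_q})$ with $\mathrm{Frob}(z)=\sigma(z)$, i.e.\ $z_i^{q+1}=1$ for all $i$, so that each $z_i$ lies in the cyclic subgroup $\mu_{q+1}\subset\F_{q^2}^{\times}$ of order $q+1$ (which contains $-1$ since $q$ is odd). Counting the $z_i$ one at a time: for $\#\Zo_n(\F_q)$ the variable $z_k$ must avoid the $2k+1$ elements $0,\pm1,\pm z_1,\dots,\pm z_{k-1}$, giving $(q-3)(q-5)\cdots(q-2n-1)$; for $N$ the variable $z_k\in\mu_{q+1}$ must avoid the $2k$ elements $\pm1,\pm z_1,\dots,\pm z_{k-1}$, giving $(q-1)(q-3)\cdots(q-2n+1)$. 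Adding these, pulling out the common factor $(q-3)(q-5)\cdots(q-2n+1)$, and dividing by $2$ produces $(q-n-1)(q-3)(q-5)\cdots(q-2n+1)$, which is the claimed expression.

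For the real components, set $T_n:=\{z\in\Zo_n(\C):\overline{z}=\sigma(z)\}=\{z\in\Zo_n(\C):|z_i|=1\text{ for all }i\}$. Since $\sigma$ is defined over $\Q$ and $\Zo_n(\C)\to\M_{C_n}(\C)$ is a covering, the preimage of $\M_{C_n}(\R)$ is exactly $\Zo_n(\R)\sqcup T_n$ (these are disjoint, as a common point would be $\sigma$-fixed), $\sigma$ preserves each summand, and therefore $\M_{C_n}(\R)=\bigl(\Zo_n(\R)/\sigma\bigr)\sqcup\bigl(T_n/\sigma\bigr)$ as a decomposition into open-and-closed subsets; in particular $|\pi_0(\M_{C_n}(\R))|=|\pi_0(\Zo_n(\R)/\sigma)|+|\pi_0(T_n/\sigma)|$. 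I would then count components combinatorially. A component of $\Zo_n(\R)$ is recorded by the signs $(\mathrm{sign}\,z_i)\in\{\pm1\}^n$ together with a linear order of the $n+1$ distinct positive reals $\{1,|z_1|,\dots,|z_n|\}$, so there are $2^n(n+1)!$ of them (consistent with Theorem~\ref{thm:hyp} applied to $\chi(t)=(t-3)(t-5)\cdots(t-2n-1)$); here $\sigma$ fixes the sign vector and reverses the linear order, so it fixes no component (a linear order of distinct elements is never a palindrome), whence $|\pi_0(\Zo_n(\R)/\sigma)|=2^{n-1}(n+1)!$. Writing $z_j=e^{i\theta_j}$, a component of $T_n$ is recorded by which of the two open arcs cut out by $\{1,-1\}$ on the unit circle contains each $z_j$ (a vector in $\{0,1\}^n$) together with a linear order of the $n$ resulting angles inside the arc $(0,\pi)$, giving $2^n n!$ components; here $\sigma\colon\theta_j\mapsto-\theta_j$ interchanges the two arcs and so flips the $\{0,1\}^n$-vector, fixing no component, whence $|\pi_0(T_n/\sigma)|=2^{n-1}n!$. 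Summing gives $|\pi_0(\M_{C_n}(\R))|=2^{n-1}(n+1)!+2^{n-1}n!=2^{n-1}(n+2)n!$.

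The main obstacle is the last step's bookkeeping: one must check carefully that the ``twisted'' real points in $T_n$ genuinely contribute components not seen inside $\Zo_n(\R)$ and that $\M_{C_n}(\R)$ is really the stated disjoint union (this uses smoothness of $\M_{C_n}$, Theorem~\ref{thm:geom}), and one must verify the combinatorial models: that every sign vector together with every linear order, and every arc-pattern together with every linear order, is realised, and that $\sigma$ acts on these discrete invariants exactly as described.
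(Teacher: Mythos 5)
Your proposal is correct and follows essentially the same route as the paper: lift to $\Zo_n$ via Proposition~\ref{prop:Cnconfig}, split the descent data into the two cases $\sigma(z)=z$ and $\sigma(z)=1/z$ (Frobenius-twisted points over $\F_q$, unit-circle points over $\R$), count each case by the same one-variable-at-a-time argument, and divide by two. The only difference is that you explicitly verify that the involution fixes no connected component (order-reversal on $\{1,|z_1|,\dots,|z_n|\}$, arc-swapping on the circle), a point the paper leaves implicit in its appeal to the two-to-one covering.
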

Note that $|\pi_0( \M_{C_n}(\R))| = |(\# \M_{C_n}(\F_q))|_{q=-1}|$, agreeing with Theorem~\ref{thm:hyp}, even though we do not know whether $\M_{C_n}$ is isomorphic to a hyperplane arrangement complement.

\begin{proof}
For a field $k$, the points of $\M_{C_n}(k)$ in general come from $\Zo_n(\bar k)$, where $\bar k$ denotes the algebraic closure of $k$.

First, suppose that $k = \F_q$ with ${\rm char}(q) >2$. Let~$\bF_q$ denote the algebraic closure of $\F_q$. Then the Galois group is topologically generated by one generator $\sigma$, called the \emph{Frobenius auto\-morphism}. It~acts as the field automorphism $\sigma(x) = x^q$, and furthermore, we have $\sigma(x) = x$ if and only if $x \in \F_q$. The~map $\pi\colon \Zo_n(\bF_q) \to \M_{C_n}(\bF_q)$ commutes with the action of $\sigma$. Thus if~$\pi(z) = u \in \M_{C_n}(\F_q)$, we have
\begin{gather*}
u \in \M_{C_n}(\F_q) \Leftrightarrow \sigma(u) = u \Leftrightarrow \sigma(\pi(z)) = \pi(z) \Leftrightarrow \pi(\sigma(z)) = \pi(z),
\end{gather*}
and we have two possibilities: (1) $\sigma(z) = z$, or (2) $\sigma(z) = 1/z$. For case (1), we are just counting $\#\Zo_n(\F_q)$. Imposing the conditions \eqref{eq:Chyper}, we get
\begin{gather*}
\#\Zo_n(\F_q) = (q-3)(q-5) \cdots (q-2n-1).
\end{gather*}
For case (2), the equation $\sigma(z) = 1/z$ is equivalent to $z_i^{q+1} = 1$ for $i = 1,2,\dots,n$. There are $q+1$ solutions to the polynomial equation $x^{q+1}-1$ in $\bF_q$. Two of the solutions are $x = \pm 1$. The~other $q-1$ solutions lie in $\F_{q^2}$, since $x^{q+1}=1 \implies x^{q^2-1} = 1 \implies x^{q^2} =x$. Thus in case (2), we are counting $n$-tuples $(z_1,z_2,\dots,z_n) \in \F_{q^2}$ satisfying the condition $x^{q+1} = 1$ and the conditions imposed by \eqref{eq:Chyper}. The~conditions $z_i \neq \pm 1$ are automatically satisfied, so we get
\begin{gather*}
(q-1)(q-3)\cdots (q-2n+1).
\end{gather*}
In sum, taking into account the two-to-one covering $\Zo_n \to \M_{C_n}$, we have
\begin{align*}
\# \M_{C_n}(\F_q) &= \frac{1}{2} \left((2q-2n-2)(q-3)(q-5) \cdots (q-2n+1) \right) \\
& = (q-n-1)(q-3)(q-5) \cdots (q-2n+1).
\end{align*}
(Note that this point count is also the same as the hyperplane arrangement complement with hyperplanes $z_i+z_j$, $z_i - z_j$, $1+z_i$, $1-z_i$, though we do not know an explanation for this.)

Next let us consider $k = \R$, so $\bar k = \C$. In~this case, $\sigma$ is replaced by complex conjugation. So~the same argument says that we should consider the two cases (1) $\bar z = z$, or (2) $\bar z = 1/z$. For case (1), we are looking at $\Zo_n(\R)$ and by Theorem~\ref{thm:hyp} we get
\begin{gather*}
|\pi_0(\Zo_n(\R))| = 2^{n} (n+1)!
\end{gather*}
since $\#\Zo_n(\F_q) = (q-3)(q-5)\cdots (q-2n-1)$.

For case (2), we must have $z_i = \exp(i \theta_i)$ on the unit circle, where $\theta_i \in (\pi,\pi]$. The~conditions~\eqref{eq:Chyper} give
\begin{gather*}
\theta_i \neq \theta_j, \qquad \theta_i \neq 0, \qquad \theta_i \neq \pi, \qquad \theta_i + \theta_j \neq 0.
\end{gather*}
Thus the quantities $|\theta_1|,\dots, |\theta_n|$ lie in $(0,\pi)$ and satisfy $|\theta_i| \neq |\theta_j|$. There are $n!$ regions in $|\theta|$ space, and $2^n$ sign choices going from $|\theta_i|$ to $\theta_i$, giving
\begin{gather*}
\big|\pi_0\big(\Zo_n(S^1)\big)\big| = 2^n n!.
\end{gather*}
In sum, taking into account the two-to-one covering $\Zo_n \to \M_{C_n}$, we have
\begin{gather*}
|\pi_0( \M_{C_n}(\R))| = \frac{1}{2}\left( 2^n (n+1)! + 2^n n!\right) = 2^{n-1}(n+2) n!,
\end{gather*}
as claimed.
\end{proof}

The variety $\M_{C_n}$ can be identified with a subvariety of $\M_{A_{2n-1}}$. Sending $(z_1,\dots,z_n)$ to
\begin{gather}\label{eq:Cembed}
\begin{bmatrix}
1& 1 & \cdots & 1 &1 & 1 &\cdots &1&1
\\
z_1 & z_2 & \cdots & z_{n} & 1 & -z_1 & \cdots & -z_n & 1
\end{bmatrix}
\end{gather}
maps $\M_{C_n}$ into $\M_{A_{2n-1}}$. Note that the non-vanishing of \eqref{eq:Chyper} is equivalent to the non-vanishing of all minors in \eqref{eq:Cembed}, and that $(z_1,\dots,z_n)$ and $(1/z_1,\dots,1/z_n)$ represent the same point in~$\M_{A_{2n-1}}$.

\bigskip\noindent{\bf{\large 7.5.} Type $\boldsymbol{D_n}$.}
We do not know a simple description of $\M_D$ in this case. Indeed, the point counts we have obtained show that $\M_D$ cannot be a hyperplane arrangement complement. In~type $D_4$, numerical computations indicate that for $p \neq 2$, we have
\begin{gather}\label{eq:D4pointcount}
|\M_{D_4}(\F_p)|= \begin{cases}
206 - 231 p + 93 p^2 - 16 p^3 + p^4 & \mbox{if}\quad p = 2 \mod 3,
\\
208 - 231 p + 93 p^2 - 16 p^3 + p^4 & \mbox{if}\quad p = 1 \mod 3.
\end{cases}
\end{gather}
Substituting $p = -1$ in \eqref{eq:D4pointcount}, we get $547$. We~do not know for sure that $\M_{D_4}(\R)$ has $547$ connected components, but see Section~11.4.

For type $D_5$, numerical computations indicate that for $p \neq 2,3 $, we have
\begin{gather*}
|\M_{D_5}(\F_p)|=-2318 + 2644 p - 1156 p^2 + 244 p^3 - 25 p^4 + p^5+ (-36+5 p) \delta_3 (p)- \delta_4 (p),
\end{gather*}
where we define $\delta_3 (p)=0$ for $p = 2 \mod 3$ or $2$ for $p = 1 \mod 3$ and similarly $\delta_4 (p)=0$ for~$p=3 \mod 4$ or $2$ for $p=1 \mod 4$.

Substituting $p=-1$, we get $6388$. We~do not know whether $\M_{D_5}(\R)$ has $6388$ connected components, but see Section~11.4.

\bigskip\noindent{\bf{\large 7.6.} Type $\boldsymbol{G_2}$.} 
Numerical computations give
\begin{gather*}
|\M_{G_2}(\F_p)| = \begin{cases}
(p-4)^2 & \mbox{if}\quad p = 2 \mod 3,
\\
(p-4)^2 + 4& \mbox{if}\quad p = 1 \mod 3.
\end{cases}
\end{gather*}
Substituting $p = -1$, we get $25$, which we expect to be the number of connected components of~$\M_{G_2}(\R)$.

\section{Positive part}
In this section, we define the nonnegative subspace $\M_{D,\geq 0}$ of $\M_D(\R)$ and show that it is diffeomorphic to the generalized associahedron of $D^\vee$.

\bigskip\noindent{\large\bf 8.1.} 
We define the positive part $\M_{D,>0} \subset \M_D(\R)$ as the subspace
\begin{gather*}
\M_{D, >0} := \big\{x \in \M_D(\R) \,|\, u_\gamma(x) > 0 \text{ for all } \gamma \in \Pi\big\}
\end{gather*}
and the nonnegative part $\M_{D,\geq 0} \subset \tM_D(\R)$ by
\begin{gather*}
\M_{D,\geq 0} := \overline{\M_{D,>0}} \subset \tM_D(\R).
\end{gather*}
Intersecting with the stratification \eqref{eq:Mstrata}, we obtain
\begin{gather*}
\M_{D,\geq 0} = \bigsqcup_F \M_{F,>0}, \qquad \text{where} \quad \M_{F,>0} = \M_{D,>0} \cap \M_F(\R).
\end{gather*}

Let $P$ be an integer polytope and $\N(P)$ denote its normal fan. It~is well-known \cite[Chapter~4]{Ful} that the nonnegative part $X_{\N(P),\geq 0}$ of the projective toric variety $X_{\N(P)}$ is diffeomorphic to the polytope $P$. The~following result thus follows from Theorem~\ref{thm:toric}.
\begin{Theorem}\label{thm:pospart}
There is a face-preserving diffeomorphism between $\M_{D,\geq 0}$ and the generalized associahedron of $D^\vee$.
\end{Theorem}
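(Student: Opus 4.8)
Proof proposal. The plan is to read Theorem~\ref{thm:pospart} off the toric picture of Theorem~\ref{thm:toric} together with the classical fact recalled above (\cite[Chapter~4]{Ful}): for a lattice polytope $P$ the algebraic moment map restricts to a face-preserving diffeomorphism from the nonnegative part $X_{\N(P),\geq 0}$ of the projective toric variety $X_{\N(P)}$ onto $P$, intertwining the torus-orbit stratification of $X_{\N(P)}$ with the face stratification of $P$. First I would set $P$ to be the Newton polytope of $F(\y)=\prod_{\gamma\in\Pi}F_\gamma(\y)$. By Theorem~\ref{thm:FNewton} the normal fan of $P$ is $\N(B^\vee)$, so $X:=X_{\N(B^\vee)}=X_{\N(P)}$ and $P$ is, by definition, a generalized associahedron of type $D^\vee$. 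By Theorem~\ref{thm:toric}, $\tM_D$ is identified with the affine open $V:=\{F(\y)\neq 0\}$ of $X$, with $\M_D=V\cap T_\y$, and the torus-orbit stratification of $X$ restricts to the stratification \eqref{eq:Mstrata} of $\tM_D$ by the pieces $\M_D(F)$.

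The step that needs care is that $V$ is only an \emph{open} subscheme of $X$, so I must check that $X_{\N(P),\geq 0}$ already lies inside $V(\R)$. In the embedding of $X$ by the lattice points of a very ample dilate $cP$ used in the proof of Theorem~\ref{thm:toric}, the complement of $V$ is the hyperplane section cut out by $F(\y)^c$, which is a sum of homogeneous coordinates with strictly positive coefficients whose support contains every vertex of $cP$. At any point of $X_{\N(P),\geq 0}$ all homogeneous coordinates are $\geq 0$ and those attached to the vertices of the face of $cP$ supporting the point are $>0$; hence $F(\y)^c>0$ there. Therefore $X_{\N(P),\geq 0}\subset V(\R)$, and it is a compact, hence closed, subset of $V(\R)$.

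Next I would identify the positive part. On $T_{\y,>0}=\R_{>0}^n$ every $y_i$ and every $F_\gamma(\y)$ is positive, so $u_\gamma=f_\gamma(\y)>0$ and $T_{\y,>0}\subseteq\M_{D,>0}$; conversely, by Proposition~\ref{prop:hinvert} each $y_i$ is a Laurent monomial in the $u_\gamma$, so $u_\gamma>0$ for all $\gamma$ forces $y_i>0$, giving $\M_{D,>0}\subseteq T_{\y,>0}$. Hence $\M_{D,\geq 0}=\overline{\M_{D,>0}}=\overline{T_{\y,>0}}$, with closure taken in $\tM_D(\R)=V(\R)$. Since $X_{\N(P),\geq 0}$ is the closure of $T_{\y,>0}$ in $X(\R)$, is closed in $X(\R)$, and is contained in $V(\R)$, this closure equals $X_{\N(P),\geq 0}$. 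Composing with the diffeomorphism $X_{\N(P),\geq 0}\cong P$ and transporting the stratification identifications above yields a face-preserving diffeomorphism $\M_{D,\geq 0}\cong P$, where $P$ is a generalized associahedron of type $D^\vee$.

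The main obstacle is the containment $X_{\N(P),\geq 0}\subset V(\R)$ — equivalently, that $F(\y)$ is nowhere zero on the closed nonnegative part of the toric variety — and the bookkeeping that the labelling of torus orbits by faces of the generalized associahedron coming from Theorem~\ref{thm:toric} is exactly the one under which the moment-map diffeomorphism is face-preserving; everything else is formal topology.
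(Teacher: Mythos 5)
Your proposal is correct and takes essentially the same route as the paper: the paper's proof of Theorem~\ref{thm:pospart} simply cites the classical diffeomorphism $X_{\N(P),\geq 0}\cong P$ from \cite[Chapter~4]{Ful} and says the result follows from Theorem~\ref{thm:toric}. The details you supply — that $F(\y)$ is nowhere zero on $X_{\N(P),\geq 0}$, so the closure of $T_{\y,>0}$ taken inside $V(\R)$ agrees with the one taken in $X(\R)$, and that $\M_{D,>0}=T_{\y,>0}$ via Proposition~\ref{prop:hinvert} — are exactly the checks the paper leaves implicit (the latter identification is asserted without proof in the paragraph following the theorem).
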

The positive part $\M_{D,>0}$ is identical to the positive part $X_{\N(B^\vee),>0}$ of the ambient projective toric variety, and $\M_{D,>0}$ is equal to one of the connected components of the smooth manifold~$\M_D(\R)$.

\bigskip\noindent{\large\bf 8.2.}
The space $\M_{D}$ has a distinguished rational top-form $\Omega(\M_D)$, called the \emph{canonical form}, that can be described in a number of ways. Suppose $\tB$ is a full rank extended exchange mat\-rix of type~$D$. The~cluster algebra $\A\big(\tB\big)$ has a natural top-form $\Omega$ which in any cluster $(x_1,x_2,\dots,x_{n+m})$ can be written (up to sign):
\begin{gather*}
\Omega = \frac{{\rm d}x_1}{x_1} \wedge \cdots \wedge \frac{{\rm d}x_{n+m}}{x_{n+m}},
\end{gather*}
which is the natural top-form on the corresponding cluster torus $(\C^\times)^{n+m}$. The~cluster automorphism group $T\big(\tB\big)$ can be identified with a subgroup of $(\C^\times)^{n+m}$, and the quotient group $(\C^\times)^{n+m}/T\big(\tB\big)$ is again an algebraic torus $S = (\C^\times)^n$. The~inclusion $(\C^\times)^{n+m} \subset X\big(\tB\big)$ identifies $S$ birationally with $\M_D$ (note that neither $S$ nor $\M_D$ contains the other, but the two share a common dense open subset). The~torus $S$ has a natural top-form, and the canonical form~$\Omega(\M_D)$ is the image of this form under the birational isomorphism between $S$ and $\M_D$.

Another way to obtain the canonical form of $\M_D$ is via Theorem~\ref{thm:toric}: any toric variety $X_P$ has a canonical rational top-form $\Omega(X_P)$, which is simply the (extension of the) natural top-form of the dense algebraic torus in $X_P$. Restricting $\Omega\big(X_{\N(B^\vee)}\big)$ to $\M_D$ gives a top-form on $\M_D$ which equals $\Omega(\M_D)$.

The pair $\big(\tM_D, \M_{D,\geq 0}\big)$ is nearly a \emph{positive geometry} in the sense of \cite{ABL}, with $\Omega\big(\tM_D\big) = \Omega(\M_D)$ as canonical form: the residue of $\Omega(\M_D)$ along a divisor $\tM_D(F)$ is equal to the canonical form $\Omega\big(\tM_D(F)\big)$, which is simply the product of canonical forms corresponding to the factorization of Proposition~\ref{prop:divisorsplit}. This follows from the similar statement concerning $X_{\N(B^\vee)}$, proven in~\cite[Appendix G]{ABL}. However, $\tM_D$ is an affine variety rather than a projective variety, so it is not a positive geometry in the strict sense.

In the case $D=A_{n-3}$, we have $\M_D = \M_{0,n}$, and the canonical form can be written as
\begin{gather*}
\Omega\big((\M_{0,n})_{>0}\big) = \frac{{\rm d}z_2 \cdots {\rm d}z_{n-2}}{(z_2-z_1)(z_3-z_2)\cdots(z_1-z_n)},
\end{gather*}
where $(z_1,z_{n-1},z_n) = (0,1,\infty)$ as in Section~7.2, and denominator factors equal to $\infty$ are understood to be omitted.
This form is also called a cell-form in~\cite{BCS} and the condition that $\Omega((\M_{0,n})_{>0})$ only has poles along the boundary divisors of $\tM_{0,n}$ \big(and not elsewhere in $\bM_{0,n}$\big) is \cite[Proposition~2.7]{BCS}. Combining with the above discussion, we have
{\sloppy\begin{Proposition}
The pair $\big(\bM_{0,n},( \M_{0,n})_{\geq 0}\big)$ is a positive geometry with canonical form $\Omega\big((\M_{0,n})_{>0}\big) $.
\end{Proposition}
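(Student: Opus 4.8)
The plan is to verify the recursive axioms defining a positive geometry in \cite{ABL} by induction on $n$, assembling facts already collected above together with the pole estimate of \cite[Proposition~2.7]{BCS}. This upgrades the discussion preceding the Proposition, in which $\tM_D$ fails to be a positive geometry only because it is affine rather than projective: for $D = A_{n-3}$ we replace $\tM_{0,n}$ by the projective variety $\bM_{0,n}$, and essentially the only new point is that $\Omega := \Omega((\M_{0,n})_{>0})$ acquires no poles along the boundary divisors of $\bM_{0,n}$ discarded in passing to $\tM_{0,n}$. For the base case, $\bM_{0,3}$ is a point with $\Omega = 1$, and $\bM_{0,4} \cong \P^1$ with $(\M_{0,4})_{\geq 0}$ a closed interval and $\Omega = \pm\,{\rm d}z_2/(z_2(1-z_2))$, the standard $1$-dimensional positive geometry.

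For the inductive step, assume the statement for all $n' < n$. One checks: (a) $\bM_{0,n}$ is a smooth complex projective variety of dimension $n-3$ and $(\M_{0,n})_{\geq 0}$ is a closed, full-dimensional, oriented semialgebraic subset of $\bM_{0,n}(\R)$ whose facets have Zariski closures among the boundary divisors of $\bM_{0,n}$; (b) $\Omega$ has at worst simple poles in $\bM_{0,n}$, with polar divisor exactly the union of the facet divisors of $(\M_{0,n})_{\geq 0}$; (c) for each such facet divisor $D$, the residue $\operatorname{Res}_D\Omega$ is the canonical form of a positive geometry supported on $D$. Part (a) is classical, together with the description recalled in Section~1.1: $(\M_{0,n})_{\geq 0}$ is homeomorphic to the associahedron, with facets indexed by partitions of $\{1,\dots,n\}$ into two cyclic intervals; the Zariski closure of such a facet is $(\M_{0,n_1})_{\geq 0} \times (\M_{0,n_2})_{\geq 0}$ inside the boundary divisor $D \cong \bM_{0,n_1} \times \bM_{0,n_2}$, with $n_1 + n_2 = n+2$ and $3 \le n_1, n_2 < n$. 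The gauge-fixed formula of Section~8.2 further shows $\Omega$ restricts to $\pm\,{\rm d}z_2 \cdots {\rm d}z_{n-2}$ divided by a product of linear forms nonvanishing on $(\M_{0,n})_{>0}$, hence to a nowhere-zero orientation form on the interior.

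Part (b) is precisely \cite[Proposition~2.7]{BCS}: the cell form $\Omega$ has only simple poles, located exactly along the boundary divisors of $\tM_{0,n}$, i.e., those boundary divisors of $\bM_{0,n}$ meeting $(\M_{0,n})_{\geq 0}$, and no other poles or zeros in $\bM_{0,n}$. (On $\tM_{0,n}$ this also follows from Theorem~\ref{thm:toric} together with \cite[Appendix~G]{ABL}, identifying $\Omega$ with the toric canonical form of $X_{\N(B^\vee)}$; the new input is the behaviour along the discarded divisors.) For part (c), the residue of the cell form along $D \cong \bM_{0,n_1} \times \bM_{0,n_2}$ equals, up to sign, $\Omega((\M_{0,n_1})_{>0}) \wedge \Omega((\M_{0,n_2})_{>0})$ --- the factorization of the cell form mirroring that of the relations \eqref{eq:Rij}, which is the content of the discussion above (the residue of $\Omega(\M_{A_{n-3}})$ along $\tM_{A_{n-3}}(F)$ is the product of the factors' canonical forms, following \cite[Appendix~G]{ABL}). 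By the inductive hypothesis each $\big(\bM_{0,n_i}, (\M_{0,n_i})_{\geq 0}\big)$ is a positive geometry, and a product of positive geometries is a positive geometry whose canonical form is the wedge of the factors \cite{ABL}; hence $\operatorname{Res}_D\Omega$ is the canonical form of the boundary positive geometry on $D$, the sign being fixed by the induced orientation. Uniqueness of $\Omega$ follows from the recursion, completing the induction.

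The main obstacle is part (b): ruling out poles of $\Omega$ along the forbidden boundary divisors $W = \bM_{0,n} \setminus \tM_{0,n}$, which are disjoint from $(\M_{0,n})_{\geq 0}$. The toric description of Theorem~\ref{thm:toric} controls $\Omega$ only on the affine open $\tM_{0,n}$ and says nothing about $W$, so this step genuinely requires the input of \cite[Proposition~2.7]{BCS}; everything else is classical geometry of $\bM_{0,n}$ or already established above.
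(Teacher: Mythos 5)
Your proposal is correct and follows essentially the same route as the paper: the paper's argument is precisely the combination of the preceding ``nearly a positive geometry'' discussion (residue factorization along the divisors of $\tM_{0,n}$, via the toric description and \cite[Appendix~G]{ABL}) with \cite[Proposition~2.7]{BCS} to rule out poles along the discarded divisors $W = \bM_{0,n}\setminus\tM_{0,n}$. You have merely unpacked the recursive positive-geometry axioms into an explicit induction on $n$, which is a faithful elaboration rather than a different proof.
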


}

\section{Positive tropicalization}
\def\RR{{\mathcal R}}
\def\val{{\rm val}}

In this section, we consider the positive tropicalization of $\M_D$. We~use our results to resolve a~conjecture of Speyer and Williams \cite{SW} on positive tropicalizations of cluster algebras of finite type. We~refer the reader to \cite{SW} for background on positive tropicalizations.

Let $\RR = \bigcup_{n=1}^\infty \R\big(\big(t^{1/n}\big)\big) $ denote the field of Puiseux series over $\R$. We~define $\val\colon \RR \to \R \cup \{\infty\}$ by $\val(0) = \infty$ and $\val(x(t)) = r$ if the lowest term of $x(t)$ is equal to $\alpha t^r$, where $\alpha \in \R^\times$. We~define $\RR_{>0} \subset \RR$ to be the semifield consisting of Puiseux series $x(t)$ that are non-zero and such that coefficient of the lowest term is a positive real number.

A point $u(t) \in \M_D(\RR_{>0})$ is a collection $u(t) = \{u_\gamma(t) \in \RR_{>0}, \gamma \in \Pi\}$ of (positive) Puiseux series satisfying the relations from Definition~\ref{def:ID}. We~define the \emph{positive tropicalization} $\Trop_{>0} \M_D$ as the closure of valuations
\begin{gather*}
\Trop_{>0} \M_D := \overline{\big\{\val(u_\gamma(t))_{ \gamma \in \Pi}\in \R^\Pi \,|\, u(t) \in \M_D(\RR_{>0})\big\}} \subset \R^\Pi.
\end{gather*}
\begin{Lemma}
The subspace $\Trop_{>0} \M_D$ is a (not complete) polyhedral fan inside the linear space~$\R^\Pi$.
\end{Lemma}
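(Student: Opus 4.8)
The plan is to exhibit $\Trop_{>0}\M_D$ as the image of a piecewise-linear map from a linear space, and then invoke the description of $\M_D$ in terms of $F$-polynomials from Theorem~\ref{thm:principal}. By that theorem, $\C[\M_D]$ is the subring of $\C(y_1,\dots,y_n)$ generated by the $F_\gamma(\y)^{\pm 1}$ and $y_i^{\pm 1}$, so a point of $\M_D(\RR_{>0})$ is the same thing as a choice of $(y_1(t),\dots,y_n(t)) \in \RR_{>0}^n$ together with the induced values $u_\gamma(t) = f_\gamma(\y(t))$, where $f_\gamma(\y)$ is the subtraction-free rational function of \eqref{eq:fy}. (Subtraction-freeness is what makes $f_\gamma(\y(t)) \in \RR_{>0}$ automatic, and it is visible from \eqref{eq:fy} since each $F_\gamma(\y)$ has positive coefficients.) First I would record this: the assignment $\y(t) \mapsto (f_\gamma(\y(t)))_{\gamma\in\Pi}$ gives a well-defined map $\RR_{>0}^n \to \M_D(\RR_{>0})$, and it is surjective onto the positive points because $\M_D$ is birational to the torus $T_\y$ (Theorem~\ref{thm:toric}) via precisely these functions.

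Next I would pass to valuations. Writing $g_i := \val(y_i(t))$, the basic compatibility of $\val$ with subtraction-free operations — $\val(a+b) = \min(\val a,\val b)$ and $\val(ab)=\val a+\val b$ for $a,b\in\RR_{>0}$ — shows that $\val(f_\gamma(\y(t))) = \Trop(f_\gamma(\y))(g_1,\dots,g_n) = A_\gamma(\g)$ in the notation of Section~6.4, where $\g = (g_1,\dots,g_n)$ ranges over all of $\R^n$ (every $\g \in \R^n$ is realized by some Puiseux tuple, e.g.\ $y_i(t) = t^{g_i}$). Therefore the set $\{\val(u_\gamma(t))_{\gamma} : u(t)\in\M_D(\RR_{>0})\}$ is exactly the image of the map $\Phi\colon \R^n \to \R^\Pi$, $\g \mapsto (A_\gamma(\g))_{\gamma\in\Pi}$. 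Each $A_\gamma = \Trop(f_\gamma(\y))$ is a piecewise-linear function whose domains of linearity, by Theorem~\ref{thm:FNewton} (together with the observation in the proof of Proposition~\ref{prop:hinvert} that the common refinement is a coarsening of $-\N(B^\vee)$), are unions of cones of the fan $-\N(B^\vee)$. Hence $\Phi$ is linear on each maximal cone of $-\N(B^\vee)$, and $\Phi(\R^n)$ is a finite union of polyhedral cones — the images $\Phi(C)$ as $C$ runs over cones of $-\N(B^\vee)$, which glue along common faces. This is a polyhedral fan (a finite collection of cones closed under taking faces, with pairwise intersections being common faces), and since $\Phi$ is already closed on the finitely many cones of $-\N(B^\vee)$ and their union is all of $\R^n$, the image is already closed, so the closure in the definition of $\Trop_{>0}\M_D$ is vacuous and $\Trop_{>0}\M_D = \Phi(\R^n)$ is this fan.

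It remains only to note that $\Trop_{>0}\M_D$ need not be complete: the linear span of $\{A_\gamma : \gamma\in\Pi\}$ inside $\R^\Pi$ is $n$-dimensional (by Proposition~\ref{prop:hinvert}, the $f_\gamma$ are related to $\{y_i\}\cup\{F_\gamma\}$ by an invertible monomial transformation, and tropically this is a linear isomorphism onto an $n$-dimensional subspace), so $\Trop_{>0}\M_D$ sits inside a proper linear subspace of $\R^\Pi$ as soon as $|\Pi| > n$, i.e.\ for all $D$ other than $A_1$. The main thing to be careful about — the step I'd expect to require the most care — is the interchange of "take valuations'' with "evaluate the subtraction-free $f_\gamma$'': one must check that $f_\gamma(\y(t))$ really lies in $\RR_{>0}$ and that no cancellation in leading terms occurs, which is exactly where subtraction-freeness of \eqref{eq:fy} is used, and also that $\Phi$ is genuinely continuous and piecewise-linear across the walls of $-\N(B^\vee)$ (automatic, since each $A_\gamma$ is a tropical rational function hence continuous). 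Everything else is bookkeeping with the fan $-\N(B^\vee)$ from Theorems~\ref{thm:clusterfan} and~\ref{thm:FNewton}.
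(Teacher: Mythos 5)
Your proposal is correct and follows essentially the same route as the paper: both use Theorem~\ref{thm:principal} and Proposition~\ref{prop:hinvert} to identify $\M_D(\RR_{>0})$ with $\RR_{>0}^n$ via the subtraction-free functions $f_\gamma(\y)$, and then realize $\Trop_{>0}\M_D$ as the image of $\R^n$ under the piecewise-linear map $\g\mapsto(A_\gamma(\g))$. You simply spell out in more detail what the paper leaves implicit, namely that the fan structure on the image comes from linearity of this map on the cones of $-\N(B^\vee)$.
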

\begin{proof}
We use the identification $\M_D \cong \Xo^\prin/T^\prin$ from Theorem~\ref{thm:principal}. According to Proposition~\ref{prop:hinvert}, there is an invertible monomial transformation between the functions $u_\gamma = f_\gamma$ and the set of functions $\{y_1,\dots,y_n\} \cup \{ F_\gamma(\y) \,|\, \gamma \in \Pi^+\}$. Since each $F_\gamma(\y)$ is a positive Laurent polynomial in the $y_i$-s, it follows that each $u_\gamma(\y)$ is a subtraction-free rational function in the $y_i$-s. Thus the map $(u_\gamma)_{\gamma \in \Pi} \mapsto (y_1,y_2,\dots,y_n)$ induces an isomorphism $\M_D(\RR_{>0}) \cong \RR_{>0}^n$. It~follows that we have a homeomorphism $\Trop_{>0}\M_D \cong \R^n$. The~embedding of $\Trop_{>0}\M_D$ in the (larger dimensional) linear space $\R^\Pi$ endows it with the structure of a polyhedral fan.
\end{proof}

\begin{Theorem}\label{thm:tropMD}
The fan $\Trop_{>0}\M_D$ is isomorphic to the cluster fan $\N(B^\vee)$.
\end{Theorem}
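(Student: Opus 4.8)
The plan is to leverage the description of $\M_D$ via principal coefficients (Theorem~\ref{thm:principal}) together with the tropicalization formula of Theorem~\ref{thm:pair}, reducing the claim to the well-known fact that the positive tropicalization of the torus $T_\y$ in the coordinates given by the $F$-polynomials recovers the $\g$-vector fan. Concretely, from the proof of the preceding lemma we already have a homeomorphism $\Trop_{>0}\M_D \cong \R^n$ coming from the isomorphism $\M_D(\RR_{>0}) \cong \RR_{>0}^n$, $(u_\gamma) \mapsto (y_1,\dots,y_n)$. What remains is to understand the fan structure induced on $\R^n$ by the embedding into $\R^\Pi$ via $\g \mapsto (\val(u_\gamma(t)))_{\gamma\in\Pi}$, where $y_i(t)$ ranges over positive Puiseux series with $\val(y_i(t)) = g_i$.

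First I would make the bookkeeping precise: for a point of $\M_D(\RR_{>0})$ parametrized by $\y(t)$ with $\val(\y(t)) = \g = (g_1,\dots,g_n)$, the valuation of $u_\gamma = f_\gamma(\y)$ is $\val(f_\gamma(\y(t))) = \Trop(f_\gamma(\y))(\g) = A_\gamma(\g)$, since $f_\gamma$ is a subtraction-free rational function (being a Laurent monomial in $y_i$ and the positive polynomials $F_\gamma(\y)$, as in the proof of Proposition~\ref{prop:hinvert}) and tropicalization of a subtraction-free expression commutes with valuation of a positive Puiseux series. Hence the image of $\Trop_{>0}\M_D$ in $\R^\Pi$ is exactly $\{(A_\gamma(\g))_{\gamma\in\Pi} \mid \g \in \R^n\}$, and the closure is unnecessary since this set is already closed (it is the image of $\R^n$ under a continuous piecewise-linear map). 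The linear map $\Phi\colon \R^n \to \R^\Pi$, $\g \mapsto (A_\gamma(\g))_\gamma$ is injective: by Theorem~\ref{thm:pair}, $A_\gamma(-\g^\vee_\omega) = \delta_{\omega,\tau\gamma}$, so composing $\Phi$ with the coordinate projections indexed by $\{\tau\gamma \mid \gamma\in\Pi\} = \Pi$ recovers, up to the reindexing $\gamma \mapsto \tau\gamma$, the evaluation $\g \mapsto (A_{\tau^{-1}\gamma}(\g))$, and the values on the spanning set $\{-\g^\vee_\omega\}$ determine $\g$. So $\Phi$ is a piecewise-linear homeomorphism onto its image, and it suffices to identify the fan of domains of linearity of $\Phi$ on $\R^n$ with $\N(B^\vee)$.

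Next I would identify that fan. Each $A_\gamma(\g) = \Trop(f_\gamma(\y))(\g)$ is, by the formula~\eqref{eq:fy}, an integer-coefficient combination of the functions $\g \mapsto \Trop(F_\gamma(\y))(\g)$ (support functions of the Newton polytopes $P_\gamma$) and the linear functions $g_i$; its domains of linearity therefore refine the common refinement of the normal fans of the $P_\gamma$, which by Theorem~\ref{thm:FNewton} is $\N(B^\vee)$. Conversely, the map $\Phi$ restricted to a maximal cone $C$ of $\N(B^\vee)$ — spanned by $\g^\vee$-vectors of a cluster $\{\omega_1,\dots,\omega_n\}$ — is linear, because on $C$ each $f_\gamma(\y)$ tropicalizes linearly (all the $F_\gamma$'s do); and distinct maximal cones give distinct linear pieces, again by Theorem~\ref{thm:pair}: evaluating at the generators $-\g^\vee_{\omega}$ pins down which cone we are in. Thus the fan of linearity domains of $\Phi$ is exactly $\N(B^\vee)$, and $\Phi$ is a fan isomorphism onto its image $\Trop_{>0}\M_D$.

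The main obstacle I expect is the second half of the previous paragraph: showing that the domains of linearity are \emph{exactly} the cones of $\N(B^\vee)$ and not a strict refinement. This amounts to verifying that on each wall between two adjacent maximal cones of $\N(B^\vee)$, at least one coordinate $A_\gamma$ genuinely breaks linearity, so that no two adjacent cones merge under $\Phi$. This should follow cleanly from Theorem~\ref{thm:pair} — the values $A_\gamma(-\g^\vee_\omega) = \delta_{\omega,\tau\gamma}$ say that the "tropical cluster $X$-coordinates" $A_\gamma$ detect exactly the $\g^\vee$-fan structure — but it requires a careful argument that the piecewise-linear map $\Phi$ separates the rays of $\N(B^\vee)$ and that its linearity locus cannot be coarser than $\N(B^\vee)$, i.e. that $\Phi$ is genuinely non-linear across every wall. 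Once that is in hand, the theorem follows, and as a corollary one obtains the Speyer--Williams conjecture \cite[Conjecture~8.1]{SW} in finite type, since $\Trop_{>0} X = \Trop_{>0}\M_D$ under the quotient by the cluster automorphism torus.
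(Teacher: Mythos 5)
Your proposal follows essentially the same route as the paper: use the homeomorphism $\Trop_{>0}\M_D\cong\R^n$ from the preceding lemma and identify the induced fan of common linearity domains of the $\Trop(f_\gamma(\y))$ with $\N\big(B^\vee\big)$ via Theorem~\ref{thm:FNewton}. The one step you flag as the ``main obstacle'' --- ruling out that the common linearity fan is strictly coarser than $\N\big(B^\vee\big)$ --- does not require the wall-crossing analysis via Theorem~\ref{thm:pair} that you sketch: Proposition~\ref{prop:hinvert}, which you already invoke, says the $f_\gamma$ and $\{y_i,F_\gamma(\y)\}$ are related by an \emph{invertible} monomial transformation, so the common linearity domains of the $\Trop(f_\gamma)$ coincide exactly with those of the $\Trop(F_\gamma)$; the latter form the normal fan of the Newton polytope of $\prod_{\gamma}F_\gamma(\y)$, which Theorem~\ref{thm:FNewton} asserts \emph{equals} $\N\big(B^\vee\big)$ (not merely coarsens it). With that substitution your argument is complete and matches the paper's.
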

\begin{proof}
Under the isomorphism $\Trop_{>0}\M_D \cong \R^n$, the fan structure of $\Trop_{>0}\M_D$ gives a complete fan in $\R^n$ whose maximal cones are the common domains of linearity of the piecewise linear functions $\Trop(u_\gamma(\y))$. By Proposition~\ref{prop:hinvert}, we can equivalently take the common domains of linearity of the functions $\Trop(F_\gamma(\y))$, $\gamma \in \Pi^+$. It~is well-known (see for example \cite[Section~11.1]{ALS2}) that the resulting fan is the normal fan of the Newton polytope of the Laurent polynomial $\prod_{\gamma \in \Pi^+} F_\gamma(\y)$. By Theorem~\ref{thm:FNewton}, we deduce the isomorphism of fans $\Trop_{>0}\M_D \cong \N(B^\vee)$.
\end{proof}

Now let $\A\big(\tB\big)$ denote a cluster algebra of finite type, and $X\big(\tB\big)$ the corresponding cluster variety. Using the set of cluster variables $x_\gamma$, $\gamma \in \Pi$ and coefficient variables $x_{n+1},\dots,x_{n+m}$, we~have an embedding $X\big(\tB\big) \hookrightarrow \C^{|\Pi|+m}$. We~define $\Trop_{>0}X\big(\tB\big) \subset \R^{|\Pi|+m}$ as the closure of the image of $X\big(\tB\big)(\RR_{>0})$ under the map $\val\colon \RR_{>0} \to \R$. Note that the tropicalization $\Trop_{>0}X\big(\tB\big)$ depends only on the cluster algebra $\A = \A\big(\tB\big)$ and not on the choice of initial cluster.

The projection map $(x_\gamma,x_i) \mapsto (x_1,x_2,\dots,x_{n+m})$ from $\R^{|\Pi|+m}$ to $\R^{n+m}$ (onto the initial cluster variables) identifies $\Trop_{>0}X\big(\tB\big)$ with a complete polyhedral fan in $\R^{n+m}$.

\begin{Proposition}\label{prop:modL}
Suppose that $\tB$ is of full rank and has finite type $D$. Then $\Trop_{>0} X\big(\tB\big)$ modulo its lineality space $L$ is isomorphic to $\Trop_{>0}\M_D$.
\end{Proposition}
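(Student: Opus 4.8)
The plan is to realize $\Trop_{>0}\M_D$ as the quotient of $\Trop_{>0}X(\tB)$ by the tropicalization of the cluster automorphism torus, using Theorem~\ref{thm:fullrank}. Since a positive Puiseux series is invertible, $X(\tB)(\RR_{>0}) = \Xo(\tB)(\RR_{>0})$, so $\Trop_{>0}X(\tB) = \Trop_{>0}\Xo(\tB)$; and by positivity of the Laurent phenomenon every positive point of $\Xo(\tB)$ lies in the positive part of a fixed (say initial) cluster torus, so that $\Trop_{>0}X(\tB)$ is the image of the injective piecewise-linear map $\Phi\colon \R^{n+m} \to \R^{|\Pi|+m}$ sending $v$ to the tuple of $\Trop(x_\delta)(v)$, as $x_\delta$ ranges over all cluster variables and $x_j$ over the frozen variables, where $\Trop(x_\delta)$ is the tropicalization of the subtraction-free Laurent expansion of $x_\delta$ in the initial cluster. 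The map $\Phi$ is injective because the initial cluster and frozen variables occur among the coordinates, and the fan structure on $\Trop_{>0}X(\tB)$ is the common refinement of the domains of linearity of the $\Trop(x_\delta)$.

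Next I would bring in two rank-$m$ linear subspaces. The group $T(\tB)(\RR_{>0}) \cong \RR_{>0}^m$ (the positive points of the cluster automorphism group, which is a split connected group over $\RR_{>0}$ even when $T(\tB)$ is disconnected) acts on each cluster torus by scaling coordinates through characters; valuating this action gives a rank-$m$ subspace $L \subseteq \R^{|\Pi|+m}$ under whose translations $\Trop_{>0}X(\tB)$ is invariant, so $L$ lies in the lineality space of $\Trop_{>0}X(\tB)$. On the parametrizing space there is a corresponding rank-$m$ subspace $L'$ such that $\Phi(v+\ell')$ equals $\Phi(v)$ translated by a vector of $L$ for all $\ell'\in L'$; thus $\Phi$ is $L'$-equivariant and carries $L'$ linearly and bijectively onto $L$, hence descends to an isomorphism of fans $\R^{n+m}/L' \cong \Trop_{>0}X(\tB)/L$. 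By Theorem~\ref{thm:fullrank} the coordinates $u_\gamma = f_\gamma$ are $T(\tB)$-invariant Laurent monomials in the $x_\delta$ and $x_j$, so there is a linear map $\pi = (\Trop(f_\gamma))_\gamma\colon \R^{|\Pi|+m} \to \R^\Pi$; it sends $\Trop_{>0}X(\tB)$ into $\Trop_{>0}\M_D$ (valuation commutes with monomial maps, and positive points of $\Xo(\tB)$ map into $\M_{D,>0}$), it kills $L$ by $T(\tB)$-invariance, and so it descends to $\bar\pi\colon \Trop_{>0}X(\tB)/L \to \Trop_{>0}\M_D$.

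It then remains to show $\bar\pi$ is an isomorphism of fans and that $L$ is exactly the lineality space. Composing with $\Phi$, the map $\pi\circ\Phi\colon \R^{n+m} \to \R^\Pi$ has image $\Trop_{>0}\M_D$, which is $n$-dimensional by the homeomorphism $\Trop_{>0}\M_D \cong \R^n$ established earlier in this section; hence its kernel has dimension $m$, and since it contains $L'$ it equals $L'$, so $\bar\pi$ is a bijection. That it is an isomorphism of fans amounts to checking that the fan on $\Trop_{>0}X(\tB)$ descends modulo $L$ to the fan on $\Trop_{>0}\M_D$: one inclusion is immediate since each $\Trop(f_\gamma)$ is an integer combination of the $\Trop(x_\delta)$, and the reverse follows from Proposition~\ref{prop:hinvert}, which recovers the tropicalized cluster data, modulo the torus directions, from the tropicalized $u_\gamma$. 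Finally, since $\Trop_{>0}X(\tB)/L \cong \Trop_{>0}\M_D \cong \N(B^\vee)$ by Theorem~\ref{thm:tropMD}, and $\N(B^\vee)$ is the normal fan of a full-dimensional polytope and hence pointed, the lineality space of $\Trop_{>0}X(\tB)$ cannot be larger than $L$, so it equals $L$.

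The main obstacle is the fan-structure matching in the last paragraph: verifying that passing to the free $T(\tB)$-quotient neither destroys nor coarsens the cone structure beyond collapsing the lineality $L$. I expect this to reduce cleanly to Proposition~\ref{prop:hinvert} combined with the identification in Theorem~\ref{thm:tropMD}, but it is the step that needs care; everything else is routine bookkeeping with tropicalizations of monomial maps and of the free torus action.
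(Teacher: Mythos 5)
Your proposal is correct and follows essentially the same route as the paper's proof: identify the lineality space $L$ with the tropicalization of the $T\big(\tB\big)$-action, tropicalize the monomial map sending cluster variables to the $u_\gamma$, and conclude that its fibers on $\Trop_{>0}X\big(\tB\big)$ are exactly the $L$-orbits. You simply supply more of the bookkeeping (the dimension count via $\Trop_{>0}\M_D\cong\R^n$, the appeal to Proposition~\ref{prop:hinvert} for the fan structure, and the pointedness of $\N\big(B^\vee\big)$ to pin down $L$) that the paper's terser argument leaves implicit.
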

\begin{proof}
The translation action of the lineality space $L$ on $\R^{n+m}$ is simply the tropicalization of the action of the automorphism torus $T\big(\tB\big)$ \eqref{eq:auto}. Thus $\tB$ is of full rank if and only if~$L$ has dimension $m$ if and only if $T\big(\tB\big)$ is of dimension $m$. In~particular, $\Trop_{>0}X\big(\tB\big)/L$ is a~polyhedral fan of dimension $n$.

Via the isomorphism of Theorem~\ref{thm:fullrank}, each $u_\gamma = f_\gamma \in \C[\M_D]$ can be identified with a~monomial in the cluster and coefficient variables $x_\gamma$, $\gamma \in \Pi$ and $x_{n+1},\dots,x_{n+m}$. We~thus have a~linear projection map $p\colon \R^{|\Pi|+m} \to \R^{|\Pi|}$ (the tropicalization of the rational map sending $x$-s to $u$-s) mapping $\Trop_{>0} X\big(\tB\big)$ surjectively to $\Trop_{>0} \M_D$. The~fibers of $p$ are exactly the orbits of~the lineality space $L$ acting on $\Trop_{>0} X\big(\tB\big)$. It~follows that the fans $\Trop_{>0}X\big(\tB\big)/L$ and $\Trop_{>0} \M_D$ are isomorphic.
\end{proof}

Noting that the fans $\N\big(B^\vee\big)$ and $\N(B)$ are combinatorially isomorphic, we deduce from Theorem~\ref{thm:tropMD} and Proposition~\ref{prop:modL} the following conjecture of Speyer and Williams \cite[Conjecture~8.1]{SW}. The~principal coefficient case was established in~\cite{JLS} and we thank Christian Stump for drawing our attention to this work.
\begin{Corollary}
Suppose that $\tB$ is of full rank and has finite type. Then $\Trop_{>0}X\big(\tB\big)/L$ is combinatorially isomorphic to the complete fan $\N(B)$.
\end{Corollary}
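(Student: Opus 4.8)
The plan is to concatenate the two results immediately preceding the statement. First I would invoke Proposition~\ref{prop:modL}: since $\tB$ has full rank and finite type $D$, the quotient fan $\Trop_{>0}X\big(\tB\big)/L$ is isomorphic as a polyhedral fan to $\Trop_{>0}\M_D$. Second I would apply Theorem~\ref{thm:tropMD}, which identifies $\Trop_{>0}\M_D$ with the cluster fan $\N\big(B^\vee\big)$. Composing these two isomorphisms yields an isomorphism of fans $\Trop_{>0}X\big(\tB\big)/L \cong \N\big(B^\vee\big)$.

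The remaining point is to pass from $\N\big(B^\vee\big)$ to $\N(B)$. These fans are generally not equal: the extremal rays of $\N\big(B^\vee\big)$ are spanned by the $\g^\vee_\gamma$ while those of $\N(B)$ are spanned by the $\g_\gamma$, and these differ in the multiply-laced case. However, by the discussion in Section~2.5 the natural bijection $\Pi(B)\leftrightarrow\Pi\big(B^\vee\big)$ carries the cluster complex of $B$ onto that of $B^\vee$. Since for a complete simplicial fan arising from a cluster algebra the combinatorial type is determined by the cluster complex (the maximal cones being exactly the clusters), it follows that $\N\big(B^\vee\big)$ and $\N(B)$ are combinatorially isomorphic. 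Chaining this with the isomorphism above gives the asserted combinatorial isomorphism $\Trop_{>0}X\big(\tB\big)/L \cong \N(B)$.

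No step here is a genuine obstacle, as all of the real work has already been carried out in Theorem~\ref{thm:tropMD} (which in turn rests on the $F$-polynomial Newton polytope computation, Theorem~\ref{thm:FNewton}, extended to multiply-laced type via folding) and in Proposition~\ref{prop:modL}. The only point requiring care is the distinction between an isomorphism of fans and a merely combinatorial isomorphism: one must \emph{not} claim that $\Trop_{>0}X\big(\tB\big)/L$ equals $\N(B)$ on the nose, precisely because of the $B$ versus $B^\vee$ discrepancy in the extremal rays; the statement is only that they have the same face lattice. I would also remark that the principal-coefficient case of this corollary was already established in \cite{JLS}, so the new content is the extension to an arbitrary full-rank extended exchange matrix $\tB$ of finite type, which is exactly the flexibility supplied by Theorem~\ref{thm:fullrank} and Proposition~\ref{prop:modL}.
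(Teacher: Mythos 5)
Your proposal is correct and matches the paper's argument exactly: the paper deduces the corollary by combining Proposition~\ref{prop:modL} with Theorem~\ref{thm:tropMD} and then noting that $\N\big(B^\vee\big)$ and $\N(B)$ are combinatorially isomorphic. Your extra care in distinguishing a fan isomorphism from a merely combinatorial one is exactly the right point to flag.
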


\section[Extended and local u-equations]{Extended and local $\boldsymbol u$-equations}

In this section, we first study two additional sets of equations satisfied by $u$-variables, the {\it exten\-ded $u$-equations} and the {\it local $u$-equations}. In~other words, we give some further distinguished elements in the ideal $I_D$.

In type $A$, the extended $u$-equations were used by Brown \cite{Brown} to define what we call $\tM_{A_{n-3}}$; we see here that they can be interpreted as arising from all the exchange relations of the cluster algebra, rather than just the primitive exchange relations.

\bigskip\noindent{\bf{\large 10.1.} Extended $\boldsymbol u$-equations.} 
An \emph{extended $u$-equation} is an equation which holds in $\C[\M_D]$ of the form
\begin{gather}\label{eq:extu}
\prod_\gamma u_\gamma^{\alpha_\gamma} + \prod_\gamma u_\gamma^{\beta_\gamma} =1,
\end{gather}
where $\alpha_\gamma$, $\beta_\gamma$ are nonnegative integer parameters. All the primitive $u$-equations $R_\gamma$ in Definition~\ref{def:ID} are examples. Corollary~\ref{cor:ext} gives a class of extended $u$-equations for each $D$. It~would be desirable to have a uniform (instead of case-by-case) description of the extended $u$-equations coming from Corollary~\ref{cor:ext} similar to the description of the relations $R_\gamma$. This would follow from a solution to the following problem, which we believe is open. (Recall that the primitive exchange relations are described in Proposition~\ref{eq:exchange}.)

\begin{problem}
Give a uniform, root-system theoretic, description of {\it all} the exchange relations in a cluster algebra of finite type with universal coefficients.
\end{problem}

We now present explicitly extended $u$-equations for all the classical types $A$, $B$, $C$, $D$, and also for type $G_2$. In~types $A$ and $D$, the only extended $u$-equations we know come from Corollary~\ref{cor:ext}, and we conjecture that in simply-laced types these are the only ones. In~types $B$ and $C$, we~find more extended $u$-equations than those from Corollary~\ref{cor:ext}. Indeed, any extended $u$-equation for type $A_{2n-3}$ (resp.\ $D_n$) gives one for type $C_{n-1}$ (resp.\ $B_{n-1}$), but not all of these come from Corollary~\ref{cor:ext}. We~conjecture that all extended $u$-equations in multiply-laced type come from folding.

A similar analysis of extended $u$-equations for the types $E$ and $F$ can be found by a lengthy, but finite computation which we do not present here.

In the following, we will use the indexing of $\Pi$ from Section~3.2. For two disjoint subsets $I$ and $J$, define
\begin{gather}\label{eq:UIJ}
U_{I, J}:=\prod_{i \in I, j\in J} u_{i,j}.
\end{gather}
Note that $U_{I,J}$ and $U_{J,I}$ are not necessarily equal.

\medskip\noindent{\bf 10.1.1. Type $\boldsymbol A$.} 
Let $\{1,2,\dots, n\} = A \sqcup B \sqcup C \sqcup D$ be a decomposition of $[n]$ into cyclic intervals.
Then we have the extended $u$-equation
\begin{gather}\label{eq:exA}
U_{A,C}+U_{B,D}=1.
\end{gather}
Each equation depends on the choice of four cyclically ordered points $a$, $b$, $c$, $d$: $A=\{a{+}1, \dots, b\}$, $B=\{b{+}1, \dots, c\}$, $C=\{c{+}1, \dots, d\}$ and $D=\{d{+}1, \dots, a\}$, and thus there are $\binom{n}{4}$ equations in total, in bijection with the exchange relations of the type~$A$ cluster algebra. These equations arise from Corollary~\ref{cor:ext}. See \cite[Proposition~7.2]{YZ} for the exchange relations of the universal coefficient cluster algebra of type~$A$.

For $A=\{i\}$, $C=\{j\}$, the equation \eqref{eq:exA} becomes the primitive $u$-equations, $R_{ij}=0$.
As~discussed in~\cite{AHLT,Brown}, it is natural to interpret these $U$'s as cross-ratios of $n$ points on $\P^1$: denote $U_{A,C}=[a,b|c,d]$, and \eqref{eq:exA} becomes $[a, b | c, d]+[b, c | d, a]=1$. Together with the identity by definition $[a, b | c, e] [a, b | e, d]=[a, b | c, d]$, the equalities $[a, b | c, d]+[b, c | d, a]=1$ invariantly characterize cross-ratios of $n$ points, namely $[a, b | c, d]=\frac{\Delta_{a d}\Delta_{ b c}}{\Delta_{a c}\Delta_{b d}}$.

\medskip\noindent{\bf 10.1.2. Type $\boldsymbol C$.}
Extended $u$-equations for type $C_{n-1}$ arise via folding $A_{2n-3}$.
For a decomposition $\big\{1,2,\dots,n,\bar 1,\dots, \bar n\big\} \simeq [2n] = A\sqcup B\sqcup C\sqcup D$ into cyclic intervals, the image of \eqref{eq:exA} gives the extended $u$-equation for $C_{n{-}1}$ (which become the primitive ones for $A=\{i\}$, $C=\{j\}$). For example, if we choose $A=\{1\}$ and $C=\{n{+}1\}$, we have
\begin{gather*}
1-u_{[1 \bar 1]}=U_{\{2,\dots, n\}, \{\bar 2,\dots, \bar n\}}
=\prod_{i=2}^n u_{[i \bar i]} \prod_{2\leq i<j\leq n} u_{[i \bar j]}^2,
\end{gather*}
which is the primitive $u$-equation $R_{[1 \bar 1]} = 0$. For $C_2$, in addition to the $6$ primitive $u$-equations given in~\eqref{eq:C2}, we have $3$ more equations:
\begin{gather}\label{eq:exC2}
 u_{[1 \bar{1}]} u_{[1 \bar{2}]}+ u_{[2 \bar{3}]} u_{[3 \bar{3}]}=1,
 \end{gather}
and its cyclic rotations.

Let us count the number of extended $u$-equations for $C_{n-1}$ we have obtained. There are $\binom{2n}{4}$ equations \eqref{eq:exA} in type $A_{2n-3}$. Of those, $\binom{n}{2}$ are equal to its mirror image. For the remainder, both the equation and its mirror image map to the same equation in type $C_{n-1}$. Thus we have obtained $\frac{n(n{-}1)(2n^2-4n+3)}{6}$ extended $u$-equations for $C_{n-1}$. Note that this number is greater than the number of exchange relations of type $C_{n-1}$. For example, for $C_2$, there are $6$ exchange relations, but we have found $9$ extended $u$-equations.

\medskip\noindent{\bf 10.1.3. Type $\boldsymbol D$.} 
We now consider type $D_n$. We~use the notation \eqref{eq:UIJ} and also define (here $a\prec b$ means $a$ precedes $b$ in $I$)
\begin{gather*}
U_I:=\prod_{a\prec b \in I} u_{a,b} \prod_{i \in I} u_i,\qquad
\tilde{U}_I:=\prod_{a\prec b \in I} u_{a,b} \prod_{i \in I} u_{\tilde i}.
\end{gather*}
We now describe two types of extended $u$-equations. (These equalities were discovered emprically, but it should be relatively straightforward to prove them by induction.) First, similar to~\eqref{eq:exA}, for a cyclically ordered partition $A\sqcup B\sqcup C\sqcup D=\{1,2,\dots, n\}$, we have
\begin{gather}\label{eq:exD1}
U_{C,A}+U_{D,B} U_{A,B} U_{B,C} U_{B,D} U_B \tilde{U}_B=1.
\end{gather}
We allow $D$ to be empty here, in which case \eqref{eq:exD1} becomes $U_{C,A}+U_{A,B} U_{B,C} U_B \tilde{U}_B=1$. Second, for a cyclically ordered partition $A\sqcup B\sqcup C=\{1,2,\dots, n\}$, we have
\begin{gather}\label{eq:exD2}
U_A U_{A,B}+\tilde{U}_C U_{B,C}=1.
\end{gather}
Note that $B$ can be empty here, in which case we have $U_A+\tilde{U}_C=1$. In~the first and second type we have $4 {n \choose 4}+3 {n \choose 3}$ and $6 {n \choose 3}+2 {n \choose 2}$ equations, respectively, thus in total there are $\frac{n(n{-}1)(n^2+4n-6)}6$ extended $u$-equations. It~is not difficult to see that this is equal to the number of (unordered) pairs of exchangeable cluster variables. We~have $n^2$ cluster variables, $\frac{1}{4}(n^4+n^2-2n)$ pairs of compatible cluster variables, and $2\binom{n}{4}$ pairs of cluster variables where the compatibility degree is greater than one. The~remaining pairs of cluster variables are exchangeable.

It is straightforward to obtain the primitive $u$-equations for type $D_n$. In~\eqref{eq:exD1}, choosing $A=\{i\}$, $C=\{j\}$ (including the degenerate case with $D=\varnothing$, thus $j=i{-}1$), we have
\begin{gather*}
1-u_{j,i}=U_{D\cup \{i\}, B} U_{B, j\cup \{D\}} U_B \tilde {U}_B,
\end{gather*}
where $B=\{i{+}1, \dots, j{-}1\}$ and $D=\{j{+}1, \dots, i{-}1\}$. In~\eqref{eq:exD2}, take $B=\varnothing$, and choosing $A=i$ or $C=i$ we have
\begin{gather*}
1-u_i=\tilde{U}_{\{i{+}1, \dots, i{-}1\}},\qquad
1-{u}_{\tilde i}=U_{\{i{+}1, \dots, i{-}1\}}.
\end{gather*}
For example, for $n=4$, we have $52$ extended $u$-equations of the form \eqref{eq:exD1} and \eqref{eq:exD2}, including the $16$ primitive $u$-equations in \eqref{eq:D4}.

\medskip\noindent{\bf{10.1.4.} Type $\boldsymbol B$.}
Finally, we consider type $B_{n{-}1}$ by folding $D_n$. We~identify $u_i=u_{\tilde i}$, and the two types of equations become
\begin{gather*}
U_{C,A}+U_{D,B} U_{A,B} U_{B,C} U_{B,D} U_B^2=1, \qquad
U_A U_{A,B}+U_C U_{B,C}=1.
\end{gather*}
We have $\frac{n(n{-}1)(n^2+n-3)}{6}$ such extended $u$-equations in total: note that each equation \eqref{eq:exD1} has a distinct image in $B_{n{-}1}$ but two of the equations \eqref{eq:exD2} map to a single one in $B_{n{-}1}$. Again, we note that this number is greater than the number of exchange relations in type $B_{n{-}1}$.

The primitive $u$-equations can be recovered by setting $A=\{i\}$, $C=\{j\}$ (also $B=\varnothing$ in the second one). For example, for $n=4$, we obtain $34$ extended $u$-equations including the $12$ primitive ones of \eqref{eq:B3}. Let's write the additional $22$ equations as dihedral orbits of sizes 8, 8,~4,~2:
\begin{gather*}
8~{\rm eqs}\colon \quad u_{3,1} u_{4,1}+u_{1,2} u_{2,3} u_2^2=1,
\\
8~{\rm eqs}\colon \quad u_{1,2} u_1 u_2 u_{1,3} u_{2,3}+u_4 u_{3,4}=1,
\\
4~{\rm eqs}\colon \quad u_1 u_{1,2} u_{1,3}+ u_4 u_{2,4} u_{3,4}=1,
\\
2~{\rm eqs}\colon \quad u_{1,2} u_1 u_2+u_{3,4} u_3 u_4=1.
\end{gather*}
There are $22$ exchange relations in type $B_3$.

\medskip\noindent{\bf 10.1.5. Type $\boldsymbol G$.} 
As in Section~{3.2.5}, let us call the 8 $u$-variables $a_i$, $b_i$ for $i=1,\dots,4$ that can be thought of as labelling the edges of an octagon ($a_1,b_1,\dots,a_4,b_4)$. Folding $D_4$, we obtain 18 extended $u$-equations for~$G_2$, the primitive $u$-equations
\begin{gather*}
a_1 + a_2 a_3^2 a_4 b_2 b_3=1,\qquad b_1 + a_3^3 a_4^3 b_2 b_3^2 b_4=1, \qquad
{\rm + \, cyclic},
\end{gather*}
together with
\begin{gather*}
a_1 b_1 + a_3^2 a_4 b_2 b_3=1,\qquad a_2 b_1 + a_3 a_4^2 b_3 b_4=1,\qquad a_1 a_2 b_1 + a_3 a_4 b_3=1, \qquad
{\rm + \, cyclic}.
\end{gather*}

\bigskip\noindent{\bf{\large 10.2.} Local $\boldsymbol u$-equations.} 
The relations $R_\gamma$, and the extended $u$-equations are global in nature: they involve many $u_\gamma$ variables which are ``far away from each other". We~now describe a class of {\it local $u$-equations}. Using them one can show that all $u$-variables can be solved rationally in terms of $u$-variables of any acyclic seed; see also~\cite{AHLT}. This has implications for canonical forms; see \eqref{eq:canform}.

We first recall the $X$-coordinates for Fock and Goncharov's cluster $X$-variety. For an exchange relation $x x' = M + M'$, we have a cluster $X$-variable $X=M/M'$. Now, let us consider a primitive exchange relation in $X\big(\tB\big)$:
\begin{gather*}
x_{\tau \gamma} x_{\gamma} = M + M',
\end{gather*}
where $M'$ only involves frozen variables. We~recall that the isomorphism of Theorem~\ref{thm:fullrank} identifies the rational function $\frac{M}{x_{\tau \gamma} x_{\gamma}}$ with $u_\gamma$. Thus the cluster $X$-variable $X_\gamma:= M/M'$ can be identified with $u_\gamma/(1-u_\gamma) \in \C[\M_D]$ which is equal to a Laurent monomial in the $u_\omega$-s using the relations~$R_\gamma$. By \cite{FZY} or \cite[Proposition~3.9 or equation~(8.11)]{CA4} the variables $X_\gamma$ satisfy the relation
\begin{gather}\label{eq:localX}
X_{(t-1,j)} X_{(t,j)} = \prod_{i \to j} \big(1+X_{(t,i)}\big)^{-a_{ij}} \prod_{j \to i} \big(1+ X_{(t-1,i)}\big)^{-a_{ij}}
\end{gather}
or, equivalently, the variables $u_\gamma$ satisfy the relation
\begin{gather*}
\frac{u_{(t-1,j)} }{1-u_{(t-1,j)} } \frac{u_{(t,j)} }{1-u_{(t,j)} }= \prod_{i \to j} \big(1-u_{(t,i)}\big)^{a_{ij}} \prod_{j \to i} \big(1-u_{(t-1,i)}\big)^{a_{ij}}.
\end{gather*}

For the convenience of the reader, we give some examples of \eqref{eq:localX}, noting that the passage between $X$-variables and $u$-variables is completely compatible with folding.

\medskip\noindent{\bf 10.2.1. Type $\boldsymbol A$.}
For type $A_{n-3}$ we have $u$-variables $u_{i,j}$ for $1\leq i<j{-}1<n$, corresponding to the $n(n{-}3)/2$ diagonals of $n$-gon. We~have the same number of local $u$-equations, one for each ``skinny" quadrilateral:
\begin{gather}
\begin{tikzpicture}[{scale=0.7}]
\draw[thick] (0,0) circle (2);
\draw[thick] (0.894,1.788)
-- (0.894,-1.9) node[below] {$j$}
-- (-0.894,-1.9) node[below] {$j+1$}
-- (-0.894,1.85) node[above] {$i$}
-- (0.894,1.85) node[above] {$i+1$};
\draw[thick] (0.894,1.788) -- (-0.894,-1.788);
\draw[thick] (-0.894,1.788) -- (0.894,-1.788);
\end{tikzpicture}\nonumber
\\[1ex] \label{localA}
X_{i, j} X_{i{+}1, j{+}1}=(1+X_{i,j{+}1}) (1+ X_{i{+}1, j})
\end{gather}
or
\begin{gather*}
\frac{u_{i, j}}{1-u_{i,j}} \frac{u_{i{+}1, j{+}1}}{1-u_{i{+}1, j{+}1}}=\frac{1}{1-u_{i,j{+}1}} \frac{1}{1-u_{i{+}1, j}}.
\end{gather*}

\medskip\noindent{\bf 10.2.2. Type $\boldsymbol C$.}
By folding $A_{2n-3}$, we obtain local $u$-equations for type $C_{n-1}$. The~local $u$-equations take the form of~\eqref{localA}; for the special case of $j=\bar i$, it reads
\begin{gather*}
X_{[i, \bar i]} X_{[i{+}1, \overline{i+1}]}=\big(1+X_{[i, i{+}1]}\big)^2.
\end{gather*}

\medskip\noindent{\bf 10.2.3. Type $\boldsymbol D$.}
We use the notation for $u$-variables from Section~3.2.3. The~$n^2$ local $u$-equations for type $D_n$ read
\begin{gather*}
X_{i,j} X_{i{+}1, j{+}1}=(1+X_{i,j{+}1})(1+X_{i{+}1, j}),
\\
X_{i{-}1,i} X_{i, i{+}1}=(1+X_{i{-}1, i{+}1})(1+X_i) \big(1+X_{\tilde i}\big),
\\
X_i X_{\widetilde{i{+}1}}=1+X_{i, i{+}1},
\\
X_{\tilde i} X_{i{+}1}=1+X_{i,i{+}1}.
\end{gather*}

\medskip\noindent{\bf 10.2.4. Type $\boldsymbol B$.}
By folding type $D_n$, we obtain local $u$-equations for type $B_{n-1}$ (see Section~3.2.4). The~local $u$-equations are
\begin{gather*}
X_{[i,j]} X_{[i{+}1, j{+}1]}=\big(1+X_{[i,j{+}1]}\big)\big(1+X_{[i{+}1, j]}\big),
\\
X_{[i{-}1,\bar i]} X_{[i, \overline{i{+}1}]}=\big(1+X_{[i{-}1, \overline{i{+}1}]}\big)\big(1+X_{[i,\bar i]}\big)^2,
\\
X_{[i,\bar i]} X_{[i+1,\overline{i+1}]}=1+X_{[i, i{+}1]}
\end{gather*}
together with cyclic rotations.

\medskip\noindent{\bf 10.2.5. Type $\boldsymbol E$.}
Finally, for $E_n$ with $n=6,7,8$, we use the identification \eqref{eq:Pidef} to index~$u$ and~$X$ variables. When $\tB$ is bipartite, i.e., every vertex is either a source or a sink in the induced orientation of $D$, we have $r_i = h/2+1$ does not depend on $i$, where $h$ is the Coxeter number. The~Coxeter number is even in types $E_6$, $ E_7$, $E_8$, and $\Pi$ is identified with $m=7$, $10$, $16$ copies of~$I$ respectively. We~index the nodes of $E_n$ as shown below:
\begin{center} \begin{tikzpicture}[baseline={([yshift=-.5ex]current bounding box.center)},scale=1,every node/.style={scale=1}] \draw (-1,0) -- (-2,0); \draw (-2,0) -- (-3,0); \draw (-2,0) -- (-4,0);\draw (-2,0) -- (-2,1); \draw [dash pattern={on 2pt off 1pt } ](-1,0) -- (1,0); \draw (1,0) -- (2,0); \node[fill=white,circle,draw=black, inner sep=0pt,minimum size=5pt] at (-0.9,0) {}; \node[fill=white,circle,draw=black, inner sep=0pt,minimum size=5pt] at (-3,0) {}; \node[fill=white,circle,draw=black, inner sep=0pt,minimum size=5pt] at (-4,0) {}; \node[fill=white,circle,draw=black, inner sep=0pt,minimum size=5pt] at (-2,0) {}; \node[fill=white,circle,draw=black, inner sep=0pt,minimum size=5pt] at (-2,1) {}; \node[fill=white,circle,draw=black, inner sep=0pt,minimum size=5pt] at (1,0) {}; \node[fill=white,circle,draw=black, inner sep=0pt,minimum size=5pt] at (2,0) {}; \node at (-4,-0.3) {$1$}; \node at (-3, -0.3) {$2$}; \node at (-1.9,-0.3) {$3$}; \node at (-0.8,-0.3) {$4$}; \node at (1.1,-0.3) {${n{-}2}$}; \node at (2.3,-0.3) {$n{-}1$}; \node at (-1.5, 0.9) {$n$};
 \end{tikzpicture}
\end{center}
The local $u$-equations take the following form:
\begin{gather*}
X_{(t,i)}X_{(t+1,i)}=(1+X_{(t+1,i{+}1)})(1+X_{(t+1,i{-}1)})(1+X_{(t+1,n)})^{\delta_{i,3}}, \qquad \text{for~odd}\quad i<n,
\\
X_{(t,i)}X_{(t+1,i)}=(1+X_{(t,i)})(1+X_{(t,i{+}1)}), \qquad \text{for~even}\quad i<n,
\\
 X_{(t,n)}X_{(t+1,n)}=1+X_{(t,3)},
\end{gather*}
for $t=1,2,\dots, m$ in all these cases. We~have $n\times m$ equations in total.

\medskip\noindent{\bf 10.2.6. Types $\boldsymbol{F_4}$ and $\boldsymbol{G_2}$.} By folding $E_6$ we obtain local $u$-equations for $F_4$, and by folding~$D_4$ we obtain those for $G_2$.

\section{Connected components and sign patterns} \label{sec:conn}

The permutation group $S_n$ acts on the moduli space $\M_{0,n}(\R)$, permuting the $n$ points and permuting the connected components. The~presentation of $\M_{0,n}(\R)$ using $u_{ij}$ and the relations~\eqref{eq:Rij} depends on the choice of a dihedral ordering and the action of the symmetry group~$S_n$ is obscured. In~this section, we use the extended $u$-equations to investigate the connected components of $\M_D(\R)$, with the hope of uncovering an appropriate symmetry group for $\M_D$ in other types. While our results here are more speculative, we are able to construct new classes of $u$-equations.

A further motivation for studying connected components of $\M_D(\R)$ is the application to string amplitudes where it is important to consider canonical forms of different connected components of $\M_{0,n}(\R)$; see Section~12.2 for a brief discussion.

We also define an analogue of an oriented matroid for $\M_D$, called a ``consistent sign pattern", and it is conjectured that the number of consistent sign patterns is equal to the number of connected components.

\bigskip\noindent{\bf{\large 11.1.} Consistent sign patterns.}
A \emph{consistent sign pattern} for type $D$ is an element $(s_\gamma) \in \{+,-\}^\Pi$ such that for each extended $u$-equation \eqref{eq:extu} $\prod_\gamma u_\gamma^{\alpha_\gamma} + \prod_\gamma u_\gamma^{\beta_\gamma} =1$, we have that at least one of the signs $\prod_\gamma s_\gamma^{\alpha_\gamma}$ and $\prod_\gamma s_\gamma^{\beta_\gamma}$ is positive. In~other words, $(s_\gamma)$ are possible signs for some solution $(u_\gamma)$ of extended $u$-equations. We~could also call a consistent sign pattern a~``uniform oriented matroid" for the $u$-variables.

In~\cite{AHLT}, we made the following conjecture.

\begin{Conjecture} The number of connected components of $\M_D(\R)$ is given by the number of sign patterns of $u$-variables consistent with the extended $u$-equations for $D$.
\end{Conjecture}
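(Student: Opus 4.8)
The plan is to construct the natural comparison map and then reduce the conjecture to a question about the real topology of an explicit hypersurface complement. Since each $u_\gamma$ is a unit in $\C[\M_D]$ it is nowhere vanishing on $\M_D(\R)$, so $\mathrm{sign}(u_\gamma)$ is constant on each connected component; and since every real point satisfies all extended $u$-equations \eqref{eq:extu} (identities in $\C[\M_D]$ by Corollary~\ref{cor:ext}), the resulting vector never has two simultaneously negative monomial products, i.e.\ it is a consistent sign pattern. Writing $\mathrm{CSP}(D)$ for the set of consistent sign patterns of type $D$, this gives a well-defined map $\sigma\colon \pi_0(\M_D(\R)) \to \mathrm{CSP}(D)$, and the conjecture is precisely the assertion that $\sigma$ is a bijection. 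The base case $D=A_{n-3}$ is known, both sides equalling $(n-1)!/2$ via the identification of $\M_{0,n}(\R)$ with cyclic orderings on $S^1$ modulo dihedral symmetry (Section~7.2), and the Shi-arrangement picture of Section~7.3 pins down type $B_n$; so one can set up an induction on the rank of $D$ with these as seeds.

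The key structural input is that, by Theorem~\ref{thm:toric} together with Theorem~\ref{thm:principal}, $\M_D$ is isomorphic to the open subvariety $\{F(\y)\neq 0\}$ of the torus $(\C^\times)^n$, where $F(\y)=\prod_{\gamma\in\Pi^+}F_\gamma(\y)$. Hence $\M_D(\R)$ is the complement in $(\R^\times)^n$ of the real hypersurface $\{F=0\}$, and $\pi_0(\M_D(\R))$ is the set of connected components of $\R^n$ minus the $n$ coordinate hyperplanes and $\{F=0\}$. On each such region all the functions $f_\gamma(\y)$ have constant sign, and by Proposition~\ref{prop:hinvert} each $f_\gamma$ is a Laurent monomial in the $y_i$ and the $F_\delta(\y)$ whose exponent matrix is unimodular; reducing that matrix modulo $2$, the sign vector $\big(\mathrm{sign}\,f_\gamma\big)_{\gamma\in\Pi}$ of a region both determines and is determined by the pair of vectors $\big(\mathrm{sign}\,y_i\big)_i$ and $\big(\mathrm{sign}\,F_\delta\big)_{\delta\in\Pi^+}$.

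This reduces \textbf{injectivity} of $\sigma$ to: inside any open coordinate quadrant $Q\subset\R^n$, two connected components of $Q\setminus\bigcup_{\delta\in\Pi^+}\{F_\delta=0\}$ with the same vector $\big(\mathrm{sign}\,F_\delta\big)_\delta$ must coincide, i.e.\ the family $\{F_\delta=0\}$ has no ``hidden'' regions in $Q$. The archetype is the positive quadrant, where every $F_\delta>0$ (its coefficients are), giving a single region, namely $\M_{D,>0}$, connected by Theorem~\ref{thm:pospart}. For a general quadrant I would induct using the product factorization of boundary strata: passing to the closure $\tM_D\cong\{F\neq0\}\subset X_{\N(B^\vee)}$ and to a divisor $\tM_D(\{\gamma\})=\tM_{D_1}\times\cdots\times\tM_{D_s}$ (Proposition~\ref{prop:divisorsplit}), one would deduce connectivity of each sign locus from the lower-rank cases and the way regions of $\M_D(\R)$ limit onto those of the strata. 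For \textbf{surjectivity} one must realize every consistent pattern $s$: through the above dictionary $s$ prescribes a quadrant $Q$ and a target vector $\big(\mathrm{sign}\,F_\delta\big)_\delta$, and the task is to prove that consistency of $s$ with the extended $u$-equations forces this data to be attained by a nonempty region of $Q\setminus\bigcup_\delta\{F_\delta=0\}$.

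The \textbf{main obstacle} lies precisely here, in two forms. First, ``consistent sign pattern'' is defined against the set of \emph{all} extended $u$-equations, for which no uniform description is known (the Problem in Section~10.1); without control of that list one cannot a priori exclude ``phantom'' patterns consistent with every equation yet geometrically unrealizable, in the spirit of non-realizable oriented matroids. Second, the ``no hidden regions'' statement for $\{F_\delta=0\}$ is a genuine real-algebraic-geometry problem: the $F$-polynomials are not linear, and since $\M_{D_4}$ and $\M_{D_5}$ have merely quasi-polynomial point counts over finite fields they are not hyperplane-arrangement complements, so the Zaslavsky-type counting underlying Theorem~\ref{thm:hyp} is unavailable. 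I expect a complete proof to require a new ingredient --- for instance a regular cell decomposition of $\M_D(\R)$ refining the stratification by sign vectors, or a monotonicity property of the $F_\delta$ along suitable flows within each quadrant --- rather than a routine extension of the type $A$ and type $B$ arguments.
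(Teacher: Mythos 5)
The statement you are asked to prove is stated in the paper only as a \emph{Conjecture} (originating in \cite{AHLT}); the paper offers no proof, only computational verification of the equality of the two counts in types $A$, $C$, $D_4$, $D_5$ and $G_2$ (Sections~11.2--11.5). So there is no argument of the authors to compare yours against, and the relevant question is whether your proposal closes the conjecture. It does not, and you say so yourself: what you have is a correct and useful \emph{setup}, not a proof.

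What is sound: the comparison map $\sigma\colon \pi_0(\M_D(\R)) \to \mathrm{CSP}(D)$ is well defined exactly as you argue (each $u_\gamma$ is a unit, and an identity $U+U'=1$ with both monomials negative is impossible), and the reduction via Theorem~\ref{thm:toric} and Proposition~\ref{prop:hinvert} to sign vectors of $(y_i)$ and $(F_\delta(\y))$ on the regions of $(\R^\times)^n \setminus \{F=0\}$ is a genuine and nontrivial reformulation. But the two substantive assertions --- that within each coordinate quadrant the sign vector of the $F_\delta$ separates regions (injectivity), and that every consistent sign pattern is realized (surjectivity) --- are precisely where the conjecture lives, and you leave both as open problems, correctly noting that the set of all extended $u$-equations is itself not under control (the Problem in Section~10.1) and that Zaslavsky-type counting is unavailable since $\M_{D_4}$ is not a hyperplane arrangement complement. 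One further inaccuracy to flag: you call type $A_{n-3}$ a ``known base case with both sides equalling $(n-1)!/2$,'' but the paper establishes $(n-1)!/2$ only for the number of connected components; the count of consistent sign patterns is verified only empirically even in type $A$, and likewise the Shi-arrangement computation in type $B_n$ gives $\pi_0$ but says nothing about $\mathrm{CSP}(B_n)$. So even your proposed induction seeds are not actually established on the sign-pattern side. In short: a reasonable research program, not a proof, and the conjecture remains open.
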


\medskip\noindent{\bf{\large 11.2.} Type $\boldsymbol A$.} We consider $D = A_{n-3}$. The~space $\M_{0,n}(\R)$ has $(n{-}1)!/2$ connected components, corresponding to the dihedral orderings of $n$ points. In~the positive connected component $(\M_{0,n})_{> 0}$, all the cross ratios $u_{ij}$ are positive, indeed, we have $0 < u_{ij} < 1$. In~other connected components of $\M_{0,n}$, some of the $u_{ij}$-s are negative. We~find that the extended $u$-equations exclude those sign patterns for which both $[a, b | c, d]$ and $[b, c | d, a]$ are negative. Empirically, we find that precisely $(n{-}1)!/2$ consistent sign patterns are allowed by the extended $u$-equations~\eqref{eq:exA}, and this count agrees with the number of connected components of $\M_{0,n}(\R)$.

Let us now consider the problem of finding new $u$-variables for other components of $\M_{0,n}(\R)$, that is, we seek cross-ratios that are positive on that component. It~suffices to consider the orde\-ring that is obtained from the standard one by an adjacent transposition, e.g., $(1',2',3',\dots, n')=(2,1,3,\dots, n)$. Using the following identities for the cross ratio $[a, b | c, d]$ (see Section~10.1.1):
\begin{gather*}
[a, b| d, c]=\frac{1}{[a,b| c,d]},\qquad
[a, c| b,d]=-\frac{[a, b| c, d]}{[b,c | d,a]}\,
\end{gather*}
we find that the $u$-variables in this new ordering include
\begin{gather}
 u'_{1,3}=[n', 1'| 2', 3']=[n, 2 | 1, 3]=\frac{-[n, 1|2,3]}{1,2| 3, n]}=\frac{-[n,1|2,3]}{\prod_{i=3}^{n{-}1} [1,2 | i,i{+}1]}=\frac{-u_{1,3}}{u_{2,4} \cdots u_{2,n}}, \nonumber
 \\
 u'_{1, i}=[n', 1'| i'{-}1, i']=[n, 2| i-1, i]=[n,1| i{-}1,i][1,2| i{-}1,i]=u_{1,i}u_{2,i}, \nonumber
\\
 u'_{2, i}=[1', 2'| i'{-}1, i']=[2,1 | i{-}1, i]=\frac{1}{[1,2; i{-}1, i]}=\frac{1}{u_{2,i}}, \nonumber
\\
 u'_{3, i}=[2',3'| i'{-}1, i']=[1,3 |i{-}1, i]=[1,2 | i{-}1, i][2, 3 | i{-}1, i]=u_{2,i} u_{3,i},
\label{newu}
\end{gather}
and all other $u$'s are unchanged. These new $u'_{ij}$-s are positive in the connected component given by the ordering $(2,1,3,\dots, n)$, and furthermore they satisfy the extended $u$-equations (for this ordering). In~other words, the (invertible) signed monomial transformation \eqref{newu} sends the extended $u$-equations for $u_{ij}$ to a permutation of the extended $u$-equations for the $u'_{i,j}$, and thus exposing a hidden $S_n$-symmetry of these equations.

\bigskip\noindent{\bf{\large 11.3.} Type $\boldsymbol C$.}
From the analysis of Section~7.4, we know that $\M_{C_n}(\R)$ has $2^{n{-}1}n! (n+1)$ connected components. Computationally, we find that this agrees with the number of consistent sign patterns.

There are two types of components in ${\mathcal M}_{C_n}(\R)$ corresponding to two types of configurations of the $(2n{+}2)$-gon with labels $i$, $\bar{i}$ for $i=1,2, \dots, n{+}1$: ($A$) $2^{n{-}1} n!$ components for polygons with central symmetry, e.g., $1,2, \dots, n{+}1, \bar{1}, \bar{2}, \dots, \overline{n{+}1}$; ($B$) $2^{n{-}1} (n{+}1)!$ components for polygons with reflection symmetry avoiding vertices, e.g., the ordering $1, 2, \dots, n{+}1, \overline{n{+}1}, \dots, \bar{2}, \bar{1}$. Unlike type $A$, our investigations indicate that the compactification (arising from $u$-equations) of these two types of components have differing boundary combinatorics: for any component in ($A$), combinatorially it is a cyclohedron (the generalized associahedron of type $C$), while for any component in ($B$), combinatorially it is an associahedra. We~expect that this can be proven via a careful analysis of the extended $u$-equations, and here we illustrate it for the simplest example, $C_2=B_2$.

The extended $u$-equations are given by \eqref{eq:C2} and \eqref{eq:exC2}, plus cyclic rotations. The~positive part with all $u$-s positive corresponds to the ordering $1,2,3,\bar{1}, \bar{2}, \bar{3}$, which cuts out a hexagon. We~can see the other $3$ orderings in $(A)$ by making a signed monomial transformation of the $u$-variables. For example, for the ordering $2,1,3, \bar{2}, \bar{1}, \bar{3}$, we find that the $6$ new variables can be obtained by a monomial change of variables:
\begin{gather*}
\begin{split}
&u'_{[1 \bar{2}]}=\frac{1}{u_{[2 \bar{1}] }u_{[2 \bar{2}]}},\qquad
u'_{[2 \bar{3}]}=-\frac{u_{[1 3]}}{u_{[\bar{1} 2]} u_{[2 \bar{2}]} u_{[2 \bar{3}]}}, \qquad
u_{[3 \bar{1}]}=\frac{1}{u_{[2 \bar{2}]}u_{[2 \bar{3}]}},
\\
&u'_{[1 \bar{1}]}=u_{[2 \bar{2}]},\qquad
u'_{[2 \bar{2}]}=u_{[1 \bar{1}]} u^2_{[2 \bar{1}]} u_{[2 \bar{2}]},\qquad
u_{[3 \bar{3}]}=u_{[2 \bar{2}]} u^2_{[2 \bar{3}]}u_{[3 \bar{3}]}.
\end{split}
\end{gather*}
It is straightforward to check that these $6$ new variables satisfy identical extended $u$-equations for the ordering $2,1, 3, \bar{2}, \bar{1}, \bar{3}$. More generally, under this kind of transformation, similar to the type $A_{n-3}$ case, we find that for any ordering in $(A)$ the new variables satisfy identical extended $u$-equations, and the corresponding component is combinatorially a cyclohedron.

Let us now consider orderings in $(B)$. For example, for the ordering $1, 2, 3, \bar{3}, \bar{2}, \bar{1}$, we find that the $6$ new variables are given in terms of the old ones by
\begin{gather*}
u'_{[1 \bar{3}]}=-\frac{u_{[1 \bar{1}]}}{u_{[2 \bar{2}]} u^2_{[2 \bar{3}]} u_{[3 \bar{3}]}},\qquad u'_{[\bar{1} 2]}=\frac{1}{u_{[2 \bar{2}]}},\qquad
u'_{[\bar{2} 3]}=\frac{1}{u_{[3 \bar{3}]}},
\\
u'_{[1 \bar{2}]}=u_{[13]} u_{[2 \bar{3}]} u_{[3 \bar{3}]},\qquad
u'_{[2 \bar{3}]}=u_{[2 \bar{1}]} u_{[2 \bar{2}]} u_{[2 \bar{3}]},\qquad
u'_{[2 \bar{2}]}=\frac{1}{u_{[2 \bar{3}]}}.
\end{gather*}
The $9$ extended $u$-equations become the following ones for the $6$ new variables:
\begin{gather*}
u'_{[1 \bar{2}]} u'_{[1 \bar{3}]}+u'_{[2 \bar{2}]} u'_{[\bar{1} 2]}=1,\qquad
u'_{[1 \bar{3}]} u'_{[2 \bar{3}]}+ u'_{[2 \bar{2}]} u'_{[\bar{2} 3]}=1,\qquad
u'_{[2 \bar{2}]} (u'_{[1 \bar{2}]}+ u'_{[2 \bar{3}]}) =1,
\\[.5ex]
u'_{[2 \bar{3}]}+ u'_{[1\bar{2}]} u'_{[2 \bar{2}]} u'_{[\bar{2} 3]}=1,\qquad
u'_{[1 \bar{3}]}+ u'^2_{[2 \bar{2}]} u'_{[\bar{1} 2]} u'_{[ \bar{2} 3]}=1,\qquad
u'_{[1 \bar{2}]}+u'_{[2 \bar{2}]} u'_{[2 \bar{3}]} u'_{[\bar{1} 2]}=1,
\\[.5ex]
u'_{[\bar{1} 2]}+u'^2_{[1 \bar{2}]} u'_{[1 \bar{3}]}=1, \qquad
u'_{[2 \bar{3}]}+ u'_{[1 \bar{3}]} u'^2_{[2 \bar{3}]}=1, \qquad
u'_{[2 \bar{2}]} (1+ u'_{[1 \bar{2}]} u'_{[1 \bar{3}]} u'_{[2 \bar{3}]})=1.
\end{gather*}
Note that $u'_{[2 \bar{2}]}$ is special: from the third and the last equations, it is easy to see that $u'_{[ 2 \bar{2}]}$ cannot take the value 0, and thus $u'_{[2 \bar{2}]}=0$ does not correspond to a facet. The~other $5$ variables do correspond to facets, and from the equations we see that requiring all $u' \geq 0$ cuts out a pentagon instead. In~general, we expect that for any ordering in ($B$), such a transformation give equations of this type, where certain $u'$ cannot reach zero, and (an appropriate closure) of the component has the combinatorics of a (type $A$) associahedron.

\bigskip\noindent{\bf{\large 11.4.} Types $\boldsymbol{D_4}$ and $\boldsymbol{D_5}$.} 
We were unable to determine the number of connected components of $\M_{D_n}(\R)$. However, we can obtain a consistency check by comparing the number of consistent sign patterns with the point count over $\F_q$.

Recall from the point count \eqref{eq:D4pointcount} in type $D_4$, we predicted that $\M_D(\R)$ has $547$ connected components. By a direct computation, we checked that this is equal to the number of consistent sign patterns of $u$-variables with respect to the extended $u$-equations in Section~10.1.3. Similarly, the prediction of $6388$ in the $D_5$ case computationally agrees with the number of consistent sign patterns of $u$-variables.

\bigskip\noindent{\bf{\large 11.5.} Type $\boldsymbol{G_2}$.}
As we have discussed in Section~7.6, we expect that there are 25 different connected components for $G_2$. Furthermore, the point count $\M_{G_2}(\F_q)$ is not polynomial, and thus $\M_{G_2}$ is not a hyperplane arrangement complement. Computationally, we find that there are 25 consistent sign patterns for the 18 extended $u$-equations from Section~10.1.5.

We now investigate the connected components of $\M_{G_2}(\R)$, and note some new features. The~positive component has $0 < a_i, b_i < 1$ as usual. But suppose $b_1$ is made negative; to wit we put $b_1 = -b^\prime_1$ with $b^\prime_1 > 0$. As in our discussion for types $A$ and $C$, we rearrange all the extended $u$-equations to put them again in the form of (monomial$_1$) + (monomial$_2$) = 1. Quite nicely, the 36 exponent vectors of these monomials lie in an 8-dimensional cone, that is, all 36 vectors can be expressed as a positive linear combination of eight of them. The~8 generators can be associated with the new variables
\begin{gather*}
x_1 = b_4^{-1},\qquad x_2=a_1a_3 a_4^2 b_3 b_4,\qquad x_3 = b^\prime_1 a_3^{-3} a_4^{-3} b_2^{-1} b_3^{-2} b_4^{-1},
\\
x_4=a_2 a_3^2 a_4 b_2 b_3,\qquad x_5 =b_2^{-1},
\\
y_1=b_3, \quad
 y_2= a_4^{-1} b_3^{-1}, \quad y_3=a_3^{-1} b_3^{-1}
\end{gather*}
and using these variables, we get 18 equations (monomial)$_1$ + (monomial)$_2$ = 1, written in terms of the $x$'s and $y$'s, all with positive exponents. Eight of these are ``primitive" $u$-equations
\begin{gather*}
x_1 + x_3 x_4^2=1, \qquad
x_2 + x_4 x_5 y_1^2 y_2 y_3^2=1, \qquad
x_3 + x_1 x_5 y_1^4 y_2^3 y_3^2=1,
\\
x_4 + x_1 x_2 y_1^2 y_2^2 y_3=1, \qquad
x_5 + x_2^3 x_3=1,
\\[.5ex]
y_1 + y_1^4 x_2^3 x_3^2 x_4^3 y_2^3 y_3^2=1, \qquad
y_2 + y_2^2 x_2 x_3 x_4^2 y_1^2 y_3^2=1, \qquad
y_3 + y_3^2 x_2^2 x_3 x_4 y_1^2 y_2^2=1.
\end{gather*}
 As usual these equations tell us that if the $x,y \geq 0$, then we also have $x, y \leq 1$. But note an interesting feature of the equations for the $y_i$ that we also saw for $C_n$: both monomials contain a factor of $y_i$, and therefore we cannot set any of the $y$'s to zero. Thus the only boundaries of this connected component are associated with the $x_{i} \to 0$ for $i =1,\dots,5$; and we have found an unusual binary realization (in the sense of \cite{AHLT}) of a pentagon.

Suppose instead we now set $a_1$ to be negative, that is, $a_1 = -a^\prime_1$ with $a'_1 >0$. Repeating the same analysis something more interesting happens. The~set of 36 exponent vectors associated with the monomials of the extended $u$-equations span a cone with 12 generators. The~monomials associated with these 12 generators are
\begin{gather*}
x_1 = a_4^{-1}, \qquad\!
x_2=a_2^3 a_3^3 b_1 b_2^2 b_3, \qquad
x_3=b_3^{-1},\qquad \!
x_4=b_2^{-1},\qquad \!
x_5=a^\prime_1 a_2^{-1}a_3^{-2} a_4^{-1} b_2^{-1} b_3^{-1},
\\
x_6 =a_3^3 a_4^3 b_2 b_3^2 b_4, \qquad
x_7 = a^\prime_1 a_4^2 b_3 b_4 a_2^{-1} b_2^{-1}, \qquad
x_8=a_3^{-2} a_4^{-1} b_2^{-1} b_3^{-1},
\\
x_{9} = a^\prime_1 a_2^2 b_1 b_2 a_4^{-1} b_3^{-1}, \qquad
x_{10}=a_3 b_2, \qquad
x_{11} = a_3 b_3, \qquad
x_{12} = a_2^{-1} a_3^{-2} b_2^{-1} b_3^{-1}
\end{gather*}
and all the extended $u$-equations can be written (albeit not uniquely) as a sum of two monomials in the $x$'s with positive coefficients. Of course, twelve of the equations are of the form $x_i + \text{(monomial)} = 1$, so all the $x$'s are restricted to lie between 0 and 1. But the twelve exponent vectors in eight dimensions satisfy four relations which can be expressed in many equivalent ways, for instance
\begin{gather*}
x_2^2 x_3 x_5^2 x_8=x_1 x_4 x_9^2, \qquad x_6^2 x_4 x_5^2 x_8 = x_1 x_3 x_7^2, \qquad x_{10}^2 x_4 x_8 = x_1 x_3, \qquad x_{11}^2 x_3 x_8 = x_1 x_4.
\end{gather*}
So in the language of these $x$-variables, we have monomial equations with positive exponents, but also satisfying non-trivial constraint relations. Further study of this particular region reveals it to be a binary realization of a hexagon. It~is natural to conjecture that the phenomenon we have seen in this $G_2$ example is generic--when studying different connected components using $u$-equations, the set of exponent vectors will be a pointed cone, and that the equations will always force all variables to lie between $(0,1)$. But the variables will satisfy additional monomial constraints.

\section{Outlook}

We close with a few comments on open directions for future exploration.

\bigskip\noindent{\bf{\large 12.1.} Understanding real components of cluster configuration space.}
An open question immediately suggested by our investigations (see Section~\ref{sec:conn}) is understanding all the connected components of the real points of cluster configuration spaces. For $A_{n-3}$, there is a~beautiful picture, where the complete space is tiled by ``binary associahedra" corresponding to all $(n{-}1)!/2$ orderings of $n$ points on the projective line, and there is a similar complete picture of all the orderings for $C_n$ by folding. These examples are especially easy to understand since we have a ``linear model" for $\M_{0,n}$ in terms of a hyperplane arrangement. The~connected components can also be easily understood in these examples, directly studying the space of solutions of the $u$-equations with different sign patterns. In~general, $\M_D$ is not a hyperplane arrangement complement and it would thus be interesting to systematically study the question of connected components directly from the $u$-equations defining the space, as we have done in some examples in Section~\ref{sec:conn}.

It is natural to conjecture that some or all of the other real components of cluster configurations spaces (suitably compactified using the $u$-equations) are also positive geometries, and it would be interesting to determine their canonical forms.

In connection with determining canonical forms for general components, we state here without proof, a simple expression for the canonical forms of the positive component we have studied above, not in terms of cluster variables, but directly in terms of $u$-variables. Recall that the $u_\gamma$ are in bijection with all the cluster variables. Consider any {\it acyclic} cluster $(x_{\gamma_1}, \dots, x_{\gamma_n})$. Then, the canonical form is simply given by taking the wedge product
\begin{gather}\label{eq:canform}
\Omega = \bigwedge_{i=1}^n \frac{{\rm d} u_{\gamma_i}}{u_{\gamma_i} (1 - u_{\gamma_i})}.
\end{gather}
As we noted in Section~10.2, acyclic seeds have the following feature: all the $u_\gamma$ variables can be expressed rationally in terms of those in the initial seed. This idea can be extended to give canonical forms for other connected components of cluster configuration spaces. As a simple example, let us consider the description of the component discussed in Section~\ref{sec:conn} for the $G_2$ case, where $b_1 = - b^\prime_1 < 0$. We~can readily check that all of the $(x,y)$ variables can be rationally solved for in terms of either $(x_2,x_3)$ or in terms of $(x_3,x_4)$. The~canonical form is then given as
\begin{gather*}
\Omega = {\rm d log} \frac{x_2}{1-x_2} {\rm d log} \frac{x_3}{1-x_3} = {\rm d log} \frac{x_3}{1-x_3} {\rm d log} \frac{x_4}{1-x_4}.
\end{gather*}

\bigskip\noindent{\bf{\large 12.2.} Open and closed cluster string amplitudes.} 
The \emph{stringy canonical forms} of \cite{AHL} can be applied to the cluster configuration space $\M_D$, and we obtain the \emph{cluster string amplitude}.
For a cluster algebra $\A\big(\tB\big)$ of full rank and of type $D$, the cluster string integral is defined to~be
\begin{gather}\label{eq:clusterstring}
\I_D(s):= \int_{\M_{D,>0}} \Omega(\M_{D,>0}) \prod_{\gamma \in \Pi} x_\gamma^{\alpha' s_\gamma}\prod_{i=n+1}^{n+m} x_i^{\alpha' s_i},
\end{gather}
where $\{s_\gamma \,|\, \gamma \in \Pi\} \cup \{s_i \,|\, i \in [n+1,n+m]\}$
are parameters chosen so that the product $\prod_{\gamma \in \Pi} x_\gamma^{\alpha' s_\gamma}$
is $T$-invariant, and thus descends to a function on $\M_{D,>0}$. Choosing $\tB = \tB^\prin$, we~may use Theorem~\ref{thm:principal} to rewrite \eqref{eq:clusterstring} as
\begin{gather}\label{eq:Fintegral}
\I_D(\X,\{c\}) = \int_{\R_{>0}^n} \prod_i \frac{{\rm d}y_i}{y_i} y_i^{\alpha' X_i} \prod_{\gamma \in \Pi^+} F_\gamma(\y)^{-\alpha' c_\gamma},
\end{gather}
where $(\X,\{c\})$ are related linearly to $(s_i,s_\gamma)$. By \cite[Claim 2]{AHL}, \eqref{eq:Fintegral} converges when the point $\X$ belongs to the generalized associahedron $P(c)= \sum_{\gamma \in \Pi^+} c_\gamma P_\gamma$, where $P_\gamma$ is the Newton polytope of $F_\gamma(\y)$. By \cite[Claim 2]{AHL} the leading order of $\I_D(\X,\{c\})$ is the canonical function $\Omega(P(c))$ of~$P(c)$ evaluated at $X$:
\begin{gather*}
\lim_{\alpha' \to 0} (\alpha')^n \I_D(\X,\{c\}) = \underline{\Omega}(P(c))(\X).
\end{gather*}
(We refer the reader to \cite{ABL, AHL} for background on canonical functions and canonical forms.)
In~particular, the poles of $\I_D(\X,\{c\})$ as $\alpha' \to 0$, all of which are simple, correspond bijectively to the facets of the generalized associahedron of $D^\vee$. By \cite[Section~9]{AHL} and Theorem~\ref{thm:pair}, we may also rewrite
\begin{gather} \label{eq:IDu}
\I_D({\bf U}) = \int_{\M_{D,>0}} \Omega(\M_{D,>0}) \prod_{\gamma \in \Pi} u_\gamma^{\alpha'U_\gamma}
\end{gather}
and the convergence condition is the very simple condition $U_\gamma >0$. By Proposition~\ref{prop:hinvert}, the $u_\gamma$ and $\{y_i, F_\gamma\}$ are related by an invertible monomial transformation, and thus $\{U_\gamma \,|\, \gamma \in \Pi\}$ and $\{X_1,\dots,X_n\} \cup \{c_\gamma \,|\, \gamma \in \Pi^+\}$ are related by an invertible linear transformation. (The matrix of~this linear transformation has entries given by the integers $\Trop(F_\gamma(\y))\big({-}\g^\vee_\omega\big)$ that appeared in~Section~\ref{sec:F}.) We see from~\eqref{eq:IDu} that the $u$-variables $u_\gamma$ are reverse-engineered from the cluster string integral: they are those monomials in cluster variables making the domain of convergence explicit.

As explained in~\cite[Section~7]{AHL}, for generic exponents $X$, we expect that varying the cycle of integration (to something other than the cycle $\M_{D,>0}$) will span a space of integral functions of dimension equal to the absolute value of the Euler characteristic $|\chi(\M_D(\C))|$. Indeed, it is especially natural to integrate over any of the other real connected components of the cluster configuration space, directly generalizing the basis of all (tree-level) open string amplitudes associated with type $A$.

We can also define the analog of ``closed string" cluster amplitudes. The~simplest object we can define (as in~\cite{AHL}) is the ``mod square" of the open string integral
\begin{gather*}
\I_{D}^{\rm{closed}}\big(\big\{U,\bar{U}\big\}\big)=
\int_{\M_D(\C)} \Omega(\M_{D,>0}) \prod_{\gamma \in \Pi} u_\gamma^{\alpha' U_\gamma} \wedge
 \bar{\Omega}(\M_{D,>0}) \prod_{\gamma \in \Pi} \bar{u}_\gamma^{\alpha' \bar{U}_\gamma},
\end{gather*}
where in order for the integrand to be single-valued, we must have that the exponents $\bar{U}_\gamma$ differ from $U_\gamma$ at most by integers.

As we have remarked, it is plausible that real components of cluster configuration space other than the region associated with $u_\gamma \geq 0$ provide us with many different positive geometries $\M^{(i)}_{D}$, with associated canonical forms
$ \Omega\big(\M^{(i)}_{D}\big) $. In~this case we can extend the closed string integrals to be more generally labelled by pairs of these positive geometries,
\begin{gather*}
\I_{\M^{(i)}_{D},\M^{(j)}_{D}}^{\rm{closed}}\big(\big\{U,\bar{U}\big\}\big)=
\int_{\M_D(\C)} \Omega\big(\M^{(i)}_{D}\big) \prod_{\gamma \in \Pi} u_\gamma^{\alpha' U_\gamma} \wedge
 \bar{\Omega}\big(\M^{(j)}_{D}\big) \prod_{\gamma \in \Pi} \bar{u}_\gamma^{\alpha' \bar{U}_\gamma}.
\end{gather*}

It is clear that a complete understanding of the space of open and closed string integrals will go hand-in-hand with a similarly complete understanding of the space of all connected real components of the cluster configuration space.

\bigskip\noindent{\bf{\large 12.3.} Beyond finite type.}
Finally, the most obvious open question is whether the notions of cluster configuration space presented in this paper can naturally be extended beyond finite-type cluster algebras. It~is interesting to note that, as we have seen in \eqref{eq:IDu} above, in the finite type case, the introduction of the $u$-variables is naturally reverse engineered, starting from the definition of the cluster string amplitude, see also \cite{AHL}. This definition can be extended in~various ways to define natural ``compactifications" of the infinite-type configuration spaces, as~recently been explored for the case of Grassmannian cluster algebras \cite{ALS}. In~these examples, the reverse-engineering of $u$-variables does not work as it does in finite type: amongst other things the polytope capturing the combinatorics of the boundary structure in these cases is typically not simple. But there may be other choices of stringy integral that are more natural from the perspective of finding good $u$-variables and ``binary" realizations of general cluster configuration spaces.

\appendix

\section{A lemma in commutative algebra}

\begin{Lemma}\label{lem:ringiso}
Let $f\colon A \to B$ be a surjective homomorphism of Noetherian commutative rings with identity. Let~$S \subset A$ be the multiplicative set generated by elements $x_1,x_2,\dots,x_p$ such that
\begin{gather}\label{eq:nzd}
f(x_1),f(x_2),\dots,f(x_p)\quad \text{are not zero-divisors in $B$}.
\end{gather}
Suppose that
\begin{enumerate}\itemsep=0pt
\item[$(1)$]
the localized homomorphism $S^{-1} f\colon S^{-1}A \to S^{-1}B$ is an isomorphism, and
\item[$(2)$]
for each $i=1,2,\dots,p$ the induced homomorphism $f_i\colon A/(x_i) \to B/(f(x_i))$ is an isomorphism.
\end{enumerate}
Then $f$ is an isomorphism.
\end{Lemma}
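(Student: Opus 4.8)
The plan is to reduce everything to showing that $K:=\ker f=0$, since $f$ is already surjective. The three hypotheses will play complementary roles: assumption~(1) controls $K$ after inverting $S$, assumption~(2) together with \eqref{eq:nzd} controls $K$ modulo each $x_i$, and the Noetherian hypothesis packages the first input into a single annihilator. First I would note that $K$ is a finitely generated ideal of the Noetherian ring $A$, and that, since localization is exact, $S^{-1}K=\ker(S^{-1}f)=0$ by~(1). Hence every element of $K$ is killed by some element of $S$; choosing generators $k_1,\dots,k_m$ of $K$ and elements $s_j\in S$ with $s_jk_j=0$, the product $s:=s_1\cdots s_m\in S$ satisfies $sK=0$. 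Write $s=x_1^{a_1}\cdots x_p^{a_p}$.

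The heart of the argument is the claim that $K=x_iK$ for every $i$. For the inclusion $K\subseteq (x_i)$: the composite $A\xrightarrow{f}B\to B/(f(x_i))$ factors through $f_i\colon A/(x_i)\to B/(f(x_i))$, and since $f_i$ is injective by~(2) while $f(k)=0$ for $k\in K$, the image of any $k\in K$ in $A/(x_i)$ vanishes, i.e.\ $k\in(x_i)$. Writing $k=x_ia$ then gives $f(x_i)f(a)=f(k)=0$, and since $f(x_i)$ is not a zero-divisor in $B$ by~\eqref{eq:nzd}, we conclude $f(a)=0$, so $a\in K$ and $k\in x_iK$; the reverse inclusion $x_iK\subseteq K$ is trivial. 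Iterating the claim gives $K=x_i^{a_i}K$ for each $i$, hence $K=x_1^{a_1}K=x_1^{a_1}x_2^{a_2}K=\cdots=x_1^{a_1}\cdots x_p^{a_p}K=sK=0$. Thus $f$ is injective, and being surjective, an isomorphism.

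I expect the only real subtlety to be recognizing that hypotheses~(2) and~\eqref{eq:nzd} must be used jointly in the claim $K=x_iK$ — assumption~(2) alone yields merely $K\subseteq(x_i)$, which is far from sufficient, and it is the non-zero-divisor property that lets one ``divide by $x_i$'' inside $K$ — whereas assumption~(1) enters only to produce the single element $s$ with $sK=0$. Everything else is formal manipulation of ideals, and the Noetherian hypothesis is used exactly once, to pass from a per-element annihilator to a global one.
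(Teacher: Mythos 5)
Your proof is correct, and all three hypotheses are used exactly where they must be: (2) together with \eqref{eq:nzd} to get $K\subseteq x_iK$, (1) to get $S$-torsion, and Noetherianity to globalize. The core step is the same as in the paper --- the paper likewise uses injectivity of $f_i$ to show a kernel element lies in $(x_i)$ and then uses the non-zero-divisor hypothesis to divide by $x_i$ inside $K$ --- but your endgame is organized differently and, I would say, more cleanly. The paper argues element by element: it takes a nonzero $a\in K$, uses (1) to arrange $x_ia=0$ after multiplying by a monomial, then repeatedly divides by $x_i$ to build an ascending chain of principal ideals $(a)\subseteq(a_1)\subseteq\cdots$ and extracts a contradiction from its stabilization via a minimality argument on the order of $x_i$-torsion. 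You instead promote the divisibility statement to the ideal identity $K=x_iK$, use finite generation of $K$ (your only appeal to Noetherianity) to produce a single $s\in S$ with $sK=0$, and conclude $K=sK=0$. This avoids the somewhat delicate chain/stabilization analysis at the end of the paper's proof and makes the logical dependence on each hypothesis more transparent; the trade-off is negligible, since both arguments use the Noetherian hypothesis in an essential and comparably elementary way.
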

\begin{proof}
Let $K$ denote the kernel of $f$. Let~$a \in K$ be a nonzero element. Suppose that $x_i a \neq 0$ in $A$ for all $i$. Then the image of $a$ in $S^{-1}A$ is nonzero and it is in the kernel of $S^{-1} f\colon S^{-1}A \allowbreak\to S^{-1}B$. This contradicts (1). Thus $M a = 0$ for some monomial $M$ in the $x_i$-s. Replacing $a$ by~$M'a$ for some other monomial $M'$, and using \eqref{eq:nzd}, we may assume that $a \in K$ and $x_ia=0$ for some $i =1,2,\dots,p$.

If $a \in (x_i)$, then by \eqref{eq:nzd}, we have $a = x_i a_1$ for a nonzero element $a_1 \in K$. Repeating, we~either find a nonzero element $a' \in K$ such that $a' \notin (x_i)$, or we have an ascending chain of ideals $(a) \subset (a_1) \subset (a_2) \subset \cdots$. In~the former case, the image of $a'$ in $A/(x_i)$ is nonzero and in the kernel of $f_i$, contradicting (2). Thus we are in the latter case. Since $x_i$ is not a~unit and $A$ is Noetherian, the chain of ideals stabilizes to a proper ideal $(a') = I \subsetneq A$, and we thus have $(a') = (a'')$, where $a'' = x_i a'$ and $x^n a' = 0$ for some $n > 0$. This is impossible: letting~$m$ be minimal such that $x_i^m a' = 0$ we find that $x_i^{m-1} a'' =0$ which implies $x_i^{m-1} a' = 0$, a~contradiction.
\end{proof}

\subsection*{Acknowledgements} We thank Mark Spradlin and Hugh Thomas for many discussions related to this work and for closely related collaborations. We~thank the anonymous referees for a number of corrections and helpful suggestions to the exposition. T.L.\ was supported
by NSF DMS-1464693, NSF DMS-1953852, and by a von Neumann Fellowship from the Institute for Advanced Study.
N.A-H.\ was supported by DOE grant DE-SC0009988. S.H.\ was supported in part by the National Natural Science Foundation of China under Grant No.\ 11935013, 11947301, 12047502, 12047503.

\pdfbookmark[1]{References}{ref}
\LastPageEnding

\end{document}